\documentclass[reqno]{amsart}
\usepackage[top=1in, bottom=1in, left=1.3in, right=1.3in]{geometry}
\usepackage{soul}
\usepackage{amsmath}
\usepackage{amssymb}
\usepackage{amsthm}
\usepackage{verbatim}
\usepackage{graphicx}
\usepackage{graphics}
\usepackage{color}
\usepackage{enumerate}
\usepackage{mathrsfs}
\usepackage{booktabs}
\usepackage[toc,page]{appendix}
\usepackage{enumerate}
\usepackage{mathrsfs}
\usepackage{fancyhdr}
\usepackage{latexsym, amsfonts, amssymb, amsmath,  amsthm}
\usepackage{amsopn, amstext, amscd,pifont}
\usepackage{amssymb,bm, cite}
\usepackage[all]{xy}
\usepackage{color}
\usepackage{blkarray}
\usepackage{hyperref}
\usepackage{float}
\usepackage{listings}
\usepackage{setspace}
\usepackage{paralist}
\usepackage{float}
\usepackage{fancyhdr}

\theoremstyle{plain}
\newtheorem{theorem}{Theorem}[section]
\newtheorem{proposition}[theorem]{Proposition}
\newtheorem{lemma}[theorem]{Lemma}
\newtheorem{corollary}[theorem]{Corollary}

\theoremstyle{definition}
\newtheorem{definition}[theorem]{Definition}
\newtheorem{example}[theorem]{Example}
\newtheorem{remark}[theorem]{Remark}

\numberwithin{equation}{section}
\graphicspath{{images/}}


\begin{document}
\title{Iteration at the Boundary of Newton Maps}
\author{Hongming Nie}
\address{Department of Mathematics, Indiana University\\
831 East Third Street, Bloomington, Indiana 47405, USA}
 \email{nieh@indiana.edu}
\date{\today}
\maketitle
\begin{abstract}
Let $\{N_t\}$ be a holomorphic family of degree $d\ge 3$ Newton maps. By studying the related Berkovich dynamics, we obtain an estimate of the weak limit of the maximal measures of $N_t$. Moreover, we give a complete description of the rescaling limits for $\{N_t\}$.
\end{abstract}

\section{Introduction}
Let $P$ be a degree $d\ge 2$ monic polynomial with distinct roots. It can be written as
$$P(z)=\prod_{i=1}^d(z-r_i),$$
where $r_1,\dots,r_d$ are distinct complex numbers. The Newton map of $P$ is defined by 
$$N_P(z)=z-\frac{P(z)}{P'(z)}.$$ 
We also write $N_{\{r_1,\cdots,r_d\}}$ to indicate the roots of $P$.\par
As an algorithm for finding the roots of a polynomial, it is also known as Newton's method, which is a generally convergent for quadratic polynomials. Cayley \cite{Cayley79} noticed the difficulties for Newton's method to find roots of higher degree polynomials. However, there is a finite universal set such that for any root of any suitably normalized polynomial of degree $d$, there is an initial point in the universal set whose orbit converges to this root under iterations of the corresponding Newton's method \cite{Hubbard01}. We refer \cite{Friedman89, Manning92, McMullen87, Schleicher16, Sutherland89} for more details. \par 
As a dynamical system, Newton map exhibits nice properties. The Julia sets of Newton maps are always connected \cite{Shishikura09}. Moreover, the cubic Newton maps have locally connected Julia sets except in some very specific case \cite{Roesch08}. For the combinatorial properties of Newton maps, every postcritically finite cubic Newton map can be constructed by mating two cubic polynomials \cite{Tan97}. For more dynamical properties of Newton maps, we refer \cite{Lodge15a, Lodge15b, Mikulich11}. We also refer \cite{Haeseler88} for an overview of Newton maps. And there also contains detailed background of known results for Newton maps in \cite{Roesch15}. \par
The goal of the present article is to investigate the limiting behaviors of a degenerate holomorphic family of Newton maps. We mainly focus on the weak limits of measures of maximal entropy and the rescaling limits. In the subsequent article \cite{Nie-2}, we will study the compactifications of the related moduli spaces. Now we set up the statement.\par
For $d\ge 2$, the space $\mathrm{Rat}_d$ of degree $d$ complex rational maps can be naturally identified to a dense open subset of $\mathbb{P}^{2d+1}$. And in the projective coordinates, for $f\in\mathrm{Rat}_d$, we can write
$$f([X:Y])=[F_a([X:Y]):F_b([X:Y])],$$
where $F_a([X:Y])$ and $F_b([X:Y])$ are two relatively prime homogeneous polynomials of degree $d$. Generally, for $f\in\mathbb{P}^{2d+1}$, it determines the coefficients for a pair of homogeneous polynomials of degree $d$. We can 
write
$$f([X:Y])=[F_a([X:Y]):F_b([X:Y])]=H_f(X,Y)[f_a([X:Y]):f_b([X:Y])],$$
where $H_f(X,Y)=\mathrm{gcd}(F_a([X:Y]),F_b([X:Y]))$ and $\hat f:=[f_a([X:Y]):f_b([X:Y])]$ is a rational map of degree at most $d$. The indeterminacy locus $I(d)\subset\mathbb{P}^{2d+1}$ is the collection of $f=H_f\hat f$, where $\hat f$ is a constant in $\mathbb{P}^1$ and this constant is a hole of $f$, that is, it is a zero of $H_f$.\par
Following DeMarco \cite{DeMarco05}, we can associate each $f\in\mathbb{P}^{2d+1}$ a probability measure $\mu_f$. If $f\in\mathrm{Rat}_d$, the measure $\mu_f$ is the unique measure such that the measure-theoretic entropy of $f$ attains its maximum. If $f=H_f\hat f\in\mathbb{P}^{2d+1}\setminus\mathrm{Rat}_d$ with $\deg\hat f\ge 1$, the measure $\mu_f$ is an atomic measure defined on the holes of $f$ and all their preimages under $\hat f$. If $f=H_f\hat f\in\mathbb{P}^{2d+1}\setminus\mathrm{Rat}_d$ with $\deg\hat f=0$, the measure $\mu_f$ is an atomic measure defined on the holes of $f$. Then the map $f\to\mu_f$ is continuous if and only if $f\not\in I(d)$ \cite[Theorem 0.1]{DeMarco05}.\par 
If we restrict to holomorphic families $\{f_t\}$, then function $f_t\to\mu_{f_t}$ is better behaved. Let $\mathbb{P}^1_{\mathrm{Ber}}$ be the Berkovich space over the completion of the field of formal Puiseux series and let $\mathbf{f}:\mathbb{P}^1_{\mathrm{Ber}}\to\mathbb{P}^1_{\mathrm{Ber}}$ be the associated map for $\{f_t\}$. If $f_t$ converges to a point in $I(d)$, by regarding the Berkovich dynamical system $(\mathbf{f},\mathbb{P}_{\mathrm{Ber}}^1)$ as the limit of dynamical systems $(f_t,\widehat{\mathbb{C}})$ as $t\to 0$, DeMarco and Faber \cite[Theorem B]{DeMarco14} proved the weak limit $\mu$ of measures $\mu_{f_t}$ exists and it is a countable sum of atoms. Later they \cite{DeMarco16} gave a method to compute the measure $\mu$ by introducing and studying what they call an analytically stable pair $(\mathbf{f},\Gamma)$, where $\Gamma\subset\mathbb{P}^1_{\mathrm{Ber}}$ is a finite set of type II points. Combining with \cite[Theorem 0.1]{DeMarco05}, the measure $\mu$ satisfying
\begin{equation}\label{measure-lower-bound}
\mu(\{c\})\ge\frac{d_c(f)}{d+d_c(f)},
\end{equation}
where $c$ is the constant value of $\hat f$ and $d_c(f)$ is its depth, that is, $d_c(f)$ is the multiplicity of $c$ as a zero of $H_f$.\par
We now apply this to Newton maps. Suppose $\{N_{r(t)}\}$ is a holomorphic family of degree $d\ge 2$ Newton maps with $r(t)=\{r_1(t),\cdots,r_d(t)\}$. If $N_{r(t)}$ converges to a point $N\in I(d)$, as $t\to 0$, then all $r_i(t)$s converge to $z=\infty$, see Lemma \ref{Newton-indeterminacy}. Thus $z=\infty$ is the unique hole of $N$ and the depth $d_\infty(N)=d$. Let $\mu$ be the weak limit of the maximal measures $\mu_t$ for $N_{r(t)}$. The inequality (\ref{measure-lower-bound}) implies
\begin{equation}\label{Newton-lower-bound-1/2}
\mu(\{\infty\})\ge\frac{1}{2}.
\end{equation}
But the lower bound in inequality (\ref{Newton-lower-bound-1/2}) is not sharp. Our first result is to give a better lower bound for $\mu(\{\infty\})$.
\begin{theorem}\label{theorem-measure}
Let $\{N_t\}$ be a holomorphic family of degree $d\ge 2$ Newton maps. Suppose $N_t$ converges to a point in $I(d)$. Let $\mu_t$ be the maximal measure for $N_t$ and let $\mu$ be the weak limit of measures $\mu_t$. Then 
\begin{equation}\label{Newton-lower-bound}
\mu(\{\infty\})\ge\frac{d}{2d-1}.
\end{equation}
In particular, if the Gauss point $\xi_g\in\mathbb{P}^1_{\mathrm{Ber}}$ is a periodic point for the associated map $\mathbf{N}$ for $\{N_t\}$, then $\mu=\delta_\infty$.
\end{theorem}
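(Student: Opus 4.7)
The plan is to lift to Berkovich dynamics and sharpen the bound \eqref{measure-lower-bound} by exploiting the fact that $\infty$ is simultaneously a classical fixed point of multiplier $d/(d-1)\neq 0$ and the unique hole of the limit.

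First I would pass to the associated Berkovich map $\mathbf{N}\colon\mathbb{P}^1_{\mathrm{Ber}}\to\mathbb{P}^1_{\mathrm{Ber}}$ and identify $\mu(\{\infty\})$ with $\mu_{\mathbf{N}}(B)$, where $B=\mathrm{red}^{-1}(\infty)$ and $\mu_{\mathbf{N}}$ is the Berkovich equilibrium measure of $\mathbf{N}$ (see \cite{DeMarco14}). A direct non-Archimedean computation using $N_t(z)=z-P_t(z)/P_t'(z)$ together with Lemma \ref{Newton-indeterminacy} yields $|N_t(z)|=\min_i|r_i(t)|>1$ for every $|z|\le 1$, so
\[
\mathbf{N}\bigl(\overline{B^c}\bigr)\subset B.
\]
Via the pushforward invariance of $\mu_{\mathbf{N}}$, this already recovers the naive bound $\mu_{\mathbf{N}}(B)\ge 1/2$.

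The sharpening comes from the pullback invariance $\mathbf{N}^*\mu_{\mathbf{N}}=d\,\mu_{\mathbf{N}}$, which I would write as
\[
d\,\mu_{\mathbf{N}}(B^c)=\int_{B}\Bigl(\sum_{\substack{y\in B^c\\ \mathbf{N}(y)=x}}\deg_y\mathbf{N}\Bigr)\,d\mu_{\mathbf{N}}(x).
\]
The key Newton-specific bound I aim to establish is that for $\mu_{\mathbf{N}}$-a.e.\ $x\in B$ the inner sum is at most $d-1$: heuristically, the classical fixed point $\infty\in B$ has nonzero multiplier, so the tangent direction from $\infty$ into $B$ is preserved by $\mathbf{N}$ and absorbs one unit of local degree at every generic preimage count, leaving at most $d-1$ units available to escape to $B^c$. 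Plugging this into the pullback equation yields $d\,\mu_{\mathbf{N}}(B^c)\le (d-1)\,\mu_{\mathbf{N}}(B)$, which rearranges to $\mu_{\mathbf{N}}(B)\ge d/(2d-1)$.

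The main obstacle is making the ``unit of local degree absorbed at $\infty$'' argument precise at the level of the Berkovich tree. I plan to do this by constructing an analytically stable pair $(\mathbf{N},\Gamma)$ in the sense of \cite{DeMarco16}, where $\Gamma$ is a finite subtree of type II points containing the geodesic from $\xi_g$ to $\infty$, and then reading off the degree contributions at each vertex of $\Gamma$ from the corresponding reductions $\widetilde{\mathbf{N}}_{\xi}$; this should pin down exactly how much of the total degree is siphoned back into $B$ at each level.

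For the final assertion, if $\xi_g$ is $\mathbf{N}$-periodic I would take $\Gamma$ to consist of the $\mathbf{N}$-orbit of $\xi_g$ and extend the degree count along $\Gamma$. Combined with $\mathbf{N}(\overline{B^c})\subset B$, periodicity allows the fixed direction at $\infty$ to absorb the full degree at every vertex of $\Gamma$, forcing the inner sum in the pullback formula to vanish identically. Hence $\mu_{\mathbf{N}}(B^c)=0$, which is equivalent to $\mu=\delta_\infty$.
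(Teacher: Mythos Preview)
Your overall strategy---pass to Berkovich dynamics and use the DeMarco--Faber analytically stable framework---is the same as the paper's, but the direct pullback argument you sketch at $\xi_g$ does not go through, and the details you defer to ``a finite subtree containing the geodesic from $\xi_g$ to $\infty$'' are exactly where the content lies.

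The gap is the inequality $\sum_{y\in B^c,\ \mathbf{N}(y)=x}\deg_y\mathbf{N}\le d-1$ for $\mu_{\mathbf{N}}$-a.e.\ $x\in B$. Your heuristic (``$\infty$ absorbs one unit of local degree'') is not a statement about preimage counts in $B^c$ versus $B$; the multiplier at $\infty$ constrains nothing about how $\mathbf{N}^{-1}(x)$ is distributed between the two balls for a generic $x$. Concretely, $\xi_g$ is not fixed and not even in the convex hull $H_{\mathrm{fix}}$ of the fixed points (all $r_i$ lie in $B$ by Lemma~\ref{Newton-indeterminacy}), so there is no tangent-map calculation at $\xi_g$ that produces the $d-1$. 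In the paper the $d-1$ arises elsewhere: one follows the orbit of $\xi_g$ until it first meets $H_V^{\xi_g}$, takes $\Gamma$ to be this orbit segment together with $V$, and then the bound $P_{W_1,U_1}\le (d-1)/d$ comes from $\deg_{\pi_{H_V}(\xi_g)}\mathbf{N}\le d-1$ at the visible point in $V$, not at $\xi_g$. The Markov-chain relation $P_{W_1,U_1}\nu(W_1)=\nu(U_1)$ together with $\nu(U_1)+\nu(W_1)\le 1$ then gives the bound. When the orbit of $\xi_g$ never meets $H_V^{\xi_g}$ a separate argument (every non-$\infty$ direction at $\xi_g$ is an $F$-domain) yields $\mu=\delta_\infty$ directly. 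Your proposed $\Gamma$ ``containing the geodesic from $\xi_g$ to $\infty$'' cannot be a finite vertex set, and in any case it is the tree $H_V$ (the hull of the type~II repelling fixed points), not the ray to $\infty$, that organizes the argument.

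For the periodic case, your claim that periodicity lets $\infty$ ``absorb the full degree at every vertex of $\Gamma$'' is not what happens. The paper instead shows (Corollary~\ref{Gauss-periodic-limit-measure}) that if $\xi_g$ is periodic then its orbit is forced to avoid $H_V^{\xi_g}$ entirely, reducing to the case where every non-$\infty$ direction at $\xi_g$ lands in the Fatou set and hence carries zero $\mu_{\mathbf{N}}$-mass.
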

We point out that in general the lower bound in inequality (\ref{Newton-lower-bound}) is not sharp either. For example, for quadratic Newton maps, we always have $\mu=\delta_\infty$, see Corollary \ref{quadratic-Newton-measure}.\par 
To prove Theorem \ref{theorem-measure}, we consider the associated map $\mathbf{N}:\mathbb{P}_{\mathrm{Ber}}^1\to\mathbb{P}_{\mathrm{Ber}}^1$ for $\{N_t\}$. Let $V$ be the set of type II repelling fixed points of $\mathbf{N}$. Consider the convex hull $H_V^{\xi_g}$ of $V$ and the Gauss point $\xi_g$. If the orbit $\mathcal{O}(\xi_g)$ of $\xi_g$ under the map $\mathbf{N}$ does not intersects with $H_V^{\xi_g}$, we consider the vertices set $\Gamma=\{\xi_g\}$ and study the equilibrium $\Gamma$-measures on $\mathbb{P}_{\mathrm{Ber}}^1$. Then in this case we can prove $\mu=\delta_\infty$. If the orbit $\mathcal{O}(\xi_g)$ intersects with $H_V^{\xi_g}$, we construct the vertices sets $\Gamma$ such that the pairs $(\mathbf{N},\Gamma)$ are analytically stable and then apply \cite[Theorem C]{DeMarco16}.\par
Since the indetermiancy locus $I(d)$ is the set of points where the iterate maps $\mathbb{P}^{2d+1}\dasharrow\mathbb{P}^{2d^n+1}$ are discontinuous \cite[Lemma 2.2]{DeMarco05}, it is interesting to explore the limits of holomorphic families converging to points in $I(d)$ under iterations. More naturally, we work on the moduli space $\mathrm{rat}_d:=\mathrm{Rat}_d/\mathrm{PGL}_2(\mathbb{C})$, modulo the action by conjugation of the group of M\"obius transformations.\par
In quadratic case, the existence of such interesting limits was observed by Stimson \cite{Stimson93} in his analysis of the asymptotic behavior of some algebraic curves in $\mathrm{rat}_2$. The idea of rescaling limits also appeared in Epstein's work \cite{Epstein00} proving some hyperbolic components are precompact in $\mathrm{rat}_2$. Kiwi\cite{Kiwi15} defined rescaling limits explicitly. For a holomorphic family $\{f_t\}$, the rescaling limits arise as limits $M_t^{-1}\circ f_t^q\circ M_t\to g $ of rescaled iterates where the convergence is locally uniform outside some finite subset of $\mathbb{P}^1$. By studying the associated dynamical system $(\mathbf{f},\mathbb{P}^1_{\mathrm{Ber}})$ and relating the rescalings to the periodic repelling type II points in $\mathbb{P}^1_{\mathrm{Ber}}$, Kiwi proved for any given holomorphic one-parameter family of degree  $d\ge 2$ rational maps, there are at most $2d-2$ dynamically independent rescalings such that the corresponding rescaling limits are not postcritically finite \cite[Theorem 1]{Kiwi15}. Later, applying the trees of spheres, which is based on the Deligne-Mumford compactifications of the moduli spaces of stable curves, and considering the dynamical systems between trees of spheres, Arfeux proved the same results \cite[Theorem 1]{Arfeux17} about the interesting rescaling limits. Moreover, the existence of a periodic sphere for a dynamical cover between trees of spheres corresponds to the existence of a rescaling limit \cite[Theorem 2, Theorem 3]{Arfeux17}. Recently, Arfeux and Cui \cite{Arfeux16} announced for any arbitrary large integer $n$, there is a holomorphic family in $\mathrm{Rat}_d$ for $d\ge 3$ such that it has $n$ dynamically independent rescalings leading to postcritically finite and nonmonomial rescaling limits. \par 
For a holomorphic family of degree $d\ge 3$ Newton maps, we give a complete description for its rescaling limits.
\begin{theorem}\label{theorem-rescaling-limits-Newton}
\textit{Let $\{N_t\}$ be a holomorphic family of degree $d\ge 3$ Newton maps. Then 
\begin{enumerate}
\item up to equivalence, $\{N_t\}$ has at most $d-1$ rescalings of period $1$. Moreover, each such rescaling leads to a rescaling limit that is conjugate to a degenerated Newton map. 
\item $\{N_t\}$ has at most $d-3$ dynamically independent rescalings of periods at least $2$. Moreover, the corresponding rescaling limit for each such rescaling is conjugate to a polynomial of degree at most $2^{d-3}$.
\end{enumerate}}
\end{theorem}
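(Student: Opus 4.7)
The plan is to use the correspondence, due to Kiwi \cite{Kiwi15} and Arfeux \cite{Arfeux17}, between rescalings of $\{N_t\}$ up to equivalence and periodic type II repelling points of the associated Berkovich map $\mathbf{N}:\mathbb{P}^1_{\mathrm{Ber}}\to\mathbb{P}^1_{\mathrm{Ber}}$. Each such periodic point of period $q$ produces a rescaling whose limit is conjugate to the reduction of $\mathbf{N}^q$ on the tangent space at that point. So the proof amounts to locating and counting the periodic repelling type II points of $\mathbf{N}$ and identifying the corresponding reductions.

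\textbf{Part (1).} Since every $r_i(t)\to\infty$ by Lemma \ref{Newton-indeterminacy}, the roots cluster at various Puiseux-valuation scales, producing a finite rooted tree $T\subset\mathbb{P}^1_{\mathrm{Ber}}$ with leaves the $r_i$ and root $\infty$. I would first show that every interior vertex $\xi\in T$ is a fixed type II point of $\mathbf{N}$. The key local computation is that if $\xi$ represents a cluster of $k$ roots $r_{i_1},\ldots,r_{i_k}$ inside a disk, then the reduction of $N_t$ at $\xi$ is exactly the Newton map of the cluster polynomial $\prod_{j=1}^k(z-\bar r_{i_j})$ of degree $k$, with the remaining $d-k$ roots collapsed into a single marked point representing the direction in $T$ toward $\infty$. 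This reduction is a degenerate Newton map in the sense of the theorem. Since a rooted tree with $d$ leaves has at most $d-1$ interior vertices distinct from the root, we obtain the bound of $d-1$ fixed rescalings.

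\textbf{Part (2).} For the higher-period bound I would apply a Riemann--Hurwitz accounting of critical points on the Berkovich line. The map $\mathbf{N}$ has total critical mass $2d-2$. The $d$ superattracting fixed points at the roots account for $d$ of these critical points, and the fixed-cluster tree $T$ from part (1) absorbs, through Riemann--Hurwitz applied to subtrees of $T$, all but $d-3$ of the remaining critical points (the deficit of $3$ corresponding roughly to the repelling fixed point at $\infty$, the root of $T$, and one critical point used to glue $T$ to the surrounding dynamics). The remaining at most $d-3$ \emph{free} critical points are the only ones available to produce periodic cycles of type II points of period $\ge 2$, yielding the bound on dynamically independent rescalings. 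For the degree of each rescaling limit: along a period $q$ cycle $\xi_0\to\xi_1\to\cdots\to\xi_{q-1}\to\xi_0$, the limit has degree $D=\prod_{j}\deg(\mathbf{N},\xi_j)$ and consumes critical mass $\sum_j(\deg(\mathbf{N},\xi_j)-1)\le d-3$. Since each local degree is at least $2$, maximizing $D$ under this constraint gives $D\le 2^{d-3}$, attained when every local degree equals $2$. To see the rescaling limit is actually a polynomial rather than a general rational map, I would argue that because $\infty$ is the unique hole of the limit of $N_t$ in $I(d)$, the direction toward $\infty$ from each $\xi_j$ is mapped to the direction toward $\infty$ from $\xi_{j+1}$, producing a totally invariant point of the composed reduction around the cycle.

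The hardest step will be the critical-point accounting in part (2): rigorously showing that the fixed-cluster tree $T$ absorbs precisely enough critical mass to leave only $d-3$ critical points free. This requires combining Riemann--Hurwitz on individual subtrees with the explicit local description at each vertex of $T$ obtained in part (1), and ruling out any overcounting when a free critical point happens to lie on $T$ itself. A secondary difficulty is pinning down the totally invariant direction along each period $\ge 2$ cycle so as to conclude that the rescaling limit is truly a polynomial; here one must use that all $r_i(t)$ converge to the common hole $\infty$, so the preimage structure of $\infty$ under $\mathbf{N}$ propagates coherently around every periodic cycle disjoint from $T$.
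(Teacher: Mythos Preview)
Your overall strategy matches the paper's: use Kiwi's correspondence between rescalings and repelling type II periodic points, identify the period-$1$ points as the internal vertices $V$ of the fixed-point tree, and bound higher-period cycles by the number of available free critical points. A few points deserve comment.

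\textbf{Part (1).} Your sketch is correct and is essentially Theorem~\ref{period-1}. One slip: you invoke Lemma~\ref{Newton-indeterminacy} to assert that all $r_i(t)\to\infty$, but that lemma characterizes convergence to $I(d)$, which the theorem does not assume. The tree $H_{\mathrm{fix}}$ and the vertex set $V$ are defined for any Newton map in $\mathbb{L}(z)$; the bound $\#V\le d-1$ holds regardless.

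\textbf{Part (2).} Your critical-mass accounting is the right idea but two ingredients need to be made precise. First, the ``deficit of $3$'' explanation is not the actual mechanism. The paper's computation (Proposition~\ref{number-repelling-cycles}) shows that the number of \emph{totally free} critical points---those whose tangent direction at $\pi_{H_V}(c)$ is not in the immediate basin of an attracting fixed point of $\mathbf{N}_\ast$---is at most $d-1-\#V$; since the interesting case has $\#V\ge 2$, this yields $d-3$. Second, you implicitly use that each critical segment $(\mathbf{c},\pi_{H_V}(\mathbf{c}))$ contains at most one repelling periodic point; this is a separate lemma (Proposition~\ref{repelling-period-critical}) and requires its own argument via the expansion of $\mathbf{N}^q$ on such segments.

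Your degree bound via $\sum_j(\deg(\mathbf{N},\xi_j)-1)\le d-3$ is in fact slightly cleaner than the paper's version, which uses the looser constraint $\sum_j\deg(\mathbf{N},\xi_j)\le 2(d-3)$ and then an AM--GM argument; both yield $2^{d-3}$.

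For the polynomial conclusion, your totally-invariant-direction argument is exactly the alternative proof the paper records as Remark~\ref{dynamics-poly}. The paper's \emph{primary} proof of Theorem~\ref{rescaling-limit-poly} goes through the measure theory of Theorem~\ref{theorem-measure}: it shows that after conjugating by an affine $M_t$ the Gauss point becomes periodic, so the limiting measure is $\delta_\infty$, which forces $\hat f^{-1}(\infty)=\{\infty\}$. Your route is shorter and self-contained; the measure-theoretic route has the advantage of simultaneously yielding information about the hole structure of the subalgebraic limit (used later in Proposition~\ref{rescaling-limit-not-semistable}).
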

Let $\mathbf{N}:\mathbb{P}^1_{\mathrm{Ber}}\to\mathbb{P}^1_{\mathrm{Ber}}$ be the associated map for $\{N_t\}$. The period $1$ rescalings for $\{N_t\}$ correspond to the type II repelling fixed points of $\mathbf{N}$. To prove part $(1)$, we show the type II repelling fixed points are the points that are the branch points in the convex hull whose endpoints are the type I fixed points of $\mathbf{N}$. To prove $(2)$, we first consider the ramification locus $\mathcal{R}_{\mathbf{N}}$ and give sufficient and necessary conditions for the existence of generalized rescalings of period at least $2$. Then we count the available critical points of $\mathbf{N}$. It will give us the upper bound of the rescalings of higher periods. The main ingredient we use to show the corresponding rescaling limits are conjugate to polynomials is Theorem \ref{theorem-measure}. If there exists a rescaling $\{M_t\}$ of period $q\ge 2$, then up to dynamical dependence, we can assume $M_t$ is affine. By the continuity of the composition maps, $M_t^{-1}\circ N_t\circ M_t$ converges to a point in $I(d)$. We consider the associated map $\widetilde{\mathbf{N}}=\mathbf{M}^{-1}\circ\mathbf{N}\circ\mathbf{M}$ of the holomorphic family $\{M_t^{-1}\circ N_t\circ M_t\}$. Then the Gauss point $\xi_g$ is a periodic point of $\widetilde{\mathbf{N}}$. Thus, by Theorem \ref{theorem-measure}, the weak limit of the maximal measures of $M_t^{-1}\circ N_t\circ M_t$ is the measure $\delta_\infty$. By the invariance of the maximal measures and the continuity of the map $f\to\mu_f$ if $f\not\in I(d)$, we know the measure associated to the subalgebraic limit for $M_t^{-1}\circ N^q_t\circ M_t$ is $\delta_\infty$. Hence, we can show the rescaling limit is a polynomial. We mention here that $(2)$ can also be obtained from the Berkovich dynamics of the map $\mathbf{N}$, see Remark \ref{dynamics-poly}. Moreover, the subalgrbraic limit of $M_t^{-1}\circ N^q_t\circ M_t$ has a unique hole at $\infty$. Therefore, the upper bound of the degree of the rescaling limits implies this subalgrbraic limit is not semistable, see Proposition \ref{rescaling-limit-not-semistable}. \par 
\subsection*{Outline}
In section \ref{preliminaries}, we recall the relevant backgrounds of Newton maps and prove there is a unique indeterminacy point in boundary of the space of Newton maps, see Proposition \ref{Newton-closure-indeterminacy-singleton}. Section \ref{Berkovich} is devoted to describe the basic Berkovich dynamics of Newton maps. It also contains a backgrounds of Berkovich spaces and related dynamics of rational maps. The goal of section \ref{measure} is to study the degenerate holomorphic families of Newton maps in measure-theoretic view. We first state DeMarco and Faber's results about limiting measures, and then prove Theorem \ref{theorem-measure}. Finally we prove Theorem \ref{theorem-rescaling-limits-Newton} and related results in section \ref{rescaling}. Kiwi's results are included in this section.

\section{Preliminaries}\label{preliminaries}
\subsection{Iterate Maps on $\mathrm{Rat}_d$}
For $d\ge 1$, consider the map 
$$\Psi_n:\mathrm{Rat}_d\to\mathrm{Rat}_{d^n},$$
sending $f$ to its $n$-th iterate $f^n$. Then $\Psi_n$ is proper for all $n\ge 2$ and all $d\ge 2$ \cite[Corollary 0.3]{DeMarco05}. The iterate map $\Psi_n$ extends to a rational map
$$\Psi_n:\mathbb{P}^{2d+1}\dasharrow\mathbb{P}^{2d^n+1}.$$
Define $I(d)\subset\mathbb{P}^{2d+1}$ is the collection of $f=H_f\hat f$ with $\deg\hat f=0$ and the constant value of $\hat f$ is a hole of $f$, that is 
$$I(d)=\{f=H_f\hat f\in\mathbb{P}^{2d+1}:\hat f\equiv h\in\mathbb{P}^1, H_f(h)=0\}.$$\par
The following result, due to DeMarco, claims the indeterminacy loci of the maps $\Psi_n$ are $I(d)$ and gives the formula for iteration.
\begin{lemma}\cite[Lemma 2.2]{DeMarco05}\label{iterate-indeterminacy}
For $d\ge 1$ and $n\ge 2$, the indeterminacy locus $I(\Psi_n)$ of the iterate map $\Psi_n:\mathbb{P}^{2d+1}\dasharrow\mathbb{P}^{2d^n+1}$ is $I(d)$. Moreover, for $f=H_f\hat f\in\mathbb{P}^{2d+1}\setminus I(d)$, 
$$f^n=\Psi_n(f)=\left(\prod_{k=0}^{n-1}(H_f\circ\hat f^k)^{d^{n-k-1}}\right)\hat f^n.$$
\end{lemma}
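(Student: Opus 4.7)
My plan is to prove the iteration formula by induction on $n$ and then read off the indeterminacy locus directly from the formula. Once we know $\Psi_n(f) = A_n\,\hat{f}^n$ as a pair of homogeneous polynomials in $(X,Y)$, where $A_n := \prod_{k=0}^{n-1}(H_f\circ \hat{f}^k)^{d^{n-k-1}}$, the indeterminacy locus of the rational map $\Psi_n$ is precisely the locus where this polynomial pair vanishes identically, so the problem reduces to determining when $A_n \equiv 0$.

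For the inductive step, write $\Psi_n(f) = A_n\,(f_a^{(n)}, f_b^{(n)})$, where $(f_a^{(n)}, f_b^{(n)})$ is the polynomial pair obtained by the naive $n$-fold substitution of $(f_a, f_b)$ into itself. Then $\Psi_{n+1}(f)$ is obtained by substituting this pair into $(F_a, F_b) = (H_f f_a, H_f f_b)$. The key computation exploits homogeneity: since $H_f$ is homogeneous of degree $d - d'$, with $d' := \deg \hat{f}$, and $f_a, f_b$ are homogeneous of degree $d'$, one has
\[
H_f(A_n f_a^{(n)}, A_n f_b^{(n)}) = A_n^{\,d-d'}\,(H_f\circ\hat{f}^n), \qquad f_a(A_n f_a^{(n)}, A_n f_b^{(n)}) = A_n^{\,d'}\,(f_a\circ\hat{f}^n),
\]
and analogously for $f_b$. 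Multiplying yields $\Psi_{n+1}(f) = A_n^{\,d}\,(H_f\circ\hat{f}^n)\,\hat{f}^{n+1}$, and an exponent check confirms $A_n^{\,d}\,(H_f\circ\hat{f}^n) = A_{n+1}$, completing the induction (the base case $n=1$ being just $f = H_f\hat{f}$).

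With the formula in hand, note first that $\hat{f}^n$, being the iterated naive substitution of $(f_a, f_b)$, never has both coordinates identically zero. Hence $\Psi_n(f) \equiv 0$ iff some factor $H_f\circ\hat{f}^k$ is identically zero. If $\deg \hat{f} \ge 1$, then each $\hat{f}^k$ is a non-constant rational map; factoring $H_f$ into linear forms $\alpha X + \beta Y$, the substitution $\alpha f_a^{(k)} + \beta f_b^{(k)}$ vanishes identically iff $\hat{f}^k$ is the constant $-\beta/\alpha$, which is impossible, so $A_n \not\equiv 0$. If $\deg \hat{f} = 0$, say $\hat{f}\equiv c$, then for all $k\ge 1$ the factor $H_f\circ\hat{f}^k$ equals the scalar $H_f(c)$, which vanishes iff $c$ is a hole of $f$, i.e., iff $f\in I(d)$. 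Combining both cases gives $I(\Psi_n) = I(d)$.

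The main obstacle is the exponent bookkeeping in the inductive step: one must isolate the power $A_n^{\,d-d'}$ coming from substituting into $H_f$ and the power $A_n^{\,d'}$ coming from substituting into $f_a$ or $f_b$, and verify that together with the newly emerging factor $H_f\circ\hat{f}^n$ these combine to exactly $A_{n+1}$. The remaining technicality is the short but essential observation in the indeterminacy analysis that a non-constant rational map cannot take any constant value dictated by a linear factor of $H_f$; once these two points are settled, the rest is a routine factorization check.
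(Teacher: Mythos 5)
Your argument is correct in substance, but note first that the paper offers no proof of this statement to compare against: it is quoted directly from DeMarco \cite[Lemma 2.2]{DeMarco05}, so you are supplying the (standard) external argument. Your induction — substituting the pair $A_n(f_a^{(n)},f_b^{(n)})$ into $(H_f f_a, H_f f_b)$, extracting $A_n^{d-d'}$ from the homogeneous factor $H_f$ and $A_n^{d'}$ from $f_a,f_b$, and checking $A_n^{d}(H_f\circ\hat f^{n})=A_{n+1}$ — is exactly the natural computation behind DeMarco's formula, and your case analysis of when $A_n\equiv 0$ (impossible for $\deg\hat f\ge 1$, since the nonconstant map $\hat f^{k}$ cannot coincide with the constant determined by a linear factor of $H_f$; equivalent to $H_f(c)=0$ when $\hat f\equiv c$) correctly identifies the locus where the naive composed tuple vanishes identically as $I(d)$. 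The one step you leave implicit is the passage from this to $I(\Psi_n)=I(d)$: the indeterminacy locus of a rational map between projective spaces is the common zero set of its defining coordinate polynomials only after a possible common factor of those polynomials (here, polynomials in the coefficients of $f$) has been divided out, so a priori you only get $I(\Psi_n)\subseteq I(d)$. Your own computation repairs this in one line: since the common zero locus $I(d)$ has codimension $d+1\ge 2$ in $\mathbb{P}^{2d+1}$, the coordinate polynomials of $\Psi_n$ can have no nonconstant common factor (such a factor would contribute a hypersurface to the common zero locus), and therefore $I(\Psi_n)$ is indeed all of $I(d)$. With that remark added, the proof is complete.
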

More general, for the composition maps, we have 
\begin{lemma}\cite[Lemma 2.6]{DeMarco07}\label{composition}
The composition map 
$$\mathcal{C}_{d,e}:\mathbb{P}^{2d+1}\times\mathbb{P}^{2e+1}\dasharrow\mathbb{P}^{2de+1},$$
which sends a pair $(f,g)$ to the composition $f\circ g$ is continuous away from
$$I(d,e)=\{(f,g)=(H_f\hat f,H_g\hat g):\hat g\equiv c\in\mathbb{P}^1, H_f(c)=0\}.$$
Moreover, for $(f,h)=(H_f\hat f,H_g\hat g)\in\mathbb{P}^{2d+1}\times\mathbb{P}^{2e+1}\setminus I(d,e)$,
$$\mathcal{C}_{d,e}(f,g)=(H_g)^dH_f(P_g,Q_g)\hat f\circ\hat g,$$
where $\hat g=[P_g:Q_g]$.
\end{lemma}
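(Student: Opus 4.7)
The plan is to prove this by a direct computation in bihomogeneous coordinates, followed by an analysis of when the resulting polynomial expression becomes identically zero. First I would unpack the notation: write $f \in \mathbb{P}^{2d+1}$ as a pair of degree-$d$ homogeneous polynomials $(F_a,F_b)$ with $F_j = H_f \cdot f_j$ and $\hat f = [f_a:f_b]$, and similarly $g \in \mathbb{P}^{2e+1}$ as $(G_a,G_b)$ with $G_j = H_g \cdot p_j$ and $\hat g = [P_g:Q_g]$. The naive coordinate substitution gives $f\circ g = [F_a(G_a,G_b) : F_b(G_a,G_b)]$, and because $F_j$ is homogeneous of degree $d$,
\[
F_j(G_a,G_b) \;=\; F_j(H_g P_g,\, H_g Q_g) \;=\; H_g^{\,d}\, F_j(P_g,Q_g) \;=\; H_g^{\,d}\, H_f(P_g,Q_g)\, f_j(P_g,Q_g).
\]
Factoring out the common scalar $H_g^{\,d}\,H_f(P_g,Q_g)$ yields the asserted formula, and a bidegree check confirms both sides have total degree $de$ in $(X,Y)$.

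Next I would establish continuity off $I(d,e)$. The unreduced pair $\bigl(F_a(G_a,G_b),\,F_b(G_a,G_b)\bigr)$ is a polynomial function of the coefficients of $(f,g)$, so the induced map to $\mathbb{C}^{2de+2}$ is continuous; projectivization is then continuous at every pair where this output is not the zero vector. Thus the task reduces to showing that both $F_a(G_a,G_b)$ and $F_b(G_a,G_b)$ vanish identically as polynomials in $(X,Y)$ precisely when $(f,g) \in I(d,e)$. If $\hat g$ is non-constant, coprimality of $(f_a,f_b)$ and of $(P_g,Q_g)$ forces $f_a(P_g,Q_g)$ and $f_b(P_g,Q_g)$ not to vanish simultaneously, while $H_g^{\,d} H_f(P_g,Q_g)$ is a non-zero polynomial; hence the product is non-zero. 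If $\hat g \equiv c = [c_1:c_2]$ is constant, then $H_f(P_g,Q_g) = H_f(c_1,c_2)$ is a scalar, and the whole expression vanishes identically iff $H_f(c) = 0$, which is exactly the condition defining $I(d,e)$.

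Finally, to confirm that $I(d,e)$ is genuinely the indeterminacy locus rather than just a locus where one chosen lift degenerates, I would exhibit, for each $(f,g) \in I(d,e)$, two one-parameter perturbations $(f_t,g_t)$ of $(f,g)$ with distinct limits of $\mathcal{C}_{d,e}(f_t,g_t)$. Concretely, at a pair with $\hat g \equiv c$ and $H_f(c) = 0$, deforming $g_t$ so that $\hat g_t$ becomes a non-constant rational map passing through $c$ causes $H_f(P_{g_t},Q_{g_t})$ to pick up different leading factors depending on the jet of the deformation, while the remaining factors $H_g^{\,d}$ and $\hat f \circ \hat g_t$ behave in controllable ways; comparing with a second deformation approaching $g$ within the locus $\hat g \equiv c$ (varying only $H_g$) produces a different limit. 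The main obstacle is the careful bookkeeping in the non-vanishing argument of the second paragraph — one must rule out identical cancellation between $H_f(P_g,Q_g)$ and $f_j(P_g,Q_g)$ after the substitution $X \leftarrow P_g$, $Y \leftarrow Q_g$, which is handled by the coprimality hypotheses together with the fact that a composition $f_j \circ \hat g$ of rational maps of positive degree cannot be the zero polynomial.
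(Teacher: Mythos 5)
Your first two paragraphs are correct and complete for the statement as given, and this is the standard argument: the paper itself offers no proof of this lemma (it is quoted from DeMarco \cite[Lemma 2.6]{DeMarco07}), and your computation --- substituting $G_a=H_gP_g$, $G_b=H_gQ_g$ into the degree-$d$ homogeneous $F_j=H_ff_j$ to pull out the factor $H_g^{\,d}H_f(P_g,Q_g)$, then characterizing exactly when the bihomogeneous output $(F_a(G_a,G_b),F_b(G_a,G_b))$ vanishes identically --- is precisely how DeMarco establishes it, and parallels the iterate formula of Lemma \ref{iterate-indeterminacy}. The only assertions you leave implicit (that a nonconstant $\hat g$ is surjective onto $\mathbb{P}^1$, so $H_f(P_g,Q_g)\not\equiv 0$ and $f_a(P_g,Q_g), f_b(P_g,Q_g)$ cannot both vanish identically, while in the constant case coprimality of $f_a,f_b$ gives $(f_a(c),f_b(c))\neq(0,0)$) are routine and correctly invoked. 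Note that your third paragraph is superfluous: the lemma only claims continuity \emph{away from} $I(d,e)$, not genuine indeterminacy on it, so the admittedly sketchy perturbation argument there is not needed and does not affect the validity of your proof of the stated result.
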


\subsection{Convergence}
In this subsection, we consider various notations of convergence of maps in the space of (degenerate) rational maps. Mainly, we define the subalgebraic convergence and show the relations between subalgebraic convergence and locally uniform convergence on the complement of a finite set in $\mathbb{P}^1$.\par
First recall that for $t\in\mathbb{D}$, a collection $\{f_t\}\subset\mathbb{P}^{2d+1}$ is a 1-dimensional holomorphic family of degree $d\ge 1$ rational maps if the map $\mathbb{D}\to\mathbb{P}^{2d+1}$, sending $t$ to $f_t$, is a holomorphic map such that $f_t\in\mathrm{Rat}_d$ if $t\not=0$. We say $\{f_t\}$ is degenerate if $f_0\not\in\mathrm{Rat}_d$. A holomorphic family of degree $1$ rational maps is called a moving frame.\par
\begin{definition}
For a holomorphic family $\{f_t\}\subset\mathbb{P}^{2d+1}$, we say $f_t$ converges to $f=H_f\hat f\in\mathbb{P}^{2d+1}$ subalgebraically if $f_t$ converges to $f$ in $\mathbb{P}^{2d+1}$ as $t\to 0$. If, in addition, $\deg\hat f=d$, we say $f_t$ converges to $f\in\mathbb{P}^{2d+1}$ algebraically.
\end{definition} 
Let's use the notations $f_t\xrightarrow{sa} f$ and $f_t\xrightarrow{a}f$ for subalgebraic convergence and algebraic convergence, respectively. Moreover, for a finite subset $S\subset\mathbb{P}^1$, denote $f_t\xrightarrow{\bullet}\hat f$ on $\mathbb{P}^1\setminus S$ if $f_t$ converges to $f$ locally uniformly on $\mathbb{P}^1\setminus S$, and denote $f_t\rightrightarrows\hat f$ if $f_t$ converges to $f$ uniformly on $\mathbb{P}^1$. \par
\begin{lemma}\cite[Lemma 3.2 ]{Kiwi15}\label{subalg-imply-locally-uniform} 
For a holomorphic $\{f_t\}\subset\mathrm{Rat}_d$, suppose that $f_t\xrightarrow{sa} f=H_f\hat f$. Then $f_t\xrightarrow{\bullet}\hat f$ on $\mathbb{P}^1\setminus\{h:H_f(h)=0\}$.
\end{lemma}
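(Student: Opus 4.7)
The plan is to reduce the statement to coefficient-wise convergence of a pair of homogeneous lifts and then estimate the Fubini--Study distance directly. First I would unwind the definition of $f_t \xrightarrow{sa} f$ to choose homogeneous lifts $F_{a,t}, F_{b,t}$ of $f_t$ and a lift $(F_a, F_b) = (H_f f_a,\, H_f f_b)$ of $f$ for which the coefficients of $F_{a,t}, F_{b,t}$ converge to those of $F_a, F_b$; rescaling lifts by a nonzero scalar is harmless since both sides of the asserted conclusion are projective. Here $\hat f = [f_a : f_b]$ with $\gcd(f_a, f_b) = 1$, so the holes of $f$ are precisely the zeros of $H_f$ on $\mathbb{P}^1$, and $f_a, f_b$ have no common zero on $\mathbb{P}^1$.

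Next I would compute the Fubini--Study distance using homogeneous representatives on the unit sphere $S^3 \subset \mathbb{C}^2$,
\begin{equation*}
d\bigl(f_t([X{:}Y]),\,\hat f([X{:}Y])\bigr) = \frac{\bigl|F_{a,t}(X,Y)\,f_b(X,Y) - F_{b,t}(X,Y)\,f_a(X,Y)\bigr|}{\sqrt{|F_{a,t}|^2+|F_{b,t}|^2}\;\sqrt{|f_a|^2+|f_b|^2}}.
\end{equation*}
The cancellation $F_a f_b - F_b f_a = H_f f_a f_b - H_f f_b f_a = 0$ lets me rewrite the numerator as $(F_{a,t}-F_a)f_b - (F_{b,t}-F_b)f_a$, which tends to $0$ uniformly on the compact set $S^3$ by coefficient convergence and the fact that $f_a, f_b, F_{a,t}-F_a, F_{b,t}-F_b$ all have fixed degree. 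For the denominator, I would fix a compact set $K \subset \mathbb{P}^1 \setminus \{h : H_f(h) = 0\}$, lift to a compact $\widetilde K \subset S^3$, and observe that $|f_a|^2 + |f_b|^2 \geq c_1 > 0$ on $S^3$ by coprimality, while $|H_f| \geq c_2 > 0$ on $\widetilde K$ by the nonvanishing of $H_f$ on $K$ together with compactness. Hence $|F_a|^2 + |F_b|^2$ is bounded below on $\widetilde K$, and by uniform coefficient convergence the same lower bound (up to a constant factor) holds for $|F_{a,t}|^2 + |F_{b,t}|^2$ when $t$ is sufficiently close to $0$. Combining the vanishing numerator with the bounded-below denominator yields the desired uniform convergence $f_t \to \hat f$ on $K$, i.e. locally uniform convergence on $\mathbb{P}^1 \setminus \{H_f = 0\}$.

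No essential obstacle arises in this argument; the only care required is the initial reduction to coefficient-wise convergence of representatives and the use of the Fubini--Study (rather than euclidean) metric on $\mathbb{P}^1$, which is what allows one to handle neighborhoods of points where $\hat f$ has a pole but no hole of $f$ is present.
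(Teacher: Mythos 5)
Your proof is correct. Note that the paper itself gives no argument for this lemma---it is imported verbatim as Lemma 3.2 of Kiwi's paper on rescaling limits---so there is no internal proof to compare against; your chordal-metric estimate is essentially the standard argument underlying that citation: choose scalars so that the projective convergence in $\mathbb{P}^{2d+1}$ becomes coefficientwise convergence of lifts, kill the numerator using the identity $F_a f_b - F_b f_a = H_f f_a f_b - H_f f_b f_a = 0$, and bound the denominator below on compact subsets of $\mathbb{P}^1\setminus\{H_f=0\}$ using coprimality of $(f_a,f_b)$ together with the nonvanishing of $H_f$ there. The two points that genuinely require care---renormalizing representatives so coefficients converge, and measuring distances in the spherical metric so that poles of $\hat f$ away from the holes cause no difficulty---are exactly the ones you address, and the homogeneity check (the displayed ratio is degree $0$ in $(X,Y)$, so evaluating on $S^3$ is legitimate) goes through since $\deg F_{a,t}=d$ and $\deg f_a=d-\deg H_f$.
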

Conversely, we have
\begin{lemma}\label{locally-uniform-imply-subalg}
Let $\{f_t\}\subset\mathrm{Rat}_d$ be a holomorphic family. Suppose that $f_t\xrightarrow{\bullet}\hat f\in\mathrm{Rat}_e$ on $\mathbb{P}^1\setminus S$, where $S\subset\mathbb{P}^1$ is finite. Then there exists a degree $d-e$ homogeneous polynomial $H$ such that $f_t\xrightarrow{sa} H\hat f$ with $\{h: H(h)=0\}\subset S$.
\end{lemma}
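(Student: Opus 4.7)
The plan is to identify the subalgebraic limit explicitly and then to show that its common factor is supported in $S$. Since $\{f_t\}$ is holomorphic into the compact space $\mathbb{P}^{2d+1}$, the limit $f_0 := \lim_{t\to 0}f_t$ already exists there, so $f_t\xrightarrow{sa} f_0$. I would write $f_0 = H\hat g$ in the canonical decomposition, with $H = H_{f_0}$ the common factor of the polynomial representatives and $\hat g$ the associated reduced rational map.

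By Lemma~\ref{subalg-imply-locally-uniform}, $f_t \xrightarrow{\bullet} \hat g$ on $\mathbb{P}^1 \setminus \{h : H(h) = 0\}$. Combined with the hypothesis $f_t \xrightarrow{\bullet} \hat f$ on $\mathbb{P}^1\setminus S$, the two limits must agree on the nonempty open set $\mathbb{P}^1\setminus(S\cup\{h : H(h) = 0\})$, which forces $\hat g = \hat f$ as rational maps. In particular $\deg\hat g = e$, hence $\deg H = d-e$, and $f_0 = H\hat f$ in $\mathbb{P}^{2d+1}$.

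The crux is verifying $\{h : H(h) = 0\}\subset S$. Fix $h_0$ with $H(h_0) = 0$; the case $h_0 = \infty$ is handled symmetrically, so take $h_0 \in \mathbb{C}$. Write $\hat f = [f_a:f_b]$ with $\gcd(f_a,f_b) = 1$; coprimality ensures that one of $f_a(h_0), f_b(h_0)$ is nonzero, say $f_b(h_0) \neq 0$, so $\hat f(h_0) \in \mathbb{C}$. After choosing polynomial representatives $(F_a^t,F_b^t)$ of $f_t$ depending holomorphically on $t$ and normalized so that $(F_a^t,F_b^t) \to (H f_a, H f_b)$, the dehomogenization $F_b^t(\cdot,1)$ is a family of polynomials converging to $H(\cdot,1)f_b(\cdot,1)$, which vanishes at $h_0$ but is not identically zero. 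Hurwitz's theorem then yields $z_t \to h_0$ with $F_b^t(z_t,1) = 0$ for all small $t$. Since $f_t\in\mathrm{Rat}_d$ has $\gcd(F_a^t,F_b^t) = 1$, the point $z_t$ is not a common zero, so $F_a^t(z_t,1)\neq 0$ and thus $f_t(z_t) = \infty$. Because $\hat f$ is bounded in the spherical metric on a neighborhood of $h_0$, this cluster of poles contradicts locally uniform convergence of $f_t$ to $\hat f$ on any neighborhood of $h_0$, forcing $h_0 \in S$.

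The main obstacle is this last step: a priori, the common factor $H$ could carry ``hidden'' zeros outside $S$ because the projective limit in $\mathbb{P}^{2d+1}$ only records coefficients up to a global scalar and need not directly reflect the pointwise behavior of $f_t$. The resolution exploits that $f_t \in \mathrm{Rat}_d$ forbids $F_a^t$ and $F_b^t$ from ever sharing a root, so the vanishing of $F_b^t$ at $h_0$ in the limit must be realized dynamically as a moving pole $z_t\to h_0$ of $f_t$, which is incompatible with locally uniform convergence to the regular value $\hat f(h_0)$.
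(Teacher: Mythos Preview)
Your argument is correct and follows the same outline as the paper: take the subalgebraic limit $f_0 = H\hat g$, invoke Lemma~\ref{subalg-imply-locally-uniform} to get $\hat g = \hat f$, and then verify that the zero set of $H$ lies in $S$. The paper simply asserts the inclusion $\{h:H(h)=0\}\subset S$ without further comment, whereas you supply a genuine justification via Hurwitz's theorem; this extra detail is sound and fills what is otherwise an unexplained step in the paper's proof.
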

\begin{proof}
Suppose that $f_t\xrightarrow{sa}g=H_g\hat g$, as $t\to 0$. Then by Lemma \ref{subalg-imply-locally-uniform}, we have $f_t\xrightarrow{\bullet}\hat g$ on $\mathbb{P}^1\setminus\{h:H_g(h)=0\}$. Thus $\hat g=\hat f$ on $\mathbb{P}^1\setminus(\{h:H_g(h)=0\}\cup S)$. So $\hat g=\hat f$ on $\mathbb{P}^1$. We get $\{h:H_g(h)=0\}\subset S$. Thus, $g=H_g\hat g=H_g\hat f$.
\end{proof}
\begin{corollary}Let $\{f_t\}\subset\mathrm{Rat}_d$ be a holomorphic family. Then $f_t\xrightarrow{a}f$ if and only if $f_t\rightrightarrows f$.
\end{corollary}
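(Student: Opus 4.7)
The plan is to read off both directions from Lemma \ref{subalg-imply-locally-uniform} and Lemma \ref{locally-uniform-imply-subalg} after making the observation that algebraic convergence forces $f$ to have no holes, and uniform convergence corresponds to an empty exceptional set in the locally uniform framework.

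For the forward implication, I would start from $f_t \xrightarrow{a} f$, i.e.\ $f_t \xrightarrow{sa} f = H_f \hat f$ with $\deg \hat f = d$. Since $f \in \mathbb{P}^{2d+1}$ has total degree $d$, the factor $H_f$ must be a nonzero constant, so $\{h : H_f(h)=0\} = \emptyset$ and $f$ has no holes. Lemma \ref{subalg-imply-locally-uniform} then yields $f_t \xrightarrow{\bullet} \hat f$ on all of $\mathbb{P}^1$, and since $\mathbb{P}^1$ is compact, locally uniform convergence on $\mathbb{P}^1$ coincides with uniform convergence. Hence $f_t \rightrightarrows f$.

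For the reverse implication, I would begin with $f_t \rightrightarrows f$. As a uniform limit of degree $d$ rational self-maps of $\mathbb{P}^1$, the target $f$ is itself holomorphic of topological degree $d$, so $f \in \mathrm{Rat}_d$; in particular $f_t \xrightarrow{\bullet} f$ on $\mathbb{P}^1 \setminus \emptyset$. Lemma \ref{locally-uniform-imply-subalg} applied with $e = d$ and $S = \emptyset$ then produces a homogeneous polynomial $H$ of degree $d - d = 0$, i.e.\ a nonzero constant, such that $f_t \xrightarrow{sa} H \cdot f$. The element $H \cdot f \in \mathbb{P}^{2d+1}$ has trivial gcd and its associated reduced map is $f$ of degree $d$, so by definition $f_t \xrightarrow{a} f$.

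No step looks like a genuine obstacle; the proof is really a two-line citation of the preceding pair of lemmas once one notes the two equivalences ``$\deg \hat f = d$'' $\Leftrightarrow$ ``$H_f$ constant'' and ``uniform on all of $\mathbb{P}^1$'' $\Leftrightarrow$ ``$S = \emptyset$''. The only spot requiring a line of care is the verification, in the converse direction, that the uniform limit $f$ actually lies in $\mathrm{Rat}_d$ rather than degenerating to a map of strictly smaller degree, which follows from the stability of topological degree under uniform convergence.
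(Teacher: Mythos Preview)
Your proof is correct and follows exactly the approach the paper intends: the corollary is stated without proof immediately after Lemmas \ref{subalg-imply-locally-uniform} and \ref{locally-uniform-imply-subalg}, and your argument is precisely the two-line derivation from those lemmas that the placement suggests. The observation that $\deg\hat f=d$ forces $H_f$ to be a nonzero constant (hence no holes), together with compactness of $\mathbb{P}^1$, handles both directions cleanly; your remark on the stability of topological degree under uniform convergence is the only additional ingredient, and it is standard.
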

We use the following example to illustrate the subalgebraical convergence.\par
\begin{example}
Quadratic rational maps.\par
Here we consider the fixed-point normal form for quadratic rational maps \cite[Appendix C]{Milnor93}.
$$f(z)=\frac{z(z+\mu_1)}{\mu_2z+1}$$
with $\mu_1\mu_2\not=1$. Then $z=0$ and $z=\infty$ are fixed points of $f(z)$ with multipliers $\mu_1$ and $\mu_2$, respectively. And the third fixed point is $z=(1-\mu_1)/(1-\mu_2)$ with multiplier $\mu_3=(2-\mu_1-\mu_2)/(1-\mu_1\mu_2)$. \par
In projective coordinates, we can write 
$$f([X:Y])=[X^2+\mu_1XY:\mu_2XY+Y^2].$$
Then $f=[1:\mu_1:0:0:\mu_2:1]\in\mathbb{P}^5$. Consider the holomorphic family $f_t$ given by
$$f_t=[1:\mu_1(t):0:0:\mu_2(t):1]\in\mathbb{P}^5$$
with $\mu_1(t)\mu_2(t)\to 1$. Then $\{f_t\}$ is degenerate. Suppose $f_t\xrightarrow{sa}g=H_g\hat g$ as $t\to 0$. We have 
$$g([X:Y])=H_g(X,Y)\hat g([X:Y])=
\begin{cases}
XY[1:0], &\text{if}\ \mu_1(t)\to\infty,\\
XY[0:1], &\text{if}\ \mu_1(t)\to 0,\\
(X+cY)[X:Y], &\text{if}\ \mu_1(t)\to c\in\mathbb{C}\setminus\{0\},
\end{cases}$$
and
$$f_t([X:Y])\xrightarrow{\bullet}
\begin{cases}
[1:0]\ \text{on}\ \mathbb{P}^1\setminus\{[1:0],[0:1]\} &\text{if}\ \mu_1(t)\to\infty,\\
[0:1]\ \text{on}\ \mathbb{P}^1\setminus\{[1:0],[0:1]\} &\text{if}\ \mu_1(t)\to 0,\\
[X:Y]\ \text{on}\ \mathbb{P}^1\setminus\{[-c:1]\} &\text{if}\ \mu_1(t)\to c\in\mathbb{C}\setminus\{0\}.
\end{cases}$$ 
\end{example}

\subsection{(Degenerate) Newton Maps}
In this subsection, we state some basic background of the (degenerate) Newton maps. For $d\ge 2$, let $P(z)$ be a degree $d$ monic polynomial. Let $r=\{r_1,\cdots,r_d\}$ be the roots of $P(z)$ counted with multiplicity. Then $P(z)$ is uniquely determined by $r$. We may write $P_r(z)$ instead of $P(z)$ to emphasis its roots. In projective coordinates, we can write 
$$P([X:Y])=P_r([X:Y])=[F_a(X,Y):F_b(X,Y)]=\left[\prod_{i=1}^d(X-r_iY):Y^d\right].$$
Then the derivative $P'(z)$ of $P(z)$ can be written as
$$P'([X:Y])=P'_r([X:Y])=[G_a(X,Y):G_b(X,Y)]=\left[\sum_{j=1}^d\prod_{i\not=j}(X-r_iY):Y^{d-1}\right].$$
Define
\begin{align}
\label{Newton projective}
N_r([X:Y])=N_P([X:Y]):=[XG_a(X,Y)-F_a(X,Y):YG_a(X,Y)]\in\mathbb{P}^{2d+1}.
\end{align}
We say $N_r$ is a Newton map of degree $d$ if $N_r\in\mathrm{Rat}_d\subset\mathbb{P}^{2d+1}$. In other word, $N_r$ is a Newton map of degree $d$ if and only if $r_1,\cdots,r_d$ are $d$ distinct points in $\mathbb{C}$. In this case, in affine coordinates, we can write $N_r:\widehat{\mathbb{C}}\to\widehat{\mathbb{C}}$ by 
\begin{align}
\label{Newton affine}
N_r(z)=N_{P}(z)=z-\frac{P(z)}{P'(z)},
\end{align}
and say $N_r(z)$ is the Newton map of the polynomial $P_r(z)$.\par 
For a point $N\in\mathbb{P}^{2d+1}\setminus\mathrm{Rat}_d$, we say $N$ is a degenerate Newton map of degree $d$ if there exists a sequence $\{N_n\}$ of Newton maps of degree $d$ such that $N_n$ converges to $N$ in $\mathbb{P}^{2d+1}$, as $n\to\infty$. Let $N$ be a degenerate Newton map of degree $d$. Then there exist an integer $d\ge m\ge 0$ and a degree $d-m$ polynomial $\widetilde P$ such that 
\begin{align}
\label{Newton degenerate}
N([X:Y])=Y^mN_{\widetilde P}([X:Y])\in\mathbb{P}^{2d+1}.
\end{align}
If $m\not=0$, then $\infty$ is a hole of $N$ with depth $d_\infty(N)=m$. All other holes of $N$ are the  multiple roots $r_i$ of $\widetilde P$ and the corresponding depths $d_{r_i}(N)$ are $m_{\widetilde P}(r_i)-1$, where $m_{\widetilde P}(r_i)$ is the multiplicity of $r_i$ as a zero of $\widetilde P$. Note if $m=0$, then $\widetilde P$ is a degree $d$ polynomial with multiple roots, and the formula (\ref{Newton degenerate}) coincides to formula (\ref{Newton projective}). \par
In general, let $r$ be a set of $d$ points (not necessary distinct) in $\widehat{\mathbb{C}}$. Suppose there are $m\ge 0$ many $\infty$s in $r$. Let $\widetilde P$ be the polynomial of degree $d-m$ whose roots coincide to the points in $r$ that are not $\infty$. Then we define $N_r$ to be the (degenerate) Newton map $N_r$ by formula (\ref{Newton degenerate}).
\begin{example} 
Let $r=\{0,0,1,\infty\}$. Then $N_r$ has holes at $0$ and $\infty$, and each hole has depth $1$. More precisely, 
$$N_r([X:Y])=H_{N_r}(X,Y)\widehat{N}_r([X:Y])=XY[2X^2-XY:3XY-2Y^2].$$
Note here we have $\widehat{N}_r=\widehat{N}_{\{0,0,1,\infty\}}=\widehat{N}_{\{0,0,1\}}$.
\end{example}
For $f\in\mathrm{Rat}_d$, denote by $\mathrm{Fix}(f)$, $\mathrm{Zero}(f)$ and $\mathrm{Crit}(f)$ for the set of fixed points, the set of zeros and the set of critical points of $f$, respectively. Now we state some basic facts about the Newton map $N_r(z)$.
\begin{proposition}
For $d\ge 2$, let $r=\{r_1,\cdots,r_d\}$ be a set of $d$ distinct points in $\mathbb{C}$, and let $N_r$ be the Newton map of the polynomial $P_r$. Then 
\begin{enumerate}
\item The set of fixed points of $N_r$ is
$$\mathrm{Fix}(N_r)=\mathrm{Zero}(P_r)\cup\{\infty\}=\{r_1,\cdots, r_d,\infty\}.$$
All $r_i$'s are superattracting fixed points and $z=\infty$ is the unique repelling fixed point. Moreover, the multiplier of $N_{r}$ at $z=\infty$ is $d/(d-1)$.\par
\item The derivative of $N_r$ is 
$$N'_r(z)=\frac{P_r(z)P''_r(z)}{(P'_r(z))^2}.$$
In particular, $\mathrm{Crit}(N_r)=\mathrm{Zero}(P_r)\cup\mathrm{Zero}(P''_r).$
\end{enumerate} 
\end{proposition}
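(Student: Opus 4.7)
The plan is to treat the two parts separately, starting with part (2) whose formula feeds into part (1).

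For part (2), I would just differentiate $N_r(z)=z-P_r(z)/P'_r(z)$ directly with the quotient rule:
\begin{equation*}
N'_r(z)=1-\frac{(P'_r(z))^2-P_r(z)P''_r(z)}{(P'_r(z))^2}=\frac{P_r(z)P''_r(z)}{(P'_r(z))^2}.
\end{equation*}
For the critical set, I would identify $\mathrm{Crit}(N_r)\subset\widehat{\mathbb{C}}$ as the points where $N_r$ fails to be locally injective. In $\mathbb{C}$, a point is critical either where $N'_r$ vanishes (so at zeros of $P_r$ or of $P''_r$, using that distinctness of the $r_i$ forces $P_r$ and $P'_r$ to have no common root) or where $N_r$ has a pole of order at least two. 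A pole occurs at a zero of $P'_r$; if that zero has order $k$, then $P''_r$ vanishes there to order $k-1$, so any multiple pole automatically sits in $\mathrm{Zero}(P''_r)$. A simple pole (where $P''_r\ne 0$) is locally $1$-to-$1$ and not critical, which is consistent with $N'_r$ having merely a simple pole there. Finally I would check $\infty$ separately via the expansion below, showing it is not critical.

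For part (1), the equation $N_r(z)=z$ for $z\in\mathbb{C}$ reduces to $P_r(z)=0$, giving $\{r_1,\dots,r_d\}$. To handle $\infty$, I would use the projective formula \eqref{Newton projective} or, equivalently, expand at infinity: since $P_r(z)=z^d+O(z^{d-1})$ and $P'_r(z)=dz^{d-1}+O(z^{d-2})$,
\begin{equation*}
N_r(z)=z-\frac{P_r(z)}{P'_r(z)}=\frac{d-1}{d}\,z+O(1)\quad\text{as }z\to\infty,
\end{equation*}
so $N_r(\infty)=\infty$. Conjugating by $w=1/z$, the map becomes $w\mapsto \frac{d}{d-1}w+O(w^2)$ near $w=0$, giving multiplier $d/(d-1)>1$; hence $\infty$ is repelling. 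That each $r_i$ is superattracting is immediate from part (2) since $P_r(r_i)=0$ and $P'_r(r_i)\ne 0$ imply $N'_r(r_i)=0$, and uniqueness of the repelling fixed point follows from the list being $\{r_1,\dots,r_d,\infty\}$ with all $r_i$ superattracting.

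There is no substantive obstacle here; the only mild subtlety is part (2) at points where $P'_r$ and $P''_r$ vanish simultaneously, which one handles by the multiple-pole computation above. Otherwise everything is a direct computation from the affine formula \eqref{Newton affine} together with the leading-order expansion at $\infty$.
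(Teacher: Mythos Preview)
Your argument is correct. The paper actually states this proposition without proof, treating it as elementary background; your direct computation via the quotient rule for part~(2) and the leading-order expansion at $\infty$ for part~(1) is the standard verification, and your handling of the multiple-pole case (where $P'_r$ and $P''_r$ vanish simultaneously) correctly identifies such points as lying in $\mathrm{Zero}(P''_r)$.
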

\begin{remark}\label{nondegenrate-Newton}
If $N_r$ is a degenerate Newton map, then we still have 
$$\mathrm{Fix}(\widehat{N}_r)=\mathrm{Zero}(P_r)\cup\{\infty\}=\{r_1,\cdots, r_d,\infty\}.$$
But $r_i$ may not be a superattracting fixed point. In fact, it is an attracting fixed point with multiplier $(m_{P_r}(r_i)-1)/d$, where $m_{P_r}(r_i)$ is the multiplicity of $r_i$ as a zero of $P_r$. Moreover, the depth $d_{r_i}(N_r)=m_{P_r}(r_i)-1$.
\end{remark}
For a Newton map $N_r$ of degree $d$ with $r=\{r_1,\cdots,r_d\}$, at each $r_i$, there is a neighborhood $U_i$ such that for each $z\in U_i$, $N^k_r(z)\to r_i$ as $k\to\infty$. The B$\ddot{\text{o}}$ttcher's theorem \cite[Theorem 9.1]{Milnor06B} implies that this convergence is at least quadratic. We say a critical point $c\in\mathrm{Crit}(N_r)$ is free if $c\in\mathrm{Zero}(P''_r)\setminus\mathrm{Zero}(P_r)$. If $N_r$ has an attracting cycle of period at least $2$, then at least one free critical point is attracted to this cycle.\par
In the case $d=2$, we can conjugate $N\in\mathrm{NM}_2$ by some $M\in \mathrm{PGL}_2(\mathbb{C})$ such that the two superattracting fixed points of $M^{-1}\circ N\circ M$ are $0$ and $\infty$. This fact is known by Cayley, see \cite{Alexander94} for more history. Thus, we have
\begin{proposition} 
Any quadratic Newton map $N_r(z)$ is conjugate to $z\to z^2$.
\end{proposition}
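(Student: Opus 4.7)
The plan is to normalize the map by a M\"obius conjugation, then use the resulting rigid structure of a quadratic rational map with two superattracting fixed points.

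Let $r = \{r_1, r_2\}$ with $r_1 \ne r_2$ and let $N_r$ be its Newton map. First I would choose the M\"obius transformation $M \in \mathrm{PGL}_2(\mathbb{C})$ with $M(0) = r_1$ and $M(\infty) = r_2$, for instance $M(z) = (r_1 z + r_2)/(z + 1)$, and set $\widetilde{N} := M^{-1} \circ N_r \circ M$. By the preceding proposition the points $r_1, r_2$ are superattracting fixed points of $N_r$, so $0$ and $\infty$ are superattracting fixed points of $\widetilde{N}$: each is fixed, and each has multiplier $0$ (multipliers are preserved under M\"obius conjugation), so each is also critical with local degree equal to $\deg \widetilde{N} = 2$.

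Next I would exploit this rigidity. A degree $2$ rational map with a superattracting fixed point of local degree $2$ at $0$ must be of the form $\widetilde{N}(z) = az^2 + bz + c$ with $\widetilde{N}(0) = 0$ and $\widetilde{N}'(0) = 0$, forcing $b = c = 0$; having in addition $\infty$ as a superattracting fixed point of local degree $2$ then rules out polynomial terms of degree less than $2$ and forces the leading coefficient to be the sole surviving monomial. Hence $\widetilde{N}(z) = az^2$ for some $a \in \mathbb{C}^\ast$.

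Finally I would pin down $a$ via the third fixed point. Since the unique repelling fixed point of $N_r$ is $\infty$ with multiplier $d/(d-1) = 2$, the point $M^{-1}(\infty)$ is a repelling fixed point of $\widetilde{N}$ with multiplier $2$. For $\widetilde{N}(z) = az^2$, the third fixed point is $z_0 = 1/a$ with multiplier $\widetilde{N}'(1/a) = 2$, which is automatically the required multiplier; the condition $\widetilde{N}(z_0) = z_0$ together with our freedom in choosing $M$ up to the stabilizer of $\{0, \infty\}$ in $\mathrm{PGL}_2(\mathbb{C})$ (the maps $z \mapsto \lambda z$) lets me rescale so that $a = 1$. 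Concretely, conjugating $\widetilde{N}(z) = az^2$ by $z \mapsto z/a$ gives $z \mapsto z^2$, so composing the original $M$ with this rescaling produces a single M\"obius conjugacy between $N_r$ and $z \mapsto z^2$. There is no real obstacle here; the only point requiring a moment of care is verifying that $0$ and $\infty$ are genuinely superattracting (critical, not merely attracting with multiplier $0$ by coincidence), which is immediate from the formula $N'_r(z) = P_r(z)P''_r(z)/(P'_r(z))^2$ in the previous proposition.
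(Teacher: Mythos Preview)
Your approach is the same as the paper's: send the two superattracting fixed points to $0$ and $\infty$ by a M\"obius conjugacy and conclude that the result is $z\mapsto z^2$. The paper simply states this in one sentence and attributes it to Cayley; you supply the details.

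One correction to your write-up: the sentence ``A degree $2$ rational map with a superattracting fixed point of local degree $2$ at $0$ must be of the form $\widetilde{N}(z) = az^2 + bz + c$'' is false as stated---for instance $z^2/(z+1)$ has a superattracting fixed point of local degree $2$ at $0$ but is not a polynomial. The polynomial form comes from the condition at $\infty$, not at $0$: a degree $2$ rational map with $\infty$ totally ramified and fixed is a quadratic polynomial, and then the condition at $0$ kills the lower-order terms. Swap the roles of the two conditions in that paragraph and the argument is clean. The final paragraph's worry about ``attracting with multiplier $0$ by coincidence'' is unnecessary: multiplier $0$ at a fixed point is precisely the definition of a critical fixed point.
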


\subsection{The Space $\mathrm{NM}_d$}
For $d\ge 2$, denote by $\mathrm{NM}_d$ the set of degree $d$ Newton maps. Then 
$$\mathrm{NM}_d\subset\mathrm{Rat}_d\subset\mathbb{P}^{2d+1}.$$ 
Let $\overline{\mathrm{NM}}_d\subset\mathbb{P}^{2d+1}$ be the closure of $\mathrm{NM}_d$ in $\mathbb{P}^{2d+1}$.\par
Let $r=\{r_1,\cdots,r_d\}\subset\mathbb{C}$ be a set of $d$ distinct points. Then 
$$P_r(z)=\prod\limits_{i=1}^d(z-r_i)=\sum\limits_{k=0}^d(-1)^k\sigma_kz^{d-k}$$
and 
$$N_r(z)=z-\frac{1}{\sum\limits_{i=1}^d\frac{1}{z-r_i}}=\frac{\sum\limits_{k=0}^d(-1)^k(d-k-1)\sigma_kz^{d-k}}{\sum\limits_{k=0}^{d-1}(-1)^k(d-k)\sigma_kz^{d-k-1}},$$
where the $\sigma_j$'s are the elementary symmetric functions of $r_i$'s.
In projective coordinates, we can express the Newton map $N_r$ as following
\begin{align}
\label{Newton formula}
\begin{split}
 &N_r([X:Y])\\
 &=\left[\sum\limits_{k=0}^d(-1)^k(d-k-1)\sigma_kX^{d-k}Y^k:\sum\limits_{k=0}^{d-1}(-1)^k(d-k)\sigma_kX^{d-k-1}Y^{k+1}\right]\\
&=[(d-1)\sigma_0:-(d-2)\sigma_1:\cdots:(-1)^k(d-k-1)\sigma_k:\cdots:0:(-1)^{d+1}\sigma_d:\\
&\ \ \ \ \ \ 0:d\sigma_0:\cdots:(-1)^k(d-k)\sigma_k:\cdots:(-1)^{d-1}\sigma_{d-1}]\in\mathbb{P}^{2d+1}.
\end{split}
\end{align}\par
\begin{remark}
\begin{enumerate}
\item The formula (\ref{Newton formula}) of $N_r([X:Y])$ also works for the degenerate Newton map of degree $d$ for which $\infty$ is not a hole.
\item In general, let $r=\{r_1,\cdots,r_d\}$ be the set of $d$ points (not necessary distinct) in $\widehat{\mathbb{C}}$. Suppose there are $d_\infty$ many $r_i$s that are $\infty$. Then we can write $N_{r}([X:Y])=Y^{d_\infty}N_{\tilde r}([X:Y])$, where $\tilde r\subset r$ is the subset consisting of $r_i\in\mathbb{C}$. The formula \ref{Newton formula} gives the projective coordinate of  $N_{\tilde r}([X:Y])$. Hence, we can get the projective coordinates of $N_{r}$.
\end{enumerate}
\end{remark}
Since a holomorphic family of rational maps is a family in which the coefficients may be expressed as holomorphic functions of the parameter $t$, the locations of the fixed points are thus algebraic functions of this parameter. It is well-known that algebraic functions of one complex variable t are given by Puiseux series in $t$, see \cite{Ruiz}. Thus by a change in the parameterization of the form $t=s^k$ we may assume that the fixed points themselves are holomorphic functions of one parameter. In summary, when considering holomorphic families of Newton maps, it suffices for our purposes to consider holomorphic families where the roots $r_i(t)$ are holomorphic functions of $t$.\par 
Now we state an easy lemma to show the subalgebraic limit of a holomorphic family $\{N_{r(t)}\}$ of Newton maps. 
\begin{lemma}\label{Newton-limit}
Let $\{N_{r(t)}(z)\}$ be a holomorphic family of degree $d\ge 2$ Newton maps with $r(t)=\{r_1(t), \cdots, r_d(t)\}$. Suppose $r_{i}(t)\to r_{i}\in\widehat{\mathbb{C}}$ as $t\to 0$, then in projective coordinates, 
$$N_{r(t)}([X:Y])\xrightarrow{sa}N_{r}([X:Y]),$$
where $r=\{r_1,\cdots,r_d\}$.
\end{lemma}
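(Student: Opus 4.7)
The plan is to compute the subalgebraic limit directly in projective coordinates using the explicit formula (\ref{Newton formula}) for $N_{r(t)}$ in terms of the elementary symmetric functions $\sigma_k$ of the roots, together with its extension (\ref{Newton degenerate}) for the degenerate case. I split into two cases according to whether any $r_i$ is at infinity.

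Suppose first that every $r_i\in\mathbb{C}$. Each elementary symmetric function $\sigma_k(r(t))$ is a polynomial in $r_1(t),\dots,r_d(t)$, so $\sigma_k(r(t))\to\sigma_k(r)$ as $t\to 0$. Substituting into (\ref{Newton formula}) gives coordinatewise convergence of $N_{r(t)}([X:Y])$ in $\mathbb{P}^{2d+1}$ to the point whose projective coordinates are (\ref{Newton formula}) evaluated at the limiting roots $r$; when some $r_i$'s collide this is precisely the degenerate Newton map appearing in (\ref{Newton degenerate}) with $m=0$, so the limit is $N_r([X:Y])$ in every subcase.

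Suppose next that $m\ge 1$ of the roots satisfy $r_i(t)\to\infty$. Let $\tilde r(t)\subset r(t)$ be the complementary subset of roots with finite limits $\tilde r\subset\mathbb{C}$, and set $C(t):=\prod_{r_i(t)\to\infty}r_i(t)$. The factorization
$$P_{r(t)}(z)=(-1)^mC(t)\cdot\prod_{r_i(t)\to\infty}\bigl(1-z/r_i(t)\bigr)\cdot P_{\tilde r(t)}(z),$$
together with the convergences $1-z/r_i(t)\to 1$ and $P_{\tilde r(t)}\to P_{\tilde r}$ locally uniformly, shows that $P_{r(t)}/((-1)^mC(t))\to P_{\tilde r}$ and, after differentiating, $P'_{r(t)}/((-1)^mC(t))\to P'_{\tilde r}$ locally uniformly on $\mathbb{C}$. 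Homogenizing to degrees $d$ and $d-1$ respectively extracts a common factor $Y^m$, because $P_{\tilde r}$ has degree only $d-m$. Dividing both projective entries in (\ref{Newton projective}) by the scalar $(-1)^mC(t)$ (which does not change the point of $\mathbb{P}^{2d+1}$) and passing to the limit yields $Y^m N_{\tilde r}([X:Y])$; by (\ref{Newton degenerate}) this equals $N_r([X:Y])$.

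The only technical point is the bookkeeping that extracts the common $Y^m$ factor uniformly from both projective coordinates after normalizing by $C(t)$; once this is carried out, the coordinatewise convergence in $\mathbb{P}^{2d+1}$ is immediate and identifies the claimed subalgebraic limit.
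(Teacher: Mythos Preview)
The paper does not prove this lemma; it simply labels it ``easy'' and moves directly to examples. Your argument is correct and supplies exactly the kind of computation the paper omits: the case split on whether some $r_i=\infty$, the continuity of the $\sigma_k$ in the finite case, and the rescaling by $(-1)^mC(t)$ in the infinite case are all sound, and the extraction of the common $Y^m$ factor from both homogenized entries is precisely what produces the form (\ref{Newton degenerate}).

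Two small points worth making explicit. First, to conclude convergence in $\mathbb{P}^{2d+1}$ you need the limiting coefficient vector to be nonzero; this holds because $P_{\tilde r}$ is a nonzero polynomial (of degree $d-m\ge 0$), so at least one coefficient of $Y^m N_{\tilde r}$ survives. Second, the extreme case $m=d$ (all roots escaping to $\infty$) deserves a word: then $P_{\tilde r}\equiv 1$, $P'_{\tilde r}\equiv 0$, and your computation gives $[-Y^d:0]=Y^d[1:0]$, which is exactly the indeterminacy point identified later in Lemma~\ref{Newton-indeterminacy}. Your framework handles it, but the interpretation of ``$N_{\tilde r}$'' as a degree-zero object is slightly informal there.
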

We use the following example to illustrate Lemma \ref{Newton-limit}. And we draw the corresponding dynamics plane by the software ``fractalstream".
\begin{example}
\begin{enumerate}
\item Let $r(t)=\{0,1,i/t\}$. Consider the Newton map $N_{r(t)}$. As $t\to 0$, we have 
$$N_{r(t)}([X:Y])\xrightarrow{sa}N_{\{0,1,\infty\}}([X:Y]).$$
\item Let $\tilde{r}(t)=\{0,1,ti\}$. Consider the Newton map $N_{\tilde{r}(t)}$. As $t\to 0$, we have 
$$N_{\tilde{r}(t)}([X:Y])\xrightarrow{sa}N_{\{0,1,0\}}([X:Y]).$$
\end{enumerate}
\end{example}
\begin{figure}[h!]
  \centering
  \begin{minipage}[b]{0.3\textwidth}
   \includegraphics[width=\textwidth]{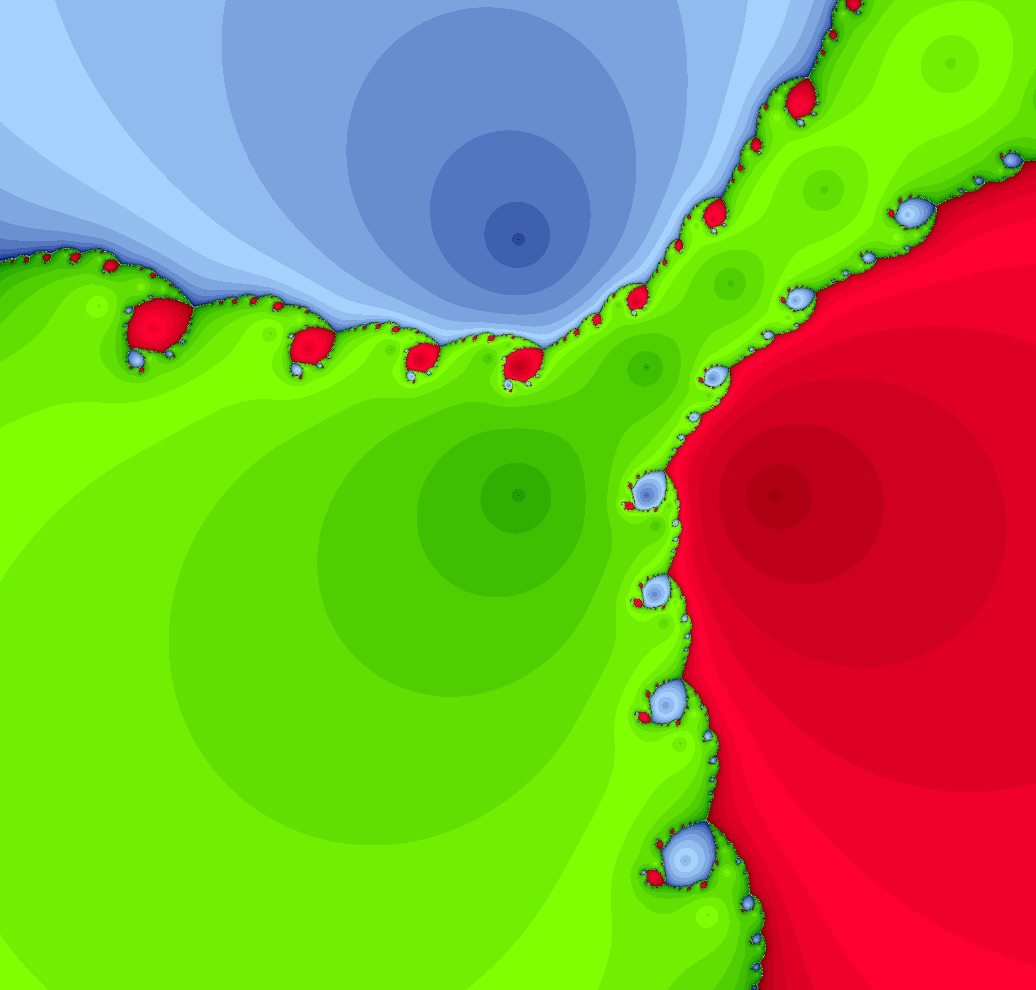}
  \end{minipage}
  \begin{minipage}[b]{0.3\textwidth}
   \includegraphics[width=\textwidth]{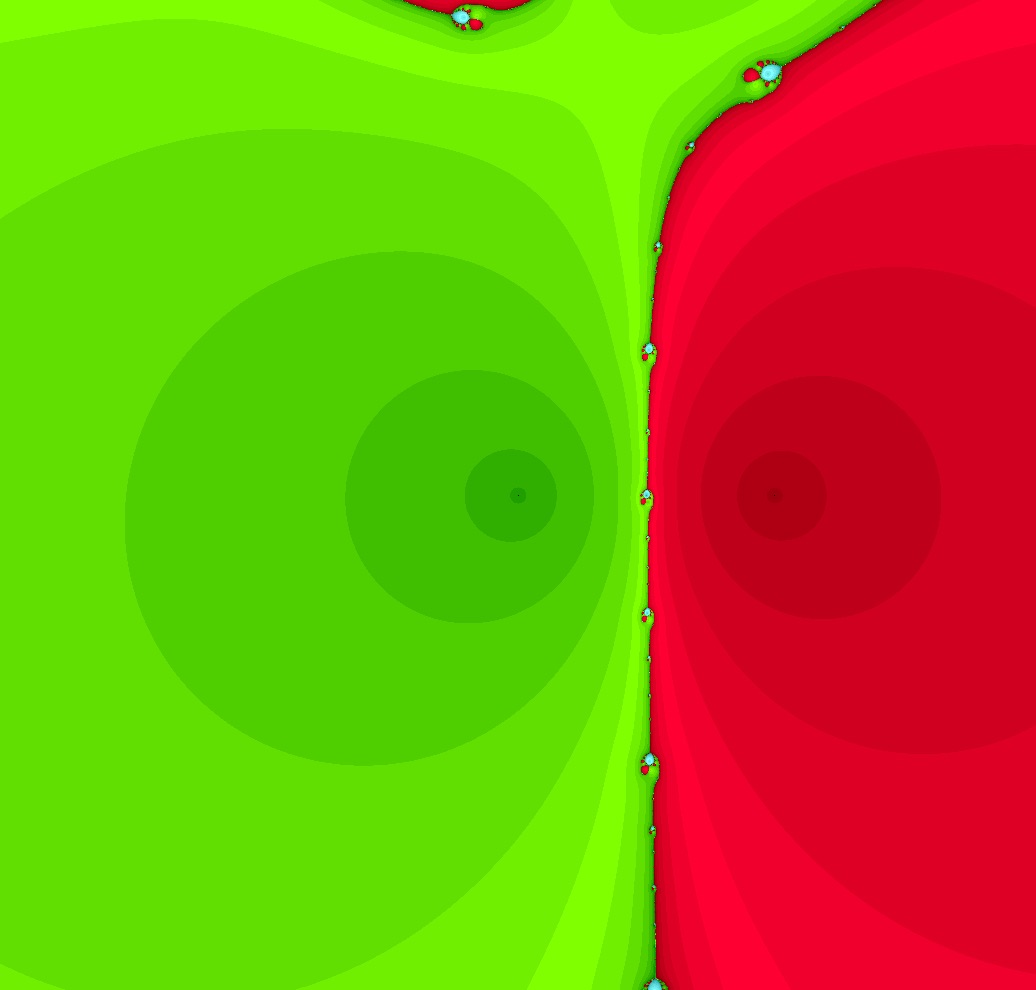}
  \end{minipage}
 \begin{minipage}[b]{0.3\textwidth}
    \includegraphics[width=\textwidth]{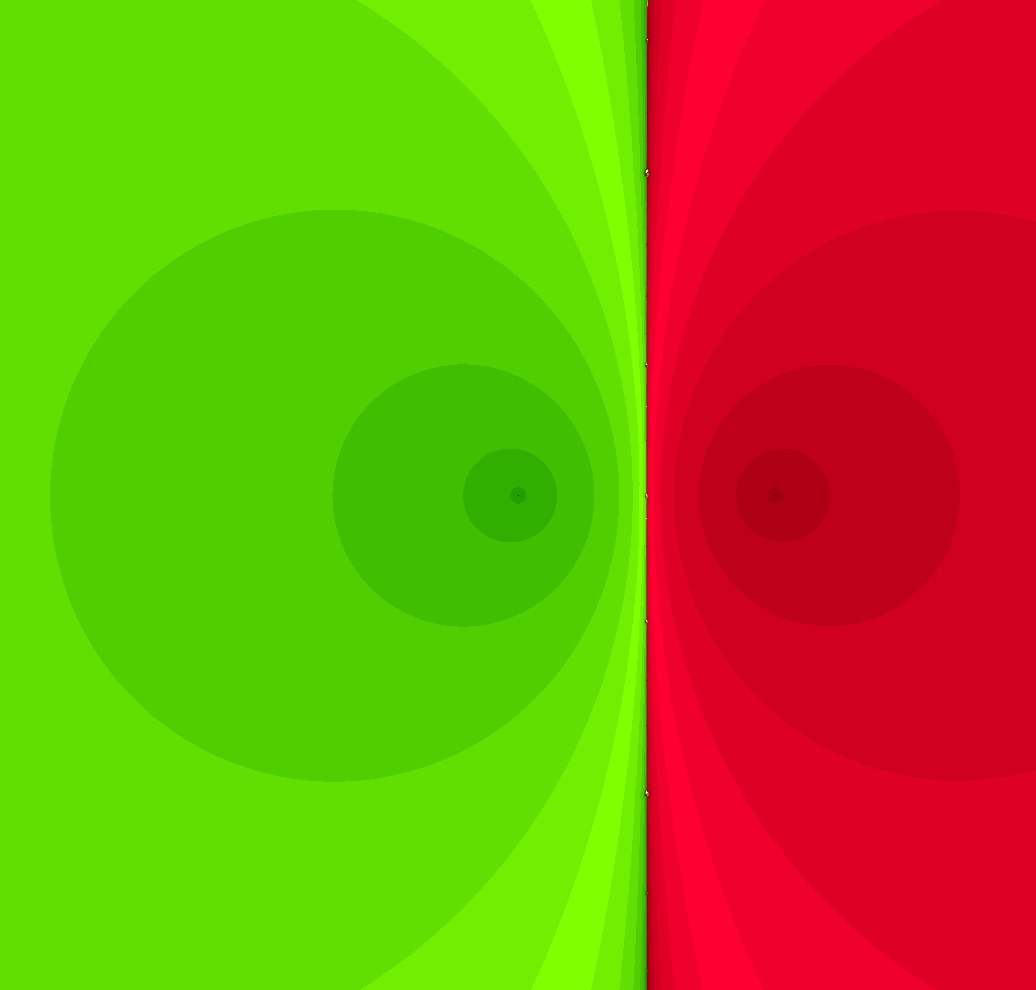}
     \end{minipage}
\caption{From left to right, the dynamical plane of the cubic Newton map of the polynomial $P(z)=z(z-1)(z-ti)$ for $t=1, 5, 25$.}
 \label{fig: Cubic Julia Up}
\end{figure}

\begin{figure}[h!]
  \centering
  \begin{minipage}[b]{0.3\textwidth}
    \includegraphics[width=\textwidth]{i}
  \end{minipage}
  \begin{minipage}[b]{0.3\textwidth}
    \includegraphics[width=\textwidth]{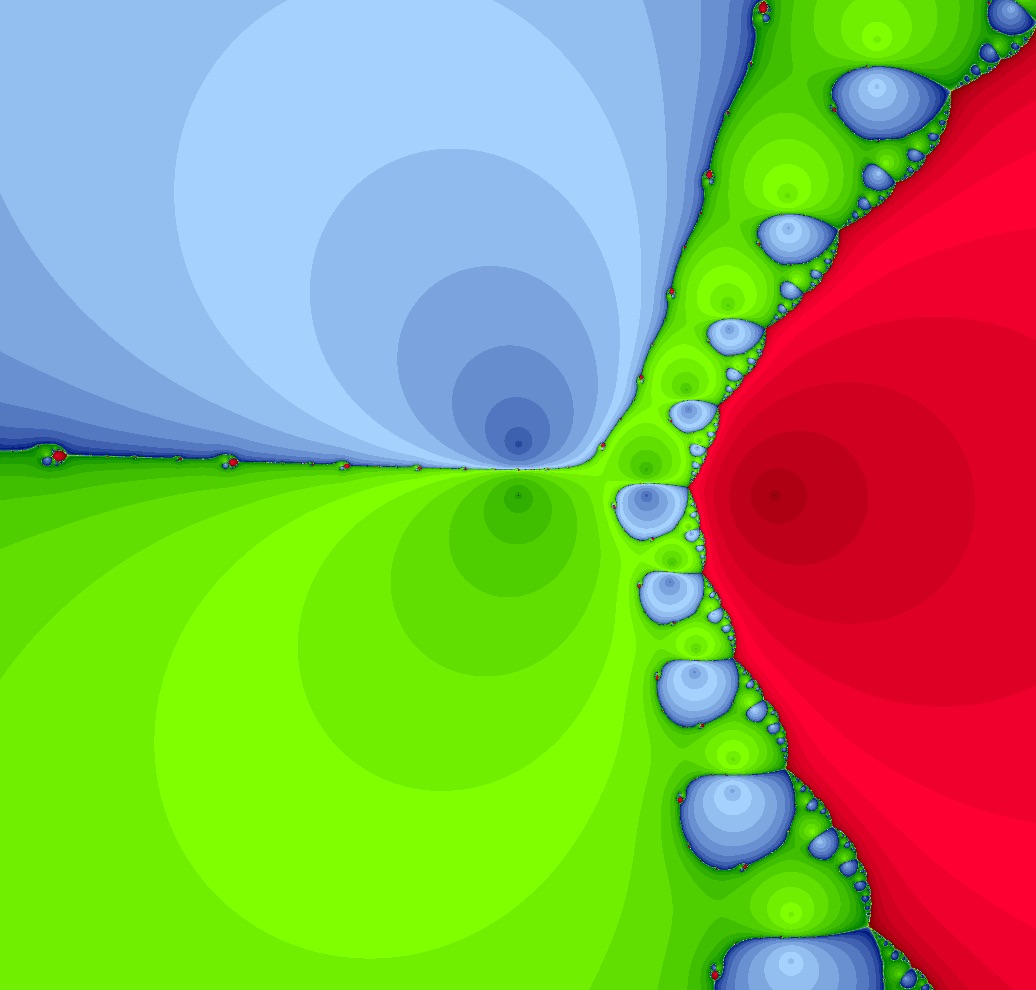}
  \end{minipage}
 \begin{minipage}[b]{0.3\textwidth}
    \includegraphics[width=\textwidth]{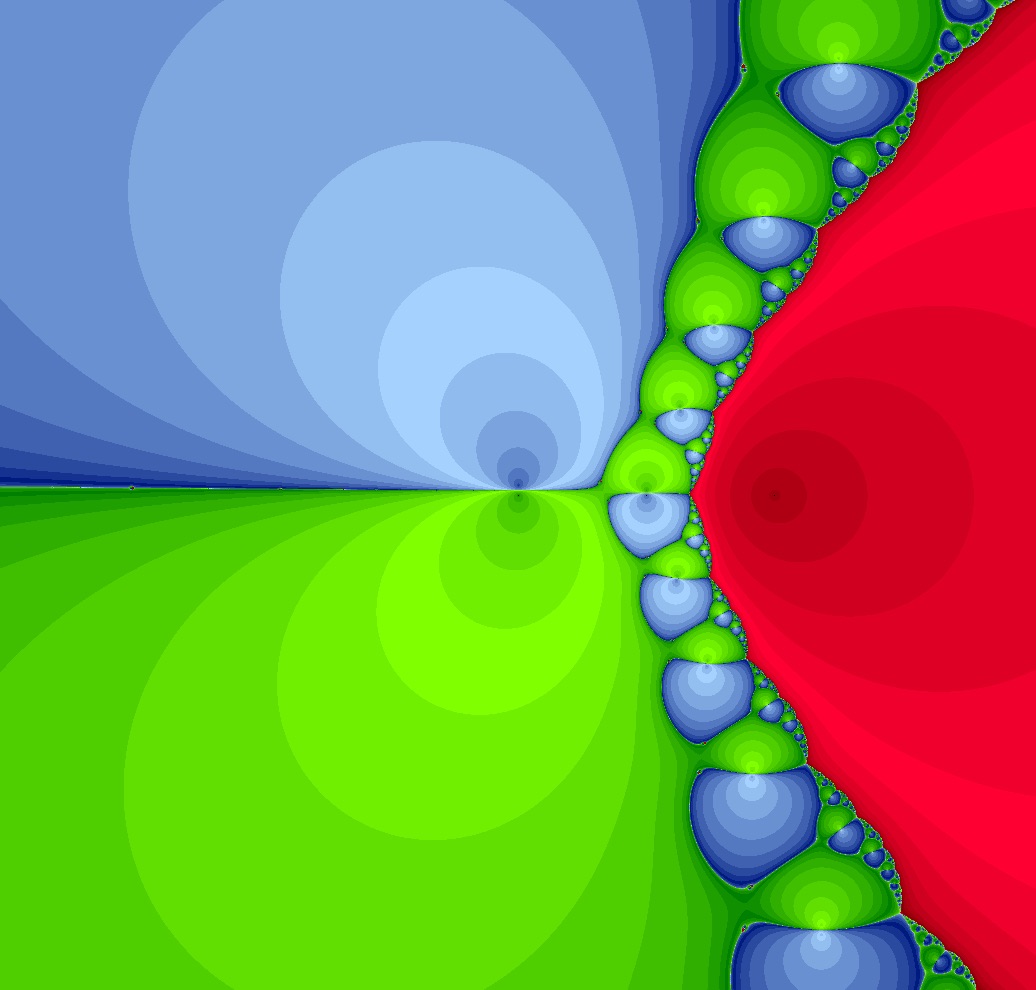}
      \end{minipage}
\caption{From left to right, the dynamical plane for the cubic Newton map of the polynomial $P(z)=z(z-1)(z-ti)$ for $t=1, 1/5, 1/25$.}
\label{fig: Cubic Julia Down}
\end{figure}
Recall $I(d)\subset\mathbb{P}^{2d+1}$ is the indeterminacy locus. Then 
\begin{lemma}\label{Newton-indeterminacy}
Let $\{N_{r(t)}\}$ be a holomorphic family of degree $d\ge 2$ Newton maps. Suppose $N_{r(t)}\xrightarrow{sa} N=H_N\widehat N$ as $t\to 0$. Then $N\in I(d)$ if and only if $r_i(t)\to\infty$ for all $i=1,\cdots,d$.
\end{lemma}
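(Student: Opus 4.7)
The plan is to apply Lemma~\ref{Newton-limit} to identify the subalgebraic limit explicitly. Write $r=\{r_1,\dots,r_d\}\subset\widehat{\mathbb{C}}$ for the limits of the roots, let $m$ be the number of $r_i$ equal to $\infty$, and let $\tilde P$ be the degree $d-m$ polynomial whose roots are the finite $r_i$. The lemma gives $N=N_r$, and by (\ref{Newton degenerate}) we have $N_r([X:Y])=Y^m N_{\tilde P}([X:Y])$, so $\hat N=\hat N_{\tilde P}$ and $H_N=Y^m\cdot H_{N_{\tilde P}}$. Testing $N\in I(d)$ thus reduces to a case analysis on $m$.

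For the ``if'' direction, suppose $m=d$. Then $\tilde P$ is a nonzero constant, and it is easiest to read off the limit directly from formula (\ref{Newton formula}): normalizing projective coordinates by the fastest-growing symmetric function $\sigma_d(t)=\prod_i r_i(t)$ and observing that $\sigma_j(t)/\sigma_d(t)\to 0$ for $j<d$ when all roots tend to infinity, only the coefficient of $Y^d$ in the numerator survives in the limit. Thus $N=[Y^d:0]=Y^d[1:0]$, so $\hat N\equiv\infty$ and $H_N=Y^d$; since $H_N$ vanishes at $\infty=[1:0]$, the constant value of $\hat N$ is a hole, and $N\in I(d)$.

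For the converse, I argue the contrapositive and suppose $m\le d-1$. If $m=d-1$, then $\tilde P(z)=z-r_0$ is linear, and a direct computation gives $N_{\tilde P}([X:Y])=[r_0Y:Y]$, so $\hat N=[r_0:1]$ is constant but equal to a finite point, whereas the only hole of $N$ (coming from $H_N=Y^d$) lies at $\infty\neq r_0$; hence $N\notin I(d)$. If $m\le d-2$, then $\tilde P$ has degree at least two, so $N_{\tilde P}$ admits both the finite roots of $\tilde P$ and the point $\infty$ as fixed points (compare Remark~\ref{nondegenrate-Newton}); since a constant rational map has a unique fixed point, $\hat N_{\tilde P}$ must have degree at least one, so $\hat N$ is not constant and $N\notin I(d)$.

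The one genuinely delicate point I expect is the edge case $m=d-1$: here $\hat N$ really is a constant, but the constant is a finite complex number rather than $\infty$, so one must use both defining conditions of $I(d)$ — $\hat N$ constant \emph{and} that constant being a hole — in order to rule it out. Everything else is a direct computation from the explicit formulas in (\ref{Newton degenerate}) and (\ref{Newton formula}).
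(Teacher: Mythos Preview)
Your proof is correct and follows essentially the same approach as the paper's. The paper's ``if'' direction is identical to yours (normalize by $\sigma_d(t)$ in formula (\ref{Newton formula})), and the paper's converse is a terser version of your case analysis: where the paper just asserts ``$\#E\le 1$ since $\deg\widehat N=0$'' (with $E=\{i:r_i(t)\not\to\infty\}$), you make explicit via Remark~\ref{nondegenrate-Newton} why $m\le d-2$ forces $\widehat N$ to be nonconstant, and your $m=d-1$ case is exactly the paper's $\#E=1$ case. Your treatment is more self-contained; the paper's is more compressed.
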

\begin{proof}
If $r_i(t)\to\infty$ for $i=1,\cdots,d$, then as $t\to 0$, we have $\sigma_d(t)\to\infty$ and for $j=1,\cdots,d-1$,
$$\frac{\sigma_j(t)}{\sigma_d(t)}\to 0.$$
Hence, by formula (\ref{Newton formula}), as $t\to 0$,
\begin{align*}
&N_{r(t)}([X:Y])\\
&=[(d-1)\sigma_0(t):-(d-2)\sigma_2(t):\cdots:(-1)^k(d-k-1)\sigma_k(t):\cdots:0:(-1)^{d+1}\sigma_d(t):\\
                              &\ \ \ \ \ \ 0:d\sigma_0(t):\cdots:(-1)^k(d-k)\sigma_k(t):\cdots:(-1)^{d-1}\sigma_{d-1}(t)]\\
                              &\to [0:0:\cdots:0:1:0:0:\cdots:0]\in\mathbb{P}^{2d+1}.
\end{align*}
Thus, $N([X:Y])=[Y^d:0]=Y^d[1:0]\in I(d)$.\par
Conversely, let $E=\{i: r_i(t)\not\to\infty\}$. By Lemma \ref{Newton-limit}, $\#E\le 1$ since $\deg\widehat N=0$. If $\#E=1$, suppose $i\in E$ and $r_i(t)\to a\in\mathbb{C}$, as $t\to 0$. Then $f([X:Y])=Y^d[a:1]$. Thus, in this case $N\not\in I(d)$. So $\#E=0$, that is, all $r_i(t)\to\infty$, as $t\to 0$.
\end{proof}
\begin{corollary}\label{Newton-unique-hole-infty}
If $N=H_N\widehat N\in\partial\overline{\mathrm{NM}}_d$ with $\deg\widehat N=0$, then $\mathrm{Hole}(N)=\{\infty\}$.
\end{corollary}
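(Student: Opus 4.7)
To prove the corollary, I would pick a sequence of Newton maps $N_{r_n}\in\mathrm{NM}_d$ converging to $N$ in $\mathbb{P}^{2d+1}$. After passing to a subsequence, each root $r_{i,n}$ converges to some $r_i\in\widehat{\mathbb{C}}$, and plugging these limits into formula (\ref{Newton formula}) (with the rescaling argument used in the proof of Lemma \ref{Newton-indeterminacy}) identifies $N=N_r$ in $\mathbb{P}^{2d+1}$, where $r=\{r_1,\dots,r_d\}$ is understood in the extended sense of (\ref{Newton degenerate}).

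The heart of the argument is to show that the hypothesis $\deg\widehat{N}=0$ forces at most one $r_i$ to be finite. Let $k=\#\{i:r_i\in\mathbb{C}\}$, let $\widetilde{P}$ be the monic polynomial of degree $k$ whose roots with multiplicity are the finite entries of $r$, and let $s$ be its number of distinct roots. By formula (\ref{Newton degenerate}), $N=Y^{d-k}N_{\widetilde{P}}$, so $\deg\widehat{N}=\deg\widehat{N}_{\widetilde{P}}$. Writing $\widetilde{P}=\prod_{j=1}^{s}(z-a_j)^{m_j}$ and cancelling in $N_{\widetilde{P}}(z)=z-\widetilde{P}(z)/\widetilde{P}'(z)$ yields
$$N_{\widetilde{P}}(z)\;=\;z-\frac{R(z)}{Q(z)},\qquad R(z):=\prod_{j=1}^{s}(z-a_j),\quad Q(z):=\sum_{j=1}^{s}m_j\prod_{i\neq j}(z-a_i).$$
Here $Q$ has degree $s-1$ and leading coefficient $k$, while $R$ has degree $s$; the distinctness of the $a_j$'s gives $Q(a_j)\neq 0$, so $R$ and $Q$ are coprime and hence so are $zQ-R$ and $Q$. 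A leading-term comparison in $zQ(z)-R(z)$ (whose leading coefficient is $k-1$) shows $\deg\widehat{N}_{\widetilde{P}}=s$ whenever $k\geq 2$, while $\deg\widehat{N}_{\widetilde{P}}=0$ in the boundary case $k=s=1$. The extreme case $k=0$ is handled directly by formula (\ref{Newton formula}) as in Lemma \ref{Newton-indeterminacy}: all roots escaping to $\infty$ forces $N([X:Y])=Y^d[1:0]$, which is constant. Therefore $\deg\widehat{N}=0$ forces $k\in\{0,1\}$.

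Finally, each surviving case determines the hole immediately. If $k=0$, then from above $N([X:Y])=Y^d[1:0]$, so $H_N=Y^d$ has its unique zero at $\infty$. If $k=1$ with finite value $a$, the projective computation $N_{z-a}([X:Y])=[aY:Y]=Y[a:1]$ combined with $N=Y^{d-1}N_{z-a}$ gives $N([X:Y])=Y^d[a:1]$, so again $H_N=Y^d$. In both cases $\mathrm{Hole}(N)=\{\infty\}$, as claimed. The main obstacle is the unified degree computation in the middle paragraph; once the explicit formula $N_{\widetilde{P}}=z-R/Q$ is in hand, reading off the hole is routine.
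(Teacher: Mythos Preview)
Your proof is correct and follows essentially the same route as the paper, which treats the corollary as immediate from the case analysis in the proof of Lemma~\ref{Newton-indeterminacy}: there one sees that $\deg\widehat N=0$ forces at most one root to remain finite (the paper cites Lemma~\ref{Newton-limit} for this), and in both cases $\#E=0$ and $\#E=1$ one reads off $N=Y^d[1:0]$ or $N=Y^d[a:1]$. Your middle paragraph simply unpacks the step ``$\deg\widehat N=0\Rightarrow\#E\le 1$'' via the explicit formula $N_{\widetilde P}=(zQ-R)/Q$ with $\deg\widehat N_{\widetilde P}=s$, rather than invoking Lemma~\ref{Newton-limit} and Remark~\ref{nondegenrate-Newton}; and you work with sequences rather than holomorphic families, which is harmless since the map $r\mapsto N_r$ from $(\widehat{\mathbb{C}})^d$ to $\mathbb{P}^{2d+1}$ is continuous.
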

\begin{corollary} 
Let $\{N_{r(t)}\}$ be a holomorphic family of degree $d\ge 2$ Newton maps with $r(t)=\{r_1(t),\cdots,r_d(t)\}$. Suppose $r_{i_0}(t)\not\to\infty$ for some $1\le i_0\le d$ as $t\to 0$, and suppose $N_{r(t)}\xrightarrow{sa} N=H_N\widehat N$. Then for any $q\ge 1$, $N^q_{r(t)}\xrightarrow{sa}N^q$ and hence $N^q_{r(t)}\xrightarrow{\bullet}\widehat{N}^q$ outside a finite set.
\end{corollary}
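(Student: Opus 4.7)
My plan is to deduce this corollary almost immediately from the three lemmas already established in the excerpt: Lemma \ref{Newton-indeterminacy}, Lemma \ref{iterate-indeterminacy}, and Lemma \ref{subalg-imply-locally-uniform}.

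First, I would observe that the hypothesis $r_{i_0}(t) \not\to \infty$ places us exactly in the contrapositive of Lemma \ref{Newton-indeterminacy}: not all roots escape to infinity, so the subalgebraic limit $N = H_N \widehat{N}$ satisfies $N \notin I(d)$. This is the key qualitative step that makes the rest of the argument purely formal.

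Second, with $N \notin I(d)$ in hand, I would invoke Lemma \ref{iterate-indeterminacy}. It tells us that the rational map $\Psi_q : \mathbb{P}^{2d+1} \dashrightarrow \mathbb{P}^{2d^q+1}$ has indeterminacy locus exactly $I(d)$; therefore $\Psi_q$ is holomorphic at $N$, and by continuity
\[
N^q_{r(t)} = \Psi_q(N_{r(t)}) \longrightarrow \Psi_q(N) = N^q
\]
in $\mathbb{P}^{2d^q+1}$ as $t \to 0$, which is the definition of subalgebraic convergence $N^q_{r(t)} \xrightarrow{sa} N^q$. Moreover, the explicit iteration formula in Lemma \ref{iterate-indeterminacy} gives
\[
N^q = \left(\prod_{k=0}^{q-1} (H_N \circ \widehat{N}^k)^{d^{q-k-1}}\right) \widehat{N}^q,
\]
so that $\widehat{N^q} = \widehat{N}^q$ and the zero set of $H_{N^q}$ is the finite set $S_q := \{h \in \mathbb{P}^1 : H_N \circ \widehat{N}^k(h) = 0 \text{ for some } 0 \le k \le q-1\}$.

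Finally, since $\{N^q_{r(t)}\} \subset \mathrm{Rat}_{d^q}$ is a holomorphic family and $N^q_{r(t)} \xrightarrow{sa} N^q$, Lemma \ref{subalg-imply-locally-uniform} applied to this family yields $N^q_{r(t)} \xrightarrow{\bullet} \widehat{N}^q$ on $\mathbb{P}^1 \setminus S_q$, as desired. Because each step is a direct citation, there is essentially no obstacle; the only thing to verify carefully is that the composite $\widehat{N^q}$ produced by Lemma \ref{iterate-indeterminacy} really equals $\widehat{N}^q$, and this is immediate from the displayed factorization above.
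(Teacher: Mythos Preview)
Your proof is correct and follows exactly the same approach as the paper, which simply cites Lemma~\ref{Newton-indeterminacy} to get $N\notin I(d)$ and then invokes Lemmas~\ref{iterate-indeterminacy} and~\ref{subalg-imply-locally-uniform}. You have merely filled in the details that the paper leaves implicit.
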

\begin{proof}From Lemma \ref{Newton-indeterminacy}, we have $f\not\in I(d)$. It follows the Lemmas \ref{iterate-indeterminacy} and \ref{subalg-imply-locally-uniform}.
\end{proof}
Lemma \ref{Newton-indeterminacy} implies immediately the following fundamental result.
\begin{proposition}\label{Newton-closure-indeterminacy-singleton}
For $d\ge 2$,
$$\overline{\mathrm{NM}}_d\cap I(d)=\{Y^d[1:0]\}.$$
\end{proposition}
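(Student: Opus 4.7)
The plan is to establish both inclusions directly from Lemma \ref{Newton-indeterminacy} and the explicit computation in its proof.

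For the inclusion $\{Y^d[1:0]\} \subseteq \overline{\mathrm{NM}}_d \cap I(d)$, I first observe that the point $p := Y^d[1:0]$ lies in $I(d)$ essentially by inspection: writing $p = H_p \hat p$, we have $H_p(X,Y) = Y^d$ and $\hat p \equiv [1:0] = \infty$, and $[1:0]$ is a zero of $Y^d$, so the constant value of $\hat p$ is indeed a hole of $p$. To see $p \in \overline{\mathrm{NM}}_d$, I exhibit a concrete holomorphic family $N_{r(t)}$ of degree $d$ Newton maps approaching $p$; for instance $r(t) = \{1/t, 2/t, \ldots, d/t\}$. The computation carried out in the forward direction of the proof of Lemma \ref{Newton-indeterminacy}, based on formula (\ref{Newton formula}) and the fact that $\sigma_j(t)/\sigma_d(t) \to 0$ for $j < d$ whenever every $r_i(t) \to \infty$, gives $N_{r(t)} \xrightarrow{sa} Y^d[1:0]$.

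For the reverse inclusion, take any $N \in \overline{\mathrm{NM}}_d \cap I(d)$. Since $\mathrm{NM}_d$ is the image of the continuous map from the symmetric product $\widehat{\mathbb{C}}^d / S_d$ (restricted to the configurations of $d$ distinct finite points) into $\mathbb{P}^{2d+1}$ given by formula (\ref{Newton formula}), I can choose a sequence of Newton maps $N_{r^n} \to N$ and, after passing to a subsequence using compactness of $\widehat{\mathbb{C}}^d/S_d$, assume $r^n \to \bar r = \{r_1, \ldots, r_d\} \in \widehat{\mathbb{C}}^d/S_d$; by continuity of formula (\ref{Newton formula}) in the coefficients $\sigma_k$, the limit point $N$ coincides with $N_{\bar r}$ in the sense of formula (\ref{Newton degenerate}). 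Equivalently (as is standard for algebraic subsets of projective space), I may instead invoke curve selection to produce a one-parameter holomorphic family $\{N_{r(t)}\} \subset \mathrm{NM}_d$ with $N_{r(t)} \xrightarrow{sa} N$ as $t \to 0$.

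Now I apply Lemma \ref{Newton-indeterminacy} directly: since $N \in I(d)$, every coordinate $r_i(t)$ must tend to $\infty$. Feeding this back into the explicit limit computation from formula (\ref{Newton formula}) (carried out in the first half of the proof of Lemma \ref{Newton-indeterminacy}) shows that $N$ is forced to equal $Y^d[1:0]$, which completes the reverse inclusion.

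The main obstacle, if any, is the passage from a general sequential limit in $\overline{\mathrm{NM}}_d$ to a holomorphic family to which Lemma \ref{Newton-indeterminacy} applies. This is genuinely routine — it is either curve selection on the algebraic locus $\mathrm{NM}_d \subset \mathbb{P}^{2d+1}$, or, equivalently, continuity of the Vieta map $\bar r \mapsto N_{\bar r}$ from the compact symmetric product $\widehat{\mathbb{C}}^d/S_d$ into $\mathbb{P}^{2d+1}$ — so the core of the proof really is the "immediate" consequence of Lemma \ref{Newton-indeterminacy} that the author advertises.
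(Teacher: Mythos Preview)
Your proposal is correct and follows essentially the same route as the paper, which simply asserts that the proposition is an immediate consequence of Lemma \ref{Newton-indeterminacy}. You have merely made explicit the two steps the paper leaves implicit: exhibiting a family (e.g.\ $r_i(t)=i/t$) to show $Y^d[1:0]\in\overline{\mathrm{NM}}_d$, and invoking curve selection (or continuity of $\bar r\mapsto N_{\bar r}$) to reduce an arbitrary boundary point to the holomorphic-family setting where Lemma \ref{Newton-indeterminacy} applies.
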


\section{Berkovich Dynamics}\label{Berkovich}

\subsection{Berkovich Space and Related Dynamics}
In this subsection, we summarize the definitions and main properties of Berkovich spaces and rational maps on Berkovich spaces used in this work. For more details, we refer \cite{Baker10,Berkovich90,Benedetto10,Jonsson15,Rivera03,Rivera05}.\par
Let $\mathbb{C}\{\{t\}\}$ be the field of formal Puiseux series over $\mathbb{C}$. Then $\mathbb{C}\{\{t\}\}$ can be equipped with a non-Archimedean absolute value $|\cdot|_{\mathbb{C}\{\{t\}\}}$ by defining 
$$\left|\sum_{n\in\mathbb{Z}}a_nt^{\frac{n}{m}}\right|_{\mathbb{C}\{\{t\}\}}=e^{-\frac{n_0}{m}},$$ 
where $n_0$ is the smallest integer $n\in\mathbb{Z}$ such that $a_n\not=0$. For more details of non-Archimedean fields, we refer \cite{Diagana16}. Then the field $\mathbb{C}\{\{t\}\}$ is algebraically closed but not complete. Let $\mathbb{L}$ be the completion of the field $\mathbb{C}\{\{t\}\}$, see \cite{Kiwi06,Kiwi14,Kiwi15}. It consists of all formal sums of the form $\sum_{n\ge 0}a_nt^{q_n}$, where $\{q_n\}$ is a sequence of rational numbers increasing to $\infty$ and $a_n\in\mathbb{C}$. Let $|\cdot|_\mathbb{L}$ be the extension of the non-Archimedean absolute value $|\cdot|_{\mathbb{C}\{\{t\}\}}$ so that $|a_0t^{q_0}+\cdots|_{\mathbb{L}}=e^{-q_0}$. To abuse of notations, we may write $|\cdot|$ for $|\cdot|_{\mathbb{L}}$. Then the ring of integers of the field $\mathbb{L}$ is 
$$\mathcal{O}_\mathbb{L}=\{z\in\mathbb{L}:|z|\le 1\}=\left\{\sum\limits_{n\ge 0}a_nt^{q_n}:q_n\ge 0\right\}$$
and the unique maximal ideal $\mathcal{M}_\mathbb{L}$ of $\mathcal{O}_\mathbb{L}$ consists of series with zero constant term, i.e.
$$\mathcal{M}_\mathbb{L}=\{z\in\mathbb{L}:|z|<1\}=\left\{\sum\limits_{n\ge 0}a_nt^{q_n}:q_n> 0\right\}.$$
The residue field $\mathcal{O}_\mathbb{L}/\mathcal{M}_\mathbb{L}$ is canonically isomorphic to $\mathbb{C}$.\par
Let $|\mathbb{L}^\times|$ be the value group of $\mathbb{L}$. Given $a\in\mathbb{L}$ and $r>0$, define 
$$D(a,r):=\{z\in\mathbb{L}:|z-a|<r\}\ \ \text{and}\ \ \overline{D}(a,r):=\{z\in\mathbb{L}:|z-a|\le r\}.$$
If $r\in|\mathbb{L}^\times|$, we say that $D(a,r)$ is an open rational disk in $\mathbb{L}$ and $\overline{D}(a,r)$ is a closed rational disk in $\mathbb{L}$. If $r\not\in|\mathbb{L}^\times|$, then $D(a,r)=\overline{D}(a,r)$. And we call it an irrational disk. Let $U(a,r)\subset \mathbb{L}$ be a disk centered at $a\in\mathbb{L}$ with radius $r>0$, that is, $U$ has the form $D(a,r)$ or $\overline{D}(a,r)$. Then if $b\in U(a,r)$, we have $U(a,r)=U(b,r)$. Moreover, the radius $r$ is the same as the diameter of $U(a,r)$, that is $r=\sup\{|z-w|,w\in U(a,r)\}$. Furthermore, if two disks have a nonempty intersection, then one must contain the other. Finally, we should mention here every disk in $\mathbb{L}$ is both open and closed under the topology of $\mathbb{L}$.\par 
The Berkovich affine line $\mathbb{A}_{\mathrm{Ber}}^1(\mathbb{L})$ is the set of all multiplicative seminorms on the ring $\mathbb{L}[z]$ of polynomials over $\mathbb{L}$, whose restriction to the field $\mathbb{L}\subset\mathbb{L}[z]$ is equal to the given absolute value $|\cdot|$. To ease notation, we may write $\mathbb{A}_{\mathrm{Ber}}^1$ for $\mathbb{A}_{\mathrm{Ber}}^1(\mathbb{L})$. For $a\in\mathbb{L}$ and $r\ge 0$, let $\xi_{a,r}$ be the seminorm defined by 
$$|f|_{\xi_{a,r}}=\sup\limits_{z\in\overline{D}(a,r)}|f(z)|$$ 
for $f\in\mathbb{L}[z]$. Then there are $4$ types of points in $\mathbb{A}_{\mathrm{Ber}}^1$:\par
1. Type I. $\xi_{a,0}$ for some $a\in\mathbb{L}$.\par
2. Type II. $\xi_{a,r}$ for some $a\in\mathbb{L}$ and $r\in|\mathbb{L}^\times|$. \par
3. Type III. $\xi_{a,r}$ for some $a\in\mathbb{L}$ and $r\notin|\mathbb{L}^\times|$. \par 
4. Type IV. A limit of seminorms $\{\xi_{a_i,r_i}\}_{i\ge 0}$, where the corresponding sequence of closed disks $\{\overline{D}_{a_i,r_i}\}_{i\ge 0}$ satisfies $\overline{D}_{a_{i+1},r_{i+1}}\subset\overline{D}_{a_i,r_i}$ and $\bigcap\limits_i\overline{D}_{a_i,r_i}=\emptyset$.\par 
We can identify $\mathbb{L}$ with the type I points in $\mathbb{A}_{\mathrm{Ber}}^1$ via $a\to\xi_{a,0}$. The point $\xi_{0,1}\in\mathbb{A}_{\mathrm{Ber}}^1$ is called the Gauss point and denoted by $\xi_g$. We put the Gelfand topology (weak topology) on $\mathbb{A}_{\mathrm{Ber}}^1$, which makes the map $\mathbb{A}_{\mathrm{Ber}}^1\to[0,+\infty)$, sending $\xi$ to $|f|_\xi$, continuous for each $f\in\mathbb{L}[z]$. Then $\mathbb{A}_{\mathrm{Ber}}^1$ is locally compact, Hausdorff and uniquely path-connected.\par
For $\xi,\xi'\in\mathbb{A}^1_{\mathrm{Ber}}$, we say $\xi\le\xi'$ if $|f|_\xi\le|f|_{\xi'}$ for all polynomials $f\in\mathbb{L}[z]$. For types I, II and III points, $\xi_{a_1,r_1}\le\xi_{a_2,r_2}$ if and only if $\overline{D}(a_1,r_1)\subset\overline{D}(a_2,r_2)$. Then $\le$ is a partial order on $\mathbb{A}^1_{\mathrm{Ber}}$. Denote by $\xi\vee\xi'$ the least upper bound with respect to the partial order $\le$. Then $\xi\vee\xi'$ always exists and is unique. The small metric on $\mathbb{A}_{\mathrm{Ber}}^1$ is defined by 
$$d(\xi,\xi')=2\mathrm{diam}(\xi\vee\xi')-\mathrm{diam}(\xi)-\mathrm{diam}(\xi'),$$
where $\mathrm{diam}(\xi)$ is the affine diameter of the point $\xi$, that is, if $\{\xi_{a_i,r_i}\}$ is a sequence of seminorms converging to $\xi$, then $\mathrm{diam}(\xi)=\lim\limits_{i\to\infty}r_i$. The topology on $\mathbb{A}_{\mathrm{Ber}}^1$ induced by the metric $d$ is called the strong topology. It is strictly finer than the weak topology. The Berkovich hyperbolic space is defined by 
$$\mathbb{H}_{\mathrm{Ber}}:=\mathbb{A}_{\mathrm{Ber}}^1\setminus\mathbb{L}.$$ 
Then we define the path distance metric $\rho$ on $\mathbb{H}_{\mathrm{Ber}}^1$ by
$$\rho(\xi,\xi')=\log\frac{\text{diam}(\xi\vee\xi')}{\text{diam}(\xi)}+\log\frac{\text{diam}(\xi\vee\xi')}{\text{diam}(\xi')}.$$
The restriction of the strong topology to $\mathbb{H}_{\mathrm{Ber}}^1$ coincides with the metric topology induced by $\rho$. The space $\mathbb{H}_{\mathrm{Ber}}^1$ is complete for this metric, but not locally compact.\par
The Berkovich projective line $\mathbb{P}_{\mathrm{Ber}}^1$ is obtained by gluing two copies of $\mathbb{A}_{\mathrm{Ber}}^1$ along $\mathbb{A}_{\mathrm{Ber}}^1\setminus\{0\}$ via the map $\xi\to 1/\xi$. Then we can associate the Gelfand topology and strong topology on $\mathbb{P}_{\mathrm{Ber}}^1$. Under the Gelfand topology, The Berkovich projective line $\mathbb{P}_{\mathrm{Ber}}^1$ is a compact, Hausdorff, uniquely path-connected topological space and contains the projective line $\mathbb{P}^1_{\mathbb{L}}$ as a dense subset.\par
The space $\mathbb{P}_{\mathrm{Ber}}^1$ has tree structure. For a point $\xi\in\mathbb{P}_{\mathrm{Ber}}^1$, we can define an equivalence relation on $\mathbb{P}_{\mathrm{Ber}}^1\setminus\{\xi\}$, that is, $\xi'$ is equivalent to $\xi''$ if $\xi'$ and $\xi''$ are in the same connected component of $\mathbb{P}_{\mathrm{Ber}}^1\setminus\{\xi\}$. Such an equivalence class $\vec{v}$ is called a direction at $\xi$. We say that the set $T_{\xi}\mathbb{P}_{\mathrm{Ber}}^1$ formed by all directions at $\xi$ is the tangent space at $\xi$. For $\vec{v}\in T_{\xi}\mathbb{P}_{\mathrm{Ber}}^1$, denote by $\mathbf{B}_{\xi}(\vec{v})^-$ the component of $\mathbb{P}_{\mathrm{Ber}}^1\setminus\{\xi\}$ corresponding to the direction $\vec{v}$. If $\xi\in\mathbb{P}_{\mathrm{Ber}}^1$ is a type I or IV point, $T_{\xi}\mathbb{P}_{\mathrm{Ber}}^1$ consists of a single direction.  If $\xi\in\mathbb{P}_{\mathrm{Ber}}^1$ is a type III point, $T_{\xi}\mathbb{P}_{\mathrm{Ber}}^1$ consists of two directions. If $\xi\in\mathbb{P}_{\mathrm{Ber}}^1$ is a type II point, the directions in $T_{\xi}\mathbb{P}_{\mathrm{Ber}}^1$ are in one-to-one correspondence with the elements in $\mathbb{P}^1$. Since the Gauss point $\xi_g$ is a type II point, we can identify $T_{\xi_g}\mathbb{P}_{\mathrm{Ber}}^1$ to $\mathbb{P}^1$ by the correspondence $T_{\xi_g}\mathbb{P}_{\mathrm{Ber}}^1\to\mathbb{P}^1$ sending $\vec{v}_x$ to $x$, where $\vec{v}_x$ is the direction at $\xi_g$ such that $\mathbf{B}_{\xi_g}(\vec{v}_x)^-$ contains all the type I points whose images are $x$ under the canonical reduction map $\mathbb{P}^1_{\mathbb{L}}\to\mathbb{P}^1$.\par 
The spherical metric on the projective $\mathbb{P}^1_{\mathbb{L}}$ is defined as follows: for points $z=[x:y]$ and $w=[u:v]$ in $\mathbb{P}^1_{\mathbb{L}}$,
$$\Delta(z,w):=\frac{|xv-yu|}{\max\{|x|,|y|\}\max\{|u|,|v|\}}.$$
Equivalently, 
$$\Delta(z,w):=
\begin{cases}
\frac{|z-w|}{\max\{1,|z|\}\max\{1,|w|\}}, &\text{if}\  z,w\in\mathbb{L},\\
\frac{1}{\max\{1,|z|\}}, &\text{if}\  z\in\mathbb{L},w=\infty.
\end{cases}$$\par
Recall a degree $d\ge 1$ rational map $\phi:\mathbb{P}^1_{\mathbb{L}}\to\mathbb{P}^1_{\mathbb{L}}$ is represented by a pair $\phi_a,\phi_b\in\mathbb{L}[X,Y]$ of degree $d$ homogeneous polynomials with no common factors, that is, $\phi([X:Y])=[\phi_a(X,Y):\phi_b(X,Y)]$ for all $[X:Y]\in\mathbb{P}^1_{\mathbb{L}}$. Equivalently, the map $\phi$ can be considered as the quotient of two relatively prime polynomials, of which the greatest degree is $d$. Then $\phi$ induces a map from $\mathbb{P}_{\mathrm{Ber}}^1$ to itself. We use the same notation $\phi$ for the induced map. For the Gauss point $\xi_g\in\mathbb{P}_{\mathrm{Ber}}^1$, the corresponding closed disk $\overline{D}(0,1)\subset\mathbb{L}$ is a disjoint union of open disks $D(a,1)$ for $a\in\mathbb{C}$.  Note for all but finitely many such $a$, we have $\phi(D(a,1))$ is an open disk $D(\phi(a),r)$ for some $r\in |\mathbb{L}^\times|$. Then $\phi(\xi_g)$ is the point $\xi_{\phi(a),r}\in\mathbb{P}_{\mathrm{Ber}}^1$ for any such choice of $a$. For an arbitrary type II point $\xi_{b,s}$, pick $M\in\mathrm{PGL}_2(\mathbb{L})$ such that $M(\xi_g)=\xi_{b,r}$. Then apply the previous discussion to $\phi\circ M$ and $\phi(\xi_{b,s})=\phi\circ M(\xi_g)$. Since type II points are dense in $\mathbb{P}_{\mathrm{Ber}}^1$, we can get the image $\phi(\xi)$ for any $\xi\in\mathbb{P}_{\mathrm{Ber}}^1$.\par
For a holomorphic family 
$$\left\{f_t(z)=\frac{a_d(t)z^d+\cdots+a_0(t)}{b_d(t)z^d+\cdots+b_0(t)}\right\}\subset\mathbb{C}(z)$$
of degree $d$ rational maps, let $\mathbf{a_d},\cdots,\mathbf{a_0},\mathbf{b_d},\cdots,\mathbf{b_0}$ be the power series expressions of the coefficients $a_d(t),\cdots,a_0(t),b_d(t),\cdots,b_0(t)$, respectively. Then the degree $d$ rational map $\mathbf{f}:\mathbb{P}^1_{\mathbb{L}}\to\mathbb{P}^1_{\mathbb{L}}$ given by 
$$\mathbf{f}(z)=\frac{\mathbf{a_d}z^d+\cdots+\mathbf{a_0}}{\mathbf{b_d}z^d+\cdots+\mathbf{b_0}}$$
is called, following Kiwi \cite{Kiwi15}, the rational map associated to $\{f_t\}$. Hence $\{f_t\}$ induces a map $\mathbf{f}:\mathbb{P}_{\mathrm{Ber}}^1\to\mathbb{P}_{\mathrm{Ber}}^1$.\par
Let $P(z)\in\mathcal{O}_\mathbb{L}[z]$ be a polynomial and let $\mathrm{Red}(P)$ be the image of $P(z)$ in $\mathcal{O}_\mathbb{L}[z]/\mathcal{M}_\mathbb{L}[z]$. We say $\mathrm{Red}(P)$ is the reduction of $P$. For a rational map $\phi=P/Q\in\mathbb{L}(z)$, where $P,Q\in\mathbb{L}[z]$ with no common zeros. We can normalize $\mathbf{\phi}$ such that $P,Q\in\mathcal{O}_\mathbb{L}(z)$ and the maximal absolute value of coefficients of $P$ and $Q$ is $1$. For a normalized rational map $\phi=P/Q\in\mathbb{L}(z)$, the reduction $\mathrm{Red}(\phi)$ is defined by 
$$\mathrm{Red}(\phi):=
\begin{cases}
\mathrm{Red}(P)/\mathrm{Red}(Q)\ \mathrm{if}\ Q\not\in\mathcal{M}_\mathbb{L}[z],\\
\infty\ \mathrm{if}\ Q\in\mathcal{M}_\mathbb{L}[z].
\end{cases}$$
In fact, if $\phi$ is induced from a holomorphic family, the reduction $\mathrm{Red}(\phi)$ coincides with the limit of $\phi(z)$ as $t\to 0$. For convenience, for an arbitrary rational map $\phi\in\mathbb{L}(z)$, we write $\mathrm{Red}(\phi)$ for the reduction of the normalization of $\phi$.\par 
\begin{lemma}
Let $\mathbf{\phi}(z),\mathbf{\psi}(z)\in\mathbb{L}(z)$ be two rational maps. Then\par
\begin{enumerate}
\item $\mathrm{Red}(\phi\cdot\psi)=\mathrm{Red}(\phi)\cdot\mathrm{Red}(\psi)$,
\item $\mathrm{Red}(\phi+\psi)=\mathrm{Red}(\phi)+\mathrm{Red}(\psi)$,
\item If $\deg\mathrm{Red}(\psi)\ge 1$, then $\mathrm{Red}(\phi\circ\psi)=\mathrm{Red}(\phi)\circ\mathrm{Red}(\psi)$.
\end{enumerate}
\end{lemma}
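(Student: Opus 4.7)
The plan is to deduce all three parts from two foundational facts: the natural projection $\mathcal{O}_\mathbb{L}\to\mathcal{O}_\mathbb{L}/\mathcal{M}_\mathbb{L}\cong\mathbb{C}$ is a ring homomorphism, and it extends coefficient-wise to a surjective ring homomorphism $\mathcal{O}_\mathbb{L}[z]\twoheadrightarrow\mathbb{C}[z]$; moreover, the Gauss norm $|\cdot|_{\xi_g}$ on $\mathbb{L}[z]$ is multiplicative, $|fg|_{\xi_g}=|f|_{\xi_g}|g|_{\xi_g}$ (the non-Archimedean Gauss lemma).

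For (1) and (2), I would write $\phi=P_1/Q_1$ and $\psi=P_2/Q_2$ in their normalized coprime forms, so that $P_i,Q_i\in\mathcal{O}_\mathbb{L}[z]$ with $\max\{|P_i|_{\xi_g},|Q_i|_{\xi_g}\}=1$. As rational functions,
$$\phi\psi=\frac{P_1P_2}{Q_1Q_2}\quad\text{and}\quad\phi+\psi=\frac{P_1Q_2+Q_1P_2}{Q_1Q_2}.$$
Both numerators and denominators lie in $\mathcal{O}_\mathbb{L}[z]$, and the ring homomorphism property immediately gives $\mathrm{Red}(P_1P_2)=\mathrm{Red}(P_1)\mathrm{Red}(P_2)$ and $\mathrm{Red}(P_1Q_2+Q_1P_2)=\mathrm{Red}(P_1)\mathrm{Red}(Q_2)+\mathrm{Red}(Q_1)\mathrm{Red}(P_2)$, and analogously for the denominator. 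If these expressions are not already in coprime normalized form, the Gauss lemma controls both the normalization scalar and the Gauss norm of any common factor $A\in\mathbb{L}[z]$ of numerator and denominator; then $\mathrm{Red}(A)$ cancels in the ratio, so the identities follow.

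For (3), write $\phi=P/Q$ normalized with $d=\deg\phi$, and let $\widetilde P(X,Y)=Y^dP(X/Y)$ and $\widetilde Q(X,Y)=Y^dQ(X/Y)$ be the homogenizations. With $\psi=R/S$ normalized, we have
$$\phi\circ\psi=\frac{\widetilde P(R,S)}{\widetilde Q(R,S)}.$$
The hypothesis $\deg\mathrm{Red}(\psi)\ge 1$ forces $|R|_{\xi_g}=|S|_{\xi_g}=1$ (otherwise $\mathrm{Red}(\psi)$ equals $0$ or $\infty$) and forces $\mathrm{Red}(R)/\mathrm{Red}(S)$ to be non-constant in $\mathbb{C}(z)$; the latter in turn implies $\mathrm{Red}(\widetilde Q)(\mathrm{Red}(R),\mathrm{Red}(S))\not\equiv 0$ as a polynomial in $z$, because the homogeneous polynomial $\widetilde Q$ vanishes only on finitely many directions in $\mathbb{P}^1$. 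Applying parts (1) and (2) iteratively to the homogeneous expansions then yields
$$\mathrm{Red}(\phi\circ\psi)=\frac{\mathrm{Red}(\widetilde P)(\mathrm{Red}(R),\mathrm{Red}(S))}{\mathrm{Red}(\widetilde Q)(\mathrm{Red}(R),\mathrm{Red}(S))}=\mathrm{Red}(\phi)\circ\mathrm{Red}(\psi).$$

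The main obstacle will be the bookkeeping around normalization. At each step, common factors may appear between numerator and denominator, and the Gauss norms of the two may differ, so one must verify that the scalar rescaling needed to put the result in normalized form does not disturb the identities. The Gauss lemma is what makes this tractable. The degree hypothesis in (3) is precisely what rules out the degenerate situations (e.g., $\mathrm{Red}(\psi)$ a constant landing in the polar or indeterminate set of $\mathrm{Red}(\phi)$) in which $\mathrm{Red}(\phi)\circ\mathrm{Red}(\psi)$ would fail to be well-defined.
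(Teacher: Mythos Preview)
The paper states this lemma without proof, so there is nothing to compare your argument against directly. Your approach---using that the coefficient-wise reduction $\mathcal{O}_\mathbb{L}[z]\to\mathbb{C}[z]$ is a ring homomorphism together with the multiplicativity of the Gauss norm to control normalization and common factors---is the standard and correct route to these identities. In particular, your handling of (3) is right: the hypothesis $\deg\mathrm{Red}(\psi)\ge 1$ forces $|R|_{\xi_g}=|S|_{\xi_g}=1$ and $\mathrm{Red}(R),\mathrm{Red}(S)$ non-proportional, so for whichever of $\widetilde P,\widetilde Q$ has unit Gauss norm, its reduction evaluated at $(\mathrm{Red}(R),\mathrm{Red}(S))$ is nonzero, and hence $\widetilde P(R,S)/\widetilde Q(R,S)$ is already normalized up to a common factor whose reduction cancels.

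One small caveat worth making explicit: as literally stated, parts (1) and (2) can land in indeterminate forms on the right-hand side (for instance $\phi=z/t$, $\psi=t/z$ gives $\mathrm{Red}(\phi)\cdot\mathrm{Red}(\psi)=\infty\cdot 0$ while $\mathrm{Red}(\phi\psi)=1$; similarly $\phi=t^{-1}$, $\psi=-t^{-1}$ gives $\infty+\infty$ on the right of (2)). Your argument implicitly excludes these by working in the field $\mathbb{C}(z)$ where the reduced fractions live, which is the intended reading; it would strengthen the write-up to say so explicitly.
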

The following result, originally proved by Rivera-Letelier, gives a characteristic of rational maps with nonconstant reductions. 
\begin{proposition}\label{fix-gauss}\cite[Corollary 9.27]{Baker10}
Let $\mathbf{\phi}(z)\in\mathbb{L}(z)$ be a normalized rational map. Then $\deg\mathrm{Red}(\phi)\ge 1$ if and only if $\phi(\xi_g)=\xi_g$.
\end{proposition}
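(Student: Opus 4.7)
The plan is to exploit the correspondence between the Gauss point $\xi_g$ and the closed unit disk $\overline{D}(0,1) \subset \mathbb{L}$: if $\phi = P/Q$ is given in normalized form, then the image $\phi(\xi_g)$ is determined by the smallest closed disk in $\mathbb{L}$ containing $\phi(\overline{D}(0,1))$, which in turn is controlled by the reduction $\mathrm{Red}(\phi) = \bar{P}/\bar{Q} \in \mathbb{C}(z)$. Since normalization forces $|P(z)|, |Q(z)| \le 1$ for every $z \in \overline{D}(0,1)$ by the ultrametric inequality, the whole analysis reduces to whether the images of the open residue disks $D(a,1) \subset \overline{D}(0,1)$ spread across many residue classes or collapse into a single one.

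For the forward direction, assume $\deg \mathrm{Red}(\phi) \ge 1$. Then both $\bar P$ and $\bar Q$ are nonzero and $\bar\phi \in \mathbb{C}(z)$ is nonconstant. For all but finitely many $a \in \mathbb{C}$ (excluding zeros of $\bar Q$), any $z \in D(a,1)$ satisfies $|Q(z)| = |\bar Q(a)| = 1$ and $\overline{\phi(z)} = \bar\phi(a)$. As $a$ ranges, $\bar\phi(a)$ takes infinitely many values in $\mathbb{C}$, so $\phi(\overline{D}(0,1))$ meets infinitely many open residue disks $D(c,1) \subset \overline{D}(0,1)$. The smallest closed disk in $\mathbb{L}$ containing such a spread-out image is necessarily $\overline{D}(0,1)$ itself, hence $\phi(\xi_g) = \xi_g$.

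For the converse, suppose $\mathrm{Red}(\phi) = c \in \mathbb{P}^1(\mathbb{C})$ is a constant. If $c \in \mathbb{C}$, then $\bar P = c\bar Q$ forces $P - cQ \in \mathcal{M}_\mathbb{L}[z]$; evaluating at any $z$ in a residue class where $\bar Q$ is nonzero (a cofinite condition) gives $|Q(z)| = 1$ and $|P(z) - cQ(z)| < 1$, hence $|\phi(z) - c| < 1$. Thus $\phi(\overline{D}(0,1))$ lies in some $\overline{D}(c, r)$ with $r < 1$, so $\phi(\xi_g)$ sits strictly below $\xi_g$ in the Berkovich partial order and cannot equal $\xi_g$. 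If instead $c = \infty$, then $\bar Q = 0$ and $\bar P \ne 0$ (by normalization), and a generic $z \in \overline{D}(0,1)$ yields $|\phi(z)| = |P(z)|/|Q(z)| > 1$, forcing $\phi(\xi_g)$ to lie on the $\infty$-side of $\xi_g$; again $\phi(\xi_g) \ne \xi_g$.

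The main technical obstacle is justifying rigorously that the Berkovich image $\phi(\xi_g)$ equals the smallest closed disk containing $\phi(\overline{D}(0,1))$. This identification is standard and follows from Rivera-Letelier's analysis of the action of rational maps on closed disks, or equivalently from the description of the induced tangent map $d\phi_{\xi_g} \colon T_{\xi_g}\mathbb{P}^1_{\mathrm{Ber}} \to T_{\phi(\xi_g)}\mathbb{P}^1_{\mathrm{Ber}}$, which under the canonical identification $T_{\xi_g}\mathbb{P}^1_{\mathrm{Ber}} \cong \mathbb{P}^1(\mathbb{C})$ is given by $\mathrm{Red}(\phi)$ precisely when $\phi(\xi_g) = \xi_g$. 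Once this framework is in place, every absolute-value estimate above is a routine application of the non-Archimedean ultrametric inequality combined with the normalization of $\phi$.
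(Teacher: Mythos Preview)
The paper does not supply its own proof of this proposition: it is stated with a citation to \cite[Corollary 9.27]{Baker10} and used as a black box. So there is no in-paper argument to compare against.

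Your argument has the right architecture, and the case analysis (nonconstant reduction versus constant reduction equal to $c\in\mathbb{C}$ versus $c=\infty$) is the natural one. However, the bridge you lean on --- ``$\phi(\xi_g)$ corresponds to the smallest closed disk in $\mathbb{L}$ containing $\phi(\overline{D}(0,1))$'' --- is not correct as stated. If $\phi$ has a pole in $\overline{D}(0,1)$ (which certainly happens when $\bar Q$ has a zero), then $\phi(\overline{D}(0,1))$ is not contained in any disk of $\mathbb{L}$ at all, and your sentence ``Thus $\phi(\overline{D}(0,1))$ lies in some $\overline{D}(c,r)$ with $r<1$'' in the $c\in\mathbb{C}$ case is false in general: you only established this for $z$ in the cofinitely many residue classes where $\bar Q$ is nonzero.

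The fix is exactly the description the paper itself gives just before this proposition: $\phi(\xi_g)$ is the Berkovich boundary point of $\phi(D(a,1))$ for any \emph{single} generic $a\in\mathbb{C}$ (one avoiding the zeros and poles of $\bar P,\bar Q$). With that characterization your estimates go through cleanly: in the forward case each such $D(a,1)$ maps onto an open disk of radius $1$ whose Berkovich boundary is $\xi_g$; in the $c\in\mathbb{C}$ case it maps into $D(c,r)$ with $r<1$, so the boundary point has diameter $<1$; in the $c=\infty$ case it maps outside $\overline{D}(0,1)$. You flag this issue in your last paragraph, but you should replace the ``smallest containing disk'' heuristic with the generic-residue-disk description rather than treat the former as something to be cited.
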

At each point $\xi\in\mathbb{P}^1_{\mathrm{Ber}}$, denote by $\deg_\xi\phi$ the local degree of $\phi$ at $\xi$. Then 
\begin{lemma}\label{dir-multi}\cite[Section 9.1]{Baker10}
Let $\mathbf{\phi}(z)\in\mathbb{L}(z)$ be a nonconstant rational map. Let $\xi\in\mathbb{P}^1_{\mathrm{Ber}}$ and $\vec{v}\in T_\xi\mathbb{P}^1_{\mathrm{Ber}}$. Then there is a positive integer $m$ and a point $\xi'\in\mathbf{B}_\xi(\vec{v})^-$ such that 
\begin{enumerate}
\item $\deg_{\xi''}\phi=m$ for all $\xi''\in(\xi,\xi')$, and
\item $\rho(\phi(\xi),\phi(\xi''))=m\rho(\xi,\xi'')$ for all $\xi''\in(\xi,\xi')$.
\end{enumerate}
\end{lemma}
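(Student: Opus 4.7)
\textbf{Proof proposal for Lemma \ref{dir-multi}.}
The plan is to reduce to a purely local computation at the Gauss point by using the action of $\mathrm{PGL}_2(\mathbb{L})$, and then read off both statements from the reduction $\mathrm{Red}(\phi)$. Recall that elements of $\mathrm{PGL}_2(\mathbb{L})$ act on $\mathbb{H}_{\mathrm{Ber}}^1$ by $\rho$-isometries, preserve local degrees, and act transitively on the set of type II points. So for a type II $\xi$, I choose $M_1, M_2 \in \mathrm{PGL}_2(\mathbb{L})$ with $M_1(\xi_g) = \xi$ and $M_2(\phi(\xi)) = \xi_g$, and replace $\phi$ by $\psi := M_2 \circ \phi \circ M_1$. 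Then $\psi(\xi_g) = \xi_g$, $\deg_{\xi_g}\psi = \deg_\xi \phi$, and distances for $\psi$ at $\xi_g$ match distances for $\phi$ at $\xi$. For $\xi$ of type I, III or IV, the tangent space has at most two directions and the analogous Möbius reduction leads to a one-variable power-series computation on a half-open segment, on which the conclusion is immediate; so the real content is the type II case, and I focus on it.

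Assume now $\xi = \phi(\xi) = \xi_g$, and normalize $\phi$ so its coefficients lie in $\mathcal{O}_\mathbb{L}$ with maximum absolute value $1$. By Proposition \ref{fix-gauss}, $\mathrm{Red}(\phi) \in \mathbb{C}(z)$ is a rational map of degree $\ge 1$; under the identification $T_{\xi_g}\mathbb{P}^1_{\mathrm{Ber}} \leftrightarrow \mathbb{P}^1(\mathbb{C})$ the induced action of $\phi$ on directions at $\xi_g$ agrees with $\mathrm{Red}(\phi)$. Given $\vec{v} = \vec{v}_x$, set $m$ to be the local degree of $\mathrm{Red}(\phi)$ at $x$. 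After further coordinate changes on source and target I may assume $x = 0$ and $\mathrm{Red}(\phi)(0) = 0$, so
\[
\phi(z) = \phi_m z^m + \phi_{m+1} z^{m+1} + \cdots, \qquad |\phi_m| = 1, \quad |\phi_i| \le 1.
\]

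Pick $r \in |\mathbb{L}^\times|$ with $r < 1$ and close enough to $1$ that the Newton polygon of $\phi$ shows the monomial $\phi_m z^m$ strictly dominates on $\overline{D}(0,r)$. Then $\phi$ maps $\overline{D}(0,s)$ onto $\overline{D}(0, s^m)$ with multiplicity $m$ for every $s \in [r,1)$, so $\phi(\xi_{0,s}) = \xi_{0, s^m}$ and $\deg_{\xi_{0,s}}\phi = m$. Setting $\xi' := \xi_{0,r}$, the defining formula for $\rho$ gives $\rho(\xi_g, \xi_{0,s}) = \log(1/s)$ and $\rho(\phi(\xi_g), \phi(\xi_{0,s})) = \log(1/s^m) = m\log(1/s)$, which yields both conclusions on $(\xi_g, \xi')$.

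The step I expect to be the main obstacle is the Newton-polygon/leading-monomial argument that certifies $\phi$ is exactly $m$-to-$1$ on $\overline{D}(0,s)$ for all $s$ in an initial interval, together with verifying compatibility between $\phi_*$ on tangent directions at $\xi_g$ and $\mathrm{Red}(\phi)$ on $\mathbb{P}^1(\mathbb{C})$; both require that the reduction of the normalized $\phi$ really does detect the local branching structure, which rests on the fact that $|\phi_m| = 1$ and the higher-order terms become negligible as the radius $s \uparrow 1$.
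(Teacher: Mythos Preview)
The paper does not prove this lemma at all; it merely cites \cite[Section 9.1]{Baker10} and moves on. Your argument is essentially the standard one found there: conjugate to the Gauss point via $\mathrm{PGL}_2(\mathbb{L})$, identify the tangent map with the reduction $\mathrm{Red}(\phi)$, and read off the local behavior from the dominant monomial $\phi_m z^m$ of the normalized expansion. For the type II case this is correct and complete.

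Two minor points worth tightening. First, when you write $\phi(z)=\phi_m z^m+\phi_{m+1}z^{m+1}+\cdots$, recall $\phi$ is rational, not a priori analytic at $0$; you should note that the normalized denominator has a unit constant term (because $\mathrm{Red}(\phi)$ has no pole at $0$), so the expansion is valid in $\mathcal{O}_\mathbb{L}[[z]]$ on a disk about $0$. Second, your treatment of types I, III, IV is a one-line dismissal. For type III and IV you cannot move $\xi$ to $\xi_g$ by $\mathrm{PGL}_2(\mathbb{L})$, since that action preserves types; the honest route is either to invoke the uniform annulus-mapping argument from Baker--Rumely, or to observe that such a $\xi$ lies in the interior of a segment $(\eta,\eta')$ between nearby type II points on which your type II argument already gives constant local degree and the $\rho$-expansion, so the conclusion at $\xi$ is inherited. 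Neither point is a genuine gap, but both deserve a sentence.
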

The integer $m$ is called the directional multiplicity and denoted by $m_\phi(\xi,\vec{v})$.\par 
Let $\xi\in\mathbb{P}^1_{\mathrm{Ber}}$. For any $\vec{v}\in T_\xi\mathbb{P}_{\mathrm{Ber}}^1$, there is a unique $\vec{w}\in T_{\phi(\xi)}\mathbb{P}_{\mathrm{Ber}}^1$ such that for any $\xi'$ sufficiently near $\xi$, $\phi(\xi')\in\mathbf{B}_{\phi(\xi)}(\vec{w})^-$. Thus the rational map $\phi$ induces a map 
$$\phi_\ast:T_{\xi}\mathbb{P}_{\mathrm{Ber}}^1\to T_{\phi(\xi)}\mathbb{P}_{\mathrm{Ber}}^1,$$ 
sending the direction $\vec{v}$ to the corresponding direction $\vec{w}$. \par 
The following lemma gives the relations between local degrees and directional multiplicities.
\begin{lemma}\label{multi-dir-multi}\cite[Theorem 9.22]{Baker10}
Let $\phi\in\mathbb{L}(z)$ be a nonconstant rational map. Let $\xi\in\mathbb{P}^1_{\mathrm{Ber}}$. Then
\begin{enumerate}
\item for each tangent $\vec{w}\in T_{\phi(\xi)}\mathbb{P}^1_{\mathrm{Ber}}$, we have 
$$\deg_\xi\phi=\sum_{\substack{\vec{v}\in T_\xi\mathbb{P}^1_{\mathrm{Ber}}\\ \phi_\ast(\vec{v})=\vec{w}}}m_\phi(\xi,\vec{v});$$
\item the induced map $\phi_\ast:T_{\xi}\mathbb{P}_{\mathrm{Ber}}^1\to T_{\phi(\xi)}\mathbb{P}_{\mathrm{Ber}}^1$ is surjective. If $\xi$ is of type I, III, or IV, then $\deg_\xi\phi=m_\phi(\xi,\vec{v})$ for each $\vec{v}\in T_{\xi}\mathbb{P}_{\mathrm{Ber}}^1$.
\end{enumerate}
\end{lemma}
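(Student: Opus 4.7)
The plan is to reduce part (1) to the classical degree formula for rational maps of $\mathbb{P}^1_{\mathbb{C}}$ via the reduction construction recalled in this subsection, and then to read off part (2) from the cardinalities of tangent spaces at each point type.

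First I would treat the model case $\xi=\phi(\xi)=\xi_g$. By Proposition \ref{fix-gauss} the reduction $\widetilde\phi:=\mathrm{Red}(\phi)\in\mathbb{C}(z)$ is a nonconstant rational map, and its degree equals $\deg_{\xi_g}\phi$. Working with a normalized representative of $\phi$, I would then verify two compatibilities. First, under the identification $T_{\xi_g}\mathbb{P}^1_{\mathrm{Ber}}\cong\mathbb{P}^1$ recalled above, the induced tangent map $\phi_\ast$ coincides with $\widetilde\phi$; this is essentially built into the definition, since $\phi_\ast(\vec{v}_x)$ is determined by tracking the image under $\phi$ of a residue disk $D(a,1)$ whose reduction maps to $x$. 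Second, the directional multiplicity satisfies $m_\phi(\xi_g,\vec{v}_x)=\deg_x\widetilde\phi$: by Lemma \ref{dir-multi}, $m_\phi(\xi_g,\vec{v}_x)$ is the metric stretching factor along the hyperbolic arc emanating from $\xi_g$ in the direction $\vec{v}_x$, while an explicit power-series expansion of the normalized $\phi$ around a lift of $x$ shows this factor equals the order of vanishing of $\widetilde\phi(z)-\widetilde\phi(x)$ at $z=x$. With these two dictionaries, part (1) becomes the classical identity $\sum_{\widetilde\phi(x)=y}\deg_x\widetilde\phi=\deg\widetilde\phi$ on $\mathbb{P}^1_{\mathbb{C}}$, and surjectivity of $\phi_\ast$ becomes surjectivity of $\widetilde\phi$.

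For an arbitrary type II point $\xi$, I would choose $M,M'\in\mathrm{PGL}_2(\mathbb{L})$ with $M(\xi_g)=\xi$ and $M'(\phi(\xi))=\xi_g$, apply the preceding analysis to $\psi:=M'\circ\phi\circ M$, and transfer the conclusions back to $\phi$ using that M\"obius transformations preserve local degrees and directional multiplicities and induce bijections on tangent spaces.

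Finally I would dispose of the remaining types. If $\xi$ is of type I or IV, then $T_\xi\mathbb{P}^1_{\mathrm{Ber}}$ and $T_{\phi(\xi)}\mathbb{P}^1_{\mathrm{Ber}}$ each contain only one element, so part (1) collapses to $\deg_\xi\phi=m_\phi(\xi,\vec{v})$ and the surjectivity claim is automatic. If $\xi$ is of type III, then $T_\xi\mathbb{P}^1_{\mathrm{Ber}}$ has exactly two directions corresponding to the two ends of the arc through $\xi$; using Lemma \ref{dir-multi} and the fact that $\phi$ is locally an isometric stretching of this arc with rate equal to the directional multiplicity on each side, one checks that $\phi_\ast$ is onto the tangent directions at $\phi(\xi)$ and that each side has multiplicity equal to $\deg_\xi\phi$, so that part (1) reduces to a single term on the right. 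The main technical obstacle I expect is the compatibility $m_\phi(\xi_g,\vec{v}_x)=\deg_x\widetilde\phi$ at the Gauss point, which requires carefully matching the metric expansion rate of $\phi$ along a hyperbolic arc with the algebraic ramification order of the reduction through a power-series computation in the normalized representative.
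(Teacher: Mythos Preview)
The paper does not give its own proof of this lemma; it is simply quoted from Baker--Rumely \cite[Theorem 9.22]{Baker10}, so there is no in-paper argument to compare against.

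That said, your outline for the type II case is essentially correct, and in fact is already packaged in Proposition~\ref{algebraic-reduction}: that proposition asserts precisely $\deg_\xi\phi=\deg\mathrm{Red}(\psi)$ and $m_\phi(\xi,\vec{v}_a)=\deg_a\mathrm{Red}(\psi)$, so the sum formula in part (1) is just the classical $\sum_{\widetilde\phi(x)=y}\deg_x\widetilde\phi=\deg\widetilde\phi$. You do not need to re-derive the dictionary yourself.

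For types I, III, IV your argument has a genuine gap. You write that when $T_\xi\mathbb{P}^1_{\mathrm{Ber}}$ has a single direction, part (1) ``collapses to $\deg_\xi\phi=m_\phi(\xi,\vec{v})$'', but this equality is exactly what remains to be proved; observing that the sum has one term does not establish it. Similarly, for type III you assert that both sides of the arc stretch with factor $\deg_\xi\phi$, but you give no reason why the two directional multiplicities must agree, nor why either equals the local degree. These facts require actual work: for type I one uses the classical ramification index and a Newton-polygon/power-series computation; for types III and IV one typically argues via continuity of the local degree along segments or via the annulus description of neighborhoods of such points. Your sketch does not supply these ingredients.
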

The following result allows us to compute the local degree of $\phi$ at a type II point.
\begin{proposition}\label{algebraic-reduction}\cite[Proposition 3.3]{Rivera03II}
Let $\phi\in\mathbb{L}(z)$ be a nonconstant rational function, and let $\xi\in\mathbb{P}^1_{\mathrm{Ber}}$ be a type II point. Set $\xi'=\phi(\xi)$ and choose $M_1,M_2\in\mathrm{PGL}_2(\mathbb{L})$ such that $M_1(\xi_g)=\xi$ and $M_2(\xi_g)=\xi'$. Let $\psi=M_2^{-1}\circ\phi\circ M_1$. Then
$\deg\mathrm{Red}(\psi)\ge 1$ and 
$$\deg_\xi(\phi)=\deg\mathrm{Red}(\psi).$$
For each $a\in\mathbb{P}^1$, if $\vec{v}_a\in T_\xi\mathbb{P}^1_{\mathrm{Ber}}$ is the associated tangent direction under the bijection between $T_\xi\mathbb{P}^1_{\mathrm{Ber}}$ and $\mathbb{P}^1$ afforded by $(M_1^{-1})_\ast$, we have
$$m_\phi(\xi,\vec{v}_a)=\deg_a\mathrm{Red}(\psi).$$
\end{proposition}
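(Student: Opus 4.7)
The plan is to use the $\mathrm{PGL}_2(\mathbb{L})$-invariance of the statement to reduce to the case where both the source and target type II point are the Gauss point, and then to exploit Rivera-Letelier's characterization from Proposition \ref{fix-gauss} together with the directional-multiplicity geometry of Lemma \ref{dir-multi}. Since every $M \in \mathrm{PGL}_2(\mathbb{L})$ is a rational map of degree one, it acts as an isometry of $\mathbb{H}_{\mathrm{Ber}}^1$ with local degree $1$ at every point and induces bijections $M_\ast$ on tangent spaces that preserve directional multiplicities. Hence for $\psi = M_2^{-1} \circ \phi \circ M_1$ we have $\deg_{\xi_g}(\psi) = \deg_{\xi}(\phi)$ and $m_\psi(\xi_g,\vec v) = m_\phi(\xi,(M_1)_\ast \vec v)$, so it suffices to prove the proposition under the additional hypothesis $\xi = \xi' = \xi_g$. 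The inequality $\deg \mathrm{Red}(\psi) \ge 1$ is then exactly Proposition \ref{fix-gauss}.

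Next I would establish the degree equality $\deg_{\xi_g}(\psi) = \deg \mathrm{Red}(\psi)$. Normalize $\psi = P/Q$ with $P,Q \in \mathcal{O}_\mathbb{L}[z]$ coprime and such that some coefficient of $P$ or $Q$ has absolute value one; the normalization exists and is essentially unique up to a unit in $\mathcal{O}_\mathbb{L}$. Fix a type I point $w$ whose reduction $\bar w \in \mathbb{P}^1$ is not a critical value of $\mathrm{Red}(\psi)$. By definition of the local degree at the type II point $\xi_g$, the number of preimages $z$ of $w$ in the open disk $D(0,1)$ (counted with multiplicity) equals $\deg_{\xi_g}(\psi)$. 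These preimages are the roots of $P(z) - wQ(z)$ lying in $\overline{D}(0,1)$, and by a standard Newton-polygon/Hensel argument they are in bijection (counted with multiplicity) with the roots of the reduced polynomial $\mathrm{Red}(P)(z) - \bar w\, \mathrm{Red}(Q)(z) \in \mathbb{C}[z]$, of which there are $\deg \mathrm{Red}(\psi)$.

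For the directional refinement, let $\vec v_a \in T_{\xi_g}\mathbb{P}^1_{\mathrm{Ber}}$ correspond to $a \in \mathbb{P}^1$ under the canonical identification. By Lemma \ref{dir-multi}, there is $\xi'' \in \mathbf{B}_{\xi_g}(\vec v_a)^-$ with $\rho(\xi_g, \psi(\xi'')) = m_\psi(\xi_g, \vec v_a)\,\rho(\xi_g, \xi'')$ and constant local degree $m_\psi(\xi_g,\vec v_a)$ along $(\xi_g,\xi'')$. Pick $\xi'' = \xi_{a+c,\,r}$ with $c \in \mathcal{O}_\mathbb{L}$ of reduction $0$ and $r < 1$ close to $1$; by the normalized form of $\psi$, applying $\psi$ to the corresponding disk and tracking diameters via the Newton polygon of $P(a+u) - \psi(a)Q(a+u)$ shows that the image is a disk of radius $C r^{\deg_a \mathrm{Red}(\psi)}$ for some $C$ independent of $r$. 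Taking logarithms and comparing with the path-metric identity of Lemma \ref{dir-multi} gives $m_\psi(\xi_g, \vec v_a) = \deg_a \mathrm{Red}(\psi)$, as desired.

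The main obstacle I expect is the last step: rigorously connecting the infinitesimal analytic behavior of $\psi$ on an annulus around $\xi_g$ with the algebraic ramification order of $\mathrm{Red}(\psi)$ at $a$. This amounts to a careful Newton-polygon computation for $P(a + u) - \psi(a)Q(a+u)$ whose reductions capture exactly the order of vanishing of $\mathrm{Red}(\psi) - \mathrm{Red}(\psi)(a)$ at $a$; handling the case $a = \infty$ and the case where the reduced polynomial's leading behavior degenerates requires the auxiliary normalization $M_1, M_2$, which is precisely what the $\mathrm{PGL}_2(\mathbb{L})$-reduction in the first step provides.
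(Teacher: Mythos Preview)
The paper does not supply its own proof of this proposition; it is quoted with the citation \cite[Proposition 3.3]{Rivera03II} and used as a black box thereafter. So there is no in-paper argument to compare your proposal against.

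That said, your sketch follows the standard route to this result and is essentially the argument one finds in Rivera-Letelier and in the expositions of Baker--Rumely and Benedetto that the paper cites: reduce via $\mathrm{PGL}_2(\mathbb{L})$-equivariance to the case $\xi=\xi'=\xi_g$, invoke Proposition~\ref{fix-gauss} for $\deg\mathrm{Red}(\psi)\ge 1$, and then read off both the local degree and the directional multiplicities from the reduced map via a Newton-polygon / preimage-counting computation. One small imprecision worth flagging: your sentence ``by definition of the local degree at the type II point $\xi_g$, the number of preimages $z$ of $w$ in the open disk $D(0,1)$ equals $\deg_{\xi_g}(\psi)$'' is not the definition used in the references this paper relies on; that equality is a consequence (for instance of the mapping properties in Proposition~\ref{fundamental-Berk} or of the Weierstrass-degree characterization of $\deg_\xi$), and a rigorous write-up should either cite it or derive it. With that caveat, your outline is sound.
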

Now let's state a fundamental fact in Berkovich dynamics.
\begin{proposition}\label{fundamental-Berk}\cite[Lemma 2.1]{Rivera03II} Let $\phi:\mathbb{P}_{\mathrm{Ber}}^1\to\mathbb{P}_{\mathrm{Ber}}^1$ be a rational map of degree at least $1$. For $\xi\in\mathbb{P}_{\mathrm{Ber}}^1$ and $\vec{v}\in T_\xi\mathbb{P}_{\mathrm{Ber}}^1$, the image $\phi(\mathbf{B}_\xi(\vec{v})^-)$ always contains $\mathbf{B}_{\phi(\xi)}(\phi_\ast(\vec{v}))^-$, and either $\phi(\mathbf{B}_\xi(\vec{v})^-)=\mathbf{B}_{\phi(\xi)}(\phi_\ast(\vec{v}))^-$ or $\phi(\mathbf{B}_\xi(\vec{v})^-)=\mathbb{P}_{\mathrm{Ber}}^1$. Moreover
\begin{enumerate}
\item if $\phi(\mathbf{B}_\xi(\vec{v})^-)=\mathbf{B}_{\phi(\xi)}(\phi_\ast(\vec{v}))^-$, then each $\xi'\in\mathbf{B}_{\phi(\xi)}(\phi_\ast(\vec{v}))^-$ has $m_\phi(\xi,\vec{v})$ preimages in $\mathbf{B}_\xi(\vec{v})^-$, counting multiplicities;\par
\item if $\phi(\mathbf{B}_\xi(\vec{v})^-)=\mathbb{P}_{\mathrm{Ber}}^1$, there is an integer $s_\phi(\xi,\vec{v})>0$ such that each $\xi'\in\mathbf{B}_{\phi(\xi)}(\phi_\ast(\vec{v}))^-$ has $m_\phi(\xi,\vec{v})+s_\phi(\xi,\vec{v})$ preimages in $\mathbf{B}_\xi(\vec{v})^-$ and each $\xi''\in\mathbb{P}_{\mathrm{Ber}}^1\setminus\mathbf{B}_{\phi(\xi)}(\phi_\ast(\vec{v}))^-$ has $s$ preimages in $\mathbf{B}_\xi(\vec{v})^-$, counting multiplicities.
\end{enumerate}
\end{proposition}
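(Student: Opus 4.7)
The plan is to reduce to the case where $\xi$ is a type II point sitting at the Gauss point $\xi_g$, exploit the reduction of $\phi$ via Propositions \ref{fix-gauss} and \ref{algebraic-reduction}, and then establish the dichotomy by a direct analytic argument on the open disk corresponding to $\mathbf{B}_\xi(\vec{v})^-$.

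First I would reduce to the type II case using that type II points are dense in $\mathbb{P}^1_{\mathrm{Ber}}$ and that every Berkovich ball $\mathbf{B}_\xi(\vec{v})^-$ is an increasing union of balls based at type II points, so continuity of $\phi$ transfers the result. Assuming now that $\xi$ is type II, pick $M_1, M_2 \in \mathrm{PGL}_2(\mathbb{L})$ with $M_1(\xi_g) = \xi$ and $M_2(\xi_g) = \phi(\xi)$, and set $\psi = M_2^{-1} \circ \phi \circ M_1$. Then $\psi(\xi_g) = \xi_g$, so Proposition \ref{fix-gauss} gives that $\tilde\psi := \mathrm{Red}(\psi)$ is a nonconstant rational map over $\mathbb{C}$. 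The bijection between $T_{\xi_g}\mathbb{P}^1_{\mathrm{Ber}}$ and $\mathbb{P}^1$ identifies $\vec v$ with some $a \in \mathbb{P}^1$ and $\phi_\ast(\vec v)$ with $\tilde\psi(a)$, while Proposition \ref{algebraic-reduction} gives $m_\phi(\xi, \vec v) = \deg_a \tilde\psi$.

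Next I would show $\mathbf{B}_{\phi(\xi)}(\phi_\ast(\vec v))^- \subset \phi(\mathbf{B}_\xi(\vec v)^-)$. By Lemma \ref{dir-multi}, $\psi$ stretches a geodesic segment emanating from $\xi_g$ along $\vec v$ by the factor $m_\phi(\xi, \vec v)$ onto a geodesic segment in $\mathbf{B}_{\phi(\xi)}(\phi_\ast(\vec v))^-$; combined with the local counting of Lemma \ref{multi-dir-multi} and the path-connectedness of a Berkovich ball, this extends the image to fill the entire target ball. For the dichotomy, set $B = \mathbf{B}_\xi(\vec v)^-$ and $B' = \mathbf{B}_{\phi(\xi)}(\phi_\ast(\vec v))^-$. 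The boundary $\partial B = \{\xi\}$ maps to $\phi(\xi) \in \partial B'$. If $\phi(B) \not\subset B'$, pick $w \in \phi(B) \setminus \overline{B'}$. I would then invoke the Berkovich analogue of the classical non-Archimedean statement that a rational function on an open disk whose range exits its natural target disk attains every value: write $\psi - w'$ in a local chart on $B$ and use a Weierstrass preparation / Newton-polygon analysis to extract preimages in $B$ for every $w' \in \mathbb{P}^1_{\mathrm{Ber}}$. This forces $\phi(B) = \mathbb{P}^1_{\mathrm{Ber}}$. Finally, defining $s_\phi(\xi, \vec v)$ as the multiplicity-count of preimages in $B$ of a generic point of $\mathbb{P}^1_{\mathrm{Ber}} \setminus \overline{B'}$ and summing directional multiplicities over the full tangent space via Lemma \ref{multi-dir-multi} forces every point of $B'$ to have $m_\phi(\xi, \vec v) + s_\phi(\xi, \vec v)$ preimages in $B$.

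The main obstacle is the dichotomy itself: ruling out intermediate images $B' \subsetneq \phi(B) \subsetneq \mathbb{P}^1_{\mathrm{Ber}}$. This hinges on the analytic fact that an ``unbounded'' meromorphic function on a non-Archimedean open disk is surjective, which is precisely the Weierstrass / Newton-polygon computation sketched above. Once the dichotomy is in hand, the preimage counts in each case are an immediate bookkeeping consequence of Lemma \ref{multi-dir-multi} applied at $\xi$.
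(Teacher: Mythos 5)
This proposition is not proved in the paper at all: it is imported verbatim from Rivera-Letelier \cite[Lemma 2.1]{Rivera03II} as background, so there is no internal argument to compare yours against; I can only judge your sketch on its own terms, against the standard proofs of this statement.

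On those terms the gaps sit exactly where the content is. (a) The containment $\mathbf{B}_{\phi(\xi)}(\phi_\ast(\vec v))^-\subset\phi(\mathbf{B}_\xi(\vec v)^-)$ does not follow from Lemma \ref{dir-multi} plus ``path-connectedness'': those facts only show the image contains a short initial segment of the target ball near $\phi(\xi)$, and connectedness of an image never forces it to fill out a ball. (b) Your dichotomy rests on the claim that an ``unbounded'' meromorphic function on a non-Archimedean open disk attains every value; as literally stated this is false ($z\mapsto 1/z$ on the open unit disk $D(0,1)$ is unbounded, yet its image is the complement of the closed unit ball), and the correct relative statement---if the image leaves the ball singled out by $\phi_\ast(\vec v)$, then it is all of $\mathbb{P}^1_{\mathrm{Ber}}$---is precisely the proposition being proved, so invoking it is circular; the promised Weierstrass/Newton-polygon computation is where the whole proof lives and it is deferred, not carried out. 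Note also that counting zeros of $\psi-w'$ in a chart only produces preimages of type I values, and an open connected set containing every type I (hence every type II and III) point can still omit a type IV point, so surjectivity onto $\mathbb{P}^1_{\mathrm{Ber}}$ needs a further argument. (c) The counting clauses, including the well-definedness of $s_\phi(\xi,\vec v)$, are not ``bookkeeping from Lemma \ref{multi-dir-multi}'': that lemma only describes the fibres of the tangent map at the single point $\xi$ and says nothing about preimage counts inside the ball; likewise your reduction to type II base points transfers the set-theoretic dichotomy plausibly, but not the multiplicities, since the directional and surplus multiplicities at approximating type II points are not compared to those at a type I or IV point. The standard route closes all of these at once: because $\partial\mathbf{B}_\xi(\vec v)^-=\{\xi\}$, the number of preimages in $\mathbf{B}_\xi(\vec v)^-$ of a point $w$, counted with multiplicity, is constant on each connected component of $\mathbb{P}^1_{\mathrm{Ber}}\setminus\{\phi(\xi)\}$, and identifying this count on $\mathbf{B}_{\phi(\xi)}(\phi_\ast(\vec v))^-$ and on the remaining components yields the containment, the dichotomy, and the formulas with $m_\phi(\xi,\vec v)$ and $s_\phi(\xi,\vec v)$ simultaneously; that constancy-of-fibre-count argument (or a fully executed Newton-polygon count) is the ingredient your outline is missing.
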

Following Faber \cite{Faber13I}, the integer $s_\phi(\xi,\vec{v})$ is called the surplus multiplicity.\par 
The following lemma gives us a criterion for which Berkovich disk $\mathbf{B}_\xi(\vec{v})^-$ is mapped to another Berkovich disk.
\begin{lemma}\label{disk-to-disk}\cite[Theorem 9.42]{Baker10}
Let $\phi\in\mathbb{L}(z)$ be a nonconstant rational map. Then $\phi(\mathbf{B}_\xi(\vec{v})^-)$ is a Berkovich disk if and only if for each $c\in\mathbf{B}_\xi(\vec{v})^-$, the local degree of $\phi$ is nonincreasing on the directed segment $[\xi,c]$.  
\end{lemma}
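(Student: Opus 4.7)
The plan is to derive the lemma from Proposition \ref{fundamental-Berk}, which already provides the dichotomy that $\phi(\mathbf{B}_\xi(\vec{v})^-)$ is either the Berkovich disk $\mathbf{B}_{\phi(\xi)}(\phi_\ast(\vec{v}))^-$ or all of $\mathbb{P}^1_{\mathrm{Ber}}$, with the second case detected by a positive surplus multiplicity $s := s_\phi(\xi,\vec{v})$. Writing $m := m_\phi(\xi,\vec{v})$, the lemma thus amounts to showing that $s=0$ if and only if $\deg_\bullet\phi$ is nonincreasing along every directed segment $[\xi,c]$ with $c\in\mathbf{B}_\xi(\vec{v})^-$. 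By Lemma \ref{dir-multi}, this function is locally constant on open subsegments and jumps only at type II points, so the analysis reduces to bookkeeping at type II branch points using Lemma \ref{multi-dir-multi}.

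For the $(\Rightarrow)$ direction, assume $s=0$, so by case (1) of Proposition \ref{fundamental-Berk} the restriction $\phi|_{\mathbf{B}_\xi(\vec{v})^-}$ is an $m$-to-$1$ branched cover onto $\mathbf{B}_{\phi(\xi)}(\phi_\ast(\vec{v}))^-$. Fix $c\in\mathbf{B}_\xi(\vec{v})^-$ and a type II point $\eta\in(\xi,c]$, and let $\vec{w}\in T_\eta\mathbb{P}^1_{\mathrm{Ber}}$ be the tangent direction pointing back toward $\xi$, so that $\mathbf{B}_\eta(\vec{w})^-\supset\mathbb{P}^1_{\mathrm{Ber}}\setminus\mathbf{B}_\xi(\vec{v})^-$. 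Then $\phi_\ast(\vec{w})$ is the direction at $\phi(\eta)$ pointing back toward $\phi(\xi)$, and by Lemma \ref{multi-dir-multi}(1),
\begin{equation*}
\deg_\eta\phi \;=\; \sum_{\phi_\ast(\vec{v}\,')=\phi_\ast(\vec{w})} m_\phi(\eta,\vec{v}\,').
\end{equation*}
Counting preimages in $\mathbf{B}_\xi(\vec{v})^-$ of a generic type I point in $\mathbf{B}_{\phi(\eta)}(\phi_\ast(\vec{w}))^-\cap\mathbf{B}_{\phi(\xi)}(\phi_\ast(\vec{v}))^-$ gives at most $m$ such preimages (since the global fiber in the source disk has size exactly $m$), so $\deg_\eta\phi\le m$; moreover, just below $\eta$ on $[\xi,c]$ the local degree equals $m_\phi(\eta,\vec{w})\le\deg_\eta\phi$ paired with the reverse inequality coming from directional-multiplicity summation along $\phi_\ast(\vec{w})$ at $\eta$, yielding nonincreasing-ness across each jump.

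For $(\Leftarrow)$, suppose the local degree is nonincreasing along all segments from $\xi$ and, for contradiction, that $s>0$. By case (2) of Proposition \ref{fundamental-Berk}, choose a type I point $\xi_0\in\mathbf{B}_\xi(\vec{v})^-$ whose image $\xi_0':=\phi(\xi_0)$ lies outside $\mathbf{B}_{\phi(\xi)}(\phi_\ast(\vec{v}))^-$. The image of the segment $[\xi,\xi_0]$ is a path from $\phi(\xi)$ to $\xi_0'$ that must leave $\mathbf{B}_{\phi(\xi)}(\phi_\ast(\vec{v}))^-$, so there is a first $\eta\in(\xi,\xi_0]$ at which $\phi_\ast$ maps the outgoing tangent direction to something other than $\phi_\ast(\vec{v})$. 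Applying Lemma \ref{multi-dir-multi}(1) at $\eta$ with the target direction $\phi_\ast(\vec{v})$, the sum of directional multiplicities at $\eta$ over directions mapping to $\phi_\ast(\vec{v})$ must equal $\deg_\eta\phi$; but the preimages of a generic type I point in $\mathbf{B}_{\phi(\xi)}(\phi_\ast(\vec{v}))^-$ lying in $\mathbf{B}_\xi(\vec{v})^-$ total $m+s > m$ (again by Proposition \ref{fundamental-Berk}(2)), and a fiber-counting argument forces $\deg_\eta\phi \ge m+s > m$, which exceeds the local degree on $(\xi,\eta)$, contradicting nonincreasing-ness.

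The main obstacle is the contradiction step in $(\Leftarrow)$: one has to locate a type II point $\eta$ on a carefully chosen path where the local degree is forced to strictly exceed $m$, and to run this through the directional-multiplicity accounting of Lemma \ref{multi-dir-multi}(1) so that the extra $s$ preimages provided by Proposition \ref{fundamental-Berk}(2) are faithfully registered as an increase in $\deg_\bullet\phi$ rather than being absorbed into other tangent directions.
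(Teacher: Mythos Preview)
The paper does not supply a proof of this lemma; it is quoted verbatim as \cite[Theorem 9.42]{Baker10} and used as a black box. So there is no ``paper's own proof'' to compare against, and your attempt is necessarily going beyond what the paper does.

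On the substance of your sketch: the $(\Rightarrow)$ direction is essentially sound, though the write-up is tangled. The clean way to phrase it is that once $\phi(\mathbf{B}_\xi(\vec{v})^-)$ is a disk, every sub-disk $\mathbf{B}_\eta(\vec{w})^-\subset\mathbf{B}_\xi(\vec{v})^-$ (with $\vec{w}$ pointing away from $\xi$) also maps to a proper subset of $\mathbb{P}^1_{\mathrm{Ber}}$, hence to a disk by Proposition~\ref{fundamental-Berk}; the $m_\phi(\eta,\vec{w})$-to-$1$ covering then forces $\deg_{\eta'}\phi\le m_\phi(\eta,\vec{w})$ for every $\eta'\in\mathbf{B}_\eta(\vec{w})^-$, and this cascades to nonincreasing-ness along $[\xi,c]$. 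Your version mixes up which direction $\vec{w}$ points (toward $\xi$ versus toward $c$) and what ``just below $\eta$'' means, which makes the final sentence hard to parse.

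The $(\Leftarrow)$ direction has a real gap, which you yourself flag. You locate a point $\eta$ on $[\xi,\xi_0]$ where the image path exits the target disk, and then assert that ``a fiber-counting argument forces $\deg_\eta\phi\ge m+s$.'' This does not follow: the $m+s$ preimages of a point in $\mathbf{B}_{\phi(\xi)}(\phi_\ast(\vec{v}))^-$ promised by Proposition~\ref{fundamental-Berk}(2) are spread throughout $\mathbf{B}_\xi(\vec{v})^-$, not concentrated at or near $\eta$, so there is no reason they should all be accounted for in the directional-multiplicity sum at $\eta$. What you actually get at such an $\eta$ is that $\phi(\eta)=\phi(\xi)$ and the backward direction $\vec{w}_-$ satisfies $\phi_\ast(\vec{w}_-)=\phi_\ast(\vec{v})$; to derive a contradiction with nonincreasing-ness you would need a \emph{second} direction at $\eta$ also mapping to $\phi_\ast(\vec{v})$, and nothing in your argument produces one. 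The actual proof in Baker--Rumely proceeds differently, via an explicit formula relating the surplus multiplicity $s_\phi(\xi,\vec{v})$ to the zeros and poles of (a normalization of) $\phi$ lying in $\mathbf{B}_\xi(\vec{v})^-$, which makes the connection to local-degree jumps direct.
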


\subsection{Berkovich Dynamics of Newton maps}
Fix $d\ge 3$. Let 
$$r=\{r_1,\cdots,r_d\}\subset\mathbb{L}$$ 
be a set of $d$ distinct points. Set $P_r(z)=\prod_{i=1}^d(z-r_i)\in\mathbb{L}[z]$. Same as complex case, the Newton map $N_r:\mathbb{P}^1_{\mathbb{L}}\to\mathbb{P}^1_{\mathbb{L}}$ of the polynomial $P_r$ is defined by 
$$N_r(z)=z-\frac{P_r(z)}{P_r'(z)}.$$
The map $N_r$ extends to a map $N_r:\mathbb{P}^1_{\mathrm{Ber}}\to\mathbb{P}^1_{\mathrm{Ber}}$. To ease notation, we write $N$ for $N_r$. In this subsection, we state some fundamental properties of $N$ in the Berkovich space $\mathbb{P}^1_{\mathrm{Ber}}$. These properties shed light on the case when we consider holomorphic families of Newton maps. Indeed, if $\{N_{r(t)}\}$ is a holomorphic family of degree $d$ Newton maps with $r(t)=\{r_1(t),\cdots,r_d(t)\}$, by regarding each $r_i(t)$ as an element in $\mathbb{L}$, the associated map for $\{N_{r(t)}\}$ is a Newton map in $\mathbb{L}(z)$.\par 
Since $\mathbb{L}$ is a complete and algebraically closed field of characteristic zero, the map $N$ has $2d-2$ critical points, counted with multiplicity, in $\mathbb{P}^1_{\mathbb{L}}$. Thus the collections $\mathrm{Crit}(N)\cup\{\infty\}$ is a finite collection of elements in $\mathbb{P}^1_{\mathbb{L}}$, which sits in the Berkovich space $\mathbb{P}^1_{\mathrm{Ber}}$. Let $H_{\mathrm{big}}$ be convex hull of $\mathrm{Crit}(N)\cup\{\infty\}$. As a topological space, $H_{\mathrm{big}}$ is homeomorphic to the underlying space of a finite tree. We may equip this undelying space with a natural graph-theoretic combinatorial structure so that we may speak of vertices and edges. Consider the branch points as the vertices of $H_{\mathrm{big}}$. Then $H_{\mathrm{big}}$ is a finite tree and it plays a central role in our study. Since it is the biggest of several trees we will consider, we indicate this by the subscript. \par
In general, the vertices of $H_{\mathrm{big}}$ come in two flavors: (1) those in the hyperbolic space $\mathbb{H}_{\mathrm{Ber}}$, which are called the internal vertices of the tree, and (2) those in $\mathbb{P}^1_\mathbb{L}$, i.e. the critical points and the point $\infty$, which are called the leaves of the tree. Note the fixed points of $N_r$ in $\mathbb{P}^1_{\mathbb{L}}$ are $r_i$s and $\infty$. Then the convex hull $H_{\mathrm{fix}}$ of $\{r_1,\cdots,r_d,\infty\}$ sits in $H_{\mathrm{big}}$. As for $H_{\mathrm{big}}$, we can consider vertices, internal vertices and leaves for $H_{\mathrm{fix}}$. Then a vertex of $H_{\mathrm{fix}}$ is a vertex of $H_{\mathrm{big}}$. Hence $H_{\mathrm{fix}}$ is a subtree of $H_{\mathrm{big}}$. Let $V$ be the set of internal vertices of $H_{\mathrm{fix}}$. We have
$$V=\{\xi\in H_{\mathrm{fix}}:\mathrm{Val}_{H_{\mathrm{fix}}}(\xi)\ge 3\},$$
where $\mathrm{Val}_E(\xi)$ is the valance of $\xi\in E$ in a connected subset $E\subset\mathbb{P}_{\mathrm{Ber}}^1$, that is, the number of components of $E\setminus\{\xi\}$. Thus each element in $V$ is an internal vertex of $H_{\mathrm{big}}$. The convex hull $H_V$ spanned by $V$ is a subset of $\mathbb{H}_{\mathrm{Ber}}$. It also play a key role. The path distance metric on $\mathbb{H}_{\mathrm{Ber}}$ gives $H_V$ a natural metric structure that we study in section \ref{rescaling}. Moreover, as we will discuss in \cite{Nie-2}, the combinatorial tree $H_V$ encodes the stratum in the Deligne-Mumford compactification of the moduli space $\mathcal{M}_{0,d+1}$  of Riemann surfaces of genus $0$ with $d+1$ marked points to which the configuration $\{r_1,\cdots,r_d,\infty\}$ converges.\par 
To make it clear, we list the subtrees we will study.
\begin{align*}
&H_{\mathrm{big}}:=\mathrm{Hull}(\mathrm{Crit}(N)\cup\{\infty\}),\\
&H_{\mathrm{fix}}:=\mathrm{Hull}(\{r_1,\cdots,r_d,\infty\}),\\
&H_{r}:=\mathrm{Hull}(\{r_1,\cdots,r_d\}),\\
&H_{\mathrm{crit}}:=\mathrm{Hull}(\mathrm{Crit}(N)),\\
&H_{V}:=\mathrm{Hull}(V),\\
&H_{V}^\infty:=\mathrm{Hull}(V\cup\{\infty\}),
\end{align*}
where $V:=\{\xi\in H_{\mathrm{fix}}:\mathrm{Val}_{H_{\mathrm{fix}}}(\xi)\ge 3\}$. \par
To illustrate the these subtrees, we first define visible point, which was introduced by Faber \cite{Faber13II}
\begin{definition}
For a connected subset $E\subset\mathbb{P}^1_{\mathrm{Ber}}$ and a point $\xi\in\mathbb{P}^1_{\mathrm{Ber}}$, we say a point $\xi'$ is \textit{the visible point from $\xi$ to $E$} if $\xi'$ is the unique point of closure $\overline{E}$ of $E$ (in strong topology) minimize the small metric from $\xi$ to $E$. Denote by $\pi_{E}(\xi)$ the visible point from $\xi$ to $E$. 
\end{definition}
Note if $\xi\in\overline{E}$, we have $\pi_{E}(\xi)=\xi$. If $\xi\not\in\overline{E}$, then $\pi_{E}(\xi)$ is the unique point on $\overline{E}$ disconnecting $E$ from $\xi$.\par  
Now we illustrate the several subtrees in $\mathbb{P}^1_{\mathrm{Ber}}$. As an motivating example, we consider
\begin{example}
Let $r_1=0, r_2=t, r_3=2t, r_4=1, r_5=1+t$ and $r_6=2$ be six points in $\mathbb{L}$. Set $r=\{r_1,r_2,r_3,r_4,r_5,r_6\}$ and consider the map $N_r$. Then $N_r$ has four free critical points $c_1,c_2,c_3$ and $c_4$. Figure \ref{big-hull} shows the convex hull $H_{\mathrm{big}}$ for $N_r$. The hull $H_{\mathrm{big}}$ has internal vertices $v_1,v_2,v_3\in\mathbb{H}_{\mathrm{Ber}}$ and leaves $r_1,\cdots,r_6,c_1,\cdots,c_4,\infty\in\mathbb{P}_{\mathbb{L}}^1$. Let $V=\{v_1,v_2,v_3\}$ and $H_V=\mathrm{Hull}(V)$. At $v_1$, the reduction of $N_r$ has degree $3$. Hence there are $4$ critical points of $N_r$ whose visible points to $H_V$ are $v_1$. At $v_2$, the map $(N_r)_\ast$ has $1$ superattracting fixed point and $2$ attracting fixed points. Thus there are at least two directions whose corresponding Berkovich disks containing free critical points, say $c_2,c_4$, attracted to these two attracting fixed points. At $v_3$, the reduction of $N_r$ has degree $2$.
\begin{figure}[h!]
\centering
\includegraphics[width=60mm]{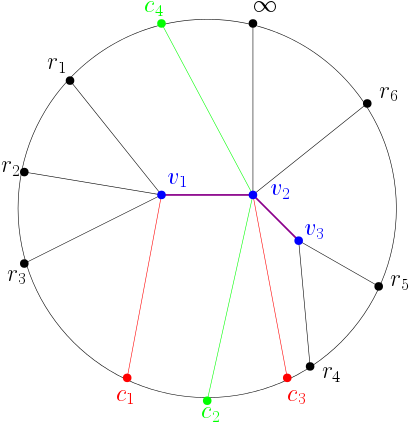}
\caption{The hull $H_{\mathrm{big}}\subset\mathbb{P}^1_{\mathrm{Ber}}$ for $N_r$. The hull $H_V$ consists of the edges $(v_1,v_2)$, $(v_2,v_3)$ and the vertices $v_1,v_2,v_3$. The points $c_1,c_2,c_3$ and $c_4$ are the free critical points of $N_r$. At $v_2$, the directions containing $c_2$ and $c_4$ are attracted to a attracting fixed points of the map $(N_r)_\ast$.}
\label{big-hull}
\end{figure}
\end{example}\par
For now, we focus on combinatorial aspects. The following theorem summarizes the properties of these subtrees. The proof depends on a sequence of lemmas established later in this subsection. 
\begin{theorem}\label{tree-properties}
For a Netwon map $N(z)\in\mathbb{L}(z)$ of degree $d\ge 3$, consider the subtrees defined as above. Then we have 
\begin{enumerate}
\item The set $V$ is a singleton if and only if the map $N$ has a potentially good reduction.
\item Let $\mathrm{Fix}_{\mathrm{Ber}}(N)$ be the set of fixed points of $N$ in $\mathbb{P}^1_{\mathrm{Ber}}$. Then 
$$\mathrm{Fix}_{\mathrm{Ber}}(N)=H_{V}^\infty\cup\{r_1,\cdots,r_d\}.$$
\item The visible points from elements in $\mathrm{Crit}(N)$ to $H_V^\infty$ lie in $V$. Moreover, for each $v\in V$, there are $2\deg_v N-2$ points, counted with multiplicity, in $\mathrm{Crit}(N)$ whose visible points to $H_{V}^\infty$ are $v$.
\item For each $v\in V$, the visible points from the preimages $N^{-1}(v)$ to $H_{\mathrm{fix}}$ lie in $V$.
\item The set $H_{\mathrm{fix}}$ is forward invariant and contains $H_{V}^\infty$. Moreover, edges of  $H_{\mathrm{fix}}$ maps to themselves. In particular, each point in $H_r\setminus H_V$ is attracted to some $r_i$.
\item The Berkovich ramification locus $\mathcal{R}_N$ of $N$ consists of the union of closed segments $[v,a]$s, where $a\in\mathrm{Crit}(N)$ and $v$ is the visible point from $a$ to $H_V$. 
\end{enumerate}
\end{theorem}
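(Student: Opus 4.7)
The plan is to establish the six parts in the order (2), (5), (1), (3), (4), (6), using throughout the reduction calculus at type II points (Proposition \ref{algebraic-reduction}), the directional multiplicity formalism (Lemmas \ref{dir-multi} and \ref{multi-dir-multi}), and the normal form of $N$ near $\infty$, whose multiplier $d/(d-1)$ has absolute value one in $\mathbb{L}$.

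For (2), I would first show $V\subseteq\mathrm{Fix}_{\mathrm{Ber}}(N)$. Since each $v\in V$ has valence at least three in $H_{\mathrm{fix}}$, at least three of the fixed points of $N$ sit in three distinct directions at $v$. If $N(v)\neq v$, Proposition \ref{fundamental-Berk} says at most one direction at $v$ can map to the tangent direction at $N(v)$ pointing back toward $v$, contradicting the existence of three independent fixed directions. To extend fixedness along the rest of $H_V^\infty$, on an edge $[v_1,v_2]$ between vertices of $V$ the reduction at each interior point has two fixed tangent directions (one toward each $v_i$), each containing a fixed point of $N$, so a multiplier computation via Proposition \ref{algebraic-reduction} forces the local degree to be one along the segment; on the terminal segment $[v,\infty]$, a direct computation using $|d/(d-1)|=1$ yields $N(\xi_{0,R})=\xi_{0,R}$ for $R$ exceeding the maximum absolute value of the roots. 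For the reverse inclusion, a fixed type II or type III point in $\mathbb{H}_{\mathrm{Ber}}$ outside $H_V^\infty$ contradicts the attraction along edges described in (5). Part (5) then follows: the vertices of $H_{\mathrm{fix}}$ (the $r_i$'s, $\infty$, and elements of $V$) are all fixed, so each edge $e$ of $H_{\mathrm{fix}}$ has fixed endpoints; a monotonicity argument on directional multiplicities combined with Lemma \ref{disk-to-disk} yields $N(e)=e$; the superattracting nature of the roots from Remark \ref{nondegenrate-Newton} gives the attraction claim on $H_r\setminus H_V$.

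Part (1) is then immediate: $V=\{v\}$ iff $H_{\mathrm{fix}}$ is a star at $v$ iff all $d+1$ fixed points of $N$ lie in $d+1$ distinct directions at $v$ iff, by Proposition \ref{algebraic-reduction}, the reduction of a suitable conjugate fixing $\xi_g$ has at least $d+1$ fixed points, i.e.\ it has degree $d$, which is the definition of potentially good reduction. For (3), at each $v\in V$ the reduction $\bar N_v$ has degree $\deg_v N$ and hence $2\deg_v N-2$ critical points on $\mathbb{P}^1$ counted with multiplicity; by Proposition \ref{algebraic-reduction} these correspond to tangent directions $\vec v_a$ at $v$ with $m_N(v,\vec v_a)\geq 2$, and by Proposition \ref{fundamental-Berk} the excess $m_N(v,\vec v_a)-1$ produces that many critical points of $N$ in $\mathbf{B}_v(\vec v_a)^-$. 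Since directions along edges of $H_V^\infty$ at $v$ are non-critical by the edge analysis of (2), all such critical points have visible point exactly $v$ in $H_V^\infty$. Part (4) is parallel: the surjectivity of $N_\ast$ on tangent spaces (Lemma \ref{multi-dir-multi}), together with non-criticality of edge directions, forces preimages of $v$ to reduce to vertices of $V$ rather than to interiors of edges. Finally, (6) follows from Lemma \ref{dir-multi}: along the segment from $v\in V$ to a critical point $a\in\mathrm{Crit}(N)$ lying in direction $\vec v_a$, the local degree remains $\geq m_N(v,\vec v_a)\geq 2$, placing $[v,a]\subseteq\mathcal{R}_N$, while elsewhere the local degree is one.

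The main obstacle is the edge analysis in (2): showing every interior point of an edge of $H_V^\infty$ is fixed, not merely the vertices. This requires careful control of directional multiplicities along edges, leveraging the presence of fixed points on both sides together with the special behavior $|d/(d-1)|=1$ at $\infty$. A secondary difficulty is showing that directions along $H_V^\infty$-edges are non-critical for the reduction at each $v\in V$; this is where the hypothesis $d\geq 3$ genuinely enters, ensuring enough fixed directions at each branch point to constrain the reduction sufficiently.
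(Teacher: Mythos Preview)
Your overall architecture is reasonable and in places parallels the paper, but there is one central idea you are missing and one genuine gap.

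The paper's proofs of (2), (3), (5) all pivot on a single algebraic observation you never invoke: if $M\in\mathrm{PGL}_2(\mathbb{L})$ is affine with $M(\xi_g)=\xi$, then $M^{-1}\circ N\circ M$ is again a Newton map over $\mathbb{L}$, for the polynomial with roots $M^{-1}(r_i)$. Hence its reduction is a (possibly degenerate) complex Newton map. For $\xi\in H_V^\infty$ this reduction is $\widehat N_{\tilde r}$ for some multiset $\tilde r$ with at least two distinct finite entries, so it is nonconstant and $\xi$ is fixed (Proposition~\ref{fix-gauss}); for $\xi$ a type~II point off $H_V^\infty$ at least $d-1$ of the $M^{-1}(r_i)$ reduce to $\infty$, forcing the reduction to be constant and $\xi$ not fixed. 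This replaces your edge-by-edge multiplier analysis, your explicit computation along $[v,\infty]$, and your fixed-direction counting argument at vertices, all at once. It is also how the paper gets Corollary~\ref{V-deg} ($\deg_\xi N=1$ on $H_V^\infty\setminus V$), which drives (3), (4), (6).

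Your reverse inclusion in (2) has a real gap. You write that a fixed point in $\mathbb{H}_{\mathrm{Ber}}$ outside $H_V^\infty$ ``contradicts the attraction along edges described in (5)'', but (5) only concerns points of $H_r\setminus H_V$, not arbitrary points of $\mathbb{P}^1_{\mathrm{Ber}}\setminus H_V^\infty$. A putative fixed point off $H_{\mathrm{fix}}$ entirely is not covered by anything you have established. The paper closes this by the reduction argument above for type~II points, and for type~III/IV points by taking the visible point $\eta=\pi_{H_{\mathrm{fix}}}(\xi)$, noting the relevant direction at $\eta$ is not fixed by $N_\ast$, and invoking Lemma~\ref{disk-to-disk} to see the whole disk moves off itself. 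Your sketches of (4) and (6) are also too thin: for (4) you need something like the paper's counting argument (Corollary~\ref{not-whole-Ber}) showing no preimage of $v\in V$ can have visible point $v$ itself except $v$, and for (6) the claim that the local degree stays $\ge m_N(v,\vec v_a)$ along the full segment $[v,a]$ is neither true in general nor needed---you only need $\ge 2$, and for that you must first know the disk $\mathbf{B}_v(\vec v_a)^-$ maps to a disk, which again is Corollary~\ref{not-whole-Ber}.
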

\begin{remark}
In Theorem \ref{tree-properties}, the assumption $d\ge 3$ is necessary. Indeed, if $d=2$, then $H_r=H_{\mathrm{crit}}$ and $V$ is always a singleton. Hence $H_{V}^\infty$ is a segment and the ramification locus is connected which is $H_r$. But Theorem \ref{tree-properties} $(2)-(5)$ are still true for $d=2$. 
\end{remark}
If $V=\{\xi\}$ is a singleton, then $\mathrm{Val}_{H_{\mathrm{fix}}}(\xi)=d$. Hence $\deg_{\xi}N=d$. Let $M\in\mathrm{PGL}_2(\mathbb{L})$ such that $M(\xi_g)=\xi$. Then $\deg_{\xi_g}M^{-1}\circ N\circ M=d$. By Proposition \ref{algebraic-reduction}, we have $\deg\mathrm{Red}(M^{-1}\circ N\circ M)=d$. Hence $N$ has a potentially good reduction. Conversely, if $N$ has a potentially good reduction, then there exits $M\in\mathrm{PGL}_2(\mathbb{L})$ such that $\deg\mathrm{Red}(M^{-1}\circ N\circ M)=d$. So $\deg_{\xi_g}M^{-1}\circ N\circ M=d$. Letting $\xi=M(\xi_g)$, we have $\deg_{\xi}N=d$. Hence $V=\{\xi\}$ is a singleton. This gives the proof of Theorem \ref{tree-properties} $(1)$. \par
\begin{lemma}
$\pi_{H_{r}}(\infty)=\pi_{H_V}(\infty)$.
\end{lemma}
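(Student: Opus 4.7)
The plan is to show that $v:=\pi_{H_r}(\infty)$ already lies in $V\subseteq H_V$, and that the open segment $(\infty,v)$ misses $H_V$ entirely; these two facts will immediately give $\pi_{H_V}(\infty)=v=\pi_{H_r}(\infty)$. The first step is to rule out $v\in\{r_1,\dots,r_d\}$. Each $r_i\in\mathbb{P}^1_{\mathbb{L}}$ is a type I point, so $T_{r_i}\mathbb{P}^1_{\mathrm{Ber}}$ is a singleton; for any $j\neq i$, the segments $[r_i,\infty]$ and $[r_i,r_j]$ therefore leave $r_i$ in the same tangent direction and must coincide on a nontrivial initial subsegment, which is contained in $[r_i,r_j]\subset H_r$. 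Consequently $[\infty,r_i]\cap H_r\supsetneq\{r_i\}$, and the visible point from $\infty$ to $H_r$ is encountered strictly before $r_i$, so $v\neq r_i$.

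Next, because $v\in H_r$ is not a leaf of $H_r$, it is either a branch point of $H_r$ or lies interior to one of its edges, yielding $\mathrm{Val}_{H_r}(v)\geq 2$. The segment $[v,\infty]$ contributes an additional tangent direction at $v$ which is not present in $H_r$ (else $v$ would fail to be the visible point from $\infty$ to $H_r$), so $\mathrm{Val}_{H_{\mathrm{fix}}}(v)\geq 3$ and hence $v\in V\subseteq H_V$. For the disjointness claim I would use the decomposition $H_{\mathrm{fix}}=H_r\cup[v,\infty]$, in which the extra edge meets $H_r$ only at $v$: every point of $H_{\mathrm{fix}}$ of valence at least $3$ must lie in $H_r$, so $V\subseteq H_r$ and therefore $H_V=\mathrm{Hull}(V)\subseteq H_r$. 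Since $v=\pi_{H_r}(\infty)$, the open arc $(\infty,v)$ is disjoint from $H_r$ and hence from $H_V$. Combined with $v\in H_V$, this identifies $v$ as the first point of $H_V$ met along the path from $\infty$, yielding $\pi_{H_V}(\infty)=v$.

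The one non-combinatorial step, and the main conceptual obstacle, is the type I tangent-space remark that prevents $v$ from equalling any root $r_i$; all the rest is a consequence of the structure of $H_{\mathrm{fix}}$ as the tree obtained by attaching the leaf $\infty$ to $H_r$ at $v$, which creates exactly one new branch point of $H_{\mathrm{fix}}$, namely $v$ itself.
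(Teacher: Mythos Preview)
Your proof is correct and follows essentially the same approach as the paper: show that $v:=\pi_{H_r}(\infty)$ has valence at least $3$ in $H_{\mathrm{fix}}$, hence lies in $V\subseteq H_V$, and then use $H_V\subseteq H_r$ to conclude. The paper's argument is terser---it simply writes $H_{\mathrm{fix}}=H_r\cup(\pi_{H_r}(\infty),\infty]$ and asserts $\mathrm{Val}_{H_{\mathrm{fix}}}(v)\ge 3$ without explicitly ruling out $v=r_i$, whereas you supply this step via the type~I tangent-space observation. Your final paragraph's phrase ``creates exactly one new branch point'' is slightly imprecise (if $v$ was already a branch point of $H_r$ nothing new is created), but this does not affect the argument.
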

\begin{proof}
Note 
$$H_{\mathrm{fix}}=H_r\cup(\pi_{H_r}(\infty),\infty].$$ 
Then $\mathrm{Val}_{H_\mathrm{fix}}(\pi_{H_{r}}(\infty))\ge 3$. Thus $\pi_{H_{r}}(\infty)\in V$. Hence $\pi_{H_r}(\infty)\in H_V$. Since $H_V\subset H_r$, we have 
$$d(\infty,\pi_{H_r}(\infty))=\min_{\xi'\in H_r}d(\infty,\xi')\le\min_{\xi''\in H_V}d(\infty,\xi'').$$
The conclusion follows from the uniqueness of the visible point.
\end{proof}
For a rational map $\phi\in\mathbb{L}(z)$ and a type II point $\xi\in\mathbb{P}^1_{\mathrm{Ber}}(\mathbb{L})$, Proposition \ref{fix-gauss} claims that $\phi(\xi)=\xi$ if and only if the reduction $\mathrm{Red}(\psi)$ is nonconstant, where $\psi=M^{-1}\circ\phi\circ M$ and $M\in \mathrm{PGL}_2(\mathbb{L})$ such that $M(\xi_g)=\xi$. A fixed point $\xi\in\mathrm{Fix_{Ber}}(\phi)$ is repelling if $\deg_\xi\phi\ge 2$. For the map $N$, the points $r_1,\cdots,r_d$ and $\infty$ are the only fixed points of type I. Now we show
\begin{lemma}\label{fixed-pt}
For the map $N$,
$$\mathrm{Fix_{Ber}}(N)\cap\mathbb{H}_{\mathrm{Ber}}=H_{V}^\infty\setminus\{\infty\}.$$
In particular, the set of repelling fixed points of $N$ in $\mathbb{H}_{\mathrm{Ber}}$ is $V$.
\end{lemma}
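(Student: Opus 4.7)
The strategy is to analyze fixed type II points via affine conjugation to the Gauss point and then extend to types III and IV using the tree structure. The key observation is that conjugation by an affine M\"obius $M(z) = cz + a$ sends the Newton map $N_r$ to another Newton map $N_s$ with transformed roots $s_i = (r_i - a)/c$. Every type II point $\xi \in \mathbb{H}_{\mathrm{Ber}}$ corresponds to a disk $\overline{D}(a,|c|) \subset \mathbb{L}$, and $M$ can be chosen so that $M(\xi_g) = \xi$; under this correspondence, $r_i$ lies in the disk of $\xi$ iff $|s_i| \le 1$. By Proposition \ref{fix-gauss}, $\xi$ is fixed by $N$ iff $\mathrm{Red}(N_s)$ has positive degree, and by Proposition \ref{algebraic-reduction}, $\deg_\xi N = \deg \mathrm{Red}(N_s)$.

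I first establish the central computational claim: $\mathrm{Red}(N_s)$ has positive degree if and only if at least two of the $s_i$'s satisfy $|s_i| \le 1$. Direct examination of formula \ref{Newton formula} after normalization shows that if $k$ is the number of ``inside'' roots, then $\mathrm{Red}(N_s)$ is the (possibly degenerate) Newton map of the polynomial $\prod_{|s_i| \le 1}(z - \bar{s}_i) \in \mathbb{C}[z]$: for $k = 0$ the reduction is the constant map $\infty$, for $k = 1$ it is the constant $\bar{s}_i$, and for $k \ge 2$ it has degree equal to the number of distinct values among $\{\bar{s}_i : |s_i| \le 1\}$, which is at least $1$. The bookkeeping here requires carefully tracking which $\sigma_j$ dominate the normalization, and this is the main technical step.

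Next I match the condition ``the disk of $\xi$ contains $\ge 2$ roots $r_i$'' with membership in $H_V^\infty \setminus \{\infty\}$. If $\xi \in V$, then $\mathrm{Val}_{H_{\mathrm{fix}}}(\xi) \ge 3$ and at most one of these tangent directions leads to $\infty$, so at least two carry roots; if $\xi$ lies strictly between some $v \in V$ and $\infty$, then $\xi$ separates $\infty$ from every $r_i$, so its disk contains all $d \ge 3$ roots. Conversely, any $\xi$ whose disk contains two roots lies on the tree path between them, hence in $H_{\mathrm{fix}}$, and the valence analysis places $\xi$ in $H_V^\infty$. For type III points on edges of $H_V^\infty$, both endpoints are fixed type II points; using Lemma \ref{dir-multi} and the fact that the directional multiplicity along the edge at each endpoint equals $1$ (read off from the reduction computation of the previous paragraph), $N$ is an isometry on the edge fixing both endpoints, hence the identity. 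The tree $H_V^\infty$ contains no type IV points, and any hypothetical hyperbolic fixed point outside $H_V^\infty$ would lie on (or project to) an arm of $H_{\mathrm{fix}}$ heading to a single root, where the disk contains at most one $r_i$, contradicting fixedness by the key claim. This establishes the set equality.

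The repelling statement follows at once from the degree formula: $\xi$ is repelling iff $\deg \mathrm{Red}(N_s) \ge 2$ iff the inside $s_i$'s have at least two distinct reductions. For $\xi \in V$, the tangent directions at $\xi$ not going to $\infty$ carry roots in at least two distinct clusters (one per direction), so indeed $\deg_\xi N \ge 2$; for $\xi$ strictly between $V$ and $\infty$, all inside reductions collapse to the single value $\bar 0$ (after the appropriate conjugation), yielding $\deg_\xi N = 1$. The principal obstacle in the proof is the explicit verification across all cases of the claim that the normalized limit of $N_s$ coincides with the Newton map of $\prod_{|s_i| \le 1}(z - \bar{s}_i)$; once this is secured, the remaining translation to the tree structure of $H_V^\infty$ is combinatorial.
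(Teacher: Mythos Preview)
Your approach is essentially the same as the paper's: both proofs hinge on affine-conjugating a type II point $\xi$ to the Gauss point, observing that the conjugate is again a Newton map $N_s$, and reading off fixedness and local degree from $\deg\mathrm{Red}(N_s)$ via Propositions~\ref{fix-gauss} and~\ref{algebraic-reduction}. Your ``central computational claim'' (that the reduction of $N_s$ is the degenerate Newton map of $\prod_{|s_i|\le 1}(z-\bar s_i)$, constant when at most one root is inside) is exactly what the paper invokes implicitly when it says ``at least $d-1$ many $r_i$'s have $\mathrm{Red}(M_1^{-1}(r_i))=\infty$'' and ``$M^{-1}\circ N\circ M$ is a Newton map for a polynomial with multiple roots.'' Your isometry argument for type III points on edges of $H_V^\infty$ is a nice alternative to the paper's density-plus-continuity step, and works because (as you note) the reduction at an endpoint $v\in V$ has local degree $1$ at the attracting fixed point or at $\infty$ corresponding to the edge direction.

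There is one genuine gap. For hyperbolic points $\xi\notin H_V^\infty$ of type III or IV, you write that the disk contains at most one $r_i$, ``contradicting fixedness by the key claim.'' But your key claim is established only for type II points via the reduction map; it does not directly apply to type III/IV. The paper closes this by a disk-mapping argument: take the visible point $\eta=\pi_{H_{\mathrm{fix}}}(\xi)$, observe that the direction $\vec v\in T_\eta\mathbb{P}^1_{\mathrm{Ber}}$ toward $\xi$ is not a fixed direction of $N_\ast$ (the fixed directions at $\eta$ are exactly those toward the $r_i$'s and $\infty$), and conclude via Lemma~\ref{disk-to-disk} that $N(\mathbf{B}_\eta(\vec v)^-)$ is a disk distinct from $\mathbf{B}_\eta(\vec v)^-$, so $\xi$ cannot be fixed. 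You should either insert this argument, or (equivalently) argue that for type III $\xi$ on the arms $(v_i,r_i)$ the nearby type II points have constant reduction $\bar s_i$, so $N(\xi')$ lies strictly closer to $r_i$, and pass to the limit.
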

\begin{proof}
If $\xi\in H_{V}^\infty$ is a type II point, let $M\in \mathrm{PGL}_2(\mathbb{L})$ such that $M(\xi_g)=\xi$. Then the reduction of $M^{-1}\circ N\circ M$ is nonconstant. Indeed, $M^{-1}\circ N\circ M\in\mathbb{L}(z)$ is a Newton map for a polynomial in $\mathbb{L}[z]$ with multiple roots. If $\xi\in H_{V}^\infty$ is not a type II point, there exists a sequence $\{\xi_n\}\subset H_{V\infty}$ of type II points such that $d(\xi_n,\xi)\to 0$ as $n\to\infty$. Note $N$ is continuous. Then 
$$d(\xi_n,N(\xi))=d(N(\xi_n),N(\xi))\to 0.$$
Thus $\xi\in\mathrm{Fix_{Ber}}(N)$.\par 
If $\xi\not\in H_{V}^\infty$ is a type II point, let $M_1\in\mathrm{PGL}_2(\mathbb{L})$ such that $M_1(\xi_g)=\xi$. Then there exist at least $d-1$ many $r_i$'s such that $\mathrm{Red}(M_1^{-1}(r_i))=\infty$. Thus, the reduction of $M_1^{-1}\circ N\circ M_1$ is constant. So $\xi\not\in\mathrm{Fix_{Ber}}(N)$. For any type III or IV point $\xi\not\in H_{V}^\infty$, the visible point $\eta:=\pi_{H_{\mathrm{fix}}}(\xi)$ is a type II point. Let $\vec{v}\in T_\eta\mathbb{P}^1_{\mathrm{Ber}}$ be such that $\xi\in\mathbf{B}_\eta(\vec{v})^-$. By Lemma \ref{disk-to-disk}, we know $N(\mathbf{B}_\eta(\vec{v})^-)$ is a Berkovich disk. Since $\vec{v}$ is not a fixed point of the map $N_\ast$, 
$$N(\mathbf{B}_\eta(\vec{v})^-)\not=\mathbf{B}_\eta(\vec{v})^-.$$
So $\xi\not\in\mathrm{Fix_{Ber}}(N)$. Therefore, $\mathrm{Fix_{Ber}}(N)\cap\mathbb{H}_{\mathrm{Ber}}\subset H_{V}^\infty$.\par 
Now we show that the set $V$ consisting of repelling fixed points in $\mathbb{H}_\mathbb{L}$. Let $\xi\in\mathrm{Fix_{Ber}}(N)$ be a type II point. Let $M_2\in \mathrm{PGL}_2(\mathbb{L})$ such that $M_2(\xi_g)=\xi$. If $\xi\not\in V$, then $\mathrm{Red}(M_2^{-1}(r_i))$ is either $\infty$ or some constant $c\in\mathbb{C}$. Thus, the reduction $M_2^{-1}\circ N\circ M_2$ has degree $1$. Then by Proposition \ref{algebraic-reduction}, $\deg_\xi N=1$. So $\xi$ is not a repelling fixed point. If $\xi\in V$, then there exist two distinct constants $c_1$ and $c_2$ in $\mathbb{C}$, and exist $r_i$ and $r_j$ with $i\not=j$ such that $\mathrm{Red}(M_2^{-1}(r_i))=c_1$ and $\mathrm{Red}(M_2^{-1}(r_j))=c_2$. Thus, the reduction $M_2^{-1}\circ N\circ M_2$ has degree at least $2$. Again, by Proposition \ref{algebraic-reduction}, we have $\deg_\xi N\ge 2$. Thus $\xi$ is a repelling fixed point.
\end{proof}
\begin{corollary}\label{V-deg}
$V=\{\xi\in H_{V}^\infty:\deg_\xi N\ge 3\}$. 
\end{corollary}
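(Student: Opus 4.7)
The plan is to establish the equality by proving both set-theoretic inclusions on $H_V^\infty$. The containment $V\subset H_V^\infty$ is immediate from $H_V^\infty=\mathrm{Hull}(V\cup\{\infty\})$, so the forward direction reduces to showing $\deg_\xi N\ge 3$ for each $\xi\in V$. By Lemma \ref{fixed-pt} such a $\xi$ is a type II repelling fixed point. Choose $M\in\mathrm{PGL}_2(\mathbb{L})$ with $M(\xi_g)=\xi$ and set $\bar\psi:=\mathrm{Red}(M^{-1}\circ N\circ M)$; then Proposition \ref{algebraic-reduction} gives $\deg_\xi N=\deg\bar\psi$. Under the bijection $T_\xi\mathbb{P}^1_{\mathrm{Ber}}\cong\mathbb{P}^1$ induced by $M_\ast$, each of the $\mathrm{Val}_{H_{\mathrm{fix}}}(\xi)\ge 3$ directions of $H_{\mathrm{fix}}$ at $\xi$ corresponds to a distinct point of $\mathbb{P}^1$ that is fixed by $\bar\psi$, since each such direction contains a type I fixed point of $N$ (some $r_i$ or $\infty$).

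The delicate step is to upgrade the resulting bound $\deg\bar\psi\ge 2$ to $\deg\bar\psi\ge 3$. For this I would exploit the Newton-specific dynamical profile inherited by $\bar\psi$: each direction at $\xi$ containing only roots $r_i$ lifts to a non-repelling fixed point of $\bar\psi$ (reflecting the superattracting character of $r_i$ for $N$), whereas the direction containing $\infty$ lifts to a repelling fixed point inheriting the multiplier $d/(d-1)$ of $N$ at $\infty$. Using Lemma \ref{multi-dir-multi} to relate the directional multiplicity $m_N(\xi,\vec v)$ at each fixed direction to the local degree of $\bar\psi$ at the corresponding fixed residue, and combining with the holomorphic fixed-point index identity on $\mathbb{P}^1$ for $\bar\psi$, the borderline case of $\bar\psi$ having degree $2$ with exactly three simple fixed points can be excluded, forcing $\deg\bar\psi\ge 3$.

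For the reverse inclusion, let $\xi\in H_V^\infty\setminus V$. If $\xi=\infty$, then $\deg_\infty N=1$ since $\infty$ is a type I fixed point of $N$ with finite nonzero multiplier $d/(d-1)$. Otherwise $\xi$ lies strictly inside an edge of $H_V^\infty\subset H_{\mathrm{fix}}$; because $V$ contains all branch points of $H_{\mathrm{fix}}$, one has $\mathrm{Val}_{H_{\mathrm{fix}}}(\xi)\le 2$, so the analysis carried out in the proof of Lemma \ref{fixed-pt} shows that the reduction $\mathrm{Red}(M^{-1}\circ N\circ M)$ has degree $1$, whence $\deg_\xi N=1$. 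Either way $\deg_\xi N<3$, so the contrapositive yields $\xi\in V$. The main obstacle is the degree upgrade in the forward direction: a bare count of fixed residues of $\bar\psi$ only delivers $\deg\bar\psi\ge 2$, so the decisive ingredient is to leverage simultaneously the attracting character at the root directions and the repelling character at the direction toward $\infty$ to eliminate the degree-$2$ case.
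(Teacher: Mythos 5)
Your reverse inclusion is essentially the paper's argument: points of $H_V^\infty\setminus V$ are fixed by Lemma \ref{fixed-pt}, and at a type II point the reduction has degree at most $1$, so $\deg_\xi N\le 1$ by Proposition \ref{algebraic-reduction}; the only omission is that points in the interior of the edges need not be of type II, and the type III case is handled in the paper by the density of type II points together with Lemma \ref{dir-multi}, a step you skip. The real problem is your forward direction. Your first paragraph (at least three fixed directions for the reduction $\bar\psi$, hence $\deg\bar\psi\ge 2$) is exactly what Lemma \ref{fixed-pt} delivers, and it is all that is true: the ``upgrade'' to $\deg\bar\psi\ge 3$ by excluding the degree-$2$ configuration cannot succeed, because that configuration genuinely occurs at points of $V$. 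Take $d=3$ and $r(t)=\{0,1,t\}$. The point $\xi=\xi_{0,|t|}$ has valence $3$ in $H_{\mathrm{fix}}$, so $\xi\in V$, and conjugating by $M_t(z)=tz$ the roots reduce to $\{0,1,\infty\}$, so the reduction is $\widehat N_{\{0,1,\infty\}}(z)=z^2/(2z-1)$, of degree $2$. Its fixed points are the two superattracting points $0,1$ (index $1$ each) and a repelling fixed point at $\infty$ of multiplier $2$ (index $-1$), so the holomorphic fixed-point formula is satisfied and produces no contradiction; note also that the multiplier inherited in the $\infty$-direction is $e/(e-1)$, where $e$ is the number of roots not reducing into that direction, not $d/(d-1)$. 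In general one has $\deg_\xi N=\mathrm{Val}_{H_{\mathrm{fix}}}(\xi)-1$ (an identity the paper itself uses in the proof of Corollary \ref{not-whole-Ber}), so every vertex of valence exactly $3$ has local degree exactly $2$.

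What your difficulty actually detects is that the constant $3$ in the statement is a misprint. The paper's own proof of the forward inclusion cites only ``the definition of $H_V$ and Lemma \ref{fixed-pt}'', which yields $\deg_\xi N\ge 2$, and every subsequent application of the corollary uses it in the form $V=\{\xi\in H_V^\infty:\deg_\xi N\ge 2\}$: the proof of Lemma \ref{Newton-ramification} deduces $\deg_\xi N<2$ for $\xi\in H_V\setminus V$, the lemma on free critical points concludes $\xi\in V$ from $\deg_\xi N\ge 2$, and Lemma \ref{preimage-H0} deduces $\deg_\eta N=1$ off $V$. So the bound you should be proving is $\ge 2$, which your first paragraph (together with the type III remark above) already gives by the same route as the paper; the ``decisive ingredient'' you propose in the second paragraph does not exist, and any attempt to eliminate the degree-$2$ case is doomed by the example above.
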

\begin{proof}
First note at $\infty$, the local degree $\deg_\infty N=1$. By the definition of $H_V$ and Lemma \ref{fixed-pt}, we know 
$$V\subset\{\xi\in H_V:\deg_\xi N\ge 3\}=\{\xi\in H_{V}^\infty:\deg_\xi N\ge 3\}.$$\par 
For the other direction, let $\xi\in H_{V}^\infty\setminus V$. By Lemma \ref{fixed-pt}, we have $\xi\in\mathrm{Fix_{Ber}}(N)$. Suppose $\xi$ is of type II. Let $M\in\mathrm{PGL}_2(\mathbb{L})$ be such that $M(\xi_g)=\xi$. Then $M^{-1}\circ N\circ M$ has reduction of degree at most $1$. By Proposition \ref{algebraic-reduction}, we have $\deg_\xi N\le 1$. By the density of type II points and Lemma \ref{dir-multi}, we know $\{\xi\in H_{V}^\infty:\deg_\xi N\ge 3\}\subset V$.
\end{proof}
Recall that a point $c\in\mathbb{L}$ is a free critical point of $N$ if $c\in\mathrm{Crit}(N)\setminus\{r_1,\cdots,r_d\}$. The following lemma gives the visible points of critical points to the tree $H_{V}^\infty$ and hence implies Theorem \ref{tree-properties} $(3)$.
\begin{lemma}
If $c\in\mathrm{Crit}(N)$ is a free critical point, then $\pi_{H_{V}^\infty}(c)\in V$. Moreover, for $v\in V$,
$$\#\{a\in\mathbb{L}: P''_r(a)=0,\pi_{H_{V}^\infty}(a)=v\}=2\deg_v N-2-\#\{r_i:\pi_{H_{V}^\infty}(r_i)=v\},$$
counted with multiplicity.
\end{lemma}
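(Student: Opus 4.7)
The plan has two parts: showing that free critical points have visible point in $V$, and a local Riemann--Hurwitz count at each $v\in V$.

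For the location, suppose for contradiction that $\eta:=\pi_{H_V^\infty}(c)\not\in V$ for some free critical $c\in\mathbb{L}$. Then $\eta$ lies on an open edge of $H_V^\infty$, so $\mathrm{Val}_{H_{\mathrm{fix}}}(\eta)=2$; conjugating to place $\eta$ at $\xi_g$ via $M\in\mathrm{PGL}_2(\mathbb{L})$ pushes the $d+1$ fixed points $\{r_1,\dots,r_d,\infty\}$ into just two tangent directions, and $\psi_\eta:=\mathrm{Red}(M^{-1}\circ N\circ M)$ is the Newton map of a (possibly degenerate) polynomial $\widetilde{P}$ with two distinct reduced roots of multiplicities summing to $d$. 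A direct calculation with the Newton formula for $(z-a)^{m_1}(z-b)^{m_2}$ shows that the only critical directions of $\psi_\eta$ coincide with the two edge directions of $H_V^\infty$ at $\eta$ (no critical direction points outside $H_V^\infty$), contradicting the presence of $c$ in a non-edge direction at $\eta$. Hence $\eta\in V$.

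For the count, fix $v\in V$, let $M\in\mathrm{PGL}_2(\mathbb{L})$ with $M(\xi_g)=v$, and set $\widetilde{N}=M^{-1}\circ N\circ M$, which is the Newton map of the polynomial with roots $M^{-1}(r_i)$; then $\psi_v:=\mathrm{Red}(\widetilde{N})$ has degree $d_v:=\deg_vN$. Classical Riemann--Hurwitz yields
\[
\sum_{\vec{v}\in T_v\mathbb{P}^1_{\mathrm{Ber}}}\bigl(m_N(v,\vec{v})-1\bigr)=2d_v-2.
\]
I would partition $T_v\cong\mathbb{P}^1$ into (E) edge directions of $H_V^\infty$ (pointing to another vertex of $V$ or to $\infty$), (L) leaf directions containing a single $r_i$ attached at $v$ in $H_{\mathrm{fix}}$, and (F) genuinely free directions not in $H_{\mathrm{fix}}$. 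A critical point $c\in\mathbf{B}_v(\vec{v})^-$ has $\pi_{H_V^\infty}(c)=v$ exactly when $\vec{v}$ is of type (L) or (F).

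The crux, and main anticipated obstacle, is to show $m_N(v,\vec{v})=1$ for every (E) direction, so that the Riemann--Hurwitz sum is supported on (L) and (F). Each (E) direction corresponds to an $a\in\mathbb{P}^1$ at which the reduced polynomial has a root of multiplicity $m\ge 2$ (the direction aggregates several $r_i$'s, or an $r_i$-cluster with the pole at $\infty$); the Newton formula for $(z-a)^mQ(z)$ with $Q(a)\ne 0$ gives multiplier $(m-1)/m\ne 0$ at $a$, while the direction to $\infty$ gives Newton multiplier $\ne 0$ at $\infty$, forcing $\deg_a\psi_v=1$ by Proposition~\ref{algebraic-reduction}. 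With (E) eliminated, an (L) direction contributes $1$ (the reduced polynomial has a simple root at the corresponding $a$, so $a$ is superattracting for $\psi_v$ with $\deg_a\psi_v=2$, and the only critical point inside $\mathbf{B}_v(\vec{v}_{r_i})^-$ is $r_i$ itself); an (F) direction at a non-fixed critical point $a$ of $\psi_v$ of multiplicity $m-1$ contributes $m-1$ free critical points of $N$ inside $\mathbf{B}_v(\vec{v})^-$ via a local Riemann--Hurwitz argument (using Proposition~\ref{fundamental-Berk} and Lemma~\ref{disk-to-disk}). Summing (L) and (F) recovers
\[
\#\{a:P''_r(a)=0,\ \pi_{H_V^\infty}(a)=v\}+\#\{r_i:\pi_{H_V^\infty}(r_i)=v\}=2d_v-2.
\]
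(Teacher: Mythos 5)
Your plan is essentially the paper's argument in expanded form: pass to the reduction at the relevant type~II point, use Riemann--Hurwitz for the degree-$\deg_vN$ reduction to get $2\deg_vN-2$ critical directions counted with multiplicity, and match these with the critical points of $N$ whose visible point is $v$ (the paper states this matching in one line, as an identification of $\mathrm{Crit}(\mathrm{Red}(M^{-1}\circ N\circ M))$ with the reductions of the visible critical points). Your extra bookkeeping is mostly correct and more explicit than the paper's: the partition of $T_v\mathbb{P}^1_{\mathrm{Ber}}$ into edge/leaf/free directions, and the multiplier computation showing that edge directions carry directional multiplicity $1$ (a multiple reduced root is attracting with multiplier $(m-1)/m\neq 0$, and $\infty$ is a nonzero-multiplier fixed point), are exactly what is needed to see that the count is supported on the directions pointing away from $H_V^\infty$. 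One slip in your first half: at a point $\eta\in H_V^\infty\setminus V$ one of the two edge directions always contains $\infty$, so with $M$ affine the reduction is the degree-one map $z\mapsto z-(z-c)/m$; it has no critical directions at all (rather than critical directions ``at the two edges''), which only strengthens the contradiction you are aiming for, but the two-finite-cluster Newton computation you describe is not the relevant one.

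The genuine gap is the bridge you use in both halves between critical points of $N$ inside a ball and critical directions of the reduction. You pass from ``$c\in\mathrm{Crit}(N)$ lies in $\mathbf{B}_v(\vec{v})^-$'' to ``$\vec{v}$ is a critical direction of $\psi_v$'' (first half), and conversely from ``$a$ is a critical point of $\psi_v$ of multiplicity $m-1$'' to ``$\mathbf{B}_v(\vec{v})^-$ contains exactly $m-1$ critical points of $N$'' (second half). Both implications require the surplus multiplicity of Proposition~\ref{fundamental-Berk} to vanish, i.e.\ $N(\mathbf{B}_v(\vec{v})^-)\neq\mathbb{P}^1_{\mathrm{Ber}}$: if the ball maps onto all of $\mathbb{P}^1_{\mathrm{Ber}}$, it may contain critical points even though $m_N(v,\vec{v})=1$, and the local count ``$m_N(v,\vec{v})-1$ critical points inside'' fails. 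Citing Lemma~\ref{disk-to-disk} does not close this, because its hypothesis (the local degree is nonincreasing along every segment into the ball) is precisely what you have not verified; and at a point $\eta$ of an open edge, where $\deg_\eta N=1$ forces every directional multiplicity to be $1$, the surplus is the only possible source of a critical point in a non-edge direction, so your contradiction in the first half rests entirely on this unproved step. The statement that closes it in the paper's framework is Corollary~\ref{not-whole-Ber} (for $v\in V$ and any direction missing $V$ the image of the ball is not all of $\mathbb{P}^1_{\mathrm{Ber}}$), whose proof, via counting preimages of points of $V$, does not use the present lemma and so may be invoked; alternatively one can observe that the relevant balls contain no poles of $N$ (the zeros of $P_r'$ lie in $H_r$ in residue characteristic zero), so their images are disks. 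To be fair, the paper's own proof elides the same point by asserting $m_N(\xi,\vec{v})\ge 2$ ``since $c$ is a critical point,'' but as written your proposal does not supply the missing justification either.
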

\begin{proof}
To ease the notations, let $\xi=\pi_{H_{V}^\infty}(c)$. Then $\xi\in\mathbb{P}^1_{\mathrm{Ber}}$ is a type II point. Let $\vec{v}\in T_\xi\mathbb{P}^1_{\mathrm{Ber}}$ be the tangent vector such that $c\in\mathbf{B}_\xi(\vec{v})^-$. Then the directional multiplicity $m_{N}(\xi,\vec{v})\ge 2$ since $c$ is a critical point of $N$. Thus by Lemma \ref{multi-dir-multi}, we have  
$$\deg_\xi(N)\ge m_{N}(\xi,\vec{v})\ge 2.$$
By Corollary \ref{V-deg}, we have $\xi\in V$.\par
For $v\in V$, by the definition of $V$ and Lemma \ref{fixed-pt}, we know $v$ is a type II fixed point of $N$. Let $M\in\mathrm{PGL}_2(\mathbb{L})$ such that $M(\xi_g)=v$. Then the reduction $\mathrm{Red}(M^{-1}\circ N\circ M)$ has $2\deg_v N-2$ critical points. Note 
$$\mathrm{Crit}(\mathrm{Red}(M^{-1}\circ N\circ M))=\{\mathrm{Red}(M^{-1}(a)):a\in\mathrm{Crit}(N),\pi_{H_{V}^\infty}(a)=v\}$$ 
and 
$$\mathrm{Crit}(N)=\{a\in\mathbb{L}: P''_r(a)=0\}\cup\{r_1,\cdots,r_d\}.$$
Thus we obtain the conclusion.
\end{proof}
Now we prove a weak version of Theorem \ref{tree-properties}$(4)$ which allows us to characteristic the fixed Berkovich Fatou components of $N$ and hence prove Theorem \ref{tree-properties}$(5)$. After that, we prove Theorem \ref{tree-properties}$(4)$.
\begin{lemma}\label{preimage-H0}
For $v\in V$, let $\xi\in N^{-1}(v)$. Then  
$$\pi_{H_V}(\xi)\in V.$$
Moreover, for $v_1\not=v_2\in V$,
$$\#\{\xi\in\mathbb{P}^1_{\mathrm{Ber}}: \xi\in N^{-1}(v_1),\pi_{H_V}(\xi)=v_2\}=\deg_{v_2}N-1.$$
\end{lemma}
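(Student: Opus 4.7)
The plan is to handle both assertions by analyzing the tangent action $N_*$ at the relevant type II point via reduction (Proposition \ref{algebraic-reduction}), applying Proposition \ref{fundamental-Berk} with Lemma \ref{disk-to-disk} to control disk images, and exploiting the already-established control on critical points from the lemma preceding this one, which gives $\pi_{H_V^\infty}(c)\in V$ for every $c\in\mathrm{Crit}(N)$. First I would argue the inclusion $\pi_{H_V}(\xi)\in V$ by contradiction: if $\xi\in H_V$ then $\xi=N(\xi)=v$ by Lemma \ref{fixed-pt}, so assume $\xi\notin H_V$, set $\xi':=\pi_{H_V}(\xi)$, and suppose $\xi'\in H_V\setminus V$. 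Let $\vec w$ be the direction at $\xi'$ with $\xi\in\mathbf{B}_{\xi'}(\vec w)^-$; by construction $\mathbf{B}_{\xi'}(\vec w)^-$ is disjoint from $H_V^\infty$. Since $\xi'$ sits interior to an edge of $H_V$, $\mathrm{Val}_{H_\mathrm{fix}}(\xi')=2$ and Corollary \ref{V-deg} gives $\deg_{\xi'}N\le 2$. Choosing an affine $M$ with $M(\xi_g)=\xi'$, the identity $M^{-1}\circ N\circ M=N_{P\circ M}$ makes $\bar N$ a reduced Newton map, and because only two $H_\mathrm{fix}$-directions meet $\xi'$ and $\infty$ reduces to $\infty$, the finite reductions of the $M^{-1}(r_i)$ all collapse onto a single point $y_2\in\mathbb{C}$; therefore $\bar N$ is a degree-$1$ M\"obius map whose only fixed directions are the two $H_V$-edge directions $\vec v_1,\vec v_2$. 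The preceding critical-point lemma rules out any critical point in $\mathbf{B}_{\xi'}(\vec w)^-$ (such $c$ would give $\pi_{H_V^\infty}(c)=\xi'\notin V$), so Lemma \ref{disk-to-disk} and Proposition \ref{fundamental-Berk}(1) ensure $N(\mathbf{B}_{\xi'}(\vec w)^-)$ is a Berkovich disk containing $v$; thus $N_*(\vec w)\in\{\vec v_1,\vec v_2\}$. But a non-identity M\"obius with these as its only fixed points has no other preimages of them, forcing $\vec w\in\{\vec v_1,\vec v_2\}$—a contradiction since $\vec w$ is off $H_V$.

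For the counting, I would analyze $N_*$ at $v_2$ using an affine $M_2$ with $M_2(\xi_g)=v_2$, realising $N_*$ as a reduced Newton map $\bar N_2$ of degree $d_2:=\deg_{v_2}N$ with $d_2+1$ simple fixed points (multipliers being $0$, some $(k-1)/k$, or $d'/(d'-1)$, all distinct from $1$). The direction $\vec u_1$ toward $v_1$ is a fixed direction corresponding to a simple fixed point of $\bar N_2$, so Lemma \ref{multi-dir-multi} gives total preimage multiplicity $d_2$ for $\vec u_1$ under $N_*$, of which $\vec u_1$ itself accounts for $1$, leaving multiplicity $d_2-1$ in other directions. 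Every other $H_\mathrm{fix}$-direction at $v_2$ is fixed by $N_*$ to itself and so cannot map to $\vec u_1$, meaning the remaining preimages lie in directions $\vec w$ whose disks $\mathbf{B}_{v_2}(\vec w)^-$ are disjoint from $H_\mathrm{fix}$, hence from $H_V$. By the first part of the lemma the preimages $\xi\in N^{-1}(v_1)$ with $\pi_{H_V}(\xi)=v_2$ are precisely the ones hosted by these disks, and Proposition \ref{fundamental-Berk}(1) assigns $m_N(v_2,\vec w)$ preimages of $v_1$ to each such $\vec w$, summing to $d_2-1$.

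The main obstacle is ruling out the surplus case of Proposition \ref{fundamental-Berk}(2): in Part 1 this is free because $\mathbf{B}_{\xi'}(\vec w)^-$ contains no critical point, but in Part 2 the off-$H_V$ disks at $v_2$ may contain critical points (those $c$ with $\pi_{H_V^\infty}(c)=v_2$), so the same shortcut is unavailable. Closing this gap requires checking that the fixed leaf-directions are attracting for $\bar N_2$—so their disks are forward-invariant and cannot surject onto $\mathbb{P}^1_\mathrm{Ber}$—while any surplus in a non-fixed off-$H_\mathrm{fix}$ direction would consume more than the $2d_2-2$ critical points with visible point $v_2$ allotted by the preceding lemma, contradicting the critical-point budget. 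This accounting of directional multiplicities against the critical content of each disk is the technical heart of the argument.
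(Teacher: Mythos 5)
Your overall strategy is the same as the paper's: analyze the tangent map $N_\ast$ at the relevant type~II fixed point via reduction, use that the $H_{\mathrm{fix}}$-directions are fixed, and count preimage directions of the direction toward $v_1$. Part 1 is correct and in fact more careful than the paper's own two-line argument: excluding critical points from $\mathbf{B}_{\xi'}(\vec w)^-$ and invoking Lemma \ref{disk-to-disk} legitimately forces the image to be a disk containing $v$, and injectivity of the degree-one reduction then gives the contradiction. (Two small repairs: the cited critical-point lemma covers only \emph{free} critical points, so you must separately note that no root $r_i$ can lie in $\mathbf{B}_{\xi'}(\vec w)^-$ --- otherwise $\xi'$ would have valence $\ge 3$ in $H_{\mathrm{fix}}$ and hence lie in $V$; and for the directional multiplicity $m_N(v_2,\vec u_1)=1$ in Part 2 the relevant point is not simplicity of the fixed point but that its multiplier is nonzero, which holds because $\mathbf{B}_{v_2}(\vec u_1)^-$ contains at least two of the marked points $r_1,\dots,r_d,\infty$, so the reduced fixed point is a multiple-root cluster or $\infty$, never superattracting.)

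The genuine gap is exactly where you place it, and your proposed patch does not close it. The principle ``the reduced fixed point is attracting, hence the disk is forward-invariant and cannot surject onto $\mathbb{P}^1_{\mathrm{Ber}}$'' is false in general: for $\phi(z)=z/2+t/z$ the reduction at $\xi_g$ is $z/2$, so the direction of $0$ is fixed and attracting, yet $\mathbf{B}_{\xi_g}(\vec v_0)^-$ contains the pole $z=0$ and maps onto all of $\mathbb{P}^1_{\mathrm{Ber}}$. So attraction of the reduced fixed point by itself rules out nothing. Your second prong, the critical-point budget, is only sketched and cannot work as stated: a single critical point inside a ball is already compatible with surplus, and the off-$H_V$ directions at $v_2$ typically do contain critical points (the roots attached at $v_2$ and the free critical points with visible point $v_2$), so no contradiction with the $2\deg_{v_2}N-2$ budget arises merely from the existence of one surplus direction.

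The clean way to finish --- and effectively what the paper does, although it postpones it to Corollary \ref{not-whole-Ber} --- is global counting rather than a local no-surplus argument. The point $v_1$ has exactly $d$ preimages in $\mathbb{P}^1_{\mathrm{Ber}}$ counted with multiplicity; $v_1$ itself accounts for $\deg_{v_1}N$ of them, and by your tangent-map count each $v_j\neq v_1$ accounts for at least $\deg_{v_j}N-1$ preimages lying in off-$H_V$ directions at $v_j$ (Proposition \ref{fundamental-Berk} gives at least $m_N(v_j,\vec w)$ preimages in each preimage direction, surplus or not), while Part 1 guarantees there are no preimages with visible point in $H_V\setminus V$. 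Since $\deg_{v_j}N=\mathrm{Val}_{H_{\mathrm{fix}}}(v_j)-1$, the tree identity $\sum_j \mathrm{Val}_{H_{\mathrm{fix}}}(v_j)=d+1+2(\#V-1)$ shows these lower bounds already sum to $d$; hence all of them are equalities, which simultaneously yields the exact count $\deg_{v_2}N-1$ and rules out every surplus contribution. Replacing your two mechanisms by this accounting would make the proof complete.
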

\begin{proof}
Suppose $\eta:=\pi_{H_V}(\xi)\in H_V\setminus V$. By Lemma \ref{fixed-pt} and Corollary \ref{V-deg}, we have $\eta\in\mathrm{Fix_{Ber}}(N)$ and $\deg_\eta N=1$. Let $\vec{v}\in T_\eta\mathbb{P}^1_{\mathrm{Ber}}$ be the tangent vector such that $\xi\in\mathbf{B}_\eta(\vec{v})^-$. Since $N_\ast(\vec{v})\not=\vec{w}$, where $\vec{w}\in T_\eta\mathbb{P}^1_{\mathrm{Ber}}$ such that $\mathbf{B}_\eta(\vec{w})^-\cap V\not=\emptyset$, then 
$$N(\mathbf{B}_\eta(\vec{v})^-)\cap V=\emptyset.$$
Hence $N(\xi)\not\in V$. It is a contradiction. Thus $\eta\in V$.\par
At the point $v_2$, let $\vec{v}_1\in T_{v_2}\mathbb{P}^1_{\mathrm{Ber}}$ such that $v_1\in\mathbf{B}_{v_2}(\vec{v}_1)^-$. Then $\vec{v}_1$ is a fixed point of $N_\ast$ with $\deg_{\vec{v}_1}N_\ast=1$. Thus, we have 
$$\#\{\xi\in\mathbb{P}^1_{\mathrm{Ber}}:\xi\in N^{-1}(v_1),\pi_{H_V}(\xi)=v_2\}=\deg N_\ast-1=\deg_{v_2}N-1.$$
\end{proof}
\begin{corollary}\label{not-whole-Ber}
For $v\in V$, if $\xi\in N^{-1}(v)$ such that $\pi_{H_V}(\xi)=v$, then $\xi=v$. In particular, for any $\vec{v}\in T_{v}\mathbb{P}^1_{\mathrm{Ber}}$ with $\mathbf{B}_v(\vec{v})^-\cap V=\emptyset$,
$$N(\mathbf{B}_v(\vec{v})^-)\not=\mathbb{P}^1_{\mathrm{Ber}}.$$
\end{corollary}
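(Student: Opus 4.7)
The corollary splits into two assertions, and I would reduce the ``in particular'' statement to the first. Suppose for contradiction that $N(\mathbf{B}_v(\vec{v})^-) = \mathbb{P}^1_{\mathrm{Ber}}$ for some $\vec{v}$ with $\mathbf{B}_v(\vec{v})^- \cap V = \emptyset$. Since $V$ is finite and no vertex of $V$ lies in this direction, no edge of $H_V$ emanating from $v$ enters the disk, so $\mathbf{B}_v(\vec{v})^- \cap H_V = \emptyset$. By Proposition \ref{fundamental-Berk}(2), the fixed point $v$ admits a preimage $\xi \in \mathbf{B}_v(\vec{v})^-$, and because $H_V$ meets the closure of this open disk only at $v$, we have $\pi_{H_V}(\xi) = v$. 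The first part of the corollary would then force $\xi = v$, contradicting $v \notin \mathbf{B}_v(\vec{v})^-$.

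For the first part I would count preimages of $v$ with multiplicity. The total is $d$; the preimage at $v$ itself contributes $\deg_v N$; and by the moreover statement in Lemma \ref{preimage-H0}, for each $v' \in V \setminus \{v\}$ there are exactly $\deg_{v'} N - 1$ preimages with visible point $v'$. Ruling out a ``stray'' preimage $\xi \ne v$ with $\pi_{H_V}(\xi) = v$ is therefore equivalent to establishing the identity
\begin{equation*}
\sum_{v' \in V} \deg_{v'} N \;=\; d + |V| - 1.
\end{equation*}

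To prove this identity I would show $\deg_{v'} N = \mathrm{Val}_{H_{\mathrm{fix}}}(v') - 1$ for each $v' \in V$. Choosing an affine $M \in \mathrm{PGL}_2(\mathbb{L})$ with $M(\xi_g) = v'$, a short computation shows $M^{-1} \circ N \circ M$ is again a Newton map over $\mathbb{L}$, with roots $(r_i - b)/a$; hence its reduction is a (possibly degenerate) Newton map over $\mathbb{C}$. By Remark \ref{nondegenrate-Newton} applied to this reduction, its $\mathbb{P}^1$-fixed points are exactly the reductions of those roots together with $\infty$, and under the tangent-space identification at $v'$ these correspond bijectively to the branches of $H_{\mathrm{fix}}$ at $v'$. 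A rational map of degree $\deg_{v'} N$ has $\deg_{v'} N + 1$ fixed points counted with multiplicity, so $\deg_{v'} N + 1 = \mathrm{Val}_{H_{\mathrm{fix}}}(v')$. Summing over $V$ and applying the handshake lemma to the tree $H_{\mathrm{fix}}$ (with $|V|$ internal vertices, $d+1$ leaves, and thus $|V|+d$ edges, whose leaves contribute $d+1$ to the total valence) yields the identity. The main obstacle I expect is the structural claim that conjugating a Newton map and reducing produces a Newton map with no spurious fixed points beyond roots and $\infty$; once this is in hand, the combinatorics on the tree is routine.
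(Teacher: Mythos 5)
Your proof is correct and follows essentially the same route as the paper: the preimage count via Lemma \ref{preimage-H0} together with the identity $\deg_{v'}N=\mathrm{Val}_{H_{\mathrm{fix}}}(v')-1$ (which the paper simply notes, while you justify it by fixed-point counting of the reduction) gives exactly $d$ accounted preimages of $v$, forcing $\xi=v$, and the ``in particular'' part then follows because a disk avoiding $V$ (hence $H_V$) can contain no preimage of $v$.
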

\begin{proof}
Let $V=\{v_1,\cdots,v_n\}$. Note 
$$\deg_{v_i}N=\mathrm{Val}_{H_{\mathrm{fix}}}(v_i)-1.$$
Thus 
\begin{align*}
\deg_{v_i}N+\sum_{j\not=i}(\deg_{\xi_j}N-1)&=\sum_{j=1}^n\deg_{\xi_j}N-(n-1)=\sum_{j=1}^n(\mathrm{Val}_{H_{\mathrm{fix}}}(v_j)-1)-(n-1)\\
&=\sum_{j=1}^n\mathrm{Val}_{H_{\mathrm{fix}}}(v_j)-(2n-1)=d+1+2(n-1)-(2n-1)\\
&=d
\end{align*}
Note $v_i$ has $d$ preimages. Thus, if $\pi_{H_V}(\xi_i)=v_i$ for some $\xi_i\in N^{-1}(v_i)$, then $\xi_i=v_i$.\par 
As a consequence, for $v\in V$ and $\vec{v}\in T_v\mathbb{P}^1_{\mathrm{Ber}}$ with $\mathbf{B}_v(\vec{v})^-\cap V=\emptyset$, we have $v\not\in N(\mathbf{B}_\xi(\vec{v})^-)$. Hence $N(\mathbf{B}_v(\vec{v})^-)\not=\mathbb{P}^1_{\mathrm{Ber}}$.
\end{proof}
Recall that for a rational map $\phi\in\mathbb{L}(z)$, the Berkovich Julia set $J_{\mathrm{Ber}}(\phi)$ consists of the points $\xi\in\mathbb{P}^1_{\mathrm{Ber}}$ such that for all (weak) neighborhood $U$ of $\xi$, the set $\cup_{n=1}^\infty\phi^n(U)$ omits at most two points in $\mathbb{P}^1_{\mathrm{Ber}}$. The complement of the Berkovich Julia set is the Berkovich Fatou set $F_{\mathrm{Ber}}(\phi)$, see \cite[Section 10.5]{Baker10} and \cite[Section 7]{Benedetto10}. If $U$ is a periodic connected component of $F_{\mathrm{Ber}}(\phi)$, then $U$ is either a Rivera domain, mapping bijectively onto itself, or an attracting component, mapping multiple to one onto itself \cite[Theorem 10.76]{Baker10}. A periodic type II point $\xi\in \mathbb{P}^1_{\mathrm{Ber}}$ of period $q$ is repelling if $\deg_\xi\phi^q\ge 2$. Repelling periodic points are contained in the Julia set $J_{\mathrm{Ber}}(\phi)$ \cite[Theorem 2.1]{Kiwi14}. Moreover, all the repelling periodic points in $\mathbb{H}_\mathbb{L}$ are of type II \cite[Lemma 10.80]{Baker10}. We say a subset $A$ of $\mathbb{P}^1_{\mathrm{Ber}}$ is an annulus if $A$ is an intersection of two disks $B,B'$ with distinct boundary points such that $B\cup B'=\mathbb{P}^1_{\mathrm{Ber}}$. For a Newton map $N\in\mathbb{L}(z)$, by Lemma \ref{fixed-pt} and Corollary \ref{V-deg}, we know $V\subset J_{\mathrm{Ber}}(N)$. Moreover, each component of $\mathbb{P}^1_{\mathrm{Ber}}\setminus V$ is either an annulus or a ball. Indeed, if $U$ is a component of $\mathbb{P}^1_{\mathrm{Ber}}\setminus V$ with at least $3$ boundary points, then there exists $v\in V$ such that $v\in U$. It is a contradiction. In contrast to complex dynamics, the point $\infty$ is an indifferent fixed point of $N$. Hence, $\infty\in F_{\mathrm{Ber}}(N)$. 
\begin{lemma}\label{Fatou}
Let $U$ be a component of $F_{\mathrm{Ber}}(N)$ fixed by $N$. 
\begin{enumerate}
\item $U$ is a fixed Rivera domain if and only if $U$ is a component of $\mathbb{P}^1_{\mathrm{Ber}}\setminus V$ which is either an annulus or a disk containing $\infty$.
\item $U$ is a fixed attracting domain if and only if $U$ is a component of $\mathbb{P}^1_{\mathrm{Ber}}\setminus V$ which contains some $r_i$.
\end{enumerate}
\end{lemma}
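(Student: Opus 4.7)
My plan is to combine the Baker--Rumely dichotomy for fixed Berkovich Fatou components (Rivera, where $N$ acts bijectively, versus attracting, where $N$ is multi-to-one) with the catalogue of $N$-fixed points supplied by Lemma \ref{fixed-pt} and Corollary \ref{V-deg}, and then classify the components of $\mathbb{P}_{\mathrm{Ber}}^1\setminus V$ accordingly. Since $V\subset J_{\mathrm{Ber}}(N)$, every fixed Fatou component $U$ is contained in a unique component $W$ of $\mathbb{P}_{\mathrm{Ber}}^1\setminus V$, so it suffices to go through each type of $W$ (ball around some $r_i$, ball around $\infty$, annulus arising from an edge of $H_V$, and ball containing neither an $r_i$ nor $\infty$), show which are Fatou components, and identify $U$ with $W$ in the first three.

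For a ball $W=\mathbf{B}_{v_i}(\vec v_{r_i})^-$ containing $r_i$ (where $v_i=\pi_{H_V^\infty}(r_i)\in V$), the edge $[v_i,r_i]\subset H_{\mathrm{fix}}$ is forward-invariant by Theorem \ref{tree-properties}(5), so $\vec v_{r_i}$ is a fixed direction of $(N_\ast)_{v_i}$; Proposition \ref{fundamental-Berk} together with Corollary \ref{not-whole-Ber} then force $N(W)=W$, and since $r_i$ is a critical fixed point of $N$ we have $m_N(v_i,\vec v_{r_i})\ge 2$, so $N|_W$ is multi-to-one. The immediate basin of the superattracting fixed point $r_i$ is $N$-invariant and bounded in $\overline W$ only by $v_i\in J_{\mathrm{Ber}}(N)$, hence equals $W$; that settles $(\Leftarrow)$ of (2), and the classification of fixed points forces $(\Rightarrow)$ because no other fixed point of $N$ in $\mathbb{P}^1_{\mathrm{Ber}}$ is attracting. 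Analogously, for the ball $W_\infty$ containing $\infty$ or for an annulus $W$ whose core is an open edge $(v_1,v_2)\subset H_V$, I will compute the relevant directional multiplicities: for annuli, each interior point of the core is an indifferent fixed point with $\deg=1$ by Lemma \ref{fixed-pt} and Corollary \ref{V-deg}, so Lemma \ref{dir-multi} forces both edge-multiplicities $m_N(v_j,\vec v_{j\to k})$ to equal $1$; for $W_\infty$, conjugating by $M\in\mathrm{PGL}_2(\mathbb{L})$ with $M(\xi_g)=v_\infty$ and $M(\infty)=\infty$, the reduction $\overline{M^{-1}\circ N\circ M}$ fixes $\infty$ with multiplier equal to the residue of the $N$-multiplier $d/(d-1)\in\mathcal{O}_\mathbb{L}^\times$, a nonzero complex number different from $1$, so $\infty$ is a non-critical simple fixed point of the reduction and $m_N(v_\infty,\vec v_\infty)=1$. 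In both cases Proposition \ref{fundamental-Berk} yields a bijection $N:W\to W$, and the indifferent fixed point inside $W$ turns $W$ into a Rivera domain.

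For the converse of (1), any indifferent fixed point of $N$ in $\mathbb{P}^1_{\mathrm{Ber}}$ must either be $\infty$ (forcing $U=W_\infty$) or lie in $H_V^\infty\setminus V\setminus\{\infty\}$; the latter sits either on an edge of $H_V$ (so $U$ is the corresponding annulus) or on the segment from $\pi_{H_V}(\infty)$ to $\infty$, which is contained in $W_\infty$. Finally, a ball $W$ of $\mathbb{P}_{\mathrm{Ber}}^1\setminus V$ containing neither any $r_i$ nor $\infty$ meets $H_{\mathrm{fix}}$ only at its bounding $V$-vertex, hence contains no fixed point of $N$ at all and cannot be a fixed Fatou component. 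The step I expect to be the main obstacle is the multiplier identification $m_N(v_\infty,\vec v_\infty)=1$: earlier results in the paper set up directional multiplicities but do not explicitly connect them to the classical multiplier at an adjacent type-I fixed point, so I will need to choose the conjugation $M$ carefully (so that it fixes $\infty$) and verify that the reduction of $d/(d-1)$ really is the multiplier of $\overline{M^{-1}\circ N\circ M}$ at $\infty$.
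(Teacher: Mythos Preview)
Your strategy matches the paper's: since $V\subset J_{\mathrm{Ber}}(N)$, classify the components of $\mathbb{P}^1_{\mathrm{Ber}}\setminus V$, determine which are $N$-fixed, and sort them into Rivera versus attracting. There is, however, one genuine issue. You invoke Theorem \ref{tree-properties}(5) to get that $[v_i,r_i]$ is forward-invariant, but in the paper's logical order that statement is \emph{deduced from} Lemma \ref{Fatou} (via Lemma \ref{fix-segment}), so the citation is circular. Fortunately you only need the much weaker fact that the direction $\vec v_{r_i}$ at $v_i$ is $N_\ast$-fixed, and this follows directly from Proposition \ref{algebraic-reduction}: after conjugating $v_i$ to $\xi_g$, the reduction is a (degenerate) Newton map for which the reduction of $r_i$ is a fixed point. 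Combined with Corollary \ref{not-whole-Ber} (which you already cite), this repairs the argument for the $r_i$-balls.

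Beyond that circularity, the paper takes a shortcut you may want to adopt. Rather than computing $m_N(v_\infty,\vec v_\infty)=1$ via the residue of the multiplier $d/(d-1)$ and checking bijectivity on each annulus through Proposition \ref{fundamental-Berk}, the paper simply observes that a fixed Berkovich Fatou component is attracting if and only if it contains an attracting fixed point, and the only attracting fixed points of $N$ are $r_1,\dots,r_d$. So once you have identified which components of $\mathbb{P}^1_{\mathrm{Ber}}\setminus V$ are fixed (annuli, the ball at $\infty$, and the balls at the $r_i$; the remaining balls carry no fixed direction of $N_\ast$ and hence cannot be fixed), the Rivera/attracting dichotomy is immediate. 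Your multiplier argument at $\infty$ is correct, but it is extra work; and applying Proposition \ref{fundamental-Berk} directly to the annulus case needs more care than you indicate, since the relevant balls $\mathbf{B}_{v_j}(\vec v_{j\to k})^-$ \emph{do} meet $V$ and Corollary \ref{not-whole-Ber} therefore does not apply to them as stated.
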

\begin{proof}
First note if $U$ is a component of $\mathbb{P}^1_{\mathrm{Ber}}\setminus V$, then $U$ is fixed by $N$ if and only if either the boundary of $U$ has more than one point, $U$ contains some $r_i$ or $U$ contains $\infty$. Indeed, if $U$ is a component of $\mathbb{P}^1_{\mathrm{Ber}}\setminus V$ which has only one boundary point, say $\xi$, and does not contain either $r_i$ or $\infty$. Then $\xi\in V$. Let $\vec{v}\in T_\xi\mathbb{P}^1_{\mathrm{Ber}}$ such that $U=\mathbf{B}_{\xi}(\vec{v})^-$. Then $N_\ast$ does not fix $\vec{v}$. Thus $U$ is not fixed by $N$. By Corollary \ref{not-whole-Ber}, we know the component $U$ containing some $r_i$ is fixed, since $N$ fixes the boundary point of $U$ and $r_i$. Similarly, the component $U$ containing $\infty$ is a fixed component. If $U$ is a component with at least $2$ boundary points, then by Lemma \ref{fixed-pt} and noting $\deg N_\ast=1$ at any point on $H_V\setminus V$, we have $U$ is a fixed component.\par 
Since each $r_i$ is a (super)attracting fixed point of $N$ and $N$ has no other attracting fixed points, we get the classifications of fixed Rivera domains and attracting domains for $N$.
\end{proof}
Note each $r_i$ is a superattracting fixed point and $v_i:=\pi_{H_V}(r_i)\in V$ is a repelling fixed point of $N$. The following lemma states that the segment $[v_i,r_i]$ is invariant under $N$ and in the path distance metric $\rho$, the map $N$ pushes points in $(v_i,r_i)$ away from $v_i$ to $r_i$. It deduces Theorem \ref{tree-properties}$(5)$ immediately since by Lemma \ref{fixed-pt}, we have $H_{V}^\infty$ is fixed by $N$ and $H_{\mathrm{fix}}\setminus H_{V}^\infty=H_r\setminus H_V$. Later, we will show $N$ expands uniformly on $[v_i,r_i)$.
\begin{lemma}\label{fix-segment}
Any component $L$ of $H_r\setminus H_V$ is invariant by $N$. Moreover, for any $\xi\in L\cap\mathbb{H}_{\mathrm{Ber}}$, then 
$$\rho(N(\xi),\pi_{H_V}(\xi))\ge\rho(\xi,\pi_{H_V}(\xi))$$
\end{lemma}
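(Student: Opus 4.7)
Since the leaves of the tree $H_{\mathrm{fix}}$ are the points $r_1,\ldots,r_d$ and $\infty$ while $V$ consists of the internal branching vertices, each component $L$ of $H_r\setminus H_V$ is a half-open geodesic segment of the form $(v,r_i]$, with $v:=\pi_{H_V}(r_i)\in V$. The plan is to prove, in order, (i) $N(L)\subseteq L$, and (ii) $\rho(N(\xi),v)\ge \rho(\xi,v)$ for every $\xi\in L\cap\mathbb{H}_{\mathrm{Ber}}$.

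For (i), let $\vec{v}_i\in T_v\mathbb{P}^1_{\mathrm{Ber}}$ be the direction containing $r_i$; by the choice of $L$, the disk $U:=\mathbf{B}_v(\vec{v}_i)^-$ contains no other element of $V$. I would first use Corollary \ref{not-whole-Ber} to rule out $N(U)=\mathbb{P}^1_{\mathrm{Ber}}$, so that Proposition \ref{fundamental-Berk} gives $N(U)=\mathbf{B}_v(N_\ast(\vec{v}_i))^-$. Since $r_i=N(r_i)$ lies in both disks, the disjointness of directional disks at $v$ forces $N_\ast(\vec{v}_i)=\vec{v}_i$ and $N(U)=U$---consistent with Lemma \ref{Fatou} identifying $U$ as the fixed attracting Fatou component of $r_i$. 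To upgrade this to $N(L)\subseteq L$ I would run an open-and-closed argument on $S:=\{\xi\in[v,r_i] : N([v,\xi])\subseteq[v,r_i]\}$: closedness is immediate from continuity of $N$, and for the crucial openness at $\xi_0\in S\setminus\{r_i\}$, I would apply Lemma \ref{dir-multi} in the direction $\vec{w}^+\in T_{\xi_0}\mathbb{P}^1_{\mathrm{Ber}}$ pointing toward $r_i$. A repeat of the disk-image analysis (using that $\mathbf{B}_{\xi_0}(\vec{w}^+)^-\subset U$ cannot map onto all of $\mathbb{P}^1_{\mathrm{Ber}}$) identifies $N_\ast(\vec{w}^+)$ with the direction at $N(\xi_0)$ toward $r_i$; since $\xi_0\in S$ forces $N(\xi_0)\in[v,r_i]$, the initial subgeodesic from $N(\xi_0)$ in that direction is a subsegment of $[v,r_i]$. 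Lemma \ref{dir-multi} then places $N((\xi_0,\xi_+))$ onto that subgeodesic for some $\xi_+>\xi_0$ on $[v,r_i]$, so $[v,\xi_+]\subseteq S$; openness follows and $S=[v,r_i]$.

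For (ii), once $N(L)\subseteq L$ is in hand I would invoke Lemma \ref{dir-multi} once more: the local degree $\deg_\xi N$ is a positive-integer-valued function on $[v,r_i]$ that is piecewise constant (jumping at only finitely many points), and on each maximal subsegment of constant value $m$ the map $N$ scales $\rho$-distances by $m\ge 1$. Concatenating these scalings along $[v,r_i]$ while using $N(v)=v$ yields $\rho(N(\xi),v)\ge \rho(\xi,v)$ on $L\cap\mathbb{H}_{\mathrm{Ber}}$.

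The hard part will be the openness step in (i): a priori, $N((\xi_0,\xi_+))$ could bulge off the thin segment $[v,r_i]$ into the bulk of the fat Fatou disk $U$. Ruling this out hinges on correctly identifying the tangent direction $N_\ast(\vec{w}^+)$, which in turn depends on the total invariance of $U$ established at the start of the argument. Once (i) is secured, the metric bound in (ii) is a routine corollary.
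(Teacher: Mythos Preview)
Your proof is correct, but it takes a different route from the paper's in both parts.

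For (i), the paper argues by contradiction in a single step rather than via connectedness. Assuming some $\xi\in L$ has $N(\xi)\notin L$, the paper observes that the disk $\mathbf{B}_\xi(\vec{w}^+)^-$ toward $r_i$ maps (by Corollary~\ref{not-whole-Ber} and Proposition~\ref{fundamental-Berk}) onto a disk with boundary $N(\xi)$ containing $r_i$; since $N(\xi)\notin[v,r_i]$, this disk must also contain $v$, so $v$ has a preimage inside $\mathbf{B}_v(\vec{v}_i)^-$, contradicting Corollary~\ref{not-whole-Ber}. This is shorter than your open--closed argument, though both rest on the same ingredient. Your approach has the side benefit of explicitly establishing that $N_\ast$ preserves the direction toward $r_i$ at every point of $L$, which you then reuse in (ii).

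For (ii), the paper instead invokes Lemma~\ref{Fatou} to identify $U=\mathbf{B}_v(\vec{v}_i)^-$ as a fixed attracting Fatou component and asserts ``$\rho(\xi,r_i)>\rho(N(\xi),r_i)$'', which is informal since $r_i$ is type~I. Your argument via Lemma~\ref{dir-multi}---piecewise-constant local degree $m\ge 1$ along $[v,r_i]$, direction toward $r_i$ preserved, $N(v)=v$, then concatenate the scalings---is more self-contained. In fact the paper uses exactly this metric-scaling idea in the lemma immediately following Lemma~\ref{fix-segment} (there with the sharper input $[v,r_i]\subset\mathcal{R}_N$ to get a factor $\ge 2$), so your approach to (ii) is in the same spirit, just deployed one lemma earlier.
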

\begin{proof}
Suppose there exist a component $L$ of $H_r\setminus H_V$ and a point $\xi\in L$ such that $N(\xi)\not\in L$. We assume $r_i\in L$. Set $v=\pi_{H_V}(\xi)$ . Let $\vec{v}_1\in T_v\mathbb{P}^1_{\mathrm{Ber}}$ and $\vec{v}_2\in T_\xi\mathbb{P}^1_{\mathrm{Ber}}$ be such that $r_i\in\mathbf{B}_{\xi}(\vec{v}_1)^-\subset\mathbf{B}_{v}(\vec{v}_2)^-$. Let $\xi'=N(\xi)$ and $\vec{w}=N_\ast(\vec{v})\in T_{\xi'}\mathbb{P}^1_{\mathrm{Ber}}$. By Corollary \ref{not-whole-Ber}, 
$$N(\mathbf{B}_{\xi}(\vec{v}_1)^-)=\mathbf{B}_{\xi'}(\vec{w})^-.$$
Note $N(r_i)=r_i$. Thus $[r_i,v]\subset\mathbf{B}_{\xi'}(\vec{w})^-$. Thus $N^{-1}(v)\cap\mathbf{B}_{v}(\vec{v}_2)^-\not=\emptyset$. It contradicts with Corollary \ref{not-whole-Ber}. Thus $N(L)\subset L$. Note $N$ fixes the endpoints of $L$. Thus $N(L)=L$.\par
By Lemma \ref{Fatou}, the disk $\mathbf{B}_{v}(\vec{v}_2)^-$ is an attracting Fatou component. Thus 
$$\rho(\xi,r_i)>\rho(N(\xi),r_i).$$
Note $\pi_{H_V}(N(\xi))=\pi_{H_V}(\xi)$. Thus we have 
$$\rho(\xi,\pi_{H_V}(\xi))<\rho(N(\xi),\pi_{H_V}(N(\xi))=d(N(\xi),\pi_{H_V}(\xi)).$$
\end{proof}
Now we can prove Theorem \ref{tree-properties} $(4)$.
\begin{corollary}\label{preimage-H-infty}
For $v\in V$, let $\xi\in N^{-1}(v)$. Then $\pi_{H_{\mathrm{fix}}}(\xi)\in V$.
\end{corollary}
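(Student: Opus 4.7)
The plan is to upgrade Lemma \ref{preimage-H0} from $H_V$ to the slightly larger tree $H_{\mathrm{fix}}$. By that lemma, $v':=\pi_{H_V}(\xi)\in V$. Since $H_V\subset H_{\mathrm{fix}}$, the visible point $\pi_{H_{\mathrm{fix}}}(\xi)$ either equals $v'$, in which case we are done, or it lies strictly inside one of the leaf branches of $H_{\mathrm{fix}}$ attached to $v'$; namely on some open segment $(v',r_i)$ with $\pi_{H_V}(r_i)=v'$, or on $(v',\infty)$ when $\pi_{H_V}(\infty)=v'$. Thus the content of the corollary is to exclude this second possibility.

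Assume for contradiction that $\pi_{H_{\mathrm{fix}}}(\xi)\in(v',r_i)$; the branch-to-$\infty$ case is identical after replacing $r_i$ by $\infty$. Let $\vec{w}\in T_{v'}\mathbb{P}^1_{\mathrm{Ber}}$ be the tangent direction containing $r_i$, so that $\xi\in \mathbf{B}_{v'}(\vec{w})^-$. I would first verify that $\mathbf{B}_{v'}(\vec{w})^-\cap V=\emptyset$: any $v''\in V$ lying in this disk would be a branch point of $H_{\mathrm{fix}}$ strictly between $v'$ and $r_i$, which would force $\pi_{H_V}(r_i)=v''\neq v'$. Combined with Corollary \ref{not-whole-Ber}, this gives $N(\mathbf{B}_{v'}(\vec{w})^-)\neq \mathbb{P}^1_{\mathrm{Ber}}$, and since $N(v')=v'$ by Lemma \ref{fixed-pt}, Proposition \ref{fundamental-Berk} identifies the image as the Berkovich disk $\mathbf{B}_{v'}(N_\ast(\vec{w}))^-$.

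To finish, I would pin down $N_\ast(\vec{w})$ by evaluating at the fixed point $r_i$: because $r_i=N(r_i)$ must lie in $\mathbf{B}_{v'}(N_\ast(\vec{w}))^-$, and the unique tangent direction at $v'$ whose open disk contains $r_i$ is $\vec{w}$ itself, we get $N_\ast(\vec{w})=\vec{w}$. Hence $v=N(\xi)\in \mathbf{B}_{v'}(\vec{w})^-$, which contradicts the disjointness $\mathbf{B}_{v'}(\vec{w})^-\cap V=\emptyset$ already established.

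I expect the main subtle point to be the initial disjointness claim $\mathbf{B}_{v'}(\vec{w})^-\cap V=\emptyset$, which is a purely combinatorial statement about the tree $H_{\mathrm{fix}}$ and its internal vertices $V$; once it is in place, the remainder is a direct application of the Berkovich dynamical facts (Proposition \ref{fundamental-Berk}, Corollary \ref{not-whole-Ber}, and the classification of fixed points in Lemma \ref{fixed-pt}) that have already been assembled for $N$.
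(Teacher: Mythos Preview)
Your proof is correct and follows the same overall strategy as the paper: reduce to Lemma~\ref{preimage-H0}, assume $\pi_{H_{\mathrm{fix}}}(\xi)$ lies on a leaf branch of $H_{\mathrm{fix}}$, and derive a contradiction by trapping $v=N(\xi)$ in a Berkovich disk disjoint from $V$.

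There is, however, a genuine simplification in your route worth pointing out. The paper works at the point $\eta=\pi_{H_{\mathrm{fix}}}(\xi)$ on the leaf branch and invokes Lemma~\ref{fix-segment} to push $N(\eta)$ farther from $H_V$, then argues the image disk misses $V$; this uses the metric expansion on $[v',r_i)$ and, strictly speaking, does not cover the $\infty$-branch (where $\eta$ is fixed rather than pushed outward). You instead work directly at $v'\in V$, which lets you apply Corollary~\ref{not-whole-Ber} on the nose, and you identify $N_\ast(\vec{w})=\vec{w}$ simply by evaluating at the fixed leaf $r_i$ (or $\infty$). This handles both branch types uniformly and bypasses Lemma~\ref{fix-segment} entirely. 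One small imprecision: your sentence ``any $v''\in V$ lying in this disk would be a branch point of $H_{\mathrm{fix}}$ strictly between $v'$ and $r_i$'' tacitly uses that $H_{\mathrm{fix}}\cap\mathbf{B}_{v'}(\vec{w})^-=(v',r_i]$; this holds because $\pi_{H_V}(r_i)=v'$ forces $r_i$ to be adjacent to $v'$ in $H_{\mathrm{fix}}$, but it is worth saying explicitly.
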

\begin{proof}
By Lemma \ref{preimage-H0}, it is sufficient to show $\pi_{H_{\mathrm{fix}}}(\xi)\not\in H_{{\mathrm{fix}}}\setminus H_V$. Suppose that $\eta:=\pi_{H_{\mathrm{fix}}}(\xi)\in H_{{\mathrm{fix}}}\setminus H_V$. By Corollary \ref{not-whole-Ber}, the Berkovich disk with boundary $\eta$ containing $\xi$ maps to a Berkovich disk $\mathbf{B}$ with boundary $N(\eta)$ and $\pi_{H_V}(\eta)\not\in\mathbf{B}$. By Lemma \ref{fix-segment}, we know 
$$\rho(N(\eta),\pi_{H_V}(\eta))>\rho(\eta,\pi_{H_V}(\eta)).$$
Thus $\mathbf{B}\cap V=\emptyset$. It is a contradiction.
\end{proof}
For a rational map $\phi\in\mathbb{L}(z)$, the Berkovich ramification locus $\mathcal{R}_\phi$ is defined by 
$$\mathcal{R}_\phi=\{\xi\in\mathbb{P}^1_{\mathrm{Ber}}: \deg_\xi\phi\ge 2\}.$$
The Berkovich ramification locus $\mathcal{R}_\phi$ is a closed subset of $\mathbb{P}^1_{\mathrm{Ber}}$ with no isolated point and has at most $d-1$ components \cite[Theorem A]{Faber13I}. Moreover, $\mathcal{R}_\phi$ is contained in the convex hull of critical points of $\phi$ \cite[Corollary 7.13]{Faber13I}. Then for Newton map $N$, we have $\mathcal{R}_N\subset H_{\mathrm{crit}}$. The following lemma gives the precise locations of the Berkovich ramification locus $\mathcal{R}_N$ and deducts Theorem \ref{tree-properties} $(6)$.
\begin{lemma}\label{Newton-ramification}
For the map $N$, the Berkovich ramification locus is
$$\mathcal{R}_N=(H_\mathrm{crit}\setminus H_V)\cup V.$$
\end{lemma}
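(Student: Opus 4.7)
The plan is to decompose $\mathcal{R}_N$ using Faber's containment $\mathcal{R}_N\subset H_{\mathrm{crit}}$, and to pin down which points of $H_{\mathrm{crit}}$ are ramified by exploiting the structural results about $V$ already established, together with the fact that Berkovich balls attached to $V$ but away from other elements of $V$ fail to surject onto $\mathbb{P}^1_{\mathrm{Ber}}$.

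First I would check that $H_V\subset H_{\mathrm{crit}}$: each $r_i\in\mathrm{Crit}(N)$ gives $H_r\subset H_{\mathrm{crit}}$, and any branch point of $H_{\mathrm{fix}}=\mathrm{Hull}(r_1,\dots,r_d,\infty)$ lying on the segment $[\pi_{H_r}(\infty),\infty]$ must coincide with $\pi_{H_r}(\infty)\in H_r$, so $V\subset H_r\subset H_{\mathrm{crit}}$. Combining Faber's $\mathcal{R}_N\subset H_{\mathrm{crit}}$ with Corollary \ref{V-deg}, which gives $\deg_\xi N=1$ on $H_V\setminus V$ and $\deg_v N\ge 2$ on $V$, yields both $V\subset\mathcal{R}_N$ and $\mathcal{R}_N\cap(H_V\setminus V)=\emptyset$. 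Thus $\mathcal{R}_N\cap H_V=V$, and the task reduces to showing $H_{\mathrm{crit}}\setminus H_V\subset\mathcal{R}_N$.

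For the remaining inclusion I would first argue it for type II points and then appeal to the closedness of $\mathcal{R}_N$ (Faber's Theorem A) plus density of type II points to cover types III and IV; type I points in $H_{\mathrm{crit}}\setminus H_V$ are themselves critical, hence automatically in $\mathcal{R}_N$. Fix a type II point $\xi\in H_{\mathrm{crit}}\setminus H_V$. Because $H_{\mathrm{crit}}$ is a tree, there exist a critical point $c\in\mathrm{Crit}(N)$ and the unique point $v:=\pi_{H_V}(c)$ with $\xi\in(v,c)$; by Theorem \ref{tree-properties}(3), $v\in V$. Let $\vec w\in T_v\mathbb{P}^1_{\mathrm{Ber}}$ and $\vec v_c\in T_\xi\mathbb{P}^1_{\mathrm{Ber}}$ both point toward $c$. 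The defining property $\pi_{H_V}(c)=v$ forces $\mathbf{B}_v(\vec w)^-\cap V=\emptyset$, so Corollary \ref{not-whole-Ber} gives $N(\mathbf{B}_v(\vec w)^-)\ne\mathbb{P}^1_{\mathrm{Ber}}$, and a fortiori $N(\mathbf{B}_\xi(\vec v_c)^-)\ne\mathbb{P}^1_{\mathrm{Ber}}$. Proposition \ref{fundamental-Berk} then forces $N(\mathbf{B}_\xi(\vec v_c)^-)$ to be a Berkovich disk in which each point admits exactly $m_N(\xi,\vec v_c)$ preimages in $\mathbf{B}_\xi(\vec v_c)^-$ counted with multiplicity. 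Taking $N(c)$ as a test point, the single preimage $c$ already contributes $\deg_c N\ge 2$, so $m_N(\xi,\vec v_c)\ge 2$, and Lemma \ref{multi-dir-multi} yields $\deg_\xi N\ge 2$.

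The main obstacle is excluding the possibility $N(\mathbf{B}_\xi(\vec v_c)^-)=\mathbb{P}^1_{\mathrm{Ber}}$, which is exactly where the Newton structure enters decisively: locating $v=\pi_{H_V}(c)$ inside $V$ via Theorem \ref{tree-properties}(3) and invoking Corollary \ref{not-whole-Ber} is what makes the image genuinely a disk and lets the critical-multiplicity bound $\deg_c N\ge 2$ propagate backward along $[v,c]$ to force ramification at $\xi$.
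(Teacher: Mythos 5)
Your proposal is correct and follows essentially the same route as the paper's proof: the forward inclusion via Faber's containment $\mathcal{R}_N\subset H_{\mathrm{crit}}$ together with Corollary \ref{V-deg}, and the reverse inclusion by using Corollary \ref{not-whole-Ber} to see that the directional ball at a point of $H_{\mathrm{crit}}\setminus H_V$ toward a critical point maps to a Berkovich disk, whence $m_N(\xi,\vec v)\ge 2$ and $\deg_\xi N\ge 2$ by Proposition \ref{fundamental-Berk} and Lemma \ref{multi-dir-multi}. You merely spell out details the paper leaves implicit (the containment of balls justifying the non-surjectivity, the test-point count giving $m\ge 2$, and the limiting argument for non-type-II points), so there is nothing substantively different to report.
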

\begin{proof}
For any point $\xi\in H_V\setminus V$, by Corollary \ref{V-deg}, $\deg_\xi N<2$. Thus $\xi\not\in\mathcal{R}_N$. Hence, $\mathcal{R}_N\subset(H_{\mathrm{crit}}\setminus H_V)\cup V$.\par 
Conversely, by Corollary \ref{V-deg}, $V\subset\mathcal{R}_N$. Now pick $\xi\in H_{\mathrm{crit}}\setminus H_V$. By Corollary \ref{not-whole-Ber}, there exists a tangent vector $\vec{v}\in T_\xi\mathbb{P}^1_{\mathrm{Ber}}$ such that $\mathbf{B}_{\xi}(\vec{v})^-\cap\mathrm{Crit}(N)\not=\emptyset$ and $N(\mathbf{B}_{\xi}(\vec{v})^-)\not=\mathbb{P}^1_{\mathrm{Ber}}$. Thus, the directional multiplicity $m_N(\xi,\vec{v})\ge 2$. By Lemma \ref{multi-dir-multi}, 
$$\deg_\xi N\ge m_N(\xi,\vec{v})\ge 2.$$
Thus, $\xi\in\mathcal{R}_N$.
\end{proof}
To end this section, we show the map $N$ expands uniformly on each segment $[v_i, r_i)$, where $v_i=\pi_{H_V}(r_i)$.
\begin{lemma}
Let $v_i=\pi_{H_V}(r_i)$. Then for any $\xi_1,\xi_2\in [v_i, r_i)$, then 
$$\rho(N(\xi_1),N(\xi_2))\ge 2\rho(\xi_1,\xi_2).$$
\end{lemma}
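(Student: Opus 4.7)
The plan is to use the fact that $r_i$ is a superattracting fixed point of $N$ of local degree at least $2$, combined with the directional stretching encoded in Lemma \ref{dir-multi}, to show that $N$ expands by a factor of at least $2$ along the invariant arc $[v_i,r_i]$.

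For each $\xi\in[v_i,r_i)$, let $\vec{v}(\xi)\in T_\xi\mathbb{P}^1_{\mathrm{Ber}}$ denote the tangent direction pointing towards $r_i$, so that $r_i\in\mathbf{B}_\xi(\vec{v}(\xi))^-$. Since $N(r_i)=r_i$ and the action of $N_\ast$ on tangent directions is determined by the germ of $N$ at $\xi$, we have $N_\ast(\vec{v}(\xi))=\vec{v}(N(\xi))$ whenever $N(\xi)\in[v_i,r_i)$. Combined with Lemma \ref{fix-segment}, this makes $N|_{[v_i,r_i]}$ a continuous, orientation-preserving self-map of the arc fixing both endpoints.

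Next I claim $m_N(\xi,\vec{v}(\xi))\ge 2$ for every $\xi\in[v_i,r_i)$. By Lemma \ref{Fatou}, the ball $B:=\mathbf{B}_{v_i}(\vec{v}(v_i))^-$ is the attracting Fatou component containing $r_i$, so $N(B)\subseteq B\subsetneq\mathbb{P}^1_{\mathrm{Ber}}$. Since $\mathbf{B}_\xi(\vec{v}(\xi))^-\subseteq B$, the image $N(\mathbf{B}_\xi(\vec{v}(\xi))^-)$ is a proper Berkovich disk, so case (1) of Proposition \ref{fundamental-Berk} applies: each point in this image disk has exactly $m_N(\xi,\vec{v}(\xi))$ preimages (counted with multiplicity) in the source disk. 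Taking this point to be $r_i$ itself and noting that $r_i$ contributes multiplicity $\deg_{r_i}N\ge 2$ (a standard computation for Newton maps, using $N(z)-r_i=\frac{P''(r_i)}{2P'(r_i)}(z-r_i)^2+O((z-r_i)^3)$ at a simple root), we conclude $m_N(\xi,\vec{v}(\xi))\ge 2$.

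Finally, for $\xi_1,\xi_2\in[v_i,r_i)$ parameterize the geodesic arc $[\xi_1,\xi_2]$ by $\rho$-arclength. The monotonicity established above guarantees that $N$ maps $[\xi_1,\xi_2]$ homeomorphically onto $[N(\xi_1),N(\xi_2)]\subset[v_i,r_i]$ without folding, so integrating Lemma \ref{dir-multi}(2) along the arc yields
$$\rho(N(\xi_1),N(\xi_2))=\int_{[\xi_1,\xi_2]}m_N(\xi,\vec{v}(\xi))\,d\rho(\xi)\ge 2\rho(\xi_1,\xi_2).$$
The main subtle point is uniformly bounding $m_N(\xi,\vec{v}(\xi))\ge 2$ along the entire segment; once case (2) of Proposition \ref{fundamental-Berk} is ruled out via Lemma \ref{Fatou}, the critical fixed point structure at $r_i$ does the rest.
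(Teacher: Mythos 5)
Your argument is correct, and its skeleton coincides with the paper's: establish that the directional multiplicity $m_N(\xi,\vec{v}(\xi))\ge 2$ at every $\xi\in[v_i,r_i)$ in the direction of $r_i$, then convert this local stretching (Lemma \ref{dir-multi}) into the global inequality. Where you differ is in how the multiplicity bound is obtained: the paper simply invokes Lemma \ref{Newton-ramification} (the segment $[v_i,r_i]$ lies in the ramification locus) and asserts the directional statement, whereas you re-derive it directly by counting preimages of the superattracting fixed point $r_i$ inside $\mathbf{B}_\xi(\vec{v}(\xi))^-$, using $\deg_{r_i}N\ge 2$ together with case (1) of Proposition \ref{fundamental-Berk}, the non-surjectivity of the ball coming from Lemma \ref{Fatou} (equivalently Corollary \ref{not-whole-Ber}). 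This buys a more self-contained and arguably more transparent justification of the key inequality, and you also make explicit the monotonicity/no-folding step ($N_\ast$ sends the towards-$r_i$ direction to the towards-$r_i$ direction, so images stay ordered along the segment) that the paper leaves implicit when it says "the conclusion follows from Lemma \ref{dir-multi}." Two small points to tighten: the identity $N_\ast(\vec{v}(\xi))=\vec{v}(N(\xi))$ is not really "determined by the germ" as you phrase it in the first paragraph -- it follows from the disk-image property you only establish in the second paragraph (the image ball contains $N(r_i)=r_i$), so the order of the argument should be rearranged; and you should note that $\mathbf{B}_{v_i}(\vec{v}(v_i))^-$ meets no point of $V$ (this follows from $v_i=\pi_{H_V}(r_i)$), which is what makes it the attracting component in Lemma \ref{Fatou} and rules out the image being all of $\mathbb{P}^1_{\mathrm{Ber}}$. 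Neither issue is a genuine gap.
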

\begin{proof}
By Lemma \ref{Newton-ramification}, for each point $\xi\in [v_i, r_i]$, we have $\deg_\xi N\ge 2$. In fact, the directional multiplicity $m_N(\xi,\vec{v})\ge 2$, where $\vec{v}\in T_\xi\mathbb{P}^1_{\mathrm{Ber}}$ is such that $r_i\in\mathbf{B}_{\xi}(\vec{v})^-$. Then the conclusion follows from Lemma \ref{dir-multi}.
\end{proof}

\section{Measure Theory}\label{measure}
\subsection{Limiting Measures on $\mathbb{P}^1$}
In this subsection, we associate each point $f\in\mathbb{P}^{2d+1}$ a natural measure $\mu_f$.\par  
For $f\in\mathrm{Rat}_d$, let $\mu_f$ be the unique measure of maximal entropy, which is given by the weak limit 
$$\mu_f=\lim\limits_{n\to\infty}\frac{1}{d^n}\sum_{f^n(z)=a}\delta_z$$
for any nonexceptional point $a\in\mathbb{P}^1$,see \cite{Freire83, Ljubich83, Mane83}. The measure $\mu_f$ has no atoms, and $\text{supp}\ \mu_f=J(f)$. Moreover, $\mu_{f^n}=\mu_f$.\par 
For $f=H_f\hat f$ with $\text{deg}\ \hat f\ge 1$, following DeMarco \cite{DeMarco05}, define 
$$\mu_f=\sum_{n=0}^\infty\frac{1}{d^{n+1}}\sum_{H_f(h)=0}\sum_{\hat f^n(z)=h}\delta_z,$$
where the holes $h$ and all preimages by $\hat f$ are counted with multiplicity. Then $\mu_f$ is an atomic probability measure. If $\text{deg}\hat f=0$, define 
$$\mu_f=\frac{1}{d}\sum_{H_f(h)=0}\delta_h,$$
where the holes $h$ are counted with multiplicity. Recall the indeterminacy locus $I(d)\subset\mathbb{P}^{2d+1}$ is defined by 
$$I(d)=\{f=H_f\hat f\in\mathbb{P}^{2d+1}: \hat f\equiv c\ \text{and}\ H_f(c)=0\}.$$ 
Then if $f\not\in I(d)$, we have $\mu_f=\mu_{f^n}$. We refer \cite{DeMarco05, DeMarco07} for more properties of the measure $\mu_f$ when $f$ is degenerate.\par
\begin{example}
Consider the degenerate cubic Newton map $N_{\{0,0,1\}}\in\mathbb{P}^7$. To ease the notation, we write $N=N_{\{0,0,1\}}$. Since $z=0$ is the unique hole of $N$, then by definition, we know 
$$\mu_N=\sum_{n=0}^\infty\frac{1}{3^{n+1}}\sum_{\hat N^n(z)=0}\delta_z.$$
In particular, $\mu_N(\{0\})=\frac{1}{2}$.
\end{example}
Recall that $d_h(f)$ is the depth of hole $h\in\mathbb{P}^1$ for the map $f$.
\begin{proposition}\label{measure-converge}\cite[Theorem 0.1, Theorem 0.2]{DeMarco05}
Suppose $d\ge2$ and $f\in\mathbb{P}^{2d+1}$. Then $f\in I(d)$ if and only if the map $g\to\mu_g$ is discontinuous at $f$. Moreover, Suppose $\{f_k\}$ is a sequence in $\mathrm{Rat}_d$ converging to $f=H_f\hat f\not\in\mathrm{Rat}_d$.
\begin{enumerate}
\item If $f\not\in I(d)$, $\mu_{f_k}\to\mu_f$ weakly.
\item If $f\in I(d)$, let $\hat f\equiv c$. Then any subsequential limit $\mu$ of $\mu_{f_k}$ satisfies 
$$\mu(\{c\})\ge\frac{d_c(f)}{d+d_c(f)}.$$ 
\end{enumerate}
\end{proposition}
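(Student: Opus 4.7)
The plan is to combine the iterate formula of Lemma \ref{iterate-indeterminacy} with the balance and invariance relations satisfied by the measure of maximal entropy, analyzing the behavior of $\mu_{f_k}$ as $f_k$ approaches the boundary of $\mathrm{Rat}_d$.

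First, for the continuity of $g\mapsto\mu_g$ away from $I(d)$: by Lemma \ref{iterate-indeterminacy}, the iterate map $\Psi_n$ is continuous at every $f\notin I(d)$, so any sequence $f_k\to f$ in $\mathrm{Rat}_d$ gives sub-algebraic convergence $f_k^n\xrightarrow{sa}f^n$. Lemma \ref{subalg-imply-locally-uniform} then yields locally uniform convergence $\widehat{f_k^n}\to\widehat{f^n}$ off a finite set. I would use the equidistribution characterization $\mu_{f_k}=\lim_n d^{-n}(f_k^n)^*\delta_a$ for a generic base point $a$: as $k\to\infty$, the $d^n$ preimages of $a$ by $f_k^n$ split into $\deg\widehat{f^n}$ preimages converging to preimages by $\widehat{f^n}$ together with the remaining preimages clustering at the holes of $f^n$, each with the multiplicity dictated by the explicit factorization of Lemma \ref{iterate-indeterminacy}. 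Matching this split against the defining formula for $\mu_f$ via a diagonal argument in $k$ and $n$ yields the weak convergence $\mu_{f_k}\to\mu_f$, covering both the cases $\deg\widehat{f}\geq 1$ and $\deg\widehat{f}=0$ with the constant value not a hole of $f$.

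For the lower bound in part (2), which is the principal technical step, I would work locally near the hole $c$ and exploit two concurrent asymptotic facts as $k\to\infty$: the map $f_k$ behaves like a $d_c$-to-$1$ branched cover from a small disk $B_\delta(c)$ onto a small target disk, while globally $f_k\to c$ locally uniformly off all the holes. Combining the balance $f_k^*\mu_{f_k}=d\mu_{f_k}$ with the invariance $f_{k\ast}\mu_{f_k}=\mu_{f_k}$, applied to $B_\epsilon(c)$ and the analogous disks at the other holes, should produce a system of inequalities among the masses $\mu_{f_k}(B_\epsilon(h))$. Passing to a weak subsequential limit $\mu$ and analyzing this system as a Perron--Frobenius-type relation on the atomic masses at holes, the bound $\mu(\{c\})\geq d_c/(d+d_c)$ should emerge, with the denominator arising from combining the overall degree $d$ with the self-reinforcing $d_c$ preimages of $c$ concentrated near $c$ itself. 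The discontinuity of $g\mapsto\mu_g$ at points of $I(d)$ then follows by exhibiting two sequences $f_k^{(1)},f_k^{(2)}\to f$ that perturb $f$ along distinct trajectories of $\widehat{f_k^{(i)}}(c)$, producing demonstrably different limit atomic measures.

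The main obstacle will be the rigorous passage from finite-$k$ inequalities to a clean inequality on the weak limit $\mu$. Weak convergence of measures is only lower semicontinuous on open sets and upper semicontinuous on closed sets, so verifying that the mass accumulating near $c$ genuinely collapses to $\{c\}$ (rather than leaking onto a small annulus or dispersing to nearby preimage orbits) will require an auxiliary compactness argument. A natural tool is the Lyapunov exponent $L(f_k)=\int\log|f_k'|\,d\mu_{f_k}$, whose divergence rate as $f_k$ approaches $I(d)$ controls the concentration of $\mu_{f_k}$ near the holes and should pin down the atomic part of $\mu$ with the precise lower bound claimed.
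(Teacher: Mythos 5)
You are attempting to prove a result that this paper does not prove at all: Proposition \ref{measure-converge} is quoted directly from DeMarco \cite{DeMarco05} (Theorems 0.1 and 0.2), so there is no internal argument to compare yours against; what you have sketched is a reconstruction of DeMarco's theorem. On the substance of part (2), your central mechanism is the right one: the balance relation $f_k^*\mu_{f_k}=d\,\mu_{f_k}$ combined with the fact that a small disk about the hole $c$ absorbs $d_c(f)$ preimages of essentially every point. Concretely, writing $f_k=[F^a_k:F^b_k]$ in coordinates where $c=0$, so that $F^a_k\to 0$ and $F^b_k\to H_f$ with $H_f(z,1)=z^{d_c}\tilde H(z,1)$, Rouch\'e applied to $F^a_k-yF^b_k$ gives $\#\bigl(f_k^{-1}(y)\cap B_\varepsilon(c)\bigr)\ge d_c(f)$ for all $y$ outside $B_\delta(c)$ and all large $k$; plugging this into $\mu_{f_k}(B_\varepsilon(c))=\frac1d\int\#\bigl(f_k^{-1}(y)\cap B_\varepsilon(c)\bigr)\,d\mu_{f_k}(y)$ yields $\mu_{f_k}(B_\varepsilon(c))\ge\frac{d_c}{d}\bigl(1-\mu_{f_k}(B_\delta(c))\bigr)-o(1)$, and the usual portmanteau inequalities on closed balls with $\delta\le\varepsilon\to 0$ give exactly $\mu(\{c\})\ge d_c/(d+d_c)$. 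In particular your worry about mass ``leaking'' is resolved by standard semicontinuity of weak limits on closed sets, and no Perron--Frobenius system over all holes is needed (a single inequality at $c$ suffices); the proposed Lyapunov-exponent argument is a red herring --- nothing in your sketch indicates how the growth of $L(f_k)$ would produce the precise constant $d_c/(d+d_c)$, and it is not how the bound is obtained.

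The genuine gaps are elsewhere. First, the local preimage count near the hole --- the statement that $f_k$ ``behaves like a $d_c$-to-$1$ branched cover'' near $c$ --- is exactly the technical heart of part (2) and is asserted rather than proved; without the Rouch\'e/Hurwitz argument on the homogeneous representatives above, the balance inequality has no input. Second, part (1) and the ``only if'' half of the continuity statement are essentially hand-waved: the claim that the $d^n$ preimages of a generic point split, uniformly as $k\to\infty$, into preimages under $\hat f^n$ plus clusters at the holes with the multiplicities of Lemma \ref{iterate-indeterminacy} is precisely what must be proved, and a ``diagonal argument in $k$ and $n$'' does not address the required uniformity (the number and location of exceptional preimages depend on $n$). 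DeMarco's actual proof of continuity away from $I(d)$ does not go through preimage counting at all: it works with homogeneous lifts on $\mathbb{C}^2$ and their escape-rate (Green's) functions, proves locally uniform convergence of these potentials away from $I(d)$, and recovers $\mu_f$ as the induced $dd^c$; the atomic formula for degenerate $f$ comes from the decomposition of the potential by $\log|H_f|$. Finally, the discontinuity at every point of $I(d)$ requires an explicit construction of two approximating families with distinct limit measures; ``perturbing along distinct trajectories'' names the strategy but does not carry it out. As written, the proposal identifies the correct germ for the lower bound but does not constitute a proof of either half of the proposition.
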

If $f=H_f\hat f=H_fc\in I(d)$ and $d_c(f)=1$, under the assumptions in Proposition \ref{measure-converge}, the lower bound $d_c(f)/(d+d_c(f))$ is sharp \cite[Example 5.1]{DeMarco05}. Furthermore, DeMarco and Faber proved every weak limit of measures of maximal entropy for degenerating sequences is purely atomic \cite[Theorem A]{DeMarco14}.

\subsection{Limiting Measures for Holomorphic Families}
In this subsection, we provide some basic facts about the measurable dynamics on Berkovich space $\mathbb{P}^1_{\mathrm{Ber}}$ and state DeMarco and Faber's results about the limiting measures for holomorphic families, see \cite{DeMarco14, DeMarco16}Œ, which connect measurable dynamics on the Riemann sphere with measurable dynamics on Berkovich space. For more measurable dynamics on $\mathbb{P}^1_{\mathrm{Ber}}$, we refer \cite{Baker10}.\par
Let $\phi:\mathbb{P}^1_{\mathrm{Ber}}\to\mathbb{P}^1_{\mathrm{Ber}}$ be a rational map of degree $d\ge 2$. Let $\mu_\phi$ be the equilibrium measure on $\mathbb{P}^1_{\mathrm{Ber}}$ relative to $\phi$. The measure $\mu_\phi$ is also known as the canonical measure, see \cite{Baker10}, which is the unique Borel probability measure $\mu$ that satisfies $\phi^\ast\mu=d\mu$ and that does not charge any type I point in $\mathbb{P}^1_{\mathrm{Ber}}$
\cite[Theorem A]{Favre10}. In fact, for any nonexceptional point $\xi\in\mathbb{P}^1_{\mathrm{Ber}}$, the measure $\mu_\phi$ is the weak limit 
$$\mu_\phi=\lim\limits_{n\to\infty}\frac{1}{d^n}\sum_{\phi^n(\eta)=\xi}\delta_\eta,$$
where the sum is counted with multiplicity \cite[Theorem A]{Favre10}. \par 
Following DeMarco and Faber\cite{DeMarco16}, we say $\Gamma\subset\mathbb{P}^1_{\mathrm{Ber}}$ is a vertex set if $\Gamma$ is a finite nonempty set of type II points. The connected components of $\mathbb{P}^1_{\mathrm{Ber}}\setminus\Gamma$ is called the $\Gamma$-domains. Denote by $\mathcal{P}(\Gamma)$ be the partition of $\mathbb{P}^1_{\mathrm{Ber}}$ consisting of the elements of $\Gamma$ and all the corresponding $\Gamma$-domains. Then the equilibrium $\Gamma$-measure $\omega_{\phi,\Gamma}$ is defined by $\omega_{\phi,\Gamma}(U):=\mu_\phi(U)$ for each $U\in\mathcal{P}(\Gamma)$. Then the measure $\omega_{\phi,\Gamma}$ supports on a countable subset of $\mathcal{P}(\Gamma)$ and has total mass $1$. Indeed, the support of $\mu_\phi$ is the Berkovich Julia set $J_{\mathrm{Ber}}(\phi)$ \cite[Section 10.5]{Baker10} and there are countably many elements in $\mathcal{P}(\Gamma)$ intersect $J_{\mathrm{Ber}}(\phi)$ \cite[Lemma 2.4, Proposition 2.5]{DeMarco16}. \par 
Since the tangent space $T_{\xi_g}\mathbb{P}^1_{\mathrm{Ber}}$ can be canonically identified with $\mathbb{P}^1$, in the case $\Gamma=\{\xi_g\}$, the equilibrium $\Gamma$-measure $\omega_{\phi,\Gamma}$ gives us a natural Borel measure on $\mathbb{P}^1$. Indeed, the branches of Berkovich space attached to the Gauss point correspond to points in its tangent space, which is identified with $\mathbb{P}^1$ via reduction, so a measure on branches induces a measure on $\mathbb{P}^1$. This measure on $\mathbb{P}^1$ is called the residual equilibrium measure. \par
For a holomorphic family $\{f_t\}$,  the maximal measures $\mu_{f_t}$ converge weakly to the residual equilibrium measure for the induced map $\mathbf{f}$ on $\mathbb{P}^1_{\mathrm{Ber}}$.
\begin{proposition}\label{holo-limit-measure}\cite[Theorem B]{DeMarco14}
Suppose that $d\ge 2$. Let $\{f_t\}$ be a degenerate holomorphic family in $\mathrm{Rat}_d$. Then $\mu_{f_t}$ converges weakly to a limiting probability measure $\mu$ as $t\to 0$. Moreover, the measure $\mu$ is equal to the residual equilibrium measure for the induced rational map $\mathbf{f}:\mathbb{P}^1_{\mathrm{Ber}}\to\mathbb{P}^1_{\mathrm{Ber}}$.
\end{proposition}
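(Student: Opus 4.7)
The plan is to combine Brolin--Lyubich equidistribution in the complex setting with its Berkovich counterpart. For each $t\ne 0$ and any non-exceptional $a\in\mathbb{P}^1$ one has $\mu_{f_t}=\lim_n d^{-n}\sum_{f_t^n(z)=a}\delta_z$, and likewise $\mu_{\mathbf{f}}=\lim_n d^{-n}\sum_{\mathbf{f}^n(\eta)=\xi_g}\delta_\eta$ in $\mathbb{P}^1_{\mathrm{Ber}}$. The bridge between the two pictures is the canonical identification $T_{\xi_g}\mathbb{P}^1_{\mathrm{Ber}}\cong\mathbb{P}^1$ via reduction: each type II preimage of $\xi_g$ corresponds to a closed disk $\overline{D}(\alpha_j(t),r_j(t))\subset\mathbb{L}$ whose classical points reduce to a single $c_j\in\mathbb{P}^1$.

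First, I would fix $n$ and enumerate $\mathbf{f}^{-n}(\xi_g)=\{\eta_1,\dots,\eta_{d^n}\}$ with multiplicity, using Proposition \ref{algebraic-reduction} applied iteratively to ensure these are indeed type II points for generic $a$. For $t$ sufficiently small the classical preimages $f_t^{-n}(a)\subset\mathbb{P}^1_{\mathbb{C}}$ partition into clusters, one per $\eta_j$, each cluster sitting inside the disk $D(\alpha_j(t),r_j(t))$ with cardinality equal to the local degree of $\mathbf{f}^n$ at $\eta_j$. Since $r_j(t)\to 0$ as $t\to 0$, the cluster indexed by $\eta_j$ concentrates at the direction $\vec v_j\in T_{\xi_g}\mathbb{P}^1_{\mathrm{Ber}}$ along $[\xi_g,\eta_j]$, whose reduction is $c_j\in\mathbb{P}^1$. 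Consequently, for $\phi\in C(\mathbb{P}^1)$,
\[
\int_{\mathbb{P}^1}\phi\,d\mu_{f_t}=\frac{1}{d^n}\sum_{z\in f_t^{-n}(a)}\phi(z)+o_n(1)=\frac{1}{d^n}\sum_{j=1}^{d^n}\phi(c_j)+o_n(1)+o_t(1),
\]
where the second error is bounded by the modulus of continuity of $\phi$ times $\max_j r_j(t)$.

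Second, letting $n\to\infty$ in the Berkovich sum, the definition of the residual equilibrium measure yields $d^{-n}\sum_j\phi(c_j)\to\int\phi\,d\omega_{\mathbf{f},\{\xi_g\}}$, because by construction $\omega_{\mathbf{f},\{\xi_g\}}$ assigns to a direction $\vec v$ at $\xi_g$ the mass $\mu_{\mathbf{f}}$ places on the corresponding branch $\mathbf{B}_{\xi_g}(\vec v)^-$. A careful swap of the limits $t\to 0$ and $n\to\infty$ then identifies $\mu=\omega_{\mathbf{f},\{\xi_g\}}$, giving both the existence of the weak limit and its identification with the residual equilibrium measure.

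The main obstacle is justifying this interchange of limits uniformly in $n$. One has to show that the cluster radii $\max_j r_j(t)$ can be controlled simultaneously for all $n$, up to a tail of branches carrying negligible equilibrium mass. The cleanest route is potential-theoretic: work with the dynamical Green functions $g_{f_t}$ of the degenerating family and show they converge on compact subsets of $\mathbb{P}^1$ minus the finite set $\mathrm{Hole}(f_0)$ to a subharmonic limit whose restriction along the directions at $\xi_g$ matches the Berkovich Green function of $\mathbf{f}$. Weak convergence of Laplacians then upgrades to weak convergence of $\mu_{f_t}$, and the contribution at the exceptional hole points is shown to equal exactly the branch masses of $\omega_{\mathbf{f},\{\xi_g\}}$ in those directions, completing the identification.
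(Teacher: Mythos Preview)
The paper does not supply a proof of this proposition; it is quoted verbatim as \cite[Theorem B]{DeMarco14} and used as a black box. So there is no ``paper's own proof'' to compare against, and the relevant question is whether your argument stands on its own.

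Your outline captures the correct heuristic---preimages of a generic point under $f_t^n$ cluster, as $t\to 0$, near the reductions of the Berkovich preimages of $\xi_g$---but it is not a proof, and you essentially say so yourself. The entire content of the theorem lives in the interchange of the limits $t\to 0$ and $n\to\infty$, which you flag as ``the main obstacle'' and then defer to a sketch of the potential-theoretic route. That route is indeed how DeMarco and Faber proceed: they work with the homogeneous escape-rate functions $G_{F_t}$ on $\mathbb{C}^2$, prove these converge locally uniformly to a plurisubharmonic limit $G$, identify $dd^c G$ (restricted to $\mathbb{P}^1$) as the weak limit $\mu$, and separately compute $G$ in terms of the non-Archimedean Green function of $\mathbf{f}$ to obtain the residual identification. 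None of those steps is a formality, and your proposal does not execute any of them.

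There is also a small technical confusion in the first half. A type~II preimage $\eta_j=\xi_{\alpha_j,e^{-q_j}}$ of $\xi_g$ corresponds to a \emph{fixed} disk in $\mathbb{L}$; it is not a family of shrinking disks in $\mathbb{C}$ indexed by $t$. What you presumably mean is that the complex disk $\{|z-\alpha_j(t)|<|t|^{q_j}\}$ captures the relevant cluster of $f_t^{-n}(a)$ for $|t|$ small. Making that precise, with multiplicities matching $\deg_{\eta_j}\mathbf{f}^n$, and uniformly in $n$, is exactly the hard analysis that the Green-function machinery is designed to circumvent. As written, the proposal is a plausible roadmap rather than a proof.
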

To compute the limit measure $\mu$ in Proposition \ref{holo-limit-measure}, DeMarco and Faber studied the  pairs $(\phi, \Gamma)$ consisting of a degree $d\ge 2$ rational map $\phi:\mathbb{P}^1_{\mathrm{Ber}}\to\mathbb{P}^1_{\mathrm{Ber}}$ and a vertex set $\Gamma$ in $\mathbb{P}^1_{\mathrm{Ber}}$. A $\Gamma$-domain $U$ is called a $F$-domain if $\phi^n(U)\cap\Gamma=\emptyset$ for all $n\ge 1$. Otherwise, we say $U$ is a $J$-domain. Denote by $\mathcal{J}(\Gamma)\subset\mathcal{P}(\Gamma)$ the subset consisting of all $J$-domains and elements of $\Gamma$. For a given vertex set $\Gamma$, the set $\mathcal{J}(\Gamma)$ is countable \cite[Lemma 2.4]{DeMarco16} and the union of sets in $\mathcal{J}(\Gamma)$ contains the Julia set $J_{\mathrm{Ber}}(\mathbf{f})$ \cite[Proposition 2.5]{DeMarco16}. We say the pair $(\phi,\Gamma)$ is analytically stable if for each $\xi\in\Gamma$, either $\phi(\xi)\in\Gamma$ or $\phi(\xi)\in U$ for some $F$-domain $U$. If $(\phi,\Gamma)$ is analytically stable, then $\phi$ maps a $F$-domain into another $F$-domain \cite[Lemma 2.7]{DeMarco16}. Moreover,
for $U,V\in\mathcal{J}(\Gamma)$ and $y\in U$, the quantity
$$m_{U,V}:=\#(\phi^{-1}(y)\cap V).$$
is independent of the choice of $y$ \cite[Lemma 2.8]{DeMarco16}.
\begin{proposition}\label{equilibrium measure}\cite[Theorem C]{DeMarco16}
Let $\phi:\mathbb{P}^1_{\mathrm{Ber}}\to\mathbb{P}^1_{\mathrm{Ber}}$ be a degree $d\ge 2$ rational map and let $\Gamma\subset\mathbb{P}^1_{\mathrm{Ber}}$ be a vertex set. Let $\mu_\phi$ be the equilibrium measure for $\phi$. Suppose that $(\phi,\Gamma)$ is analytically stable and $J_{\mathrm{Ber}}(\phi)$ is not contained in $\Gamma$. Let $P$ be the $|\mathcal{J}(\Gamma)|\times|\mathcal{J}(\Gamma)|$ matrix defined by the $(U,V)$-entry 
$$P_{U,V}=\frac{m_{U,V}}{d}.$$ Then $P$ is the transition matrix for a countable state Markov chain with a unique stationary probability vector $\nu:\mathcal{J}(\Gamma)\to [0,1]$. The rows of $P^n$ converge pointwise to $\nu$ and the $U$-entry of $\nu$ satisfies $\nu(U)=\mu_\phi(U)$ for each $U\in\mathcal{J}(\Gamma)$.
\end{proposition}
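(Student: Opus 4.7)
The plan is to exploit the invariance $\phi^{\ast}\mu_{\phi}=d\mu_{\phi}$ of the equilibrium measure together with the fact, recalled in the excerpt, that $J_{\mathrm{Ber}}(\phi)$ is contained in the union of elements of $\mathcal{J}(\Gamma)$ and that $(\phi,\Gamma)$ being analytically stable controls how preimages distribute among $J$-domains, $F$-domains, and $\Gamma$.

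First I would verify that $P$ is stochastic, i.e.\ $\sum_{V\in\mathcal{J}(\Gamma)}m_{U,V}=d$ for every $U\in\mathcal{J}(\Gamma)$. Since $\deg\phi=d$, a generic $y\in U$ has exactly $d$ preimages counted with multiplicity, so it suffices to show that none of these preimages can lie in an $F$-domain. If an $F$-domain $W$ contained a preimage of $y$, then $\phi(W)$ would contain $y$; but $y$ lies either in $\Gamma$ or in a $J$-domain, and analytic stability prohibits both (directly for $\Gamma$; by iteration for a $J$-domain, since $\phi^{n}(W)\cap\Gamma=\varnothing$ for all $n\ge1$). Hence all preimages lie in $\mathcal{J}(\Gamma)$ and the rows of $P$ sum to $1$.

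Next I would show that $\nu(U):=\mu_{\phi}(U)$ is a stationary probability vector. It is a probability vector because $\mu_{\phi}$ is supported on $J_{\mathrm{Ber}}(\phi)\subset\bigcup\mathcal{J}(\Gamma)$ and the elements of $\mathcal{P}(\Gamma)$ are pairwise disjoint. For stationarity, I would unwind $\phi^{\ast}\mu_{\phi}=d\mu_{\phi}$ in its integral form, namely
\[
d\,\mu_{\phi}(V)=\int\#\bigl(\phi^{-1}(y)\cap V\bigr)\,d\mu_{\phi}(y).
\]
Since $y\mapsto\#(\phi^{-1}(y)\cap V)$ is constant equal to $m_{U,V}$ on each $J$-domain $U$ (by Lemma 2.8 of DeMarco--Faber, quoted in the excerpt), and $\mu_{\phi}$ is concentrated on $\mathcal{J}(\Gamma)$, the right-hand side equals $\sum_{U}m_{U,V}\mu_{\phi}(U)$. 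Dividing by $d$ yields $\mu_{\phi}(V)=\sum_{U}\nu(U)P_{U,V}$, i.e.\ $\nu P=\nu$.

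Finally I would address uniqueness and the convergence $P^{n}\to\nu$. The plan is to transfer known ergodicity of $\mu_{\phi}$ for the Berkovich dynamics to the symbolic coding by elements of $\mathcal{J}(\Gamma)$. Concretely, if $\tilde{\nu}$ is any stationary probability vector, the formula $\tilde{\nu}(U)\mathbf{1}_{U}$ assembles into an invariant Borel probability measure on $\bigcup\mathcal{J}(\Gamma)$ that gives no mass to type I points; by the characterization of $\mu_{\phi}$ as the unique such balanced measure, $\tilde{\nu}=\nu$. Convergence of $P^{n}$ should then follow from mixing of $(\phi,\mu_{\phi})$ restricted to the countable partition $\mathcal{J}(\Gamma)$: the pull-back operator has a spectral gap at the stationary vector because non-constant functions on $\mathcal{J}(\Gamma)$ decorrelate along orbits of $\phi$. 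I expect this last piece to be the main obstacle, because $\mathcal{J}(\Gamma)$ is countable rather than finite and one must argue recurrence/aperiodicity carefully; the natural route is to deduce it from the topological mixing of $\phi$ on $J_{\mathrm{Ber}}(\phi)$ together with the hypothesis that $J_{\mathrm{Ber}}(\phi)\not\subset\Gamma$, which ensures the Markov chain visits infinitely many states.
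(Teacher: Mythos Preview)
The paper does not prove this proposition: it is quoted verbatim as \cite[Theorem C]{DeMarco16} and used as a black box, so there is no proof in the paper to compare your attempt against. What you have written is therefore an attempted proof of a result the author deliberately imports.

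That said, your sketch has a genuine gap in the third step. Your uniqueness argument claims that any stationary probability vector $\tilde{\nu}$ on $\mathcal{J}(\Gamma)$ ``assembles into an invariant Borel probability measure on $\bigcup\mathcal{J}(\Gamma)$'' to which the characterization of $\mu_{\phi}$ applies. This does not work: a probability vector on the countable set $\mathcal{J}(\Gamma)$ is not a Borel measure on $\mathbb{P}^{1}_{\mathrm{Ber}}$, and there is no canonical way to spread the mass $\tilde{\nu}(U)$ over each $U$ so that the resulting measure $\tilde{\mu}$ satisfies $\phi^{\ast}\tilde{\mu}=d\tilde{\mu}$. Stationarity under $P$ only encodes how mass moves between elements of $\mathcal{J}(\Gamma)$, not how it is distributed within them, so the uniqueness theorem for $\mu_{\phi}$ cannot be invoked. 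Likewise, your convergence argument appeals to ``mixing of $(\phi,\mu_{\phi})$ restricted to the countable partition'' and to the hypothesis $J_{\mathrm{Ber}}(\phi)\not\subset\Gamma$ ``ensuring the Markov chain visits infinitely many states,'' but neither statement is what is needed: convergence of $P^{n}$ requires irreducibility and aperiodicity (or an equivalent ergodic/recurrence structure) of the chain itself, and that hypothesis only guarantees $\mathcal{J}(\Gamma)$ contains at least one domain. The actual argument in DeMarco--Faber proceeds via Markov chain theory directly, not by lifting back to $\mathbb{P}^{1}_{\mathrm{Ber}}$.

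A smaller point: in step one your justification that preimages cannot lie in an $F$-domain is incomplete. You write ``by iteration for a $J$-domain, since $\phi^{n}(W)\cap\Gamma=\varnothing$ for all $n\ge1$,'' but $\phi(W)$ meeting a single point of a $J$-domain $U$ does not force any $\phi^{n}(W)$ to meet $\Gamma$. The correct input is the lemma (quoted in the paper as \cite[Lemma 2.7]{DeMarco16}) that under analytic stability $\phi$ sends each $F$-domain into an $F$-domain; once you cite that, step one is fine. Step two is correct as written.
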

For a degenerate holomorphic family $\{f_t\}$ in $\mathrm{Rat}_d$, consider the associated map $\mathbf{f}:\mathbb{P}_{\mathrm{Ber}}^1\to\mathbb{P}_{\mathrm{Ber}}^1$. If the pair $(\mathbf{f},\{\xi_g\})$ is analytically stable, then each $J$-domain is a disk with boundary $\xi_g$. By Proposition \ref{equilibrium measure}, we can compute the measure for each $J$-domain. Hence, we can get the direction $\vec{v}\in T_{\xi_g}\mathbb{P}^1_{\mathrm{Ber}}$ with $\mu_{\mathbf{f}}(\mathbf{B}_{\xi_g}(\vec{v})^-)>0$. Then by Proposition \ref{holo-limit-measure}, we can get the weak limit $\mu$ of measures $\mu_{f_t}$. However, in general, the pair $(\mathbf{f},\{\xi_g\})$ is not analytically stable. In fact, $(\mathbf{f},\{\xi_g\})$ is analytically stable if and only if $f_0\not\in I(d)$ \cite[Proposition 5.1]{DeMarco16}. DeMarco and Faber proved for any vertex set $\Gamma\subset\mathbb{P}^1_{\mathrm{Ber}}$, there exists a vertex set $\Gamma'$ containing $\Gamma$ such that $(\mathbf{f},\Gamma')$ is analytically stable \cite[Theorem D]{DeMarco16}.

\subsection{Limiting Measures for Newton Maps}
By Proposition \ref{Newton-closure-indeterminacy-singleton}, there is a unique point in the intersection of the closure of Newton maps and the indeterminacy locus $I(d)$. In this subsection, we let $\{N_{r(t)}\}$ be a holomorphic family of degree $d\ge 2$ Newton maps such that $N_{r(t)}$ converges to the point in $I(d)$, as $t\to 0$. Write $r(t)=\{r_1(t),\cdots, r_d(t)\}$. Then by Lemma \ref{Newton-indeterminacy}, we have as $t\to 0$, $r_i(t)\to\infty$ for $i=1,\cdots,d$. To ease notations, we write $N_t$ for the Newton map $N_{r(t)}$ and write $\mu_t$ for the maximal measure for $N_t$. We study the weak limit of measures $\mu_t$ and prove Theorem \ref{theorem-measure}.\par 
Let $\mathbf{N}:\mathbb{P}^1_{\mathrm{Ber}}\to\mathbb{P}^1_{\mathrm{Ber}}$ be the associated map for $\{N_t\}$. Then $\mathbf{N}(\xi_g)\in\mathbf{B}_{\xi_g}(\vec{v}_\infty)^-$, where $\vec{v}_\infty\in T_{\xi_g}\mathbb{P}^1_{\mathrm{Ber}}$ such that $\infty\in\mathbf{B}_{\xi_g}(\vec{v}_\infty)^-$. Indeed, it follows immediately from the following fact.
\begin{lemma}\label{Gauss-image}
Let $\{f_t\}\subset\mathrm{Rat}_d$ be a holomorphic family such that, in projective coordinates $f_t([X:Y])$ converges to $H(X,Y)c$, where $c\in\mathbb{P}^1$ and $H(X,Y)$ is a homogeneous polynomial. Let $\mathbf{f}:\mathbb{P}^1_{\mathrm{Ber}}\to\mathbb{P}^1_{\mathrm{Ber}}$ be the associated map for $\{f_t\}$. Then $\mathbf{f}(\xi_g)\in\mathbf{B}_{\xi_g}(\vec{v}_c)^-$, where $\vec{v}_c\in T_{\xi_g}\mathbb{P}^1_{\mathrm{Ber}}$ such that $c\in\mathbf{B}_{\xi_g}(\vec{v}_c)^-$.
\end{lemma}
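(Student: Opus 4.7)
The plan is to exploit the explicit description of $\mathbf{f}(\xi_g)$ recalled in the preliminaries: for all but finitely many $a\in\mathbb{C}$, the image $\mathbf{f}(D(a,1))$ is an open disk $D(\mathbf{f}(a),r)\subset\mathbb{L}$ with $r\in|\mathbb{L}^\times|$, and $\mathbf{f}(\xi_g)=\xi_{\mathbf{f}(a),r}$. The idea is to couple this with a computation of $\mathrm{Red}(\mathbf{f})$ in order to pin down which tangent direction at $\xi_g$ contains $\mathbf{f}(\xi_g)$.

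First, I would translate the hypothesis $f_t([X:Y])\xrightarrow{sa}H(X,Y)\,c$ into the statement that, after normalizing $\mathbf{f}=P/Q$ with $P,Q\in\mathcal{O}_{\mathbb{L}}[z]$ having coefficients of maximal absolute value $1$, one has $\mathrm{Red}(\mathbf{f})\equiv c$ as a constant rational map $\mathbb{P}^1\to\mathbb{P}^1$. Since $\deg\mathrm{Red}(\mathbf{f})=0$, Proposition \ref{fix-gauss} gives $\mathbf{f}(\xi_g)\neq\xi_g$. Moreover, for a generic $a\in\mathbb{C}$, multiplicativity of reduction yields $\mathrm{Red}(\mathbf{f}(z))=c$ for every $z\in D(a,1)$, so the entire image disk $D(\mathbf{f}(a),r)$ is contained in the residue class $\{z\in\mathbb{L}:\mathrm{Red}(z)=c\}$. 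Under the canonical identification $T_{\xi_g}\mathbb{P}^1_{\mathrm{Ber}}\leftrightarrow\mathbb{P}^1$ described in the preliminaries, this residue class is exactly the set of type I points lying in $\mathbf{B}_{\xi_g}(\vec{v}_c)^-$.

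To conclude, I would verify that the type II point $\xi_{\mathbf{f}(a),r}$ itself lies in $\mathbf{B}_{\xi_g}(\vec{v}_c)^-$, splitting into the cases $c\in\mathbb{C}$ and $c=\infty$. In the former, $D(\mathbf{f}(a),r)\subset D(c,1)$; since two ultrametric disks sharing a point are nested, $r=1$ would force $\overline{D}(\mathbf{f}(a),1)=\overline{D}(c,1)=\overline{D}(0,1)$ and hence $\xi_{\mathbf{f}(a),r}=\xi_g$, contradicting $\mathbf{f}(\xi_g)\neq\xi_g$. Thus $r<1$, so $\overline{D}(\mathbf{f}(a),r)\subsetneq D(c,1)$, which places $\xi_{\mathbf{f}(a),r}$ inside $\mathbf{B}_{\xi_g}(\vec{v}_c)^-$. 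In the latter case, every element of $D(\mathbf{f}(a),r)$ has absolute value strictly greater than $1$; a direct ultrametric computation then shows that the disk $\overline{D}(\mathbf{f}(a),r)$ either strictly contains $\overline{D}(0,1)$ or is disjoint from it, and in both scenarios the corresponding Berkovich point belongs to $\mathbf{B}_{\xi_g}(\vec{v}_\infty)^-$.

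The main obstacle is the uniform treatment of the cases $c\in\mathbb{C}$ and $c=\infty$; although both ultimately rest on the fact that $\mathbf{f}(D(a,1))$ is contained in the residue class of $c$, the case $c=\infty$ requires tracking the radius $r$ carefully, because the image disk may actually strictly dominate $\xi_g$ in the partial order rather than sit below it. Either performing the argument in the coordinate chart centered at $\infty$ via an element of $\mathrm{PGL}_2(\mathbb{L})$ fixing $\xi_g$, or simply running the ultrametric bookkeeping case by case, resolves this.
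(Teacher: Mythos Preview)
The paper states Lemma~\ref{Gauss-image} without proof, so there is no argument to compare against directly. Your proposal is a correct and natural way to supply the missing details: translating the hypothesis into $\mathrm{Red}(\mathbf{f})\equiv c$, invoking Proposition~\ref{fix-gauss} to rule out $\mathbf{f}(\xi_g)=\xi_g$, and then using the explicit description of $\mathbf{f}(\xi_g)$ via the image of a generic residue disk $D(a,1)$.

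Two small points of presentation. First, the phrase ``multiplicativity of reduction'' is not quite the right justification for $\mathrm{Red}(\mathbf{f}(z))=c$; what you actually need is that for $z\in\mathcal{O}_{\mathbb{L}}$ with $\mathrm{Red}(z)=a$ not a zero of $H$, the normalized pair $(P,Q)$ satisfies $|Q(z)|=1$ and $|P(z)-cQ(z)|<1$ (when $c\in\mathbb{C}$), so that $|\mathbf{f}(z)-c|<1$. This is the concrete computation underlying your claim. Second, in the $c=\infty$ case you can avoid the dichotomy ``strictly contains or is disjoint from $\overline{D}(0,1)$'' altogether: since $|\mathbf{f}(a)|>1$, the center $\mathbf{f}(a)$ lies outside $\overline{D}(0,1)$, hence $\overline{D}(\mathbf{f}(a),r)\not\subset\overline{D}(0,1)$ regardless of $r$, which immediately gives $\xi_{\mathbf{f}(a),r}\in\mathbf{B}_{\xi_g}(\vec{v}_\infty)^-$. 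The coordinate change you mention (conjugating by an element of $\mathrm{PGL}_2(\mathbb{L})$ sending $\infty$ to $0$ and fixing $\xi_g$) is an equally clean way to reduce to the finite case.
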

Now, combining with the Berkovich dynamics for Newton maps in section \ref{Berkovich}, we can figure out the weak limit of measures $\mu_t$. The discussion contains several cases according to the orbit of $\xi_g$ under $\mathbf{N}$.\par
Recall $V\subset\mathbb{P}^1_{\mathrm{Ber}}$ is the set of type II repelling fixed points of $\mathbf{N}$ and recall $H_{V}^\infty$ is the convex hull of $V$ and $\{\infty\}$.
\begin{proposition}\label{Fatou-measure-1}
Let $\vec{v}_\infty\in T_{\xi_g}\mathbb{P}^1_{\mathrm{Ber}}$ such that $\infty\in\mathbf{B}_{\xi_g}(\vec{v}_\infty)^-$. Suppose that $\mathbb{P}_{\mathrm{Ber}}^1\setminus\{\mathbf{B}_{\xi_g}(\vec{v}_\infty)^-\cup\{\xi_g\}\}$ is contained in the Berkovich Fatou set $F_{\mathrm{Ber}}(\mathbf{N})$. Then the measures $\mu_t$ converge to $\delta_\infty$.
\end{proposition}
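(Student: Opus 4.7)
The plan is to leverage Proposition~\ref{holo-limit-measure}, which asserts that $\mu_t$ converges weakly to $\mu$, the residual equilibrium measure for the induced map $\mathbf{N}$ on $\mathbb{P}^1_{\mathrm{Ber}}$. Under the identification $T_{\xi_g}\mathbb{P}^1_{\mathrm{Ber}}\cong\mathbb{P}^1$, this measure is obtained from the canonical measure $\mu_{\mathbf{N}}$ by assigning to each $x\in\mathbb{P}^1$ the weight $\mu_{\mathbf{N}}(\mathbf{B}_{\xi_g}(\vec{v}_x)^-)$, where $\vec{v}_x$ is the direction at $\xi_g$ containing $x$ under the reduction map.

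Next I would translate the hypothesis into a containment of the Berkovich Julia set. Since $F_{\mathrm{Ber}}(\mathbf{N})$ and $J_{\mathrm{Ber}}(\mathbf{N})$ partition $\mathbb{P}^1_{\mathrm{Ber}}$, the assumption reads
\[
J_{\mathrm{Ber}}(\mathbf{N}) \subseteq \mathbf{B}_{\xi_g}(\vec{v}_\infty)^-\cup\{\xi_g\}.
\]
Because $\mu_{\mathbf{N}}$ is supported on $J_{\mathrm{Ber}}(\mathbf{N})$, for every direction $\vec{v}\neq\vec{v}_\infty$ the open disk $\mathbf{B}_{\xi_g}(\vec{v})^-$ is disjoint from the support of $\mu_{\mathbf{N}}$ and hence has zero $\mu_{\mathbf{N}}$-mass. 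Transporting this back through the tangent-space identification yields $\mu(\{x\})=0$ for every $x\in\mathbb{P}^1\setminus\{\infty\}$. Since $\mu$ is a probability measure on $\mathbb{P}^1$, this forces $\mu=\delta_\infty$.

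The delicate point is the potential atomic mass of $\mu_{\mathbf{N}}$ at $\xi_g$ itself, which is not directly recorded by the partition $\{\mathbf{B}_{\xi_g}(\vec{v})^-\}_{\vec{v}}$ of $\mathbb{P}^1_{\mathrm{Ber}}\setminus\{\xi_g\}$. By Lemma~\ref{Gauss-image}, $\mathbf{N}(\xi_g)\in\mathbf{B}_{\xi_g}(\vec{v}_\infty)^-$, so after one iteration any such mass is pushed into the direction toward $\infty$. I would make this rigorous by enlarging $\Gamma=\{\xi_g\}$, via \cite[Theorem D]{DeMarco16}, to an analytically stable vertex set $\Gamma'$ by inserting finitely many type II points inside $\mathbf{B}_{\xi_g}(\vec{v}_\infty)^-$ to absorb the forward orbit of $\xi_g$. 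Under our hypothesis every $J$-domain of $\Gamma'$ still lies inside $\mathbf{B}_{\xi_g}(\vec{v}_\infty)^-$, and the Markov chain in Proposition~\ref{equilibrium measure} then identifies the pushforward of the stationary distribution to $\mathbb{P}^1$ as $\delta_\infty$. This bookkeeping of the Gauss-point contribution is the step that I expect to require the most care.
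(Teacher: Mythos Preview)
Your first two paragraphs are essentially the paper's proof: invoke Proposition~\ref{holo-limit-measure}, observe that the hypothesis forces $J_{\mathrm{Ber}}(\mathbf{N})\subset\mathbf{B}_{\xi_g}(\vec{v}_\infty)^-\cup\{\xi_g\}$, and conclude that $\mu_{\mathbf{N}}$ assigns no mass to any direction other than $\vec{v}_\infty$.

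The only divergence is in how you treat a possible atom at $\xi_g$. The paper handles this in one line: since the Julia set contains the repelling fixed points $V\subset\mathbf{B}_{\xi_g}(\vec{v}_\infty)^-$, it is not the singleton $\{\xi_g\}$; by the standard dichotomy for the canonical measure (either $\mu_{\mathbf{N}}$ is a Dirac mass at a single type~II point of potentially good reduction, or it is nonatomic), this forces $\mu_{\mathbf{N}}(\{\xi_g\})=0$ and hence $\mu_{\mathbf{N}}(\mathbf{B}_{\xi_g}(\vec{v}_\infty)^-)=1$.

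Your proposed detour through an analytically stable enlargement $\Gamma'\supset\{\xi_g\}$ is unnecessary and, as written, does not close the gap. First, the assertion that every $J$-domain of $\Gamma'$ lies inside $\mathbf{B}_{\xi_g}(\vec{v}_\infty)^-$ is not justified: lying in the Fatou set does not make a $\Gamma'$-domain an $F$-domain, since its forward orbit may still meet $\Gamma'$. Second, even granting that, the Markov chain of Proposition~\ref{equilibrium measure} assigns to each $U\in\mathcal{J}(\Gamma')$ precisely $\mu_{\mathbf{N}}(U)$; since $\{\xi_g\}\in\Gamma'\subset\mathcal{J}(\Gamma')$, you are still left to show $\mu_{\mathbf{N}}(\{\xi_g\})=0$, which is exactly the point at issue. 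So the enlargement buys you nothing here; just cite the dichotomy as the paper does.
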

\begin{proof}
Since $\mathbb{P}_{\mathrm{Ber}}^1\setminus\{\mathbf{B}_{\xi_g}(\vec{v}_\infty)^-\cup\{\xi_g\}\}\subset F_{\mathrm{Ber}}(\mathbf{N})$, then the equilibrium measure $\mu_{\mathbf{N}}$ does not charge $\mathbb{P}_{\mathrm{Ber}}^1\setminus\{\mathbf{B}_{\xi_g}(\vec{v}_\infty)^-\cup\{\xi_g\}\}$. Since the Berkovich Julia set $J_{\mathrm{Ber}}(\mathbf{N})$ is not an isolated point, then the measure $\mu_{\mathbf{N}}$ does not charge $\mathbb{P}_{\mathrm{Ber}}^1\setminus\mathbf{B}_{\xi_g}(\vec{v}_\infty)^-$. Thus $\mu_{\mathbf{N}}(\mathbf{B}_{\xi_g}(\vec{v}_\infty)^-)=1$. By Proposition \ref{holo-limit-measure}, the measures $\mu_t$ converge to $\delta_\infty$.
\end{proof}
\begin{corollary}\label{Fatou-measure-1-corollary}
Suppose $\mathbf{N}^k(\xi_g)\not\in H_{V}^\infty$ for all $k\ge 0$ and assume there exists $k_0\ge 0$ such that the visible point $\pi_{H_V}(\mathbf{N}^{k_0}(\xi_g))\not\in V$. Then the measures $\mu_t$ converge to $\delta_\infty$.
\end{corollary}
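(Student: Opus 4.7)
The plan is to verify the hypothesis of Proposition~\ref{Fatou-measure-1}, namely that $\mathbb{P}^1_{\mathrm{Ber}} \setminus (\mathbf{B}_{\xi_g}(\vec{v}_\infty)^- \cup \{\xi_g\})$ lies in $F_{\mathrm{Ber}}(\mathbf{N})$; the conclusion $\mu_t \to \delta_\infty$ then follows immediately. First I would use the $k = 0$ instance of the hypothesis, $\xi_g \notin H_V^\infty$, to observe that since $H_V^\infty$ is connected and contains $\infty \in \mathbf{B}_{\xi_g}(\vec{v}_\infty)^-$, the entire tree $H_V^\infty$ sits inside $\mathbf{B}_{\xi_g}(\vec{v}_\infty)^-$, and in particular so does $V$. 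Consequently, for any direction $\vec{v} \neq \vec{v}_\infty$ at $\xi_g$, the disk $U_{\vec{v}} := \mathbf{B}_{\xi_g}(\vec{v})^-$ is connected and disjoint from $V$, hence contained in the unique component $W$ of $\mathbb{P}^1_{\mathrm{Ber}} \setminus V$ containing $\xi_g$. The proof thus reduces to showing $W \subset F_{\mathrm{Ber}}(\mathbf{N})$.

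Next I would split into two cases according to $\pi_{H_V}(\xi_g)$. If $\pi_{H_V}(\xi_g) \notin V$, then $W$ is an annulus-type component with at least two boundary points in $V$, which by Lemma~\ref{Fatou} is a fixed Rivera domain and therefore lies in $F_{\mathrm{Ber}}(\mathbf{N})$; this case is immediate. If instead $\pi_{H_V}(\xi_g) = v_\xi \in V$, then $W = \mathbf{B}_{v_\xi}(\vec{u})^-$ is a single-boundary Berkovich disk. Here I invoke the second hypothesis: letting $R$ denote the component of $\mathbb{P}^1_{\mathrm{Ber}} \setminus V$ containing $\mathbf{N}^{k_0}(\xi_g)$, we see that $R$ is annulus-type, hence by Lemma~\ref{Fatou} a fixed Rivera domain lying in $F_{\mathrm{Ber}}(\mathbf{N})$. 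My aim is then to prove $\mathbf{N}^{k_0}(W) \subset R$, which would yield $W \subset \mathbf{N}^{-k_0}(R) \subset F_{\mathrm{Ber}}(\mathbf{N})$ by total invariance of the Fatou set.

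To establish $\mathbf{N}^{k_0}(W) \subset R$, I would argue inductively on $j$ that $\mathbf{N}^j(W)$ remains a Berkovich disk with boundary at $v_\xi$. Using Proposition~\ref{fundamental-Berk} together with Corollary~\ref{not-whole-Ber} (applied whenever $\mathbf{N}^{j-1}(W)$ is disjoint from $V$), the image $\mathbf{N}^j(W)$ fails to be all of $\mathbb{P}^1_{\mathrm{Ber}}$ and so is a Berkovich disk; since $v_\xi$ is a fixed point of $\mathbf{N}$ and continuity forces $\mathbf{N}(\overline{W}) = \overline{\mathbf{N}(W)}$, the boundary of the image is $v_\xi$. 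The hypothesis $\mathbf{N}^j(\xi_g) \notin H_V^\infty$ is what keeps the orbit of $\xi_g$ from landing in $V$, and is used to propagate the inductive step. At $j = k_0$, the resulting disk $\mathbf{N}^{k_0}(W)$ contains $\mathbf{N}^{k_0}(\xi_g) \in R$ and has boundary in $V$; combining these with the fact that $R$ is an annulus whose boundary lies in $V$ then forces $\mathbf{N}^{k_0}(W) \subset R$.

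The hard part will be making this induction fully rigorous. The subtlety is that one must preclude the possibility that some intermediate image $\mathbf{N}^j(W)$ expands to contain an additional vertex of $V$, which would break the disjointness hypothesis needed for Corollary~\ref{not-whole-Ber} and possibly allow the next image to be all of $\mathbb{P}^1_{\mathrm{Ber}}$. Overcoming this likely requires finer case analysis combining the location of each $\mathbf{N}^j(\xi_g)$ within $\mathbb{P}^1_{\mathrm{Ber}} \setminus V$ with the action of $\mathbf{N}$ on tangent directions at the repelling type II fixed points in $V$, augmented by the structural information about the subtrees furnished by Theorem~\ref{tree-properties}.
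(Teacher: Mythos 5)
There is a genuine gap, and it is not only the inductive difficulty you flag at the end: the reduction itself is too strong. Your plan is to show that the whole component $W$ of $\mathbb{P}^1_{\mathrm{Ber}}\setminus V$ containing $\xi_g$ lies in $F_{\mathrm{Ber}}(\mathbf{N})$, but under the hypotheses of the corollary this can be false. In the remaining case of your Case 2 (where $W=\mathbf{B}_{v_\xi}(\vec{u})^-$ contains neither $\infty$ nor any root $\mathbf{r}_i$), nothing prevents $\mathbf{N}_\ast(\vec{u})$ from being a direction at $v_\xi$ whose Berkovich disk meets $V$, e.g.\ the direction along an edge of $H_V$ toward an adjacent vertex $v_1$. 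Since $W\cap V=\emptyset$, Corollary \ref{not-whole-Ber} and Proposition \ref{fundamental-Berk} give $\mathbf{N}(W)=\mathbf{B}_{v_\xi}(\mathbf{N}_\ast(\vec{u}))^-\ni v_1$, so $W$ contains a preimage of the repelling fixed point $v_1\in J_{\mathrm{Ber}}(\mathbf{N})$, and by total invariance of the Julia set $W\not\subset F_{\mathrm{Ber}}(\mathbf{N})$. This scenario is compatible with the hypotheses: $\mathbf{N}(\xi_g)$ can hang off the open edge $(v_\xi,v_1)$, so its visible point lies in $H_V\setminus V$ (giving $k_0=1$), and since the annular component containing that edge is a fixed Rivera domain mapped bijectively to itself with the edge fixed pointwise, the whole forward orbit of $\xi_g$ stays off $H_V^\infty$. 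Consequently both your target statement $W\subset F_{\mathrm{Ber}}(\mathbf{N})$ and your intermediate claim $\mathbf{N}^{k_0}(W)\subset R$ can fail, and the "subtlety" you mention (an intermediate image of $W$ swallowing another vertex of $V$) is a real phenomenon rather than a technicality to be engineered away.

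The paper avoids this by never working with $W$: it only needs the strictly smaller disks $\mathbf{B}_{\xi_g}(\vec{v})^-$, $\vec{v}\neq\vec{v}_\infty$, to be in the Fatou set (this is all Proposition \ref{Fatou-measure-1} requires), and these disks exclude the annular region between $\xi_g$ and $v_\xi$ where the Julia preimages above actually live. Its induction tracks images as disks based at the orbit points $\xi_k=\mathbf{N}^k(\xi_g)$ (not at $v_\xi$), takes $k_0$ minimal so that $\pi_{H_V}(\xi_k)\in V$ for $k<k_0$, and uses Corollary \ref{not-whole-Ber} to rule out a preimage of $\pi_{H_V}(\xi_k)$ in the disk: since $\xi_{k+1}\notin H_V^\infty$, all of $H_V^\infty$ sits in the single direction $\vec{v}_\infty^{\,k+1}$ at $\xi_{k+1}$, so the image disk must avoid that direction. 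After $k_0$ steps the disks land in $\mathbb{P}^1_{\mathrm{Ber}}\setminus\mathbf{B}_{\xi_{k_0}}(\vec{v}_\infty^{\,k_0})^-$, which lies in a fixed Rivera domain by Lemma \ref{Fatou} because $\pi_{H_V}(\xi_{k_0})\notin V$. To repair your argument you would have to replace $W$ by these direction disks at $\xi_g$ and run essentially that induction; as written, the proposal proves (or attempts to prove) a false intermediate statement.
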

\begin{proof}
To ease notations, set $\xi_k=\mathbf{N}^k(\xi_g)$. Let $\vec{v}^k_\infty\in T_{\xi_k}\mathbb{P}^1_{\mathrm{Ber}}$ such that $\infty\in\mathbf{B}_{\xi_k}(\vec{v}^k_\infty)^-$. Since $\xi_k\not\in H_{V}^\infty$, then $H_{V}^\infty\subset\mathbf{B}_{\xi_k}(\vec{v}^k_\infty)^-$. Without loss of generality, we can assume $k_0$ is the smallest nonnegative integer such that $\pi_{H_V}(\mathbf{N}^{k_0}(\xi_g))\not\in V$. Then by Lemma \ref{Fatou}, the set $\mathbb{P}^1_{\mathrm{Ber}}\setminus\mathbf{B}_{\xi_{k_0}}(\vec{v}^{k_0}_\infty)^-$ is contained in a fixed Revera domain of the Berkovich Fatou set $F_{\mathrm{Ber}}(\mathbf{N})$. We claim for any $\vec{v}\in T_{\xi_g}\mathbb{P}^1_{\mathrm{Ber}}$ with $\vec{v}\not=\vec{v}^0_\infty$, the set $\mathbf{N}^{k_0}(\mathbf{B}_{\xi_g}(\vec{v})^-)$ is a disk contained in $\mathbb{P}^1_{\mathrm{Ber}}\setminus\mathbf{B}_{\xi_{k_0}}(\vec{v}^{k_0}_\infty)^-$. If $k_0=0$, the claim holds trivially. If $k_0\ge 1$, by Corollary \ref{not-whole-Ber}, we know $\mathbf{N}(\mathbf{B}_{\xi_g}(\vec{v})^-)$ is a Berkovich disk $\mathbf{B}_{\xi_1}(\vec{w})^-$ for some $\vec{w}\in T_{\xi_1}\mathbb{P}^1_{\mathrm{Ber}}$, and there is no preimage of $\pi_{H_V}(\xi_g)\in V$ in the Berkovich disk $\mathbf{B}_{\xi_g}(\vec{v})^-$. Then we have $\vec{w}\not=\vec{v}^1_\infty$. Applying the same argument to $\xi_i$ for $1\le i\le k_0-1$, we have $\mathbf{N}^{k_0}(\mathbf{B}_{\xi_g}(\vec{v})^-)\subset\mathbb{P}^1_{\mathrm{Ber}}\setminus\mathbf{B}_{\xi_{k_0}}(\vec{v}^{k_0}_\infty)^-$. Thus, $\mathbf{B}_{\xi_g}(\vec{v})^-\subset F_{\mathrm{Ber}}(\mathbf{N})$ for any $\vec{v}\in T_{\xi_g}\mathbb{P}^1_{\mathrm{Ber}}$ with $\vec{v}\not=\vec{v}^0_\infty$. By Proposition \ref{Fatou-measure-1}, the measures $\mu_t$ converge to $\delta_\infty$.
\end{proof}
Denote by $H_{V}^{\xi_g}$ the convex hull of $V\cup\{\xi_g\}$.\par
\begin{theorem}\label{limit-measure-case-1}
Suppose that $\mathbf{N}^n(\xi_g)\not\in H_{V}^{\xi_g}$ for any $n\ge 1$. Then the measures $\mu_t$ converge to $\delta_\infty$. 
\end{theorem}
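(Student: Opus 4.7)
The strategy is to apply Proposition~\ref{Fatou-measure-1}: I will verify the Fatou containment
\[
\mathbb{P}^1_{\mathrm{Ber}}\setminus \bigl(\mathbf{B}_{\xi_g}(\vec v_\infty)^-\cup\{\xi_g\}\bigr)\;\subset\; F_{\mathrm{Ber}}(\mathbf{N}),
\]
which by that proposition forces $\mu_t\rightharpoonup \delta_\infty$. To this end, I would work with the vertex set $\Gamma=\{\xi_g\}$ in the DeMarco--Faber framework: the $\Gamma$-domains are the open disks $\mathbf{B}_{\xi_g}(\vec v)^-$ for $\vec v\in T_{\xi_g}\mathbb{P}^1_{\mathrm{Ber}}$, and by \cite[Proposition~2.5]{DeMarco16} every $F$-domain sits inside $F_{\mathrm{Ber}}(\mathbf{N})$. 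Thus the target reduces to showing that $\mathbf{B}_{\xi_g}(\vec v)^-$ is an $F$-domain for each $\vec v\neq \vec v_\infty$, i.e.\ that $\xi_g\notin \mathbf{N}^n(\mathbf{B}_{\xi_g}(\vec v)^-)$ for every $n\geq 1$.

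The geometric setup is that all roots satisfy $|r_i|>1$ in $\mathbb{L}$ by Lemma~\ref{Newton-indeterminacy}, so $H_{\mathrm{fix}}$ and in particular $V$, $H_V$ and $H_V^{\xi_g}$ all lie in $\overline{\mathbf{B}_{\xi_g}(\vec v_\infty)^-}$; hence $\xi_g\notin V$, and for $\vec v\neq \vec v_\infty$ the disk $\mathbf{B}_{\xi_g}(\vec v)^-$ is disjoint from $H_V^{\xi_g}$. Lemma~\ref{Gauss-image} records $\mathbf{N}(\xi_g)\in \mathbf{B}_{\xi_g}(\vec v_\infty)^-$; together with the hypothesis (which excludes $\xi_g\in H_V^{\xi_g}$ from the orbit for $n\geq 1$), this shows $\xi_g$ is not periodic under $\mathbf{N}$.

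I would then verify the $F$-domain property by induction on $n$ using the Berkovich-tree tools of Section~\ref{Berkovich}. Proposition~\ref{fundamental-Berk} reduces each image $\mathbf{N}^n(\mathbf{B}_{\xi_g}(\vec v)^-)$ to two alternatives: a Berkovich disk with boundary $\mathbf{N}^n(\xi_g)$, or all of $\mathbb{P}^1_{\mathrm{Ber}}$. Lemma~\ref{disk-to-disk}, together with a directional-multiplicity analysis (Lemma~\ref{dir-multi}--\ref{multi-dir-multi}) along the directed segment from $\xi_g$ to $\mathbf{N}^n(\xi_g)$, which by the hypothesis avoids the branching data carried by $V$, excludes the surjective alternative. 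In the disk case, the hypothesis $\mathbf{N}^n(\xi_g)\notin H_V^{\xi_g}$ keeps the boundary separated from the combinatorial structure attached to $\xi_g$, so the image disk opens in a direction at $\mathbf{N}^n(\xi_g)$ opposite to the tangent direction pointing back through $H_V$ toward $\xi_g$. This rules out $\xi_g$ belonging to the image, completing the induction.

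The main obstacle is precisely this inductive tree-dynamical step: controlling the location of the image disks $\mathbf{N}^n(\mathbf{B}_{\xi_g}(\vec v)^-)$ using the sole combinatorial input that $\mathbf{N}^n(\xi_g)\notin H_V^{\xi_g}$. Both failure modes (the image being $\mathbb{P}^1_{\mathrm{Ber}}$, and the image disk sweeping over $\xi_g$) force the orbit of $\xi_g$ to interact either with $V$ or with the segment $[\pi_{H_V}(\xi_g),\xi_g]$, which the hypothesis explicitly forbids. Once this exclusion is carried out cleanly, all $\mathbf{B}_{\xi_g}(\vec v)^-$ with $\vec v\neq \vec v_\infty$ are $F$-domains, hence Fatou, and Proposition~\ref{Fatou-measure-1} immediately yields $\mu=\delta_\infty$.
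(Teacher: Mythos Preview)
Your overall strategy---show that every $\Gamma$-domain $\mathbf{B}_{\xi_g}(\vec v)^-$ with $\vec v\neq\vec v_\infty$ is an $F$-domain for $\Gamma=\{\xi_g\}$, hence lies in $F_{\mathrm{Ber}}(\mathbf{N})$, and then invoke Proposition~\ref{Fatou-measure-1} (equivalently Proposition~\ref{holo-limit-measure})---matches the paper's approach in its first case. However, the inductive step you sketch does not go through as written, and you miss a genuine case distinction that the paper needs.

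First, the tools you name are misdirected. Lemma~\ref{disk-to-disk} decides whether $\mathbf{N}(\mathbf{B}_\xi(\vec v)^-)$ is a disk by examining local degrees along segments $[\xi,c]$ \emph{into} the disk, not along the segment from $\xi_g$ to $\mathbf{N}^n(\xi_g)$; and the hypothesis $\mathbf{N}^n(\xi_g)\notin H_V^{\xi_g}$ says nothing about that latter segment avoiding $V$ (indeed it typically crosses $H_V$). The correct tool here is Corollary~\ref{not-whole-Ber}: if one knows that $\pi_{H_V}(\xi_k)\in V$, then $\mathbf{B}_{\xi_k}(\vec w)^-$ (for $\vec w$ not pointing toward $\infty$) sits inside a disk at that $V$-vertex disjoint from $V$, so its $\mathbf{N}$-image is a disk and, moreover, contains no preimage of any $V$-point. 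This is exactly what rules out the image ``sweeping over $\xi_g$''---but it requires $\pi_{H_V}(\xi_k)\in V$, which your hypothesis does not guarantee.

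Second, the paper handles this via an explicit case analysis that your plan omits. It first invokes Corollary~\ref{Fatou-measure-1-corollary} to dispose of the case where some $\pi_{H_V}(\xi_k)\notin V$ (there the relevant disks land in a fixed Rivera domain, so one gets the Fatou containment directly). With $\pi_{H_V}(\xi_k)\in V$ for all $k$, the paper then splits further: if no $\xi_k$ ever lands on the segment $(\eta,\infty)$ (where $\eta=\pi_{H_V}(\infty)$), the inductive $F$-domain argument with $\Gamma=\{\xi_g\}$ runs via Corollary~\ref{not-whole-Ber}; if some $\xi_{n_0}\in(\eta,\infty)$, then $\xi_{n_0}$ is a \emph{fixed} point of $\mathbf{N}$ (Lemma~\ref{fixed-pt}) with degree-$1$ tangent action, and the paper enlarges $\Gamma$ to $\{\xi_g,\dots,\xi_{n_0}\}\cup V$ to make $(\mathbf{N},\Gamma)$ analytically stable and conclude. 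Your plan asserts that the failure modes ``force the orbit to interact with $V$ or with $[\pi_{H_V}(\xi_g),\xi_g]$,'' but this is precisely the content that needs proof, and it splits into the subcases above.
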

\begin{proof}
Let $\eta:=\pi_{H_V}(\infty)$ and set $\xi_k=\mathbf{N}^k(\xi_g)$ for $k\ge 0$. First we further assume $\xi_k\not\in(\eta,\infty)$ for all $k\ge 0$. Then $H_V$, $\infty$ and $\xi_g$ lie in the same component of $\mathbb{P}_{\mathrm{Ber}}^1\setminus\{\xi_k\}$ for any $k\ge 0$. Denote these directions by $\vec{v}^k_\infty\in T_{\xi_k}\mathbb{P}^1_{\mathrm{Ber}}$. By Corollary \ref{Fatou-measure-1-corollary}, we may assume $\pi_{H_V}(\xi_k)\in V$ for all $k\ge 0$. Set $\Gamma=\{\xi_g\}$. We claim for any $\vec{v}\in T_{\xi_g}\mathbb{P}^1_{\mathrm{Ber}}$ with $\vec{v}\not=\vec{v}^0_\infty$, the disk $\mathbf{B}_{\xi_g}(\vec{v})^-$ is a $F$-domain. By Corollary \ref{not-whole-Ber}, we know $\mathbf{N}(\mathbf{B}_{\xi_g}(\vec{v})^-)$ is a Berkovich disk $\mathbf{B}_{\xi_1}(\vec{w})^-$ for some $\vec{w}\in T_{\xi_1}\mathbb{P}^1_{\mathrm{Ber}}$. Again, by Corollary \ref{not-whole-Ber}, there is no preimage of $\pi_{H_V}(\xi_g)\in V$ in the Berkovich disk $\mathbf{B}_{\xi_g}(\vec{v})^-$. By the assumptions, we have $\vec{w}\not=\vec{v}^1_\infty$. Hence $\mathbf{B}_{\xi_1}(\vec{w})^-\cap\Gamma=\emptyset$. Then apply the argument to $\mathbf{B}_{\xi_1}(\vec{w})^-$. Inductively, we can show $\mathbf{N}^n(\mathbf{B}_{\xi_g}(\vec{v})^-)\cap\Gamma=\emptyset$ for all $n\ge 1$. Thus $\mathbf{B}_{\xi_g}(\vec{v})^-$ is a $F$-domain. Hence the equilibrium $\Gamma$-measure $\omega_{\mathbf{N},\Gamma}$ does not charge $\mathbf{B}_{\xi_g}(\vec{v})^-$. Therefore, 
$$\omega_{\mathbf{N},\Gamma}(\mathbf{B}_{\xi_g}(\vec{v}_\infty)^-)=1.$$
By Proposition \ref{holo-limit-measure}, the weak limit of $\mu_t$ is $\delta_\infty$.\par
Now we assume there exists $n_0\ge 1$ such that $\xi_{n_0}\in(\eta,\infty)$. Without loss of generality, let $n_0$ be the smallest such positive integer. Set 
$$\Gamma=\{\xi_g,\xi_1,\cdots,\xi_{n_0}\}\cup V.$$ 
By Lemma \ref{fixed-pt}, the point $\xi_{n_0}$ is a fixed point for $\mathbf{N}$ and each point in $V$ is also a fixed point for $\mathbf{N}$. Hence $\mathbf{N}(\Gamma)\subset\Gamma$. So the pair $(\mathbf{N},\Gamma)$ is analytically stable. For any $\vec{v}\in T_{\xi_g}\mathbb{P}^1_{\mathrm{Ber}}$ with $\vec{v}\not=\vec{v}_\infty$. We claim $\mathbf{B}_{\xi_g}(\vec{v})^-$ is a $F$-domain. By Corollary \ref{not-whole-Ber}, we know $\mathbf{N}^k(\mathbf{B}_{\xi_g}(\vec{v})^-)$ does not contains $\pi_{H_V}(\mathbf{N}^{k-1}(\xi_g))$ for $1\le k\le n_0$. Thus 
$$\mathbf{N}^{n_0}(\mathbf{B}_{\xi_g}(\vec{v})^-)\not=\mathbf{B}_{\xi_{n_0}}(\vec{w})^-,$$
where $\vec{w}\in T_{\xi_{n_0}}\mathbb{P}^1_{\mathrm{Ber}}$ such that $\mathbf{B}_{\xi_{n_0}}(\vec{w})^-\cap V\not=\emptyset$. Hence 
$$\mathbf{N}^{n_0}(\mathbf{B}_{\xi_g}(\vec{v})^-)\cap\Gamma=\emptyset.$$
Since $\deg\mathbf{N}_\ast=1$ at $\xi_{n_0}$ and $\vec{w}$ is totally invariant under $\mathbf{N}_\ast$, then for any $\ell\ge 1$, we have 
$$\mathbf{N}^{\ell}(\mathbf{B}_{\xi_g}(\vec{v})^-)\cap\Gamma=\emptyset.$$
Hence $\mathbf{B}_{\xi_g}(\vec{v})^-$ is a $F$-domain. So the equilibrium $\Gamma$ does not charge $\mathbf{B}_{\xi_g}(\vec{v})^-$. Therefore, by Proposition \ref{holo-limit-measure}, we know the weak limit of $\mu_t$ is $\delta_\infty$.\par 
\end{proof}
\begin{corollary}\label{Gauss-periodic-limit-measure}
Suppose the Gauss point $\xi_g$ is a periodic point for the map $\mathbf{N}$. Then the measures $\mu_t$ converge to $\delta_\infty$.
\end{corollary}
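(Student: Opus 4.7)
The plan is to apply Proposition \ref{holo-limit-measure} by showing that the residual equilibrium measure of $\mathbf{N}$ at $\xi_g$ equals $\delta_\infty$. By Lemma \ref{Gauss-image} we have $\mathbf{N}(\xi_g)\in\mathbf{B}_{\xi_g}(\vec{v}_\infty)^-$, so $\mathbf{N}(\xi_g)\neq\xi_g$ and the period $p$ of $\xi_g$ satisfies $p\geq 2$. Writing $\xi_k:=\mathbf{N}^k(\xi_g)$, I would take the vertex set
$$\Gamma:=\{\xi_0,\xi_1,\dots,\xi_{p-1}\}\cup V.$$
The orbit is permuted cyclically by $\mathbf{N}$, and every $v\in V$ is a fixed point by Lemma \ref{fixed-pt}, so $\mathbf{N}(\Gamma)\subset\Gamma$ and the pair $(\mathbf{N},\Gamma)$ is analytically stable.

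The core step is to show that for each direction $\vec{v}$ at $\xi_g$ with $\vec{v}\neq\vec{v}_\infty$, the Berkovich disk $\mathbf{B}_{\xi_g}(\vec{v})^-$ is an $F$-domain relative to $\Gamma$. I would proceed by induction on $k$ to show that $\mathbf{N}^k(\mathbf{B}_{\xi_g}(\vec{v})^-)$ is a proper Berkovich disk disjoint from $\Gamma$. For the base step, let $v^*:=\pi_{H_V}(\xi_g)$, noting that the subcase $v^*\notin V$ is already covered by Corollary \ref{Fatou-measure-1-corollary}; otherwise $v^*\in V$ and $\mathbf{B}_{\xi_g}(\vec{v})^-\subset\mathbf{B}_{v^*}(\vec{u})^-$, where $\vec{u}$ is the direction at $v^*$ toward $\xi_g$. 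Corollary \ref{not-whole-Ber} gives $\mathbf{N}(\mathbf{B}_{v^*}(\vec{u})^-)\neq\mathbb{P}^1_{\mathrm{Ber}}$, so Proposition \ref{fundamental-Berk} forces $\mathbf{N}(\mathbf{B}_{\xi_g}(\vec{v})^-)$ to be a proper disk $\mathbf{B}_{\xi_1}(\vec{w}_1)^-$. Lemma \ref{preimage-H0} (all preimages of $V$ have visible points in $V$) then shows this image disk avoids $V$, and the choice $\vec{v}\neq\vec{v}_\infty$ ensures that the outgoing direction $\vec{w}_1$ at $\xi_1$ differs from the one carrying $\xi_2$, so the remaining orbit points also stay out. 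The inductive step repeats the same reasoning at $\pi_{H_V}(\xi_k)\in V$, giving $\mathbf{N}^k(\mathbf{B}_{\xi_g}(\vec{v})^-)\cap\Gamma=\emptyset$ for all $k\geq 1$.

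Granted the $F$-domain property, the equilibrium $\Gamma$-measure $\omega_{\mathbf{N},\Gamma}$ assigns zero to each $\mathbf{B}_{\xi_g}(\vec{v})^-$ with $\vec{v}\neq\vec{v}_\infty$, so under the reduction identification $T_{\xi_g}\mathbb{P}^1_{\mathrm{Ber}}\cong\mathbb{P}^1$ the residual equilibrium measure concentrates at $\infty$. Proposition \ref{holo-limit-measure} then yields $\mu_t\to\delta_\infty$.

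The main obstacle I anticipate is the inductive verification that the outgoing direction $\vec{w}_k$ at $\xi_k$ never coincides with the direction carrying the next orbit point $\xi_{k+1}$, since otherwise after $p$ steps the image disk could recapture $\xi_g$ and destroy the $F$-domain property. Handling this cleanly requires carefully exploiting the periodicity together with Lemma \ref{preimage-H0} and the pairwise separation of the orbit points $\xi_0,\dots,\xi_{p-1}$ in $\mathbb{P}^1_{\mathrm{Ber}}$ to pin down each $\vec{w}_k$ explicitly, paralleling the second half of the proof of Theorem \ref{limit-measure-case-1}.
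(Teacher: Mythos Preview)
Your approach is more elaborate than necessary and has a real gap at the step you flag as an ``obstacle.'' The paper's proof is a one-liner: it simply verifies the hypothesis of Theorem~\ref{limit-measure-case-1}, namely that $\mathbf{N}^n(\xi_g)\notin H_V^{\xi_g}$ for all $n\ge 1$, and then cites that theorem. The verification is short: since every point of $H_V^\infty\setminus\{\infty\}$ is fixed by $\mathbf{N}$ (Lemma~\ref{fixed-pt}), if some iterate $\xi_{n_0}$ landed in $H_V$ it would be fixed and $\xi_g$ could not be periodic; and if $\xi_{n_0}\in(\xi_g,\pi_{H_V}(\xi_g))$, a direct tangent-space argument at $\pi_{H_V}(\xi_g)$ shows subsequent iterates can never return to the disk at $\xi_{n_0}$ containing $\xi_g$, again contradicting periodicity.

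By contrast, you try to re-derive the $F$-domain property for the larger vertex set $\Gamma=\{\xi_0,\dots,\xi_{p-1}\}\cup V$, which essentially replays the proof of Theorem~\ref{limit-measure-case-1} with extra bookkeeping. Your invocation of Lemma~\ref{preimage-H0} to conclude that the image disk $\mathbf{B}_{\xi_1}(\vec{w}_1)^-$ avoids $V$ is a non sequitur: that lemma only says preimages of $V$ have visible points in $V$, which is entirely consistent with a preimage of some $v'\neq v^*$ sitting inside $\mathbf{B}_{\xi_g}(\vec{v})^-$ (its visible point would be $v^*\in V$). What actually forces $\vec{w}_1\neq\vec{v}^1_\infty$ is that $v^*$ itself is not in the image disk (Corollary~\ref{not-whole-Ber}), together with the tree fact that $v^*$, all of $V$, $\infty$, and $\xi_g$ lie in the \emph{same} direction from $\xi_1$ --- but this last fact is exactly the statement $\xi_1\notin H_V^{\xi_g}$, which you have not established. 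Likewise, your acknowledged obstacle (that $\vec{w}_k$ never points toward $\xi_{k+1}$) is resolved precisely by knowing the orbit avoids $H_V^{\xi_g}$. In short, the missing ingredient in your argument is the orbit condition that the paper proves directly; once you have it, you may as well cite Theorem~\ref{limit-measure-case-1} rather than rebuild it.
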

\begin{proof}
By Theorem \ref{limit-measure-case-1}, it is sufficient to check  $\mathbf{N}^n(\xi_g)\not\in H_{V}^{\xi_g}$ for any $n\ge 1$. Suppose there exits $n_0\ge 1$ such that $\mathbf{N}^{n_0}(\xi_g)\in H_{V}^{\xi_g}$. Without loss of generality, we assume $n_0$ is the smallest such integer. Let $\xi=\pi_{H_V}(\xi_g)$ be the visible point from $\xi_g$ to $H_V$. Then 
$$H_{V}^{\xi_g}=H_V\cup(\xi_g,\xi).$$
By Lemma \ref{fixed-pt}, the convex hull $H_V$ is fixed by $\mathbf{N}$ pointwisely. Thus $\mathbf{N}^{n_0}(\xi_g)\in(\xi_g,\xi)$. Otherwise, $\xi_g$ is not a periodic point. Let $\vec{v}\in T_{\xi}\mathbb{P}^1_{\mathrm{Ber}}$ such that $\xi_g\in\mathbf{B}_\xi(\vec{v})^-$.  
If $\xi\not\in V$, then at $\xi$, $\deg\mathbf{N}_\ast=1$, Note $\vec{v}$ is not a fixed point of $\mathbf{N}_\ast$. Thus $\xi_g$ can not be periodic. Hence $\xi\in V$. Thus $\mathbf{N}^{k}(\xi_g)\not=\xi_g$ if $n_0\nmid k$. If $n_0\mid k$, we have $\mathbf{N}^k(\xi_g)\notin\mathbf{B}_{\xi_{n_0}}(\vec{v})^-$, where $\vec{v}\in T_{\xi_{n_0}}\mathbb{P}^1_{\mathrm{Ber}}$ such that $\xi_g\in\mathbf{B}_{\xi_{n_0}}(\vec{v})^-$. Hence $\xi_g$ can not be periodic. It is a contradiction. Thus $\mathbf{N}^n(\xi_g)\not\in H_{V}^{\xi_g}$ for any $n\ge 1$.
\end{proof}
The following example show the assumption $\mathbf{N}^n(\xi_g)\not\in H_{V}^{\xi_g}$ for all $n\ge 1$ in Theorem \ref{limit-measure-case-1} is necessary.
\begin{example}
Let $r(t)=\{1/t,-1/t,1/t^3\}$. Consider the cubic Newton map $N_{r_t}(z)$. Note the symmetric functions are 
\begin{align*}
&\sigma_1(t)=\frac{1}{t^3},\\
&\sigma_2(t)=-\frac{1}{t^2},\\
&\sigma_3(t)=-\frac{1}{t^5}.
\end{align*}
We can write 
$$N_t(z):=N_{r(t)}(z)=\frac{2z^3-\sigma_1(t)z^2+\sigma_3(t)}{3z^2-2\sigma_1(t)z+\sigma_2(t)}.$$
Then the associated map $\mathbf{N}$ maps Gauss point $\xi_g$ to $\xi_{0,|t^{-2}|}$. Note $V=\{\xi_{0,|t^{-1}|},\xi_{0,|t^{-3}|}\}$. Hence 
$$\mathbf{N}(\xi_g)=\xi_{0,|t^{-2}|}\in H_V\subset H_{V}^{\xi_g}.$$
Now we compute the weak limit $\mu$ of the maximal measures $\mu_{N_t}$ by Propositions \ref{holo-limit-measure} and \ref{equilibrium measure}. Set $\Gamma=\{\xi_g,\xi_{0,|t^{-2}|}\}$. As $\mathbf{N}(\xi_{0,|t^{-2}|})=\xi_{0,|t^{-2}|}$, the pair $(\mathbf{N},\Gamma)$ is analytically stable. Let $V_1$ be the component of $\mathbb{P}_{\mathrm{Ber}}^1\setminus\Gamma$ with boundary $\xi_{0,|t^{-2}|}$ and containing $\infty$. Let $V_2$ be the component of $\mathbb{P}_{\mathrm{Ber}}^1\setminus\Gamma$ with boundary $\xi_g$ and $\xi_{0,|t^{-2}|}$. Let $V_3$ be the component of $\mathbb{P}_{\mathrm{Ber}}^1\setminus\Gamma$ with boundary $\xi_g$ and containing $0$. Then the set of states in this case is 
$$\mathcal{J}=\{\xi_g,\xi_{0,|t^{-2}|}, V_1,V_2,V_3\},$$
and the transition matrix $P$ is given by 
\[
\begin{blockarray}{cccccc}
\ &\xi_g & \xi_{0,|t^{-2}|} & V_1 & V_2 & V_3 \\
\begin{block}{c(ccccc)}
  \xi_g & 0 & 0 & 1/3 & 2/3 & 0 \\
  \xi_{0,|t^{-2}|} & 1/3 & 1/3 & 1/3 & 0 & 0 \\
  V_1 & 0 & 0 & 2/3 & 0 & 1/3 \\
  V_2 & 0 & 0 & 1/3 & 2/3 & 0 \\
  V_3 & 0 & 0 & 1/3 & 2/3 & 0 \\
\end{block}
\end{blockarray}
 \]
Then the unique stationary probability vector for $P$ is $(0,0,1/2,1/3,1/6)$. Thus $\mu_{\mathbf{N}}(V_1)=1/2$, $\mu_{\mathbf{N}}(V_2)=1/3$ and $\mu_{\mathbf{N}}(V_3)=1/6$, where $\mu_{\mathbf{N}}$ is the equilibrium measure for $\mathbf{N}$. Thus, the weak limit $\mu$ of the measures $\mu_{N_t}$ is 
$$\mu=\frac{5}{6}\delta_\infty+\frac{1}{6}\delta_0.$$
\end{example}
To prove Theorem \ref{theorem-measure}, based on Theorem \ref{limit-measure-case-1}, now we consider the case when there exists $n_0\ge 1$ such that $\mathbf{N}^{n_0}(\xi_g)\in H_{V}^{\xi_g}$.
\begin{theorem}\label{limit-measure-case-II}
Suppose there exists $n\ge 1$ such that $\mathbf{N}^{n}(\xi_g)\in H_{V}^{\xi_g}$. Let $\mu$ be the weak limit of measures $\mu_t$. Then 
$$\mu(\{\infty\})\ge\frac{d}{2d-1}.$$
\end{theorem}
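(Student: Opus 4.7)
The plan is to construct a finite vertex set $\Gamma \subset \mathbb{P}^1_{\mathrm{Ber}}$ so that $(\mathbf{N}, \Gamma)$ is analytically stable, and then extract the lower bound on $\mu(\{\infty\})$ from the transition matrix of Proposition \ref{equilibrium measure}, using Proposition \ref{holo-limit-measure} to pass from the Berkovich equilibrium measure to the weak limit $\mu$. I would let $n_0 \ge 1$ be the smallest integer with $\eta_{n_0} \in H_V^{\xi_g}$, where $\eta_k := \mathbf{N}^k(\xi_g)$. Since $H_V^{\xi_g} = H_V \cup [\xi_g, \pi_{H_V}(\xi_g)]$ and $H_V$ is pointwise fixed by Lemma \ref{fixed-pt}, the analysis naturally splits into the sub-case where $\eta_{n_0} \in H_V$ (in which the orbit stabilizes from time $n_0$ on) and the sub-case where $\eta_{n_0}$ lies strictly inside $(\xi_g, \pi_{H_V}(\xi_g))$. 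In each sub-case I would set
\begin{equation*}
\Gamma := \{\eta_0, \eta_1, \ldots, \eta_{n_0}\} \cup V \cup \{\pi_{H_V}(\xi_g)\},
\end{equation*}
enlarging it if necessary in the second sub-case by the subsequent iterates of $\eta_{n_0}$ along the segment $[\xi_g, \pi_{H_V}(\xi_g)]$. The enlargement is finite because Corollary \ref{not-whole-Ber} together with Lemma \ref{fix-segment} forces the orbit on that segment to either converge to a fixed point on $H_V$ or stabilize in finitely many steps.

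Next, I would verify analytic stability of $(\mathbf{N}, \Gamma)$: each $v \in V$ is fixed, each transient orbit point $\eta_k$ with $k < n_0$ maps to $\eta_{k+1} \in \Gamma$, and $\eta_{n_0}$ together with its tail is absorbed into $\Gamma$ by construction. I would then identify the $J$-domains. Exactly as in the proof of Theorem \ref{limit-measure-case-1}, the minimality of $n_0$ combined with Corollary \ref{not-whole-Ber} forces every Berkovich disk at $\xi_g$ in a direction $\vec{v} \ne \vec{v}_\infty$ to be an $F$-domain, and similar statements hold at the other vertices of $\Gamma$. So $\mathcal{J}(\Gamma)$ is a short explicit list and, by Proposition \ref{holo-limit-measure}, $\mu$ is concentrated on finitely many atoms on $\mathbb{P}^1$.

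The final step is to extract the bound $\nu(\mathbf{B}_{\xi_g}(\vec{v}_\infty)^-) \ge d/(2d-1)$ from the stationary distribution of $P_{U,W} = m_{U,W}/d$. The two key inputs are Lemma \ref{Gauss-image}, which places $\mathbf{N}(\xi_g)$ inside $\mathbf{B}_{\xi_g}(\vec{v}_\infty)^-$, and the fact that $\infty$ together with the $d-1$ inflection points of $P_{r(t)}$ (all of which converge to $\infty$ by Lemma \ref{Newton-indeterminacy}) constitute the $d$ preimages of $\infty$ and therefore all lie in the direction $\vec{v}_\infty$ at $\xi_g$. In the extremal configuration where the self-transition rate on $\mathbf{B}_{\xi_g}(\vec{v}_\infty)^-$ is only $1/d$ and every other $J$-domain flows back into this disk in a single step, solving the stationary equation produces exactly the ratio $d/(2d-1)$, while every other configuration gives a strictly larger value. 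The principal obstacle I expect is the sub-case in which $\eta_{n_0}$ lies strictly inside the open segment $(\xi_g, \pi_{H_V}(\xi_g))$: here the orbit can oscillate before being captured, producing several extra atoms outside $\mathbf{B}_{\xi_g}(\vec{v}_\infty)^-$, and one must verify via a careful accounting of directional multiplicities along the segment, using Theorem \ref{tree-properties}, that the total outgoing flow from $\mathbf{B}_{\xi_g}(\vec{v}_\infty)^-$ is always bounded by the critical threshold that realizes the ratio $d/(2d-1)$.
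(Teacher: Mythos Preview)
Your proposal has the right overall architecture---construct a vertex set $\Gamma$ making $(\mathbf{N},\Gamma)$ analytically stable, then read the bound off the Markov chain---but there are two genuine gaps.

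\textbf{The $F$-domain claim is false.} You assert that ``the minimality of $n_0$ combined with Corollary \ref{not-whole-Ber} forces every Berkovich disk at $\xi_g$ in a direction $\vec{v}\ne\vec{v}_\infty$ to be an $F$-domain.'' This is precisely what fails under the hypothesis of this theorem. The argument in Theorem \ref{limit-measure-case-1} needed $\mathbf{N}^n(\xi_g)\notin H_V^{\xi_g}$ for \emph{all} $n\ge 1$; here the orbit does enter $H_V^{\xi_g}$ at time $n_0$, and once $\xi_{n_0}\in H_V$ (Case I in the paper) the images of disks at $\xi_g$ can hit vertices of $V\subset\Gamma$, producing genuine $J$-domains in non-$\infty$ directions. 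Indeed, the example in the paper with $r(t)=\{1/t,-1/t,1/t^3\}$ has $\mu=\tfrac{5}{6}\delta_\infty+\tfrac{1}{6}\delta_0\ne\delta_\infty$, so the direction $\vec{v}_0$ at $\xi_g$ carries positive mass and is a $J$-domain. If your claim were true it would give $\mu=\delta_\infty$ outright, which is too strong. The paper instead \emph{allows} such $J$-domains $U_1,\dots,U_m$ at $\xi_g$ and bounds their total stationary mass: the key structural fact (from Corollary \ref{not-whole-Ber}) is that the only nonzero entry in the $U_j$-column of the transition matrix is $P_{W_j,U_j}$ where $W_j=\mathbf{N}(U_j)$, so $\nu(U_j)=P_{W_j,U_j}\nu(W_j)$; combining this with $P_{W_j,U_j}\le (d-1)/d$ (from $\deg_{\pi_{H_V}(\xi_g)}\mathbf{N}\le d-1$) and $\nu(U_j)+\nu(W_j)\le 1$ yields $\nu(U_j)\le (d-1)/(2d-1)$.

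\textbf{The second sub-case is not handled.} When $\eta_{n_0}\in(\xi_g,\pi_{H_V}(\xi_g))$ you propose enlarging $\Gamma$ by the subsequent iterates and claim ``the orbit on that segment \dots\ stabilize[s] in finitely many steps.'' Neither Corollary \ref{not-whole-Ber} nor Lemma \ref{fix-segment} gives this; Lemma \ref{fix-segment} concerns $H_r\setminus H_V$, not the segment from $\xi_g$ to $H_V$, and in general the forward orbit of $\xi_g$ need not be finite. The paper does not add the orbit of $\xi_g$ at all in this case: it constructs an auxiliary \emph{periodic} orbit $\mathcal{O}(\eta_i)$ on the segments $(\xi_i,\pi_{H_V}(\xi_i))$ by a limiting procedure (taking successive preimages of a point in $V$ and passing to the limit), and sets $\Gamma=\{\xi_g,\dots,\xi_{n_0-1}\}\cup V\cup\mathcal{O}(\eta_i)$. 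Analytic stability then holds because the $\xi_k$ eventually land in $F$-domains bounded by this periodic orbit, and the Case I computation applies verbatim.

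A minor point: the $d$ preimages of $\infty$ are $\infty$ together with the $d-1$ zeros of $P'_{r(t)}$ (critical points of $P$), not its inflection points.
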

\begin{proof}
According to the orbit $\mathcal{O}(\xi_g)$ of the Gauss point $\xi_g$, we construct different vertices set $\Gamma$ such that the pair $(\mathbf{N},\Gamma)$ is analytically stable. And then we apply Propositions  \ref{holo-limit-measure} and \ref{equilibrium measure}.\par 
To ease notations, in the following proof we let $\xi_k:=\mathbf{N}^k(\xi_g)$ for $k\ge 0$, and let $\vec{v}_\infty\in T_{\xi_g}\mathbb{P}^1_{\mathrm{Ber}}$ such that $\infty\in\mathbf{B}_{\xi_g}(\vec{v}_\infty)^-$. Note 
$$H_{V\xi_g}=H_V\cup(\xi_g,\pi_{H_V}(\xi_g)).$$ 
Hence, we have two cases. \par
Case I: There exists $n_0\ge 1$ such that $\xi_{n_0}\in H_V$. Without loss of generality, let $n_0$ be the smallest such positive integer. Set 
$$\Gamma=\{\xi_g,\xi_1,\cdots,\xi_{n_0}\}\cup V.$$
Then the pair $(\mathbf{N},\Gamma)$ is analytically stable. In this case there may exist $\vec{v}\in T_{\xi_g}\mathbb{P}^1_{\mathrm{Ber}}$ such that $\mathbf{B}_{\xi_g}(\vec{v})^-$ is a $J$-domain. Indeed, if $\xi_{n_0}\in H_V\setminus V$, there exists at most one such $\vec{v}$; if $\xi_{n_0}\in V$, there may exist most than one but finitely many such $\vec{v}$. If there is no such $\vec{v}$, then the weak limit of $\mu_t$ is $\delta_\infty$. Now we suppose that there exist $\vec{v}_1,\cdots,\vec{v}_m\in T_{\xi_g}\mathbb{P}^1_{\mathrm{Ber}}$ such that each $U_j:=\mathbf{B}_{\xi_g}(\vec{v}_j)^-$ is a $J$-domain for $1\le j\le m$. For now, we assume $m=1$. It needs no more effort to prove the case when $m>1$. Let $W_1=\mathbf{N}(U_1)$. Then $W_1$ is a Berkovich disk with boundary $\xi_1$. In fact, $W_1$ contains a union of elements in $\mathcal{J}(\Gamma)$. We may assume $W_1$ is in $\mathcal{J}(\Gamma)$. Otherwise, we consider the sum of equilibrium $\Gamma$-measure of the corresponding elements in $\mathcal{J}(\Gamma)$. Let $P$ be the transition matrix for the $J$-domains and let $\nu$ be the unique stationary probability vector. Then we have the $(W_1,U_1)$-entry $P_{W_1,U_1}$ in $P$ is nonzero and all other entries in $U_1$-column are zeros. Thus 
$$P_{W_1,U_1}\nu(W_1)=\nu(U_1),$$ 
where $\nu(W_1)$ and $\nu(U_1)$ are the $W_1$-th and $U_1$-th entries, respectively. Note $\nu(W_1)+\nu(U_1)\le 1$. Thus 
$$\frac{1+P_{W_1,U_1}}{P_{W_1,U_1}}\nu(U_1)\le 1.$$
Since the local degree $\deg_{\pi_{H_V}(\xi_g)}\mathbf{N}\le d-1$, then  
$$P_{W_1,U_1}\le\frac{d-1}{d}.$$
So we have 
$$\nu(U_1)\le\frac{P_{W_1,U_1}}{1+P_{W_1,U_1}}\le\frac{d-1}{2d-1}.$$
By Proposition \ref{equilibrium measure}, the equilibrium $\Gamma$-measure charges $\nu(U_1)$ at $U_1$ and charges $1-\nu(U_1)$ at $\mathbf{B}_{\xi_g}(\vec{v}_\infty)^-$. By Proposition \ref{holo-limit-measure}, we have
$$\mu(\{\infty\})=1-\nu(U_1)\ge\frac{d}{2d-1}.$$\par
Case II: Suppose that the Cases I does not hold and there exists $n_0\ge 1$ such that $\xi_{n_0}\in(\xi_g,\pi_{H_V}(\xi_g))$. Without loss of generality, let $n_0$ be the smallest such positive integer. Now consider the segments $(\xi_i,\pi_{H_V}(\xi_i))$. If there exists $i$ such that $\pi_{H_V}(\xi_i)\not=\pi_{H_V}(\xi_{i+1})$, then there exits $\eta_i^{(k)}\in(\xi_i,\pi_{H_V}(\xi_i))$ such that $\mathbf{N}^{(k-1)n_0+1}(\eta_i^{(k)})= \pi_{H_V}(\xi_{i+1})$. Let $\eta_i=\lim\limits_{k\to\infty}\eta_i^{(k)}$. Then $\eta_i\in(\xi_i,\pi_{H_V}(\xi_i))$. Let $\mathcal{O}(\eta_i)$ be the orbit of $\eta_i$. Note $\mathcal{O}(\eta_i)$ has $n_0$ elements. Set 
$$\Gamma=\{\xi_g,\xi_1,\cdots,\xi_{n_0-1}\}\cup V\cup\mathcal{O}(\eta_i).$$
Let $\eta=\mathcal{O}(\eta_i)\cap(\xi_g,\pi_{H_V}(\xi_g))$. Then $\xi_{n_0}\in(\xi_g,\eta)$ an hence $\xi_{n}\in(\xi_q,\mathbf{N}^q(\eta))$, where $n\equiv q\ \mathrm{mod}\ n_0$. Note $(\xi_i,\mathbf{N}^i(\eta))$ is in a $F$-domain for $i\ge 0$. Hence the pair $(\mathbf{N},\Gamma)$ is analytically stable. Note the equilibrium measure $\mu_{\mathbf{N}}$ on $\mathbb{P}^1_{\mathrm{Ber}}$ does not charge the Gauss point $\xi_g$. Then by the same argument in Case I, we can get 
$$\mu(\{\infty\})\ge\frac{d}{2d-1}.$$
\end{proof}
\begin{remark}
Since $N_t$ converges subalgebraically to the point $N$ in $I(d)$, by Lemma \ref{Newton-indeterminacy}, we know $N$ has a unique hole at $[1:0]\in\mathbb{P}^1$ with depth $d$. Then by Proposition \ref{measure-converge} $(2)$,  we have $\mu(\{\infty\})\ge 1/2$. Theorems \ref{limit-measure-case-1} and \ref{limit-measure-case-II} give a better lower bound for $\mu(\{\infty\})$.
\end{remark}
For the quadratic case, we have the following corollary.
\begin{corollary}\label{quadratic-Newton-measure}
If $d=2$, then $\mu=\delta_\infty$.
\end{corollary}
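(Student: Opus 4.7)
The plan is to reduce the Berkovich dynamics of the associated map $\mathbf{N}$ to that of $z\mapsto z^2$, exploit good reduction to pin down the equilibrium measure, and then read off the residual equilibrium measure at the Gauss point.

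First I would invoke the proposition that every quadratic Newton map over $\mathbb{C}$ is conjugate to $z\mapsto z^2$. Because the conjugating M\"obius transformation is given by an explicit algebraic formula sending the two roots to $0$ and $\infty$, the same construction produces $\mathbf{M}\in\mathrm{PGL}_2(\mathbb{L})$ with $\mathbf{M}^{-1}\circ\mathbf{N}\circ\mathbf{M}=z\mapsto z^2$. Since $z\mapsto z^2$ has good reduction, its Berkovich Julia set is the singleton $\{\xi_g\}$ and its equilibrium measure is $\delta_{\xi_g}$, so conjugating back yields $\mu_{\mathbf{N}}=\delta_v$, where $v:=\mathbf{M}(\xi_g)$. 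By Lemma \ref{fixed-pt}, this $v$ is the unique type II repelling fixed point of $\mathbf{N}$ --- the sole element of $V$ when $d=2$.

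Next I would locate $v$ relative to the Gauss point. By Lemma \ref{Newton-indeterminacy}, both $r_1(t)$ and $r_2(t)$ converge to $\infty$, so as elements of $\mathbb{L}$ they satisfy $|r_i|>1$; hence $r_1,r_2,\infty$ all sit in the Berkovich disk $\mathbf{B}_{\xi_g}(\vec{v}_\infty)^-$. This disk is convex in the tree structure of $\mathbb{P}^1_{\mathrm{Ber}}$, so it contains the convex hull $H_{\mathrm{fix}}$ of $\{r_1,r_2,\infty\}$ and in particular its branch point $v$. Finally, Proposition \ref{holo-limit-measure} identifies $\mu$ with the residual equilibrium measure of $\mathbf{N}$; since all of the $\mu_{\mathbf{N}}$-mass concentrates in the direction $\vec{v}_\infty$ at $\xi_g$, under the canonical identification $T_{\xi_g}\mathbb{P}^1_{\mathrm{Ber}}\cong\mathbb{P}^1$ the residual measure becomes a unit atom at $\infty$, giving $\mu=\delta_\infty$.

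The main obstacle is the small but necessary check that the $\mathbb{C}$-conjugacy of a quadratic Newton map to $z^2$ lifts to $\mathbb{L}$; this is automatic from the explicit M\"obius formula once one observes that $\mathbb{L}$ is algebraically closed of characteristic zero. After that, the conclusion is a formal consequence of good reduction together with the convexity of Berkovich disks.
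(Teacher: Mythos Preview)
Your proof is correct and takes a genuinely different route from the paper's. The paper argues within the framework of Theorems~\ref{limit-measure-case-1} and~\ref{limit-measure-case-II}: it shows that the orbit of $\xi_g$ under $\mathbf{N}$ can never land in the segment $[v,\xi_g)$, by a short ramification-locus argument (if $\mathbf{N}^\ell(\xi_g)\in(v,\xi_g)$ then the segments $(v,\mathbf{N}^n(\xi_g))$ miss $\mathcal{R}_{\mathbf{N}}$, forcing $\rho(v,\cdot)$ to be preserved and hence $\xi_g$ to be periodic), and then invokes Theorem~\ref{limit-measure-case-1} to conclude $\mu=\delta_\infty$.

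Your argument instead exploits the special feature that for $d=2$ the map $\mathbf{N}$ has potentially good reduction, via the explicit $\mathrm{PGL}_2(\mathbb{L})$-conjugacy to $z\mapsto z^2$. This pins down the entire Berkovich equilibrium measure as a single atom $\delta_v$, after which locating $v$ in the $\infty$-direction at $\xi_g$ is immediate from Lemma~\ref{Newton-indeterminacy} and convexity. Your approach is shorter and more conceptual, bypassing the orbit analysis altogether; the paper's approach has the virtue of staying entirely inside the $\Gamma$-measure machinery already built and not appealing to the external (though standard) fact that good reduction forces $\mu_\phi=\delta_{\xi_g}$. Both are valid; yours makes transparent \emph{why} the quadratic case is special, namely that $V$ is a singleton and hence $\mathbf{N}$ has potentially good reduction.
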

\begin{proof}
By Theorems \ref{limit-measure-case-1} and \ref{limit-measure-case-II}, we only need to show the case that there exists $n_0$ such that $\mathbf{N}^{n_0}(\xi_g)\in[v,\xi_g)$, where $v\in V$ is the unique point. We claim $\mathbf{N}^{n_0}(\xi_g)\not\in[v,\xi_g)$ for all $n\ge 1$. Since $\deg_v\mathbf{N}=2$, we have $\mathbf{N}^{n}(\xi_g)\not=v$ for all $n\ge 1$. If $\mathbf{N}^{\ell}(\xi_g)\in(v,\xi_g)$ for some $\ell\ge 1$, then for all $n\ge 1$, the segments $(v,\mathbf{N}^{n}(\xi_g))$ are disjoint with the ramification locus $\mathcal{R}_{\mathbf{N}}$. Hence $\rho(v,\xi_g)=\rho(v,\mathbf{N}^{\ell}(\xi_g))$, Thus $\mathbf{N}^{\ell}(\xi_g)=\xi_g$. It is a contradiction. So $\mu=\delta_\infty$. 
\end{proof}

\section{Rescaling Limits}\label{rescaling}
\subsection{Definitions and Known Results}
In this section, following Kiwi \cite{Kiwi15}, we give the definitions and known results about rescalings and rescaling limits.\par 
Recall that a moving frame is a holomorphic family of degree $1$ rational maps.
\begin{definition}\label{rescaling-definition}
Let $\{f_t\}\subset\mathrm{Rat}_d$ be a holomophic family. A moving frame $\{M_t\}\subset\mathrm{PGL}_2(\mathbb{C})$ is called a \textit{rescaling} for $\{f_t\}$ of period $q$ if there exist $g\in\mathrm{Rat}_e$ with $e\ge 2$ and a finite subset $S\subset\mathbb{P}^1$ such that as $t\to 0$,
$$M_t^{-1}\circ f_t^q\circ M_t\xrightarrow{\bullet} g\ \ \text{on}\ \ \mathbb{P}^1\setminus S.$$
 We say $g$ is a \textit{rescaling limit} on $\mathbb{P}^1\setminus S$. The minimal $q\ge 1$ such the above holds is called the period of the rescaling $\{M_t\}$. 
\end{definition}
Recall that the associated map $\mathbf{f}$ for a holomorphic family  $\{f_t\}$ of degree $d\ge 1$ rational maps is a rational map in $\mathbb{L}(z)$, which can act on Berkovich space $\mathbb{P}^1_{\mathrm{Ber}}$. Let $\mathbf{f}$ and $\mathbf{M}$ be the associated maps for $\{f_t\}$ and $\{M_t\}$ in Definition \ref{rescaling-definition}. By Lemmas \ref{subalg-imply-locally-uniform} and \ref{locally-uniform-imply-subalg}, the convergence holds in Definition \ref{rescaling-definition} if and only if there exits $q\ge 1$ such that the reduction $\mathrm{Red}(\mathbf{M}^{-1}\circ\mathbf{f}^q\circ\mathbf{M})$ has degree at least $2$. In general, let $M\in\mathrm{PGL}_2(\mathbb{L})$. Then $M$ induces naturally a family $\{M_t\}$ of $\mathrm{PGL}_2(\mathbb{C})$. We say $\{M_t\}$ is a generalized rescaling for the holomorphic family $\{f_t\}$ if there exists $q\ge 1$ such that the reduction $\mathrm{Red}(M^{-1}\circ\mathbf{f}^q\circ M)$ has degree at least $2$. For example, $\{M_t(z)=t^{-1/2}z\}$ is a generalized rescaling for the holomorphic family $\{f_t(z)=tz^3\}$, but it is not a rescaling for $\{f_t\}$. \par
Naturally we are interested in the sequence in the moduli space $\mathrm{rat}_d$. So we define equivalent relations on the rescalings, and count the number of the rescalings, up to these equivalence relations. 
\begin{definition}\label{rescaling-equivalent}
Let $\{M_t\}\subset\mathrm{PGL}_2(\mathbb{C})$ and $\{L_t\}\subset \mathrm{PGL}_2(\mathbb{C})$. We say $\{M_t\}$ and $\{L_t\}$ are \textit{equivalent} if there exists $M\in\mathrm{PGL}_2(\mathbb{C})$ such that $M_t^{-1}\circ L_t\to M$.
\end{definition}
The ``equivalent" in Definition \ref{rescaling-equivalent} defines an equivalence relation. Let $[\{M_t\}]$ be the equivalence class of $\{M_t\}$.\par
\begin{lemma}\cite[Lemma 3.6]{Kiwi15} 
Suppose $\{M_t\}$ and $\{L_t\}$ are two moving frames. Let  $\mathbf{M}$ and $\mathbf{L}$ be the associated maps for $\{M_t\}$ and $\{L_t\}$, respectively. Then $\{M_t\}$ and $\{L_t\}$ are equivalent if and only if $\mathbf{M}(\xi_g)=\mathbf{L}(\xi_g)$.
\end{lemma}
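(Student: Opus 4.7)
The plan is to deduce both directions by applying Proposition \ref{fix-gauss} to the composition $\mathbf{M}^{-1}\circ\mathbf{L}\in\mathbb{L}(z)$, which is a degree one rational map over $\mathbb{L}$ since both $\mathbf{M}$ and $\mathbf{L}$ are. The dictionary I will lean on is the observation stated after the definition of reduction in the paper: if $\phi\in\mathbb{L}(z)$ is the rational map associated to a holomorphic family, then $\mathrm{Red}(\phi)$ coincides with the pointwise limit of that family as $t\to 0$.

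For the ``only if'' direction, I would assume that $M_t^{-1}\circ L_t\to M$ for some $M\in\mathrm{PGL}_2(\mathbb{C})$. Since $\{M_t^{-1}\circ L_t\}$ is itself a holomorphic family of Möbius transformations (the coefficients of $M_t^{-1}$ are holomorphic in $t$ via the adjugate formula, so composition with $L_t$ is again a holomorphic family), its associated map is $\mathbf{M}^{-1}\circ\mathbf{L}$. Thus $\mathrm{Red}(\mathbf{M}^{-1}\circ\mathbf{L})=M$ has degree $1$, and Proposition \ref{fix-gauss} gives $(\mathbf{M}^{-1}\circ\mathbf{L})(\xi_g)=\xi_g$, i.e.\ $\mathbf{L}(\xi_g)=\mathbf{M}(\xi_g)$.

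For the ``if'' direction, assume $\mathbf{M}(\xi_g)=\mathbf{L}(\xi_g)$. Since $\mathbf{M}\in\mathrm{PGL}_2(\mathbb{L})$ is an automorphism of $\mathbb{P}^1_{\mathrm{Ber}}$, applying $\mathbf{M}^{-1}$ yields $(\mathbf{M}^{-1}\circ\mathbf{L})(\xi_g)=\xi_g$. Proposition \ref{fix-gauss} then forces $\deg\mathrm{Red}(\mathbf{M}^{-1}\circ\mathbf{L})\ge 1$, and because $\mathbf{M}^{-1}\circ\mathbf{L}$ has degree one as a rational map over $\mathbb{L}$, the reduction has degree exactly one and so represents an element $M\in\mathrm{PGL}_2(\mathbb{C})$. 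Translating back via the reduction/limit dictionary, $M_t^{-1}\circ L_t\to M$, establishing equivalence.

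There is no serious obstacle: the argument is essentially a two line invocation of Proposition \ref{fix-gauss} together with the identification $\mathrm{Red}=\lim_{t\to 0}$ for associated maps. The only point requiring care is the verification that $\mathbf{M}^{-1}\circ\mathbf{L}$ is the associated map of the holomorphic family $\{M_t^{-1}\circ L_t\}$, which is immediate from the explicit Cramer/adjugate expression for the inverse of a Möbius transformation (so inversion preserves holomorphic dependence on $t$) combined with the functoriality of the association $\{f_t\}\mapsto\mathbf{f}$ under composition.
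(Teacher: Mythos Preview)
The paper does not give its own proof of this lemma; it is simply quoted from \cite[Lemma 3.6]{Kiwi15} without argument. Your proposal is a correct and clean proof: both directions follow immediately from Proposition~\ref{fix-gauss} applied to the degree-one map $\mathbf{M}^{-1}\circ\mathbf{L}$, together with the identification of the reduction with the limit $\lim_{t\to 0}M_t^{-1}\circ L_t$ for the associated holomorphic family. There is nothing to compare against in the present paper, and your argument is essentially the standard one.
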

Note any rational map in $\mathbb{L}(z)$ maps the Gauss point $\xi_g$ to a type II point. For any type II point $\xi\in\mathbb{P}^1_{\mathrm{Ber}}$, there exists an affine map $A(z)\in\mathbb{L}[z]$ such that $A(\xi_g)=\xi$. If $\xi$ is the image of $\xi_g$ under the associated map $\mathbf{M}$ for some move frame $\{M_t\}$, we can choose the associated map  $\mathbf{A}$ for an affine move frame $\{A_t\}$ such that $\mathbf{A}(\xi_g)=\xi$.
\begin{lemma}\label{affine-equ-mobius}
For any moving frame $\{M_t(z)\}$, there exists an affine moving frame $\{A_t(z)\}$ such that $\{A_t\}$ is equivalent to $\{M_t\}$.
\end{lemma}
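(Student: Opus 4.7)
The plan is to combine two observations from the immediately preceding discussion: first, the lemma just above that characterizes equivalence of moving frames by $\mathbf{M}(\xi_g) = \mathbf{L}(\xi_g)$; and second, the remark that every type II point $\xi = \xi_{a,r} \in \mathbb{P}^1_{\mathrm{Ber}}$ lies in the affine orbit of $\xi_g$, realized by the map $z \mapsto cz + a$ for any $c \in \mathbb{L}$ with $|c| = r$.

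First I would let $\mathbf{M}$ denote the associated map of $\{M_t\}$ and set $\xi := \mathbf{M}(\xi_g)$. Since $\mathbf{M}$ is a degree one rational map in $\mathbb{L}(z)$, the image $\xi$ is a type II point and hence admits a description $\xi = \xi_{a,r}$ with $a \in \mathbb{L}$ and $r \in |\mathbb{L}^\times|$. Choosing $c \in \mathbb{L}$ with $|c| = r$ and setting $\mathbf{A}(z) := cz + a \in \mathbb{L}[z]$, the identity $\mathbf{A}(\overline{D}(0,1)) = \overline{D}(a,r)$ gives $\mathbf{A}(\xi_g) = \xi = \mathbf{M}(\xi_g)$.

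Next I would realize $\mathbf{A}$ as the associated map of an affine moving frame $\{A_t\}$. Because the coefficients of $\mathbf{M}$ are convergent power series in $t$, the elements $a, c \in \mathbb{L}$ arise from rational operations on these coefficients and thus lie in the fraction field of $\mathbb{C}\{t\}$ (absorbing a branched reparametrization $t = s^k$ if any fractional exponents of Puiseux type appear, in accordance with the convention set up earlier in the paper). Multiplying the projective coordinates $[c(t) : a(t) : 0 : 1] \in \mathbb{P}^3$ by a sufficiently high power of $t$ clears denominators and produces a genuine holomorphic map $\mathbb{D} \to \mathbb{P}^3$ whose value for $t \neq 0$ is the affine Möbius $z \mapsto c(t) z + a(t)$; this is the desired affine moving frame $\{A_t\}$, with associated map $\mathbf{A}$.

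Finally the equivalence $\{A_t\} \sim \{M_t\}$ follows directly from the preceding lemma, because $\mathbf{A}(\xi_g) = \mathbf{M}(\xi_g)$ by construction. The proof is essentially bookkeeping on top of the two cited facts; the only mildly technical issue is converting the $\mathbb{L}$-coefficient affine map $\mathbf{A}$ into a genuine holomorphic family in $\mathbb{P}^3$, which is handled by the projective-coordinate normalization (and, if necessary, the branched reparametrization) described above.
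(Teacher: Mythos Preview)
Your approach is the same as the paper's: reduce equivalence to the condition $\mathbf{A}(\xi_g)=\mathbf{M}(\xi_g)$, build an affine $\mathbf{A}(z)=cz+a$ over $\mathbb{L}$ with this property, and then argue that $\mathbf{A}$ comes from an honest moving frame. The difference lies entirely in this last step. You assert that $a,c$ ``arise from rational operations on the coefficients'' of $\mathbf{M}$ and fall back on a branched base change $t=s^k$ if fractional powers appear. The paper instead makes explicit choices that guarantee integer exponents from the start: when $\mathbf{M}$ has no pole in $D(0,1)$ it takes $a=\mathbf{M}(0)\in\mathbb{C}((t))$ and $c=t^n$ with $|t^n|=|\mathbf{M}(1)-\mathbf{M}(0)|$; when there is a pole it factors $\mathbf{M}=\mathbf{M}_1\circ(1/z)\circ\mathbf{M}_3$ with $\mathbf{M}_1,\mathbf{M}_3$ affine and uses $ (1/z)(\xi_{0,|t^n|})=\xi_{0,|t^{-n}|}$.

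Your version is not wrong, but the phrase ``arise from rational operations'' is loose: the center $a$ of $\xi_{a,r}$ is only determined modulo $\overline{D}(a,r)$, so you must \emph{choose} a representative in $\mathbb{C}((t))$, and you must know that $r\in|\mathbb{C}((t))^\times|$ so that $c=t^n$ works. Both facts hold precisely because $\mathbf{M}$ already has coefficients in $\mathbb{C}((t))$; this is what the paper's case split establishes, and it shows that your reparametrization escape hatch is never actually needed. So your argument is correct once you replace the vague appeal to rational operations by the concrete choices the paper makes.
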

\begin{proof}
First note that for any type II point $\xi_{a,r}$ in $\mathbb{P}^1_{\mathrm{Ber}}$, there exists an affine map $A(z)\in\mathbb{L}[z]$ such that $A(\xi_g)=\xi_{a,r}$. Indeed, since $\xi_{a,r}$ is a type II point, then there exits $b\in\mathbb{L}$ such that $|b-a|_\mathbb{L}=r$. Then let $A(z)=bz+a$. Now let $\mathbf{M}$ be the associated map for $\{M_t\}$ and let $\xi_{a,r}=\mathbf{M}(\xi_g)$. If $\mathbf{M}\in\mathbb{L}(z)$ has no pole in $D(0,1)\subset\mathbb{L}$, then $\mathbf{M}(D(0,1))$ is a disk in $\mathbb{L}$. We can choose $a=M(0)$. Thus $a\in\mathbb{C}((t))$. Note $\mathbf{M}(1)\in\mathbb{C}((t))$. Let $n\in\mathbb{Z}$ such that $|t^n|=|\mathbf{M}(1)-\mathbf{M}(0)|$. Then $r=|t^n|$. Thus we can set $A_t(z)=t^nz+a(t)$. If $\mathbf{M}\in\mathbb{L}(z)$ has a pole in $D(0,1)\subset\mathbb{L}$, we can write $\mathbf{M}=\mathbf{M}_1\circ\mathbf{M}_2\circ\mathbf{M}_3$, where $\mathbf{M}_1$ and $\mathbf{M}_3$ are affine map and $\mathbf{M}_2(z)=1/z$. It is sufficient to show that there exist $a_n=a_n(t)\in\mathbb{C}((t))$ and $r=|t^m|$, where $m=m(n)\in\mathbb{Z}$, such that $\xi_{a_n,r_n}:=\mathbf{M}_2(\xi_{0,|t^n|})$ for any $n\in\mathbb{Z}$. It follows immediately from $\mathbf{M}_2(\xi_{0,|t^n|})=\xi_{0,|t^{-n}|}$.
\end{proof}
With Lemma \ref{affine-equ-mobius} we can count the number of equivalent classes of rescalings by only considering the affine representatives.\par 
The following result allows us to consider the action of a holomorphic $\{f_t\}$ on the set $\{[\{M_t\}]\}$ of equivalence classes of moving frames.
\begin{lemma}\cite[Lemma 3.7]{Kiwi15}\label{moving-frame-action}
Let $\{f_t\}$ be a holomorphic family of degree $d\ge 1$ rational maps. If $\{M_t\}$ is a moving frame, then there exists a moving frame $\{L_t\}$ such that for the associated maps $\mathbf{f},\mathbf{M}$ and $\mathbf{L}$,
$$\mathbf{f}\circ\mathbf{M}(\xi_g)=\mathbf{L}(\xi_g).$$
\end{lemma}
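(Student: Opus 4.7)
The plan is to reduce the statement to two facts: (1) the image $\xi := \mathbf{f}\circ\mathbf{M}(\xi_g)$ is a type II point in $\mathbb{P}^1_{\mathrm{Ber}}$; and (2) every type II point arises as $\mathbf{L}(\xi_g)$ for the associated map of some moving frame $\{L_t\}$. The statement then follows immediately by applying (2) to the type II point produced by (1).

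For (1), I would use the general principle that any nonconstant rational map in $\mathbb{L}(z)$ preserves the type of each point in $\mathbb{P}^1_{\mathrm{Ber}}$; this is a standard feature of the Berkovich extension and can be extracted, for example, from Proposition \ref{algebraic-reduction} (which identifies the local degree at a type II point with the degree of an algebraic reduction, necessarily $\ge 1$). Since $\mathbf{M}$ has degree $1$ and $\mathbf{f}$ has degree $d \ge 1$, and $\xi_g$ is type II, both $\mathbf{M}(\xi_g)$ and $\xi = \mathbf{f}(\mathbf{M}(\xi_g))$ are type II.

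For (2), I would build $\{L_t\}$ as an affine moving frame, in the spirit of Lemma \ref{affine-equ-mobius}. Writing $\xi = \xi_{a,r}$ with $a\in\mathbb{L}$ and $r\in|\mathbb{L}^\times|$, choose $q\in\mathbb{Q}$ so that $r=|t^q|$. The element $A(z) := t^q z + a\in\mathbb{L}[z]$ clearly satisfies $A(\xi_g)=\xi_{a,r}=\xi$. To promote $A$ to the associated map of a genuine moving frame (with holomorphic coefficients in a parameter near $0$), I would truncate the Puiseux series $a = \sum_n a_n t^{q_n}$ to a finite partial sum $\tilde a$ with $|a-\tilde a|<r$; truncation does not change the point, since $\xi_{\tilde a,r}=\xi_{a,r}$. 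The truncated series involves only finitely many rational exponents, so after a parameter change $t=s^N$ for a suitable integer $N$, both $\tilde a(s^N)$ and $s^{Nq}$ become holomorphic functions of $s$. Setting $L_s(z)=s^{Nq}z+\tilde a(s^N)$ defines a moving frame whose associated map $\mathbf{L}$ satisfies $\mathbf{L}(\xi_g)=\xi_{\tilde a,r}=\xi$, as required.

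The main technical subtlety is exactly the passage from the formal element $A\in\mathbb{L}[z]$ to an actual holomorphic family of Möbius transformations: the coefficients in $\mathbb{L}$ a priori carry infinitely many rational-exponent terms and are not obviously convergent. This is resolved by the truncation step (which is harmless at the level of Berkovich points, since the diameter $r$ swallows the tail) combined with the standard reparameterization $t\mapsto s^N$ invoked earlier in the paper. Everything else is routine bookkeeping with the canonical correspondence between holomorphic families and their associated maps in $\mathbb{L}(z)$.
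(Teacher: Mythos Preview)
The paper does not supply its own proof of this lemma; it is quoted without argument from \cite[Lemma 3.7]{Kiwi15}. Your two-step outline is the right shape, and step (1) is correct as stated.

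The issue lies in step (2). You prove that an \emph{arbitrary} type II point $\xi_{a,r}$ is hit by some moving frame, at the price of a base change $t=s^N$; the resulting $\{L_s\}$ is holomorphic in $s$, not in $t$. Since the lemma is invoked immediately afterward to define an action of $\{f_t\}$ on equivalence classes of moving frames (all in the same parameter $t$), this is a defect: you need $\{L_t\}$ holomorphic in the original parameter. The reparameterization is also unnecessary. Because $\{f_t\}$ and $\{M_t\}$ are holomorphic families, the associated maps $\mathbf{f},\mathbf{M}$ have coefficients in the subfield $\mathbb{C}((t))\subset\mathbb{L}$ of convergent Laurent series, and so does $\mathbf{f}\circ\mathbf{M}$. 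Hence for generic $c\in\mathbb{C}$ the center $a(t):=f_t(M_t(c))$ is meromorphic in $t$, and the diameter of the image disk is $|t^n|$ with $n\in\mathbb{Z}$---exactly the computation in the proof of Lemma~\ref{affine-equ-mobius}. Then $L_t(z)=t^n z+a(t)$ is already a moving frame in $t$, and no truncation or base change is required. In short, the argument of Lemma~\ref{affine-equ-mobius} transfers verbatim once you observe that $\mathbf{f}\circ\mathbf{M}$, like $\mathbf{M}$, lives over $\mathbb{C}((t))$; treating $\xi$ as a generic type II point over $\mathbb{L}$ throws this away.
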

In fact, the moving frame $\{L_t\}$ in Lemma \ref{moving-frame-action} is unique up to equivalence. Thus it gives us an action 
$$\{f_t\}([\{M_t\}])=[\{L_t\}].$$
The dynamical dependence of moving frames is defined by considering the orbits of the equivalence classes of moving frames under the action of $\{f_t\}$.
\begin{definition}\label{dynamically-independent}
Let $\{f_t\}\subset\mathrm{Rat}_d$ be a holomorphic family of degree $d\ge 1$ rational maps. Let $\{M_t\}$ and $\{L_t\}$ be moving frames. We say $\{M_t\}$ and $\{L_t\}$ are \textit{dynamically dependent} if there is $\ell\ge 0$ such that 
$$\{f^\ell_t\}([\{M_t\}])=[\{L_t\}].$$
Otherwise, we say $\{M_t\}$ and $\{L_t\}$ are  dynamically independent
\end{definition}\par
The dynamical dependence also gives us an equivalent relation. From the definitions, we know if two moving frames are equivalent, then they are dynamically dependent. But the converse is not true in general.\par 
\begin{remark}\label{rescaling-limit-relation}
Let $\{M_t\}$ and $\{L_t\}$ be two rescalings for a holomorphic family $\{f_t\}$ of degree $d\ge 2$ rational maps. 
\begin{enumerate}
\item If $\{M_t\}$ and $\{L_t\}$ are equivalent, then the corresponding rescaling limits are conjugate.
\item If $\{M_t\}$ and $\{L_t\}$ are dynamically dependent, then there exist rational maps $g_1$ and $g_2$ such that the corresponding rescaling limits are $g_1\circ g_2$ and $g_2\circ g_1$, respectively \cite[Lemma 3.10]{Kiwi15}.
\end{enumerate} 
\end{remark}
To count the number of dynamically independent rescalings for a holomorphic family $\{f_t\}$, Kiwi \cite{Kiwi15} related to the dynamically independent rescalings to the type II periodic points of the associated map $\mathbf{f}$ on $\mathbb{P}^1_{\mathrm{Ber}}$.
\begin{proposition}\cite[Proposition 3.4]{Kiwi15}\label{rescaling-repelling-cycles}
Let $\{f_t\}$ be a holomorphic family of degree $d\ge 2$ rational maps and $\{M_t\}$ be a moving frame. Let $\mathbf{f}$ and $\mathbf{M}$ be the associated rational maps for $\{f_t\}$ and $\{M_t\}$, respectively. Then for all $\ell\ge 1$, the following are equivalent.
\begin{enumerate}
\item There exists a rational map $g:\mathbb{P}^1\to\mathbb{P}^1$ of degree $e\ge 1$ such that, as $t\to 0$, 
$$M_t^{-1}\circ f_t^\ell\circ M_t\xrightarrow{\bullet} g(z)\ \ \text{on}\ \ \mathbb{P}^1\setminus S,$$
 where $S\subset\mathbb{P}^1$ is a finite set.
\item $\mathbf{f}^\ell(\xi)=\xi$, where $\xi=\mathbf{M}(\xi_g)$, and $\deg_{\xi}\mathbf{f}^\ell=e$.
\end{enumerate}
In the case in which $(1)$ and $(2)$ hold, $T_\xi\mathbf{f}^\ell:T_{\xi}\mathbb{P}^1_{\mathrm{Ber}}\to T_{\xi}\mathbb{P}^1_{\mathrm{Ber}}$ is conjugate via a $\mathbb{P}^1$-isomorphism to $g:\mathbb{P}^1\to\mathbb{P}^1$.
\end{proposition}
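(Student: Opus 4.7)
The plan is to reduce both directions of the equivalence to a single statement about the reduction of the associated map of the conjugate family, and then extract (1), (2), and the additional conjugacy claim from that reduction using the dictionary already assembled in Section~\ref{Berkovich}. Set
\[
\boldsymbol{\psi}:=\mathbf{M}^{-1}\circ\mathbf{f}^{\ell}\circ\mathbf{M}\in\mathbb{L}(z),
\]
which is the associated map for the holomorphic family $\psi_{t}:=M_{t}^{-1}\circ f_{t}^{\ell}\circ M_{t}$.

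First I would show that (1) is equivalent to $\deg\mathrm{Red}(\boldsymbol{\psi})=e\ge1$, and that in this case $\mathrm{Red}(\boldsymbol{\psi})=g$. If (1) holds, Lemma~\ref{locally-uniform-imply-subalg} furnishes a homogeneous polynomial $H$ of degree $d^{\ell}-e$ with $\psi_{t}\xrightarrow{sa}H\cdot g$ and $\{H=0\}\subset S$; comparing coefficients of the subalgebraic limit with the normalization defining $\mathrm{Red}$ gives $\mathrm{Red}(\boldsymbol{\psi})=g$, hence of degree $e$. Conversely, if $\deg\mathrm{Red}(\boldsymbol{\psi})=e\ge1$, Lemma~\ref{subalg-imply-locally-uniform} returns $\psi_{t}\xrightarrow{\bullet}\mathrm{Red}(\boldsymbol{\psi})$ off a finite set, which is (1).

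Next I would convert the reduction condition into (2). Proposition~\ref{fix-gauss} states that $\deg\mathrm{Red}(\boldsymbol{\psi})\ge1$ if and only if $\boldsymbol{\psi}(\xi_{g})=\xi_{g}$, and the latter is equivalent to $\mathbf{f}^{\ell}(\xi)=\xi$ with $\xi=\mathbf{M}(\xi_{g})$ by invertibility of $\mathbf{M}$. Proposition~\ref{algebraic-reduction} then identifies $\deg_{\xi}\mathbf{f}^{\ell}$ with $\deg\mathrm{Red}(\boldsymbol{\psi})=e$, finishing (1)$\Leftrightarrow$(2).

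For the final conjugacy statement, the canonical identification $T_{\xi_{g}}\mathbb{P}^{1}_{\mathrm{Ber}}\cong\mathbb{P}^{1}$ sending $\vec v_{a}$ to $a$ converts the tangent map $\boldsymbol{\psi}_{\ast}$ into the rational map $\mathrm{Red}(\boldsymbol{\psi})=g$ on $\mathbb{P}^{1}$, by the second half of Proposition~\ref{algebraic-reduction}; conjugation by the $\mathbb{P}^{1}$-isomorphism $(\mathbf{M}^{-1})_{\ast}:T_{\xi}\mathbb{P}^{1}_{\mathrm{Ber}}\to T_{\xi_{g}}\mathbb{P}^{1}_{\mathrm{Ber}}$ then transports $\boldsymbol{\psi}_{\ast}$ to $T_{\xi}\mathbf{f}^{\ell}$, which is the desired conjugacy between $T_{\xi}\mathbf{f}^{\ell}$ and $g$. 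I expect the main technical point to be the bookkeeping in the first step, namely matching the normalization built into the definition of $\mathrm{Red}$ with the homogeneous representation coming from locally uniform convergence on $\mathbb{P}^{1}\setminus S$; once that is pinned down, the remainder is a clean translation through the Berkovich dictionary.
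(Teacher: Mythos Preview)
The paper does not supply its own proof of this proposition; it is quoted from \cite[Proposition 3.4]{Kiwi15} and used as a black box. So there is no ``paper's proof'' to compare against here.

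That said, your argument is correct and is precisely the intended derivation from the Berkovich dictionary assembled in Section~\ref{Berkovich}. The reduction to $\boldsymbol{\psi}=\mathbf{M}^{-1}\circ\mathbf{f}^{\ell}\circ\mathbf{M}$, the equivalence of (1) with $\deg\mathrm{Red}(\boldsymbol{\psi})=e\ge1$ via Lemmas~\ref{subalg-imply-locally-uniform} and~\ref{locally-uniform-imply-subalg}, the passage to (2) through Proposition~\ref{fix-gauss} and Proposition~\ref{algebraic-reduction} (taking $M_{1}=M_{2}=\mathbf{M}$ since $\xi$ is fixed), and the tangent-map conjugacy via the second clause of Proposition~\ref{algebraic-reduction} are all sound. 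The only cosmetic remark is that the identification $\mathrm{Red}(\boldsymbol{\psi})=g$ in your first step is really the observation that for a holomorphic family the reduction of the associated map coincides with $\widehat{f_{0}}$ where $f_{0}$ is the subalgebraic limit; this is stated in the paper just before the lemma on reductions and is exactly the bookkeeping you flagged.
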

By counting the repelling cycles of type II points, Kiwi \cite{Kiwi15} gave the number of dynamically independent rescalings that lead to nonpostcritical finite rescaling limits. And by combining with the properties of quadratic rational maps \cite{Kiwi14}, he gave a complete description of rescaling limits for a quadratic holomorphic family.
\begin{proposition}\cite[Theorem 1]{Kiwi15}\label{RL Number}
Let $d\ge 2$, and consider a holomorphic family $\{f_t\}\subset\mathrm{Rat}_d$. There are at most $2d-2$ pairwise dynamically independent rescalings whose rescaling limits are not postcritically finite.\par
Moreover, if $d=2$, then there are at most two dynamically independent rescalings of period at least $2$. Furthermore, in the case that a rescaling of period at least
$2$ exists, exactly one of the following holds:
\begin{enumerate}
\item $\{f_t\}$ has exactly two dynamically independent rescalings of periods $q'>q>1$. The period $q$ rescaling limit is a quadratic rational map with a multiple fixed point and a prefixed critical point. The period $q'$ rescaling limit is a quadratic polynomial, modulo conjugacy.
\item $\{f_t\}$ has a rescaling whose corresponding limit is a quadratic rational map with a multiple fixed point and every other rescaling is dynamically dependent on it.
\end{enumerate}
\end{proposition}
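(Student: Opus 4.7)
The plan is as follows. First, Proposition~\ref{rescaling-repelling-cycles} translates the existence and counting of rescalings into a question about periodic type~II points of the associated map $\mathbf{f}:\mathbb{P}^1_{\mathrm{Ber}}\to\mathbb{P}^1_{\mathrm{Ber}}$. More precisely, moving frames up to equivalence correspond to type~II points via $\{M_t\}\mapsto\mathbf{M}(\xi_g)$; a rescaling of period $q$ with rescaling limit of degree $\ge 2$ corresponds to a periodic type~II point $\xi$ of period $q$ with $\deg_\xi\mathbf{f}^q\ge 2$; and two rescalings are dynamically dependent exactly when the corresponding periodic points lie in the same grand orbit under $\mathbf{f}$. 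The rescaling limit itself is conjugate, via the canonical bijection $T_\xi\mathbb{P}^1_{\mathrm{Ber}}\cong\mathbb{P}^1$, to the tangent map $T_\xi\mathbf{f}^q:T_\xi\mathbb{P}^1_{\mathrm{Ber}}\to T_\xi\mathbb{P}^1_{\mathrm{Ber}}$.

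Next, I would show that if the rescaling limit $T_\xi\mathbf{f}^q$ is not postcritically finite, then the grand orbit of $\xi$ under $\mathbf{f}$ must absorb at least one critical point of $\mathbf{f}$ in $\mathbb{P}^1_{\mathbb{L}}$. Concretely, the non-PCF assumption forces the existence of a critical direction $\vec{v}\in T_\xi\mathbb{P}^1_{\mathrm{Ber}}$ for $T_\xi\mathbf{f}^q$ whose forward tangent orbit is infinite; equivalently $m_{\mathbf{f}^q}(\xi,\vec{v})\ge 2$ and $\vec{v}$ is not eventually periodic under the induced tangent dynamics. Using Lemma~\ref{multi-dir-multi} together with Proposition~\ref{fundamental-Berk}, each such direction accounts either for a genuine critical point of $\mathbf{f}$ lying in $\mathbf{B}_\xi(\vec{v})^-$ or for surplus multiplicity coming from a Berkovich disk that is mapped onto all of $\mathbb{P}^1_{\mathrm{Ber}}$; in the latter case, Riemann--Hurwitz on $\mathbb{P}^1_{\mathrm{Ber}}$ still charges the contribution against the $2d-2$ available critical points of $\mathbf{f}$. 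Distinct grand orbits of such cycles contribute disjointly, and summing over dynamically independent non-PCF cycles yields the bound of at most $2d-2$, proving~(1).

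For~(2), when $d=2$ there are only two critical points of $\mathbf{f}$, so the same counting already gives at most two dynamically independent rescalings of period $\ge 2$. To refine this to the stated dichotomy, I would combine the above with the classification of the Berkovich dynamics of quadratic rational maps. Type~II periodic points of period $q\ge 2$ with $\deg_\xi\mathbf{f}^q=2$ force the tangent map to be a degree two rational map; if a critical direction is itself periodic under the tangent dynamics, the corresponding rescaling limit has a multiple fixed point and all other moving frames are eventually driven into this cycle, yielding case~(b); otherwise the two critical points of $\mathbf{f}$ split into two distinct grand orbits, producing two rescalings, and the ``deeper'' one (of period $q'>q$) must have all its critical directions trapped inside a totally invariant Berkovich disk whose complement contains the other cycle, so that the rescaling limit is conjugate to a quadratic polynomial.

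The main obstacle is the second step: turning the purely combinatorial non-PCF condition on the tangent map into a genuine and \emph{disjoint} consumption of ambient critical points of $\mathbf{f}$. Careful bookkeeping with the surplus multiplicity in Proposition~\ref{fundamental-Berk} and the local index formula in Lemma~\ref{multi-dir-multi} is required to guarantee that two distinct grand orbits of periodic type~II cycles charge disjoint critical points of $\mathbf{f}$, rather than share them; this is where the hypothesis of dynamical independence, combined with the non-postcritically-finite assumption (which prevents a critical direction from being ``reused'' along a preperiodic tail), plays the essential role.
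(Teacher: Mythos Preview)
The paper does not prove this statement at all: it is quoted verbatim as \cite[Theorem 1]{Kiwi15} and used as a black box. There is therefore no ``paper's own proof'' to compare your proposal against.

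That said, your outline is broadly in the spirit of Kiwi's argument, but it is only a sketch and you correctly identify its weakest point. The crucial step---showing that each dynamically independent non-PCF repelling type~II cycle consumes a \emph{distinct} critical point of $\mathbf{f}$ in $\mathbb{P}^1_{\mathbb{L}}$---is not established by the tools you cite. Lemma~\ref{multi-dir-multi} and Proposition~\ref{fundamental-Berk} give local information about directional and surplus multiplicities, but they do not by themselves yield the disjointness you need; Kiwi's proof requires a more careful argument tracking how critical points are attracted to such cycles, and your appeal to ``Riemann--Hurwitz on $\mathbb{P}^1_{\mathrm{Ber}}$'' is too vague to fill the gap. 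Likewise, for the $d=2$ dichotomy your sketch gestures at the right phenomena but the actual classification relies on specific structural results about quadratic maps over $\mathbb{L}$ from \cite{Kiwi14} that you have not invoked.
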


\subsection{Period $1$ Rescalings for Newton Maps}
In this subsection, we study the period $1$ rescalings for a holimorphic family of Newton maps and we prove Theorem \ref{theorem-rescaling-limits-Newton}$(1)$. Then using the rescaling limits induced by the these rescalings, we construct a compactification of the moduli space $\mathrm{nm}_d$.\par
Let $\{N_{r(t)}\}$ be a holomorphic family of degree $d\ge 2$ Newton maps with $r(t)=\{r_1(t),\cdots,r_d(t)\}$. Regard $r_i(t)$ as a point in $\mathbb{L}$ and denote by $\mathbf{r_i}$.  Let $\mathbf{N}$ be the associated map for $\{N_{r(t)}\}$. Recall the subset $V$ is the set of internal vertices of the convex hull $H_{\mathrm{fix}}=\mathrm{Hull}(\{\mathbf{r_1},\cdots, \mathbf{r_d},\infty\})$. Then any point $\xi\in V$ has valance at least $3$ in $H_{\mathrm{fix}}$. Moreover, by Proposition \ref{fixed-pt}, we know $V$ is the set of repelling fixed points of $\mathbf{N}$ in $\mathbb{H}_{\mathrm{Ber}}$. 
\begin{theorem}\label{period-1}
Let $\{N_{r(t)}(z)\}$ be a holomorphic family of degree $d\ge 2$ Newton maps. Then up to equivalence, $\{N_{r(t)}(z)\}$ has at most $d-1$ rescalings of period $1$. Moreover, let $\{M_t\}$ be a period $1$ rescaling for $\{N_{r(t)}(z)\}$, then, as $t\to 0$, the subalgebraic limit of $M_t^{-1}\circ N_{r(t)}(z)\circ M_t$ is conjugate to a (degenerate) map in $\overline{\mathrm{NM}}_d$. 
\end{theorem}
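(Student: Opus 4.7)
The plan is to use Proposition \ref{rescaling-repelling-cycles} (with $\ell=1$) to translate period $1$ rescalings into type II fixed points of $\mathbf{N}$ of local degree at least $2$, identify this set with the internal vertex set $V$ of $H_{\mathrm{fix}}$ via the Berkovich dictionary from Section \ref{Berkovich}, bound $|V|$ by a combinatorial tree count, and then for the conjugacy statement reduce to an affine rescaling using Lemma \ref{affine-equ-mobius}.

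First I would note that by Proposition \ref{rescaling-repelling-cycles}, equivalence classes of period $1$ rescalings for $\{N_{r(t)}\}$ correspond bijectively to type II points $\xi=\mathbf{M}(\xi_g)\in\mathbb{P}^1_{\mathrm{Ber}}$ that are fixed by $\mathbf{N}$ with $\deg_\xi\mathbf{N}\ge 2$. By Lemma \ref{fixed-pt} (combined with the fact that a type II fixed point is repelling exactly when its local degree is at least $2$), this set is precisely $V$, the internal vertex set of the fixed-point hull $H_{\mathrm{fix}}=\mathrm{Hull}(\{\mathbf{r_1},\dots,\mathbf{r_d},\infty\})$.

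Next I would bound $|V|$ by a standard tree count. Viewing $H_{\mathrm{fix}}$ as a finite combinatorial tree with $d+1$ leaves (the fixed points $\mathbf{r_i}$ and $\infty$) and $|V|$ internal vertices, each of valence at least $3$, and noting that a tree with $d+1+|V|$ vertices has $d+|V|$ edges, double-counting the sum of valences yields
\[
2(d+|V|)\;\ge\;(d+1)+3|V|,
\]
so $|V|\le d-1$. This gives the first assertion.

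For the ``Moreover'' clause, by Lemma \ref{affine-equ-mobius} I may replace $\{M_t\}$ by an equivalent affine moving frame $\{A_t\}$, say $A_t(z)=a(t)z+b(t)$. A direct computation shows that conjugating a Newton map by an affine map produces a Newton map: if $Q_t(z):=P_{r(t)}(A_t(z))$ then
\[
A_t^{-1}\circ N_{P_{r(t)}}\circ A_t\;=\;N_{Q_t},
\]
whose root set is $\{A_t^{-1}(r_i(t))\}_{i=1}^d$. Hence $\{A_t^{-1}\circ N_{r(t)}\circ A_t\}$ is itself a holomorphic family of degree $d$ Newton maps. The Puiseux-series discussion preceding Lemma \ref{Newton-limit} shows that, after a base change if necessary, each $A_t^{-1}(r_i(t))$ extends holomorphically to $t=0$ with a limit $\tilde r_i\in\widehat{\mathbb{C}}$, and Lemma \ref{Newton-limit} then identifies the subalgebraic limit as $N_{\{\tilde r_1,\dots,\tilde r_d\}}\in\overline{\mathrm{NM}}_d$. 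Since $\{M_t\}$ and $\{A_t\}$ are equivalent, $M_t^{-1}\circ A_t$ converges to some $M_0\in\mathrm{PGL}_2(\mathbb{C})$, and the original subalgebraic limit of $M_t^{-1}\circ N_{r(t)}\circ M_t$ is conjugate to $N_{\{\tilde r_1,\dots,\tilde r_d\}}$ by $M_0$, as required.

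The main subtlety I expect is in the last sentence: one must invoke the continuity of composition (Lemma \ref{composition}) at the triple $(M_0^{-1},N_{\{\tilde r_i\}},M_0)$ to pass from the affine representative back to the general moving frame. This requires verifying that the relevant indeterminacy loci are avoided, which is immediate because $M_0\in\mathrm{PGL}_2(\mathbb{C})$ is a genuine M\"obius transformation (so $M_0^{\pm 1}\notin I(1)$), and because the Newton limit $N_{\{\tilde r_1,\dots,\tilde r_d\}}$ is a well-defined element of $\overline{\mathrm{NM}}_d\subset\mathbb{P}^{2d+1}$. Once this is checked, the conjugacy follows and both assertions of the theorem are complete.
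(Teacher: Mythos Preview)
Your proof is correct and follows essentially the same strategy as the paper: translate period $1$ rescalings to type II repelling fixed points via Proposition \ref{rescaling-repelling-cycles}, identify these with $V$ via Lemma \ref{fixed-pt}, bound $|V|\le d-1$, and then pass to an affine representative for the conjugacy statement. The only notable difference is that you supply an explicit tree-counting argument for $|V|\le d-1$ (and a more careful treatment of the conjugacy via Lemmas \ref{Newton-limit} and \ref{composition}), whereas the paper simply asserts the bound on $|V|$ and the closure statement; these are welcome additions in rigor but do not change the route.
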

\begin{proof}
Let $V$ be as above and let $\mathbf{N}:\mathbb{P}^1_{\mathrm{Ber}}\to\mathbb{P}^1_{\mathrm{Ber}}$ be the associated map for $\{N_{r(t)}\}$. Note two period $1$ rescalings are equivalent if and only if they are dynamically dependent. By Proposition \ref{RL Number}, we know there are finitely many dynamically independent rescalings for $\{N_{r(t)}(z)\}$. Hence there are finitely many inequivalent rescalings. Suppose $\{M_t^{(1)}(z)\},\cdots,\{M_t^{(k)}(z)\}$ are the pairwise inequivalent period $1$ rescalings for $\{N_{r(t)}(z)\}$. Let $\mathbf{M}^{(1)},\cdots,\mathbf{M}^{(k)}$ be the associated map for $\{M_t^{(1)}\},\cdots,\{M_t^{(k)}\}$, respectively. Then by Proposition \ref{rescaling-repelling-cycles}, for each $i$, we have $\mathbf{M}^{(i)}(\xi_g)$ is a repelling fixed point of $\mathbf{N}$. Thus, by Lemma \ref{fixed-pt}, it follows that $\mathbf{M}^{(i)}(\xi_g)\in V$. Note $V$ has at most $d-1$ elements. Thus $k\le d-1$.\par
By Lemma \ref{affine-equ-mobius}, we can choose an affine representative in each equivalence class $[\{M_t^{(i)}\}]$. By Remark \ref{rescaling-limit-relation}, we know for different representatives of $[\{M_t^{(i)}\}]$, the corresponding rescaling limits are conjugate. Thus, we may assume $M_t^{(i)}$ is affine. Note $(M_t^{(i)})^{-1}\circ N_{r(t)}\circ M_t^{(i)}$ is still a degree $d$ Newton maps. Letting $t\to 0$, we get that the limit of $(M_t^{(i)})^{-1}\circ N_{r(t)}\circ M_t^{(i)}$ is a point in $\overline{\mathrm{NM}}_d$.
\end{proof}
\begin{remark}
For any rational map $\phi\in\mathbb{L}(z)$ of degree $d\ge 2$, Rumely \cite[Corollary 6.2]{Rumely17} proved $\phi$ has at most $d-1$ repelling fixed points in the hyperbolic space $\mathbb{H}_{\mathrm{Ber}}$. Hence for any holomorphic family $\{f_t\}$ of degree $d\ge 2$ rational maps, by Proposition \ref{rescaling-repelling-cycles}, we know $\{f_t\}$ has at most $d-1$ rescalings of period $1$, up to equivalence.
\end{remark}
Now we give some examples to illustrate the period $1$ rescalings and corresponding rescaling limits for holomorphic families of degree $d\ge 2$ Newton maps.
\begin{example}
Cubic Newton maps.\par
Consider the cubic Newton map $N_{r(t)}(z)$, where $r(t)=\{0,1,r_3(t)\}$ for some holomorphic function $r_3(t)$. If $N_{r(t)}(z)$ is nondegenerate, then up to equivalence, $\{M_{t}(z)=z\}$ is the unique period $1$ rescaling for $\{N_{r(t)}(z)\}$. If $N_{r(t)}(z)$ is degenerate, without loss of generality, we suppose $r_3(t)\to 0$ as $t\to 0$. Let $\mathbf{r}_3=r_3(t)\in\mathbb{L}$. Then the set $V$ contains only two points $\xi_{0,|\mathbf{r}_3|}$ and the Gauss point $\xi_g$. Thus $N_{r(t)}(z)$ has two period $1$ rescalings.\par 
Let $L_{t}(z)=r_3(t)z$. Then the associated map $\mathbf{L}$ maps $\xi_g$ to the point $\xi_{0,|\mathbf{r}_3|}$. Thus $\{L_t\}$ is a period $1$ rescaling. Moreover, we have 
$$L_t^{-1}\circ N_{r(t)}\circ L_t\xrightarrow{sa} N_{\{0,1,\infty\}}=H_{N_{\{0,1,\infty\}}}\widehat N_{\{0,1,\infty\}}.$$
Thus $L_t^{-1}\circ N_{r(t)}\circ L_t$ converges to $\widehat N_{\{0,1,\infty\}}=N_{\{0,1\}}$ locally uniformly on $\mathbb{P}^1\setminus\{\infty\}$.\par 
Note the associated map of the moving frame $\{M_{t}(z)=z\}$ fixes the Gauss point. Thus $\{M_t\}$ is also a period $1$ rescaling. And we have 
$$M_t^{-1}\circ N_{r(t)}\circ M_t\xrightarrow{sa} N_{\{0,0,1\}}=H_{N_{\{0,0,1\}}}\widehat N_{\{0,0,1\}}.$$
Thus $M_t^{-1}\circ N_{r(t)}\circ M_t$ converges to $\widehat N_{\{0,0,1\}}$ locally uniformly on $\mathbb{P}^1\setminus\{0\}$.\par  
In this example, the rescaling limits $N_{\{0,1\}}$ and $\widehat N_{\{0,0,1\}}$ are points in $\overline{\mathrm{NM}}_3$.
\end{example}
\begin{example}
Quartic Newton maps.\par
Let $r(t)=\{0,1,t,2t\}$. Consider the Newton map $N_{r(t)}$. Then the set $V$ contains two points $\xi_{0,|t|}$ and $\xi_g$. Thus $N_{r(t)}(z)$ has two period $1$ rescalings.\par 
Let $L_{t}(z)=tz$. Then the associated map $\mathbf{L}$ maps $\xi_g$ to the point $\xi_{0,|t|}$. Thus $\{L_t\}$ is a period $1$ rescaling. And we have 
$$L_t^{-1}\circ N_{r(t)}\circ L_t\xrightarrow{sa} N_{\{0,1,2,\infty\}}=H_{N_{\{0,1,2,\infty\}}}\widehat N_{\{0,1,2,\infty\}}.$$
Thus $L_t^{-1}\circ N_{r(t)}\circ L_t$ converges to $N_{\{0,1,2\}}$ locally uniformly on $\mathbb{P}^1\setminus\{\infty\}$.\par 
Note the associated map of the moving frame $\{M_{t}(z)=z\}$ fixes the Gauss point. Thus $\{M_t\}$ is also a period $1$ rescaling. And we have 
we have 
$$M_t^{-1}\circ N_{r(t)}\circ M_t\xrightarrow{sa}N_{\{0,0,0,1\}}=H_{N_{\{0,0,0,1\}}}\widehat N_{\{0,0,0,1\}}.$$
Thus $M_t^{-1}\circ N_{r(t)}\circ M_t$ converges to $\widehat N_{\{0,0,0,1\}}$ locally uniformly on $\mathbb{P}^1\setminus\{0\}$.\par 
In this example, the limits $N_{\{0,1,2,\infty\}}$ and $N_{\{0,0,0,1\}}$ are points in $\overline{\mathrm{NM}}_4$.\par
Now let $r(t)=\{0,1,t,t^2\}$ and consider the Newton map $N_{r(t)}$. Then the set $V$ contains three points $\xi_{0,|t^2|},\xi_{0,|t|}$ and $\xi_g$. Thus $N_{r(t)}(z)$ has three period $1$ rescalings.\par 
Let $L_{t}(z)=t^2z$. Then the associated map $\mathbf{L}$ maps $\xi_g$ to the point $\xi_{0,|t^2|}$. Thus $\{L_t\}$ is a period $1$ rescaling. And we have 
$$L_t^{-1}\circ N_{r(t)}\circ L_t\xrightarrow{sa}N_{\{0,1,\infty,\infty\}}=H_{N_{\{0,1,\infty,\infty\}}}\widehat N_{\{0,1,\infty,\infty\}}.$$
Thus $L_t^{-1}\circ N_{r(t)}\circ L_t$ converges to $N_{\{0,1\}}$ locally uniformly on $\mathbb{P}^1\setminus\{\infty\}$.\par  
Let $K_{t}(z)=tz$. Then the associated map $\mathbf{K}$ maps $\xi_g$ to the point $\xi_{0,|t|}$. Thus $\{K_t\}$ is a period $1$ rescaling. And we have 
$$K_t^{-1}\circ N_{r(t)}\circ K_t\xrightarrow{sa}N_{\{0,0,1,\infty\}}=H_{N_{\{0,0,1,\infty\}}}\widehat N_{\{0,0,1,\infty\}}.$$
Thus $K_t^{-1}\circ N_{r(t)}\circ K_t$ converges to $\widehat N_{\{0,0,1,\infty\}}$ locally uniformly on $\mathbb{P}^1\setminus\{0,\infty\}$.\par 
Note the associated map of the moving frame $\{M_{t}(z)=z\}$ fixes the Gauss point. Thus $\{M_t\}$ is also a period $1$ rescaling. And we have  
$$M_t^{-1}\circ N_{r(t)}\circ M_t\xrightarrow{sa}N_{\{0,0,0,1\}}=H_{N_{\{0,0,0,1\}}}\widehat N_{\{0,0,0,1\}}.$$
Thus $M_t^{-1}\circ N_{r(t)}\circ M_t$ converges to $\widehat N_{\{0,0,0,1\}}$ locally uniformly on $\mathbb{P}^1\setminus\{0\}$.\par 
In this example, the limits $N_{\{0,1,\infty,\infty\}}$, $N_{\{0,0,1,\infty\}}$ and $N_{\{0,0,0,1\}}$ are points in $\overline{\mathrm{NM}}_4$.
\end{example}
Since $\infty$ is the unique repelling fixed points of Newton maps, the moduli space of degree $d\ge 2$ Newton maps is naturally defined by 
$$\mathrm{nm}_d:=\mathrm{NM}_d/\mathrm{Aut}(\mathbb{C}), $$
modulo the action by conjugation of affine maps. In the remaining of this subsection, we will use the rescaling limits induced by rescalings of period $1$ to give a new compactification for the space $\mathrm{nm}_d$, which is quite different from the compactification $\overline{\mathrm{nm}}_d$ induced by geometric invariant theory in \cite{Nie-2}.\par 
We first define a relation $\sim$ on $\mathbb{P}^{2d+1}\times\mathbb{P}^{2d+1}$. For $f=H_f\hat f, g=H_g\hat g\in\mathbb{P}^{2d+1}$, we say $f\sim g$ if there exists $M\in\mathrm{PGL}_2(\mathbb{C})$ such that $M^{-1}\circ\hat f\circ M=\hat g$. Denote by $[f]_\sim$ for the equivalent class of $f$. Let $\mathrm{NM}_d^{\{0,1\}}$ be the subset of $\mathrm{NM}_d$ consisting of degree $d\ge 2$ Newton maps $N_r$, where $r=\{0,1,r_3,\cdots,r_d\}\subset\mathbb{C}$ is a set of $d$ distinct points. Let $\overline{\mathrm{NM}}_d^{\{0,1\}}$ be the compactification of $\mathrm{NM}_d^{\{0,1\}}$ in $\mathbb{P}^{2d+1}$. Then for any $f=H_f\hat f\in\overline{\mathrm{NM}}_d^{\{0,1\}}$, we have $\deg\hat f\ge 2$. Moreover, for any point $g=H_g\hat g\in\overline{\mathrm{NM}}_d$ with $\deg\hat g\ge 2$, there exists $f\in\overline{\mathrm{NM}}_d^{\{0,1\}}$ such that $f\sim g$. Thus, we have
\begin{lemma}
For $d\ge 3$,
$$\overline{\mathrm{NM}}_d^{\{0,1\}}/\sim=\overline{\mathrm{NM}}_d\cap\{H_f\hat f:\deg\hat f\ge 2\}/\sim.$$
\end{lemma}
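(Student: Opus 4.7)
The plan is to prove both inclusions by carefully relating the reduction $\hat N_{r^*}$ of a (possibly degenerate) Newton map $N_{r^*}$ to the combinatorics of the multiset $r^* \subset \widehat{\mathbb{C}}$. Throughout I will exploit the fact that every point in $\overline{\mathrm{NM}}_d$ is of the form $N_{r^*}$ for some $r^* = \{r_1^*,\dots,r_d^*\}$, which follows from Lemma \ref{Newton-limit} together with an Arzel\`a–Ascoli / subsequence argument applied to any approximating sequence $N_{r(n)} \to g$.

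The key numerical input is the following degree computation: if $r^*$ has $m$ points at $\infty$ and $\ell$ distinct finite values $s_1,\dots,s_\ell$ of multiplicities $m_1,\dots,m_\ell$ (so $m + \sum m_i = d$), then $\deg \hat N_{r^*} = \ell$ whenever $\ell \ge 2$, and $\deg \hat N_{r^*} \le 1$ otherwise. Starting from formula (\ref{Newton degenerate}), one writes $\widetilde P/\widetilde P' = \overline P/Q$ where $\overline P(z) = \prod_i(z-s_i)$ and $Q(z) = \sum_i m_i \prod_{j\ne i}(z-s_j)$; since $Q(s_k) = m_k \prod_{j\ne k}(s_k - s_j) \ne 0$, the polynomials $\overline P$ and $Q$ are coprime, so $\hat N_{\widetilde P} = (zQ - \overline P)/Q$ has degree exactly $\ell$ when the top coefficient $d-m-1$ of $zQ - \overline P$ is nonzero, i.e. when $\ell \ge 2$. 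Finally, $Y$ cannot appear in the common factor of the projective numerator and denominator of $Y^m N_{\widetilde P}$, because $\infty$ is always a fixed point of the reduced map $\hat N_{r^*}$ (Remark \ref{nondegenrate-Newton}), so $\deg \hat N_{r^*} = \deg \hat N_{\widetilde P} = \ell$.

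With this degree count, the forward inclusion $(\subseteq)$ is immediate: any $f \in \overline{\mathrm{NM}}_d^{\{0,1\}}$ is of the form $N_{r^*}$ with $r^* \supseteq \{0,1\}$, hence $\ell \ge 2$ and $\deg \hat f \ge 2$. For the reverse inclusion $(\supseteq)$, take $g = N_{r^*} \in \overline{\mathrm{NM}}_d$ with $\deg \hat g \ge 2$; by the numerical fact, $r^*$ contains at least two distinct finite values $a, b$. Set $M(z) = (b-a)z + a$, so that $M^{-1}(a) = 0$, $M^{-1}(b) = 1$, and $M^{-1}(\infty) = \infty$. From the identity $M^{-1} \circ N_P \circ M = N_{P \circ M}$ in the non-degenerate case, together with continuity of conjugation by an affine moving frame (Lemma \ref{composition} applied to degree $1$ maps, which is continuous without exception), one obtains $M^{-1} \circ g \circ M = N_{M^{-1}(r^*)}$, and the configuration $M^{-1}(r^*)$ contains $\{0,1\}$. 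Hence $f := M^{-1} \circ g \circ M \in \overline{\mathrm{NM}}_d^{\{0,1\}}$.

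It remains to verify $f \sim g$, i.e.\ $\hat f = M^{-1} \circ \hat g \circ M$. Writing $g = H_g \hat g$ with $\gcd(\hat g_a, \hat g_b) = 1$, composition with the linear-invertible $M$ preserves coprimality of the defining homogeneous polynomials, so $\widehat{g \circ M} = \hat g \circ M$, and similarly post-composition with $M^{-1}$ does not introduce new cancellations; the common factor is exactly $H_g \circ M$, and the reduced part is $M^{-1} \circ \hat g \circ M$. Thus $\hat f = M^{-1} \circ \hat g \circ M$, giving $f \sim g$. The principal obstacle is the degree count in the second paragraph — tracking how the extra factor $Y^m$ interacts with the common factor of the reduced Newton formula and ruling out $Y$ from the $\gcd$; once this is in place, the rest is routine bookkeeping with affine conjugation.
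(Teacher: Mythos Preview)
Your proof is correct and follows essentially the same approach as the paper. The paper's ``proof'' is just the paragraph preceding the lemma, which asserts without justification that (i) every $f\in\overline{\mathrm{NM}}_d^{\{0,1\}}$ has $\deg\hat f\ge 2$, and (ii) every $g\in\overline{\mathrm{NM}}_d$ with $\deg\hat g\ge 2$ is $\sim$-equivalent to some $f\in\overline{\mathrm{NM}}_d^{\{0,1\}}$; you supply the details for both, in particular the degree count $\deg\hat N_{r^*}=\ell$ (the number of distinct finite values in $r^*$) and the affine-conjugation argument.
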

To relate $\overline{\mathrm{NM}}_d^{\{0,1\}}/\sim$ to the rescaling limits, 
we now need to characteristic the set of rescaling limits induced by period $1$ rescalings for a holomorphic family of Newton maps.
\begin{lemma}\label{rescaling-limit-1}
For any $H_f\hat f\in\overline{\mathrm{NM}}_d$ with $\deg\hat f\ge 2$, there exist a holomorphic family $\{N_t\}$ of degree $d\ge 2$ Newton maps and a period $1$ rescaling $\{M_t\}$ for $\{N_t\}$ such that the corresponding rescaling limit of $\{M_t\}$ for $\{N_t\}$ is $\hat f$.
\end{lemma}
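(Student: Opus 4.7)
The plan is essentially constructive: given $f = H_f\hat f \in \overline{\mathrm{NM}}_d$ with $\deg\hat f \ge 2$, I want to realize $f$ itself as the subalgebraic limit of a holomorphic family of nondegenerate Newton maps, and then show that the identity moving frame already serves as a period $1$ rescaling with rescaling limit $\hat f$.

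First, I would use the structure theorem for degenerate Newton maps to write $f$ in the form $f([X:Y]) = Y^m N_{\widetilde P}([X:Y])$ for some polynomial $\widetilde P$ of degree $d-m$, and then repackage this as $f = N_r$ for a multiset $r = \{r_1,\dots,r_d\}\subset\widehat{\mathbb{C}}$ (with $m$ copies of $\infty$ and copies accounting for the multiple roots of $\widetilde P$). Then I would construct a holomorphic perturbation $r(t) = \{r_1(t),\dots,r_d(t)\}$ such that $r_i(t)\to r_i$ as $t\to 0$ and such that the $r_i(t)$ are pairwise distinct in $\mathbb{C}$ for all sufficiently small $t\neq 0$ (for instance, separating each collision $r_i = r_j \in \mathbb{C}$ by adding distinct multiples of $t$, and replacing each copy of $\infty$ by a distinct Puiseux-style point $a_i/t$). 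By Lemma \ref{Newton-limit}, $N_{r(t)} \xrightarrow{sa} N_r = f$.

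Next, I would take the trivial moving frame $M_t \equiv \mathrm{id}$, whose associated map $\mathbf{M}$ on $\mathbb{P}^1_{\mathrm{Ber}}$ fixes the Gauss point $\xi_g$. Let $\mathbf{N}$ be the associated Berkovich map of $\{N_{r(t)}\}$. Since $N_{r(t)}\xrightarrow{sa}f$, the normalization of $\mathbf{N}$ has reduction equal to $\hat f$, so $\deg\mathrm{Red}(\mathbf{N}) = \deg\hat f \ge 2$. By Proposition \ref{fix-gauss} this already forces $\mathbf{N}(\xi_g) = \xi_g$, and by Proposition \ref{algebraic-reduction} we have $\deg_{\xi_g}\mathbf{N} = \deg\hat f \ge 2$. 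Proposition \ref{rescaling-repelling-cycles} then gives that $\{M_t\}$ is a rescaling of period $1$ for $\{N_{r(t)}\}$. Finally, the rescaling limit is obtained from Lemma \ref{subalg-imply-locally-uniform}: the subalgebraic convergence $M_t^{-1}\circ N_{r(t)}\circ M_t = N_{r(t)} \xrightarrow{sa} f = H_f\hat f$ implies $N_{r(t)} \xrightarrow{\bullet} \hat f$ on $\mathbb{P}^1 \setminus \{h : H_f(h) = 0\}$, so the rescaling limit is exactly $\hat f$.

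There is really no hard step here — the content of the lemma lies in matching the definitions. The only minor care needed is in the perturbation $r(t)$: one must verify that the $r_i(t)$ can be chosen to be holomorphic in $t$, to be pairwise distinct away from $t=0$, and to realize the prescribed collision pattern (including collisions at $\infty$) so that Lemma \ref{Newton-limit} genuinely outputs the target degenerate map $f$ and not merely something in its $\sim$-equivalence class. This is a routine separation-of-roots argument.
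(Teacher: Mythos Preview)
Your proposal is correct and follows essentially the same approach as the paper's proof: realize $f$ as the subalgebraic limit of a holomorphic family of Newton maps, then take the identity moving frame and read off the rescaling limit. The paper's argument is terser---it simply invokes $f\in\overline{\mathrm{NM}}_d$ to produce the holomorphic family and cites $\deg\hat f\ge 2$ directly from the definition of rescaling, whereas you explicitly construct the perturbed root set and route the verification through the Berkovich machinery (Propositions~\ref{fix-gauss}, \ref{algebraic-reduction}, \ref{rescaling-repelling-cycles})---but the underlying idea is identical.
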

\begin{proof}
Since $H_f\hat f\in\overline{\mathrm{NM}}_d$, there exists a holomorphic family $\{N_{r(t)}\}\subset\mathrm{NM}_d$ such that $N_t\xrightarrow{sa} H_f\hat f$, as $t\to 0$, in $\mathbb{P}^{2d+1}$. Set $M_t(z)=z$. Then $\{M_t\}$ is a period $1$ rescaling for $\{N_t\}$ with rescaling limit $\hat f$ since $\deg\hat f\ge 2$.
\end{proof}
Fix $d\ge 3$ and denote by $\mathrm{RL}(\mathrm{NM}_d,1)$ the set of rescaling limits induced by a period $1$ rescaling for some holomorphic family of degree $d$ Newton maps. Then by Theorem \ref{period-1} and Lemma \ref{rescaling-limit-1}, 
$$\mathrm{RL}(\mathrm{NM}_d,1)=\overline{\mathrm{NM}}_d\cap\{H_f\hat f:\deg\hat f\ge 2\}.$$
Consider the quotient space $\mathrm{RL}(\mathrm{NM}_d,1)/\sim$ associated with the quotient topology. Then 
$$\mathrm{RL}(\mathrm{NM}_d,1)/\sim=\overline{\mathrm{NM}}_d^{\{0,1\}}/\sim.$$
Since $\overline{\mathrm{NM}}_d^{\{0,1\}}$ is compact, then the quotient space $\overline{\mathrm{NM}}_d^{\{0,1\}}/\sim$ is also compact. Hence $\mathrm{RL}(\mathrm{NM}_d,1)/\sim$ is compact.\par 
Define 
$$\overline{\mathrm{nm}}_d^{\mathrm{RL}}:=\mathrm{RL}(\mathrm{NM}_d,1)/\sim$$
Then $\overline{\mathrm{nm}}_d^{\mathrm{RL}}$ is compact and contains $\mathrm{nm}_d$ as a dense subset. We call $\overline{\mathrm{nm}}_d^{\mathrm{RL}}$ is the compactification of the moduli space $\mathrm{nm}_d$ via rescaling limits.
\begin{proposition}
For $d\ge 3$, the space $\overline{\mathrm{nm}}_d^{\mathrm{RL}}$ is not Hausdorff.
\end{proposition}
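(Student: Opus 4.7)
The plan is to prove non-Hausdorffness by producing a single sequence in $\overline{\mathrm{nm}}_d^{\mathrm{RL}}$ with two distinct limits. The engine of the construction is a holomorphic family $\{N_t\}\subset\mathrm{NM}_d^{\{0,1\}}$ together with a moving frame $\{M_t\}\subset\mathrm{PGL}_2(\mathbb{C})$ that rescales the Newton roots. Both $N_t$ and $\widetilde{N}_t:=M_t^{-1}\circ N_t\circ M_t$ will lie in $\mathrm{NM}_d^{\{0,1\}}$ yet subalgebraically converge to inequivalent degenerate limits $f_1,f_2\in\overline{\mathrm{NM}}_d^{\{0,1\}}$. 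Since $M_t$ conjugates $N_t$ to $\widetilde{N}_t$ via a M\"obius transformation, the two families determine the same class in $\overline{\mathrm{nm}}_d^{\mathrm{RL}}$ at every $t$, so the single sequence $\{[N_t]\}=\{[\widetilde{N}_t]\}$ will converge to both $[f_1]$ and $[f_2]$, which violates Hausdorff separation.

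Concretely, I will fix distinct $r_4,\ldots,r_d\in\mathbb{C}\setminus\{0,1\}$ (vacuously if $d=3$), set $N_t:=N_{\{0,1,t,r_4,\ldots,r_d\}}$, and take $M_t(z)=tz$. The polynomial associated with $\widetilde{N}_t$ then has roots $\{0,1,1/t,r_4/t,\ldots,r_d/t\}$, which keeps $\widetilde{N}_t$ in $\mathrm{NM}_d^{\{0,1\}}$. Lemma~\ref{Newton-limit} identifies the subalgebraic limits as
$$f_1 = N_{\{0, 0, 1, r_4, \ldots, r_d\}}, \qquad f_2 = N_{\{0, 1, \underbrace{\infty, \ldots, \infty}_{d-2}\}}.$$

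The main obstacle is verifying $[f_1]\neq[f_2]$, equivalently that $\hat{f}_1$ and $\hat{f}_2$ are not $\mathrm{PGL}_2(\mathbb{C})$-conjugate. By (\ref{Newton degenerate}), $\hat{f}_2=N_{\{0,1\}}$ is the quadratic Newton map, so $\deg\hat{f}_2=2$. A short computation with $P(z)=z^2(z-1)\prod_{i=4}^d(z-r_i)$ and $Q(z):=(z-1)\prod_{i=4}^d(z-r_i)$ yields
$$\hat{f}_1(z) = \frac{z\bigl(Q(z) + z Q'(z)\bigr)}{2 Q(z) + z Q'(z)},$$
which has degree $d-1$ for generic choice of $r_i$. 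For $d\ge 4$ the degrees differ and no conjugacy is possible. For $d=3$ both maps have degree $2$, so I will separate them by counting superattracting fixed points, a $\mathrm{PGL}_2(\mathbb{C})$-conjugation invariant: $\hat{f}_2=N_{\{0,1\}}$ has two such fixed points (at $z=0$ and $z=1$), while $\hat{f}_1(z)=z(2z-1)/(3z-2)$ has only one (at $z=1$), since the direct calculation $\hat{f}_1'(0)=1/2\neq 0$ shows the fixed point at $z=0$ is merely attracting.
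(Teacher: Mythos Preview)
Your proof is correct and follows essentially the same strategy as the paper: build two conjugate holomorphic families in $\mathrm{NM}_d^{\{0,1\}}$ via $M_t(z)=tz$, identify their subalgebraic limits, and check the limits are $\sim$-inequivalent. The paper phrases the conclusion as ``$\sim$ is not closed on $\overline{\mathrm{NM}}_d^{\{0,1\}}\times\overline{\mathrm{NM}}_d^{\{0,1\}}$,'' while you phrase it as a single sequence in the quotient with two distinct limits; these are equivalent ways of witnessing non-Hausdorffness.

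There is one small but genuine difference in the choice of $r_i(t)$. The paper takes $r_i(t)=o(t)$ for $i\ge 4$, so both limits $N_{\{0,\ldots,0,1\}}$ and $N_{\{0,\ldots,0,1,\infty\}}$ have $\deg\hat f=2$, and the inequivalence (which the paper simply asserts) must be argued via multipliers. You instead take the $r_i$ constant, which forces $\deg\hat f_1=d-1$ while $\deg\hat f_2=2$; for $d\ge 4$ the inequivalence is then immediate from the degree mismatch, and only the case $d=3$ (where the two constructions coincide) requires the multiplier computation you carry out. Your variant therefore makes the verification more transparent, and your explicit check that $\hat f_1$ and $\hat f_2$ share no common factor (hence $\deg\hat f_1=d-1$ for \emph{every}, not merely generic, choice of distinct $r_i$) fills in a detail the paper leaves implicit.
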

\begin{proof}
To show $\overline{\mathrm{nm}}_d^{\mathrm{RL}}$ is not Hausdorff, it is sufficient to show the relation $\sim$ is not closed on $\overline{\mathrm{NM}}_d^{\{0,1\}}\times\overline{\mathrm{NM}}_d^{\{0,1\}}$. Set $r(t)=\{0,1,t,r_4(t),\cdots,r_d(t)\}$ and $r'(t)=\{0,1/t,1,r_4(t)/t,\cdots,r_d(t)/t\},$
where $r_i(t)\not=r_j(t)$ if $i\not=j$ and $|r_i(t)|=o(|t|)$, as $t\to 0$, for $i=4,\cdots,d$.
Let $M_t(z)=tz$. Then we have if $t\not=0$,
$$M_t^{-1}\circ N_{r(t)}\circ M_t=N_{r'(t)}.$$
Hence $[N_{r(t)}]_\sim=[N_{r'(t)}]_\sim$.\par 
However, as $t\to 0$, we have $N_{r(t)}\xrightarrow{sa} N_{\{0,1, 0,\cdots,0\}}$ and $N_{r'(t)}\xrightarrow{sa} N_{\{0,\infty,1, 0,\cdots,0\}}$. Note 
$$N_{\{0,1, 0,\cdots,0\}}\not\sim N_{\{0,\infty,1, 0,\cdots,0\}}.$$
Thus $\sim$ is not closed on $\overline{\mathrm{NM}}_d^{\{0,1\}}\times\overline{\mathrm{NM}}_d^{\{0,1\}}$.
\end{proof}
For cubic Newton maps, we have
$$\overline{\mathrm{nm}}_3^{\mathrm{RL}}=\mathrm{nm}_3\cup\{[N_{\{0,1,\infty\}}]_\sim, [N_{\{0,0,1\}}]_\sim\},$$
and there are no open sets can separate $[N_{\{0,1,\infty\}}]$ and $[N_{\{0,0,1\}}]$.\par
As another example, we state explicitly the elements  in the space $\overline{\mathrm{nm}}_4^{\mathrm{RL}}$. Note $\overline{\mathrm{NM}}_4^{\{0,1\}}=\{N_{\{0,1,r_3,r_4\}}:r_3,r_4\in\widehat{\mathbb{C}}\}$. Thus 
\begin{align*}
&\overline{\mathrm{nm}}_4^{\mathrm{RL}}\setminus\mathrm{nm}_4\\
&=\{[N_{\{0,1,r,\infty\}}]_\sim,[N_{\{0,1,\infty,\infty\}}]_\sim,[N_{\{0,0,1,r\}}]_\sim,[N_{\{0,0,0,1\}}]_\sim,N_{\{0,0,1,1\}}]_\sim:r\in\mathbb{C}\setminus\{0,1\}\}.
\end{align*}

\subsection{Higher Period Rescalings for Newton Maps}
In this subsection, we first prove a sufficient and necessary condition for the existence of higher periods generalized rescalings for a holomorphic family of Newton maps. Then we prove Theorem \ref{theorem-rescaling-limits-Newton}$(2)$.\par
Recall that for two points $\xi_1,\xi_2$ in the affine Berkovich space $\mathbb{A}^1_{\mathrm{Ber}}$, the point $\xi_1\vee\xi_2$ corresponds to the smallest closed disk containing the disks related to $\xi_1$ and $\xi_2$ in $\mathbb{L}$, and the path distance metric $\rho(\xi_1,\xi_2)$ on $\mathbb{H}_{\mathrm{Ber}}$ is defined by 
$$\rho(\xi_1,\xi_2)=\log\frac{\mathrm{diam}(\xi_1\vee\xi_2)}{\mathrm{diam}(\xi_1)}+\log\frac{\mathrm{diam}(\xi_1\vee\xi_2)}{\mathrm{diam}(\xi_2)}.$$\par
First, the following example shows that there do exist higher period rescalings for a holomorphic family of quartic Newton maps. Later we will show for any quadratic polynomial $P$, there exits a holomorphic family $\{N_t\}$ of quartic Newton maps such that $P$ is a rescaling limit of a period $2$ rescaling for $\{N_t\}$.
\begin{example}\label{higher-rescaling-example}
Quartic Newton maps. See Figure \ref{higher-period}.\par
We construct a holomorphic family $\{N_{r(t)}\}$ of quartic Newton maps such that the associated map $\mathbf{N}:\mathbb{P}_{\mathrm{Ber}}^1\to\mathbb{P}_{\mathrm{Ber}}^1$ has two type II repelling points, say $V=\{\xi_1,\xi_2\}$, and a free critical point $\mathbf{c}$ satisfying the following:  Let $\xi_1$ be the visible point from $\mathbf{c}$ on the convex hull $H_V$, and let $\vec{v}\in T_{\xi_1}\mathbb{P}^1_{\mathrm{Ber}}$ be the direction such that $\mathbf{c}\in\mathbf{B}_{\xi_1}(\vec{v})^-$. Then $(1)$ at $\xi_1$, for the induced map $\mathbf{N}_\ast:T_{\xi}\mathbb{P}_\mathrm{Ber}^1\to T_{\mathbf{N}(\xi)}\mathbb{P}_\mathrm{Ber}^1$, we have $\mathbf{N}_\ast(\vec{v})$ is an attracting fixed point of $\mathbf{N}_\ast$; $(2)$ $\mathbf{N}^2(\mathbf{c})\in\mathbf{B}_{\xi_1}(\vec{v})^-$; and $(3)$ $\rho(\mathbf{c}\vee\mathbf{N}^2(\mathbf{c}),\xi_1)\ge 2\rho(\xi_1,\xi_2)$.\par
Consider $r(t)=\{r_1(t),r_2(t),r_3(t),r_4(t)\}$, where
\begin{align*}
r_1(t)&=-1-\sqrt{3}i+(5+\frac{40\sqrt{3}}{9}i)t^2,\\
r_2(t)&=-1+\sqrt{3}i+(5-\frac{40\sqrt{3}}{9}i)t^2,\\
r_3(t)&=2+\frac{\sqrt{30}}{3}t-5t^2,\\
r_4(t)&=2-\frac{\sqrt{30}}{3}t-5t^2.
\end{align*}
Let $M_t(z)=5t^2z/18$. Since as $t\to 0$
$$M_t^{-1}\circ N_{r(t)}\circ M_t\xrightarrow{\bullet}\infty\ \ \text{on}\ \ \mathbb{P}^1\setminus\{\infty\}.$$
Thus $\{M_t\}$ is not a period $1$ rescaling for the holomorphic family $\{N_{r(t)}\}$. However, $\{M_t\}$ is a period $2$ rescaling for $\{N_{r(t)}\}$. Indeed, 
$$M_t^{-1}\circ N_{r(t)}^2\circ M_t\xrightarrow{\bullet} z^2+62\ \ \text{on}\ \ \mathbb{P}^1\setminus\{\infty\}.$$
\end{example}
For a rational map $\phi:\mathbb{P}^1_{\mathrm{Ber}}\to\mathbb{P}^1_{\mathrm{Ber}}$ and a point $\xi\in\mathbb{P}^1_{\mathrm{Ber}}$, we say $\vec{v}\in T_\xi\mathbb{P}^1_{\mathrm{Ber}}$ is a critical direction at $\xi$ if $\vec{v}$ is a critical point of the induced map $\phi_\ast:T_\xi\mathbb{P}^1_{\mathrm{Ber}}\to T_{\phi(\xi)}\mathbb{P}^1_{\mathrm{Ber}}$. If $\vec{v}$ is a critical direction of $\phi$ at $\xi$, then there exists at least one critical point of $\phi$ in the Berkovich disk $\mathbf{B}_\xi(\vec{v})^-$. Recall that $\mathrm{Crit}(\phi)$ is the set of critical points for a rational map $\phi(z)\in\mathbb{L}(z)$ in  $\mathbb{P}_\mathbb{L}^1$.\par
Recall that for a holomorphic family $\{N_t\}$ of Newton maps, the subtree $H_V\subset\mathbb{P}^1_{\mathrm{Ber}}$ is the convex hull of the set $V$ of type II repelling fixed points of the associated map $\mathbf{N}$. Inspired by Example \ref{higher-rescaling-example}, we now state a sufficient and necessary condition for the existence of higher periods generalized rescalings for holomorphic families of Newton maps. 
\begin{proposition}\label{higher-period-rescalings-conditions}
For $d\ge 3$, let $\{N_t\}$ be a holomorphic family of degree $d$ Newton maps, and let $\mathbf{N}:\mathbb{P}^1_{\mathrm{Ber}}\to\mathbb{P}^1_{\mathrm{Ber}}$ be the associated map for $\{N_t\}$. Then $\{N_t\}$ has a period $q\ge 2$ generalized rescaling if and only if there exist a point $v\in V$, a critical direction $\vec{v}\in T_{v}\mathbb{P}^1_{\mathrm{Ber}}$ of $\mathbf{N}_\ast$ at $v$, a critical point $\mathbf{c}\in\mathrm{Crit}(\mathbf{N})$ in the Berkovich disk $\mathbf{B}_v(\vec{v})^-$ and an integer $1\le\ell\le q-1$ satisfying the following:
\begin{enumerate}
\item Let $\{M_{t}\}\subset\mathrm{PGL}_2(\mathbb{C})$ be such that the associated map $\mathbf{M}$ satisfies $\mathbf{M}(\xi_g)=v$. At $v$, after identifying $T_v\mathbb{P}^1_{\mathrm{Ber}}$ to $\mathbb{P}^1$, the direction $\mathbf{N}_\ast^\ell(\vec{v})$ is a hole of the subalgebraic limit of $M^{-1}_t\circ N_t\circ M_{t}$;
\item $\mathbf{N}^q(\mathbf{c})\in\mathbf{B}_v(\vec{v})^-$;
\item Let $\eta=\mathbf{c}\vee\mathbf{N}^q(\mathbf{c})$. For $k\ge 1$, there exists $\xi_k\in\mathbf{N}^{-kq}(v)\cap(v,\eta)$ such that 
$$\rho(\eta,v)\ge\rho(\xi_\infty,v),$$
where $\xi_\infty=\lim\limits_{k\to\infty}\xi_k$.
\end{enumerate} 
In particular, if $(1)-(3)$ holds, the point $\xi_\infty$ is periodic of period $q$. 
\end{proposition}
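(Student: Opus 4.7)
The plan is to reduce the proposition, via Proposition \ref{rescaling-repelling-cycles}, to characterizing the type II points $\xi \in \mathbb{H}_{\mathrm{Ber}}$ of period $q \ge 2$ for $\mathbf{N}$ with $\deg_\xi \mathbf{N}^q \ge 2$. The structure theorems from Section \ref{Berkovich}---especially Theorem \ref{tree-properties}, Lemma \ref{fixed-pt}, Corollary \ref{V-deg}, and Lemma \ref{Newton-ramification}---then let me locate such a $\xi$ and read off (1)--(3) from its position in $\mathbb{P}^1_{\mathrm{Ber}}$.

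For the forward direction, given such a $\xi$, I will first argue that $\xi \notin H_V^\infty$, since $\xi$ is not fixed and by Lemma \ref{fixed-pt} every point of $H_V^\infty \setminus \{\infty\}$ is fixed. Multiplicativity of local degrees forces some $\mathbf{N}^i(\xi)$ to lie in the ramification locus, and Lemma \ref{Newton-ramification} places that iterate in $H_{\mathrm{crit}} \setminus H_V$. Cyclically relabeling, I may take $\xi \in H_{\mathrm{crit}} \setminus H_V$. Then $v := \pi_{H_V}(\xi) \in V$, the direction $\vec{v} \in T_v \mathbb{P}^1_{\mathrm{Ber}}$ from $v$ toward $\xi$ is a critical direction of $\mathbf{N}_\ast$, and the disk $\mathbf{B}_v(\vec{v})^-$ contains a critical point $\mathbf{c}$ with $\xi \in [v,\mathbf{c}]$. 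Using Lemma \ref{dir-multi} to expand segments at $\xi$ together with $\mathbf{N}^q(\xi)=\xi$, I produce $\mathbf{c}$ with $\mathbf{N}^q(\mathbf{c}) \in \mathbf{B}_v(\vec{v})^-$, giving (2). Choosing $\{M_t\}\subset\mathrm{PGL}_2(\mathbb{C})$ with $\mathbf{M}(\xi_g) = v$, I read the $\mathbf{N}_\ast$-action on $T_v \mathbb{P}^1_{\mathrm{Ber}}$ off the reduction of $M_t^{-1} \circ N_t \circ M_t$ via Proposition \ref{algebraic-reduction}; the first index $\ell$ at which the critical orbit of $\vec{v}$ is collapsed by this reduction is the required hole in (1). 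Finally, the sequence $\xi_k$ in (3) arises by pulling $v$ back along $[v, \mathbf{c}]$, and $\xi$ itself plays the role of $\xi_\infty$, with $\eta := \mathbf{c} \vee \mathbf{N}^q(\mathbf{c}) \in [\xi, \mathbf{c}]$ giving the inequality.

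For the converse, suppose (1)--(3). I define $\xi_\infty := \lim_k \xi_k \in [v, \eta]$. Continuity of $\mathbf{N}^q$ and $\mathbf{N}^{kq}(\xi_k) = v$ give $\mathbf{N}^q(\xi_\infty) = \xi_\infty$. Since $\xi_\infty \in (v, \eta) \subset (v, \mathbf{c})$ meets $H_V$ only at $v$, and since $\xi_\infty \neq v$ follows from (3) combined with the image analysis of Lemma \ref{disk-to-disk} at $v$ (the disk $\mathbf{B}_v(\vec{v})^-$ cannot collapse to $v$ in finitely many steps), I conclude $\xi_\infty \in H_{\mathrm{crit}} \setminus H_V \subset \mathcal{R}_{\mathbf{N}}$, hence $\deg_{\xi_\infty} \mathbf{N}^q \ge 2$. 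Condition (1), via Proposition \ref{algebraic-reduction}, rules out minimal period $q' \mid q$ strictly smaller than $q$: if the $\mathbf{N}_\ast$-orbit of $\vec{v}$ closed up after $q'$ steps, the hole at step $\ell \le q-1$ would collapse a direction that must remain critical, contradicting the combinatorics of the return. The main obstacle will be precisely this minimality argument and the careful bookkeeping of directional multiplicities from Lemmas \ref{dir-multi}, \ref{multi-dir-multi}, and Proposition \ref{fundamental-Berk}; a secondary difficulty is ensuring in the forward direction that $\mathbf{c}$ can be chosen inside the same disk so that (2) holds, which will require tracking which critical points survive iteration back into $\mathbf{B}_v(\vec{v})^-$.
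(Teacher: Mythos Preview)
Your approach is essentially the same as the paper's: both reduce via Proposition~\ref{rescaling-repelling-cycles} to the existence of a repelling type~II $q$-cycle in $\mathbb{H}_{\mathrm{Ber}}$, and both exploit Lemma~\ref{fixed-pt}, Corollary~\ref{V-deg}, and Lemma~\ref{Newton-ramification} to locate such a cycle on a segment $[v,\mathbf{c}]$ with $v\in V$ and $\mathbf{c}\in\mathrm{Crit}(\mathbf{N})$. The converse direction is argued identically in both.

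The one organizational difference is in the forward direction. The paper does not construct $(v,\vec{v},\mathbf{c},\ell)$ from a given repelling $q$-cycle as you do; instead it argues each of (1), (2), (3) by contradiction: assuming (1) fails forces every segment $[\mathbf{c},\pi_{H_V}(\mathbf{c}))$ into the Fatou set, so the only repelling cycles lie in $V$ and have period~$1$; assuming (2) fails gives the same conclusion; and (3) is obtained by showing the preimages $\xi_k$ form a Cauchy sequence along $[v,\eta]$ because $\rho(\xi_k,\xi_{k+1})\ge 2\rho(\xi_{k+1},\xi_{k+2})$ on the ramification locus. Your constructive route is more informative but, as you yourself flag, the verification that $\mathbf{N}^q(\mathbf{c})\in\mathbf{B}_v(\vec{v})^-$ and that $\eta=\mathbf{c}\vee\mathbf{N}^q(\mathbf{c})$ lies on the correct side of $\xi$ needs care; the paper sidesteps this by contraposition. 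Note also that neither your outline nor the paper's proof fully justifies that the period of $\xi_\infty$ is \emph{exactly} $q$ rather than a proper divisor; your attempt via condition~(1) is a reasonable gesture but would need to be made precise.
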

Before we prove Proposition \ref{higher-period-rescalings-conditions}, we illustrate the conditions $(1)-(3)$ using Example \ref{higher-rescaling-example}. In this example, 
$$v=\xi_g\in V=\{\xi_g,\xi_{2,|t|}\},$$
and $q=2$, $\ell=1$. The visible points from both free critical points $\mathbf{c}_1$ and $\mathbf{c}_2$ to $H_V$ are $\xi_g$, that is, $\pi_{H_V}(\mathbf{c}_1)=\pi_{H_V}(\mathbf{c}_2)=\xi_g$. Let $\mathbf{c}_1$ be the critical point such that $\pi_{H_V}(\mathbf{N}(\mathbf{c}_1))=\xi_{2,|t|}$ and let $\vec{v}\in T_{\xi_g}\mathbb{P}^1_{\mathrm{Ber}}$ such that $\mathbf{c}_1\in\mathbf{B}_{\xi_g}(\vec{v})^-$. Then at $\xi_g$, $\mathbf{N}_\ast(\vec{v})\in T_{\xi_g}\mathbb{P}^1_{\mathrm{Ber}}$ and $\xi_{2,|t|}\in\mathbf{B}_{\xi_g}(\mathbf{N}_\ast(\vec{v}))^-$. Let $\vec{w}\in T_{\xi_{2,|t|}}\mathbb{P}^1_{\mathrm{Ber}}$ such that $\mathbf{N}(\mathbf{c}_1)\in\mathbf{B}_{\xi_{2,|t|}}(\vec{w})^-$. Then at $\xi_{2,|t|}$, we have $\mathbf{N}_\ast(\vec{w})\in T_{\xi_{2,|t|}}\mathbb{P}^1_{\mathrm{Ber}}$ and $\xi_g\in\mathbf{B}_{\xi_{2,|t|}}(\mathbf{N}_\ast(\vec{w}))^-$. Moreover, we have $\mathbf{N}^2(\mathbf{c}_1)\in\mathbf{B}_{\xi_g}(\vec{v})^-$. Considering $\xi_k$ and $\xi_\infty$ as in Proposition \ref{higher-period-rescalings-conditions}, we have $\xi_\infty$ is a $2$-periodic point for $\mathbf{N}$ and $\eta=\xi_\infty$, as it is shown in Figure \ref{higher-period}. Moreover, we have 
$$\rho(\xi_g,\xi_\infty)=2\rho(\xi_1,\xi_g)=2\rho(\xi_g,\xi_{2,|t|})=2.$$
\begin{figure}[h!]
\centering
\includegraphics[width=100mm]{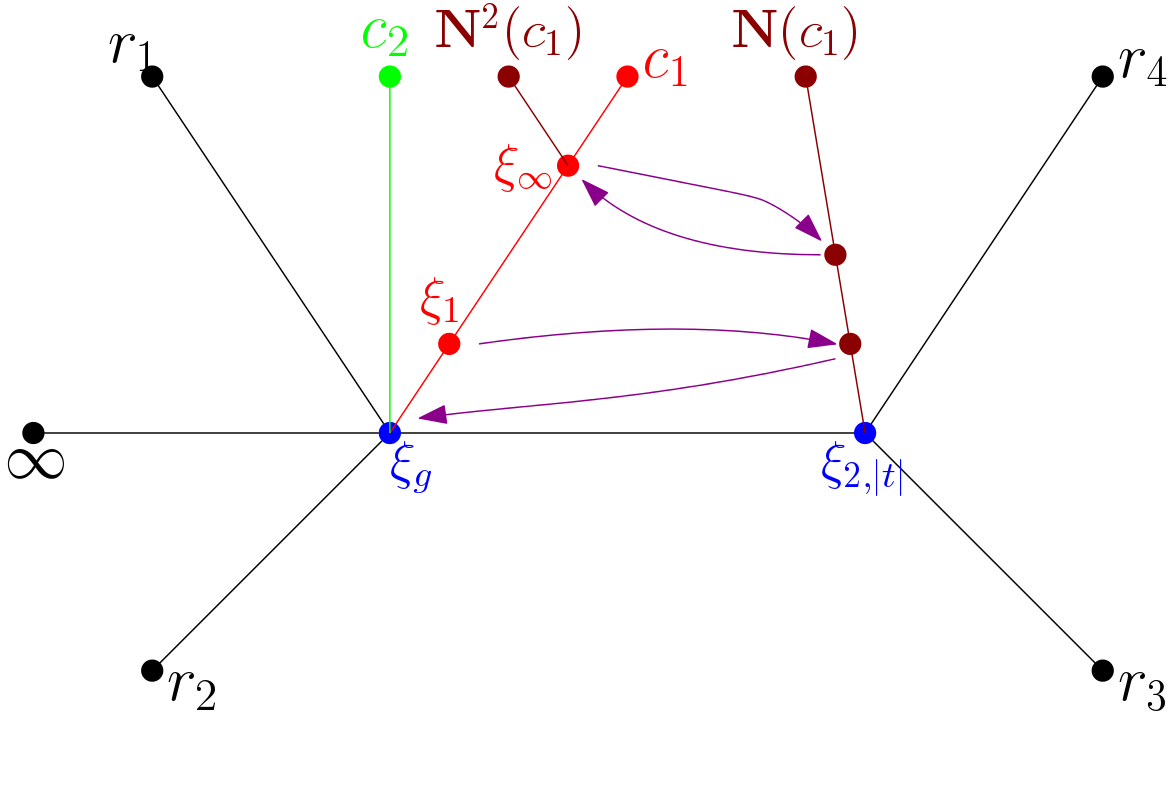}
\caption{The $2$-periodic type II cycle for Example \ref{higher-rescaling-example}. }
\label{higher-period}
\end{figure}\par
Now we prove Proposition \ref{higher-period-rescalings-conditions}.
\begin{proof}
By Proposition \ref{rescaling-repelling-cycles}, the family $\{N_t\}$ has a period $q\ge 2$ generalized rescaling if and only if the associated map $\mathbf{N}$ has a $q$-periodic repelling cycles of type II points in $\mathbb{P}^1_{\mathrm{Ber}}$.\par 
We first show the ``only if" part. If $(1)$ does not hold, then any critical point of $\mathbf{N}$ are contained in the Berkovich Fatou set $F_{\mathrm{Ber}}(\mathbf{N})$. Moreover, for any $\mathbf{c}\in\mathrm{Crit}(\mathbf{N})$, the segment $[\mathbf{c},\pi_{H_V}(\mathbf{c}))$ is in $F_{\mathrm{Ber}}(\mathbf{N})$. Note the repelling cycles of type II points intersect with the ramification locus $\mathcal{R}_{\mathbf{N}}$. By Lemmas \ref{fixed-pt} and \ref{Newton-ramification}, we know the repelling cycles of type II points are contained in $V$. Thus, all of them has period $1$. It contradicts with the existence of higher period rescalings. If $(2)$ does not hold, then for any $\mathbf{c}\in\mathrm{Crit}(\mathbf{N})$, the segment $[\mathbf{c},\pi_{H_V}(\mathbf{c}))$ disjoints with the periodic cycles. Thus, again the repelling cycles of type II points are contained in $V$. It is contradiction. Now we show $(3)$ holds. From $(1)$, $(2)$ and the existence of higher period generalized rescalings, we know for $k\ge 1$, there exist unique $\xi_k\in(\mathbf{N})^{-kq}(v)\cap(v,\eta)$. By Lemma \ref{Newton-ramification}, we know $(v,\eta)$ is in the ramification locus $\mathcal{R}_{\mathbf{N}}$. Then 
$$\rho(\xi_k,\xi_{k+1})\ge 2\rho(\xi_{k+1},\xi_{k+2})$$
for all $k\ge 0$. So $\rho(\xi_k,\xi_{k+1})\to 0$, as $k\to\infty$. Hence the sequence $\{\xi_k\}$ converges. Therefore, $\xi_\infty$ exists. Moreover, we have $\mathbf{N}^q(\xi_\infty)=\xi_\infty$. Note for any point $\xi'\in(v,\xi_\infty)$, we have
$$\rho(\mathbf{N}^q(\xi'),\xi_\infty)>\rho(\xi',\xi_\infty).$$
Hence there is no $q$-periodic point of $\mathbf{N}$ in the segment $(v,\xi_\infty)$. Thus  $(3)$ holds.\par 
Now we prove the ``if" part. Note the conditions $(1)-(3)$ imply the point $\xi_\infty$ is a fixed point of $\mathbf{N}^q$. Moreover, by Lemma \ref{Newton-ramification}, we know $\xi_\infty\in\mathcal{R}_{\mathbf{N}}$. Thus $\xi_\infty$ is a repelling fixed point of $\mathbf{N}^q$ and $\rho(\xi_k,\xi_{k+1})\ge 2\rho(\xi_{k+1},\xi_{k+2})$. Hence $\xi_\infty$ is a type II point. So $\{N_t\}$ has a generalized rescaling of period $q$.
\end{proof}
\begin{remark}\label{repelling-point-dynamics}
As in Proposition \ref{higher-period-rescalings-conditions}, from the proof, for any point $\xi'\in(v,\xi_\infty)$, we know $\mathbf{N}^q(\xi')\not=\xi'$. Let $\vec{v}\in T_{\xi_\infty}\mathbb{P}_{\mathrm{Ber}}^1$ with $v\in\mathbf{B}_{\xi_\infty}(\vec{v})^-$. Then $\mathbf{N}^q(\xi')\in\mathbf{B}_{\xi_\infty}(\vec{v})^-$ with $\rho(\mathbf{N}^q(\xi'),\xi_\infty)>\rho(\xi',\xi_\infty)$.
\end{remark}
In general, the orbit of $\xi_\infty$ may intersect the ramification locus $\mathcal{R}_\mathbf{N}$ with more than one points. For $0\le j< q$, let $v_j=\pi_{H_V}(\mathbf{N}^j(\mathbf{c}))$. If $\pi_{[v_j,\mathbf{N}^j(\xi_\infty)]}(\mathbf{a})\not\in(v_j,\mathbf{N}^j(\xi_\infty))$ for all $\mathbf{a}\in\mathrm{Crit}(\mathbf{N})$, i.e. the visible points from all critical points of $\mathbf{N}$ to the segment $[v_j,\mathbf{N}^j(\xi_\infty)]$ lie in the endpoints $\{v_j,\mathbf{N}^j(\xi_\infty)\}$, then there exists $K\ge 2$ such that 
$$\rho(\xi_k,\xi_{k+1})\ge K\rho(\xi_{k+1},\xi_{k+2}).$$
Indeed, let $\vec{v}_j\in T_{v_j}\mathbb{P}^1_{\mathrm{Ber}}$ such that $\mathbf{N}^j(\mathbf{c})\in\mathbf{B}_{v_j}(\vec{v}_j)^-$ and set 
$$K=\prod_{j=0}^{q-1}\deg_{\vec{v}_j}\mathbf{N}_\ast.$$
Moreover, in this case, we have 
$$\rho(v,\xi_\infty)=\frac{K}{K-1}\rho(v,\xi_1).$$\par
The following two results show there are only finitely many such $\xi_\infty$s for Newton map $\mathbf{N}$.
\begin{proposition}\label{repelling-period-critical}
For $d\ge 3$, let $\{N_t\}$ be a holomorphic family of degree $d$ Newton maps, and let $\mathbf{N}:\mathbb{P}^1_{\mathrm{Ber}}\to\mathbb{P}^1_{\mathrm{Ber}}$ be the associated map for $\{N_t\}$. Then for each $\mathbf{c}\in\mathrm{Crit}(\mathbf{N})$, the segment $(\mathbf{c},\pi_{H_V}(\mathbf{c}))$ contains at most one type II repelling periodic point.
\end{proposition}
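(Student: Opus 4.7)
The strategy is to argue by contradiction and reduce to a strengthened form of Remark~\ref{repelling-point-dynamics}. Suppose two distinct type~II repelling periodic points $\xi_1\neq\xi_2$ lie in $(\mathbf{c},v)$, where $v:=\pi_{H_V}(\mathbf{c})$, and arrange the labelling so that $\xi_1\in(v,\xi_2)$. Let $q_1,q_2$ denote their minimal periods and set $Q:=\operatorname{lcm}(q_1,q_2)$, so that both $\xi_1$ and $\xi_2$ are fixed points of $\mathbf{N}^Q$. The whole proof will follow once one establishes, for every $\xi'\in(v,\xi_2)$, the strict inequality
\[
\rho\bigl(\mathbf{N}^Q(\xi'),\,\xi_2\bigr) \;>\; \rho(\xi',\xi_2),
\]
since applying this to $\xi'=\xi_1$ and using $\mathbf{N}^Q(\xi_1)=\xi_1$ produces the self-contradiction $\rho(\xi_1,\xi_2)>\rho(\xi_1,\xi_2)$.

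The plan for verifying the displayed inequality is to rerun the analysis behind Proposition~\ref{higher-period-rescalings-conditions} and Remark~\ref{repelling-point-dynamics} with $\mathbf{N}^Q$ in place of $\mathbf{N}^{q_2}$. Three inputs transfer without effort. First, $\xi_2$ is still a type~II repelling fixed point of $\mathbf{N}^Q$. Second, by Lemma~\ref{Newton-ramification} the open segment $(v,\xi_2)\subset H_{\mathrm{crit}}\setminus H_V$ lies in $\mathcal{R}_{\mathbf{N}}\subset\mathcal{R}_{\mathbf{N}^Q}$. Third, at every $\xi'\in(v,\xi_2)$ the directional multiplicity $m_{\mathbf{N}^Q}(\xi',\vec{v}_c)$ in the direction $\vec{v}_c$ toward $\mathbf{c}$ is at least $2$, because it factors along the orbit as $m_{\mathbf{N}}(\xi',\vec{v}_c)\cdot\prod_{i=1}^{Q-1} m_{\mathbf{N}}(\mathbf{N}^i(\xi'),(\mathbf{N})_\ast^i(\vec{v}_c))$ and the very first factor is already $\geq 2$ since the Berkovich disk $\mathbf{B}_{\xi'}(\vec{v}_c)^-$ contains the critical point $\mathbf{c}$. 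Using these three ingredients, one constructs preimages of $v$ under $\mathbf{N}^Q$ along $(v,\xi_2)$ exactly as in Proposition~\ref{higher-period-rescalings-conditions}: the factor-$\geq 2$ directional multiplicity gives $\rho(\tilde\xi_k,\tilde\xi_{k+1})\geq 2\rho(\tilde\xi_{k+1},\tilde\xi_{k+2})$, so the sequence converges geometrically to a type~II repelling fixed point of $\mathbf{N}^Q$ on $(v,\xi_2)$; this limit coincides with $\xi_2$, and the convergence is equivalent to the desired strict expansion of $\rho(\cdot,\xi_2)$ under $\mathbf{N}^Q$ on all of $(v,\xi_2)$.

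The main obstacle in this plan is to extend Remark~\ref{repelling-point-dynamics} to the iterate $\mathbf{N}^Q$ without having to re-verify condition~$(2)$ of Proposition~\ref{higher-period-rescalings-conditions} (namely $\mathbf{N}^Q(\mathbf{c})\in\mathbf{B}_v(\vec{v})^-$), which does not automatically follow from the corresponding statements for $q_1$ and $q_2$. The intended workaround is to argue directly from the local dynamics at $\xi_2$ rather than through the critical-orbit hypothesis of Proposition~\ref{higher-period-rescalings-conditions}. Namely, since $\mathbf{N}(v)=v$, the direction at $\xi_2$ toward $v$ is fixed by $(\mathbf{N}^Q)_\ast$ with directional multiplicity $\geq 2$, and one may iteratively pull back $v$ along $(v,\xi_2)$ using Proposition~\ref{fundamental-Berk}; the factor-$\geq 2$ contraction of these preimages then forces them to accumulate at $\xi_2$ and delivers the required expansion on the full segment $(v,\xi_2)$ without reference to the orbit of $\mathbf{c}$.
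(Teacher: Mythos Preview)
Your argument has a genuine gap at its core. You aim to prove that for every $\xi'\in(v,\xi_2)$ one has $\rho(\mathbf{N}^Q(\xi'),\xi_2)>\rho(\xi',\xi_2)$, and you propose to do this by pulling back $v$ under $\mathbf{N}^Q$ along $(v,\xi_2)$ and showing the preimage sequence $\tilde\xi_k$ accumulates at $\xi_2$. But this cannot work precisely because of the hypothesis you are trying to contradict: $\xi_1$ is already a fixed point of $\mathbf{N}^Q$ sitting in $(v,\xi_2)$. The preimage sequence of $v$ under $\mathbf{N}^Q$ along this segment accumulates at the \emph{first} fixed point encountered, namely $\xi_1$, not at $\xi_2$. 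Indeed, once you know (as the paper shows, and as your own local-multiplicity observations would give) that the segment $[\xi_1,\xi_2]$ is $\mathbf{N}^Q$-invariant, there can be no preimage of $v$ in $(\xi_1,\xi_2)$ at all, since $v\notin[\xi_1,\xi_2]$. Your third-paragraph workaround, invoking the directional multiplicity at $\xi_2$ in the direction toward $v$, is only a \emph{local} statement (Lemma~\ref{dir-multi} holds on a sufficiently short initial segment); it says nothing about the behaviour of $\mathbf{N}^Q$ near $\xi_1$, which is exactly where you need the strict inequality.

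The paper's proof avoids this trap by working on the segment $[\xi_\infty,\xi']$ \emph{between} the two periodic points rather than on $(v,\xi_2)$. It first chooses $\xi_\infty$ closest to $v$, then uses Corollary~\ref{not-whole-Ber} to see that the Berkovich disk at $\xi_\infty$ pointing toward $\xi'$ maps to disks along the orbit of $\xi_\infty$; this forces the period of $\xi'$ to be a multiple $kq$ of that of $\xi_\infty$. The segment $[\xi_\infty,\xi']$ is then $\mathbf{N}^{kq}$-invariant (both endpoints fixed, image is a segment), yet lies in the ramification locus, so its $\rho$-length is multiplied by at least $2$ under $\mathbf{N}^{kq}$ --- an immediate contradiction. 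The key difference is that the paper compares two fixed endpoints of the \emph{same} iterate on an \emph{invariant} segment, whereas your approach tries to control the dynamics on a longer segment that contains an intermediate fixed point, which obstructs the pull-back argument.
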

\begin{proof}
By Lemma \ref{fix-segment}, we know there is no type II repelling periodic point in $(\mathbf{c},\pi_{H_V}(\mathbf{c}))$ if $\pi_{H_\mathbf{r}}(\mathbf{c})\notin V$. Now we consider the critical points $\mathbf{c}\in\mathrm{Crit}(\mathbf{N})$ with $\pi_{H_{\mathbf{r}}}(\mathbf{c})\in V$. For such $\mathbf{c}$, to ease notation, let $\mathrm{L}_\mathbf{c}:=(\mathbf{c},\pi_{H_V}(\mathbf{c}))$. Suppose $\mathrm{L}_\mathbf{c}$ contains a type II repelling periodic point $\xi_\infty$ of period $q\ge 2$. Set $\xi=\pi_{H_V}(\xi_\infty)$. By Remark \ref{repelling-point-dynamics}, we can choose $\xi_\infty$ such that there is no repelling periodic point in $(\xi,\xi_\infty)$. Suppose $\xi'\in\mathrm{L}_\mathbf{c}\setminus(\xi,\xi_\infty]$ is another type II repelling periodic point. Let $\vec{v}\in T_{\xi_\infty}\mathbb{P}_{\mathrm{Ber}}^1$ such that $\xi'\in\mathbf{B}_{\xi_\infty}(\vec{v})^-$. By Corollary \ref{not-whole-Ber}, for $1\le\ell\le q$, we know $\mathbf{N}^\ell(\mathbf{B}_{\xi_\infty}(\vec{v})^-)$ is a Bekovich disk with boundary at $\mathbf{N}^\ell(\xi_\infty)$. Thus $\xi'$ has period $kq$ for some integer $k\ge 1$. Now consider the segment $[\xi_\infty,\xi']$. For $1\le\ell\le kq$, again by Corollary \ref{not-whole-Ber}, we know $\mathbf{N}^\ell([\xi_\infty,\xi'])=[\mathbf{N}^\ell(\xi_\infty),\mathbf{N}^\ell(\xi')]$ is a segment. Thus
$$\mathbf{N}^{kq}([\xi_\infty,\xi'])=[\mathbf{N}^{kq}(\xi_\infty),\mathbf{N}^{kq}(\xi')]=[\xi_\infty,\xi'].$$
However, from Lemma \ref{dir-multi}, we know 
$$\rho(\mathbf{N}^{kq}(\xi_\infty),\mathbf{N}^{kq}(\xi'))\ge 2\rho(\xi_\infty,\xi').$$
It is a contradiction. Thus $\xi_\infty$ is the only type II repelling periodic points in $\mathrm{L}_c$. 
\end{proof}
We say a critical point $c\in\mathbb{P}_{\mathbb{L}}^1$ for a Newton map $N\in\mathbb{L}(z)$ is totally free if at the point $\xi:=\pi_{H_V}(c)$, the vector $\vec{v}_c$ is not in the immediate basin of any (super)attracting fixed point of $N_\ast$, where $\vec{v}_c\in T_{\xi}\mathbb{P}_{\mathrm{Ber}}^1$ with $c\in\mathbf{B}_{\xi}(\vec{v}_c)^-$. If a critical point $c\in N$ is not totally free, then the segment $(c,\pi_{H_V}(c))$ is in the Berkovich Fatou set $F_{\mathrm{Ber}}(N)$. Hence, in this case, there is no type II repelling periodic point in $(c,\pi_{H_V}(c))$. Thus, by Proposition \ref{repelling-period-critical}, the number of totally free critical points give a upper bound of the the number of type II repelling cycles for $\mathbf{N}$. \par
\begin{proposition}\label{number-repelling-cycles}
Let $\{N_t\}$ be a holomorphic family of degree $d\ge 3$ Newton maps and let $\mathbf{N}:\mathbb{P}^1_{\mathrm{Ber}}\to\mathbb{P}^1_{\mathrm{Ber}}$ be the associated map for $\{N_t\}$. Then $\mathbf{N}$ has at most $d-3$ totally free critical points. In particular, $\mathbf{N}$ has at most $d-3$ type II repelling cycles. 
\end{proposition}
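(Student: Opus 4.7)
The plan is to analyze the tangent map $\mathbf{N}_\ast \colon T_v\mathbb{P}^1_{\mathrm{Ber}} \to T_v\mathbb{P}^1_{\mathrm{Ber}}$ at each vertex $v \in V$ and count its critical directions. The key structural observation is that $\mathbf{N}_\ast$ at $v$ is itself a (possibly degenerate) Newton map of degree $d_v := \deg_v\mathbf{N}$. To see this, I conjugate $\mathbf{N}$ by an affine $M \in \mathrm{PGL}_2(\mathbb{L})$ with $M(\xi_g) = v$: since affine conjugation preserves Newton-map structure (by direct computation using $P(\alpha z + \beta)/\alpha^d$), the conjugate $\tilde{\mathbf{N}} := M^{-1}\circ\mathbf{N}\circ M$ is the Newton map of the polynomial with roots $\tilde r_i := M^{-1}(r_i)$. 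Splitting the $\tilde r_i$ into those in $\mathcal{O}_\mathbb{L}$ and those with $|\tilde r_i| > 1$ (i.e.\ those in the direction containing $\infty$), the Newton formula $\tilde{\mathbf{N}}(z) = z - 1/\sum_i(z - \tilde r_i)^{-1}$ reduces cleanly modulo $\mathcal{M}_\mathbb{L}$: each $(z - \tilde r_i)^{-1}$ with $|\tilde r_i| > 1$ is congruent to $-\tilde r_i^{-1} \in \mathcal{M}_\mathbb{L}$ and so contributes nothing, leaving the Newton map of $\prod_{|\tilde r_i|\le 1}(z - \overline{\tilde r_i})$, whose $d_v$ distinct roots correspond to the directions of $F_v$ not containing $\infty$, with multiplicities $m_j$ equal to the cluster sizes.

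This identification pins down the fixed-point structure of $\mathbf{N}_\ast$ at $v$: exactly $d_v$ attracting fixed directions (with multipliers $1 - 1/m_j$, superattracting iff $m_j = 1$), and a single repelling fixed direction (toward $\infty$, multiplier $d_v/(d_v-1)$). Applying the Fatou--Shishikura bound to $\mathbf{N}_\ast$, each of the $d_v$ attracting fixed directions absorbs at least one critical direction of $\mathbf{N}_\ast$ (counted with multiplicity) into its immediate basin. Since the total critical multiplicity of $\mathbf{N}_\ast$ at $v$ is $2d_v - 2$, at most $d_v - 2$ critical directions lie outside every such immediate basin, and any totally free critical point of $\mathbf{N}$ with $\pi_{H_V^\infty}(c) = v$ corresponds to precisely such a critical direction, via the standard bijection between critical points of $\mathbf{N}$ visible at $v$ and critical directions of $\mathbf{N}_\ast$.

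Summing over $v \in V$ and using the tree identity $\sum_{v\in V}d_v = d+|V|-1$ (derived from $H_{\text{fix}}$ having $d+1$ leaves, $|V|$ internal vertices, and $|V|+d$ edges, so that $\sum_{v\in V}(d_v+1) = 2(|V|+d)-(d+1)$), I obtain a total bound of $\sum_v(d_v - 2) = d - 1 - |V|$. When $|V| \geq 2$ this is at most $d-3$; when $|V| = 1$, Theorem~\ref{tree-properties}(1) gives that $\mathbf{N}$ has potentially good reduction, so all type II repelling periodic points of $\mathbf{N}$ lie in $V$ and have period $1$, whence the type II repelling cycle count of period $\geq 2$ is trivially $0 \leq d - 3$. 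Proposition~\ref{repelling-period-critical} then translates the totally-free-critical-point bound into the claimed bound on type II repelling cycles. The main technical obstacle is the first step: carefully justifying that the reduction of $\tilde{\mathbf{N}}$ has exactly degree $d_v$ and inherits the Newton structure, requiring a delicate tracking of how the cancellations in $z - 1/\sum(z-\tilde r_i)^{-1}$ absorb the divergent factors coming from roots at infinity; once this Newton-map identification of $\mathbf{N}_\ast$ is secured, the remaining counting follows cleanly from Fatou--Shishikura and the tree combinatorics of $H_{\text{fix}}$.
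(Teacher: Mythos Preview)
Your proof is correct and follows essentially the same strategy as the paper: bound the totally free critical points locally at each $v\in V$, then sum using the tree identity $\sum_{v\in V}\deg_v\mathbf{N}=d+|V|-1$ to obtain $d-1-|V|$. The only difference is bookkeeping at the local step. You invoke Fatou--Shishikura once to say that each of the $d_v$ attracting fixed directions of $\mathbf{N}_\ast$ absorbs a critical direction, giving $\#C_v\le (2d_v-2)-d_v=d_v-2$ directly. The paper instead writes $\#C_\xi\le (2d_\xi-2)-\#\{r_i:\pi_{H_V}(r_i)=\xi\}-\#E_\xi$, separating the superattracting directions (each is a root $r_i$, already critical) from the merely attracting ones $E_\xi$ (each needs a critical point in its basin); since $\#\{r_i:\pi_{H_V}(r_i)=\xi\}+\#E_\xi=d_\xi$, this is the same bound. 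Your explicit identification of $\mathbf{N}_\ast$ at $v$ as a degenerate Newton map is exactly what the paper uses implicitly (cf.\ the proof of Lemma~\ref{fixed-pt}), and your treatment of the case $|V|=1$ for the cycle count is a small improvement in completeness over the paper, which simply writes ``Note $\#V\ge 2$.''
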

\begin{proof}
At a point $\xi\in V$, let $C_\xi$ be the set of totally free critical points $\mathbf{c}$ of $\mathbf{N}$ such that $\pi_{H_V}(\mathbf{c})=\xi$. Let $E_\xi$ be the set of attracting fixed points of $\mathbf{N}_\ast$. Then at $\xi$, we have 
$$\#C_\xi\le 2\deg_\xi\mathbf{N}-2-\#\{r_i:\pi_{H_V}(r_i)=\xi\}-\# E_\xi,$$
where $r_i$s are the superattracting fixed points of $\mathbf{N}$. Note there are $\#V-1$ many points in $V$ such that the corresponding limits have $\infty$ as a hole, and hence 
$$\sum_{\xi\in V}\# E_\xi=\sum_{\xi\in V}\mathrm{Val}_{H_V}(\xi)-(\#V-1)=2(\#V-1)-(\#V-1)=\# V-1.$$
Then we have 
\begin{align*}
\sum_{\xi\in V}\#C_\xi&\le\sum_{\xi\in V}(2\deg_\xi\mathbf{N}-2)-\sum_{\xi\in V}\#\{r_i:\pi_{H_0}(r_i)=\xi\}-\sum_{\xi\in V}\#E_\xi\\
&=2d-2-d-(\# V-1)=d-1-\# V.
\end{align*}
Note $\#V\ge 2$. Therefore, $\mathbf{N}$ has at most $d-3$ totally free critical points. 
\end{proof}
\begin{remark}
Recall that the classical Julia set $J_I(\phi)$ for a rational map $\phi\in K(z)$ over a complete and algebraically closed non-archimedean field $K$ is the Julia set for the map $\phi:\mathbb{P}_K^1\to\mathbb{P}^1_K$. It is known that $J_I(\phi)=J_{\mathrm{Ber}}(\phi)\cap\mathbb{P}^1_K$ \cite[Theorem 10.67]{Baker10}. The Hsia's conjecture \cite[Conjecture 4.3]{Hsia00} asserts that the repelling periodic points in $\mathbb{P}_K^1$ are dense in $J_I(\phi)$. B{\'e}zivin \cite[Theorem 3]{Bezivin01} proved the Hsia's conjecture holds if there exists at least one repelling periodic point in $J_I(\phi)$.  For a Newton map $N(z)\in\mathbb{L}(z)$, same argument shows $N$ has at most $d-3$ type II repelling cycles. Rivera-Letelier \cite[Theorem 10.88]{Baker10} proved the closure of the repelling cycles in $\mathbb{P}^1_{\mathrm{Ber}}$ is the Berkovich Julia set. Thus, there are type I repelling cycles for Newton maps over $\mathbb{L}$. Hence the Hsia's conjecture holds for Newton maps $N\in\mathbb{L}(z)$.
\end{remark}
Now we can give an upper bound of the number of higher periodic (generalized) rescalings for holomorphic families of degree $d\ge 3$ Newton maps, which sharps the Kiwi's general results in \cite{Kiwi15}. First,   
\begin{theorem}\label{higher-rescaling-number-degree}
Let $\{N_t\}$ be a holomorphic family of degree $d\ge 3$ Newton maps. Then there are at most $d-3$ dynamically independent (generalized) rescalings of periods of period at least $2$ for $\{N_t\}$. Moreover, all the corresponding rescaling limits have degree at most $2^{d-3}$.
\end{theorem}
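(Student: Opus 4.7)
The plan is to establish the two claims separately, using results from Section~\ref{rescaling}. For the bound on the number of rescalings, by Proposition~\ref{rescaling-repelling-cycles} each dynamically independent generalized rescaling of period $q\ge 2$ corresponds bijectively to a distinct type II repelling periodic cycle of period $q$ for $\mathbf{N}$ in $\mathbb{P}^1_{\mathrm{Ber}}$: the equivalence class $[\{M_t\}]$ maps to the $\mathbf{N}$-orbit of $\mathbf{M}(\xi_g)$, and dynamical dependence of moving frames translates to belonging to the same orbit. Proposition~\ref{number-repelling-cycles} then caps the number of such cycles by $d-3$, which gives the first claim.

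For the degree bound, let $\{M_t\}$ be a rescaling of period $q\ge 2$ with limit $g$ and associated cycle $\{\xi_0,\ldots,\xi_{q-1}\}$, where $\xi_0=\mathbf{M}(\xi_g)$. Proposition~\ref{rescaling-repelling-cycles} identifies $g$ up to conjugation with the induced tangent map $(\mathbf{N}^q)_\ast$ at $\xi_0$, so
$$
\deg g \;=\; \deg_{\xi_0}\mathbf{N}^q \;=\; \prod_{i=0}^{q-1} e_i, \qquad e_i:=\deg_{\xi_i}\mathbf{N}.
$$
Using the elementary inequality $e\le 2^{e-1}$, valid for every positive integer $e$, this gives $\deg g\le 2^{\sum_{i}(e_i-1)}$, so the bound $\deg g\le 2^{d-3}$ reduces to the key inequality
$$
\sum_{i=0}^{q-1}(e_i-1)\;\le\; d-3. \qquad (\ast)
$$

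To establish $(\ast)$, I would charge each contribution $e_i-1$ to totally free critical multiplicity of $\mathbf{N}$. Only indices with $e_i\ge 2$ contribute; for each such $i$, Lemma~\ref{Newton-ramification} places $\xi_i\in\mathcal{R}_{\mathbf{N}}\setminus V$, and Proposition~\ref{higher-period-rescalings-conditions} locates a critical point $\mathbf{c}_i$ of $\mathbf{N}$ with $\xi_i\in(\mathbf{c}_i,\pi_{H_V}(\mathbf{c}_i))$. These $\mathbf{c}_i$ are pairwise distinct by Proposition~\ref{repelling-period-critical}, and each is totally free because its direction $\vec{v}_{\mathbf{c}_i}$ at $\pi_{H_V}(\mathbf{c}_i)$ carries the repelling periodic orbit through $\xi_i$ and so cannot lie in an attracting basin of $\mathbf{N}_\ast$. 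A local analysis at $\xi_i$ using Lemma~\ref{dir-multi} along the ramification segment together with Lemmas~\ref{multi-dir-multi} and~\ref{disk-to-disk} should show that the Berkovich disk $\mathbf{B}_{\xi_i}(\vec{v}_i^+)^-$ in the direction of $\mathbf{c}_i$ contains at least $e_i-1$ units of critical multiplicity of $\mathbf{N}$, all of them totally free (since no root $r_k$ lies in such a disk: $r_k$ sits on the $H_V$-side of $\xi_i$). Pairwise disjointness of these disks across distinct $i$ together with the global upper bound $d-3$ on totally free critical multiplicity from the proof of Proposition~\ref{number-repelling-cycles} then yields $(\ast)$.

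The main obstacle is this last critical-multiplicity count at each $\xi_i$: one must verify that all $e_i-1$ units of ramification concentrated at $\xi_i$ can indeed be located inside $\mathbf{B}_{\xi_i}(\vec{v}_i^+)^-$, without being diverted into the cycle direction, into neighbouring edges of $H_{\mathrm{crit}}$, or double-counted with contributions from other $\xi_j$. This will likely require a case analysis depending on the local combinatorics of $H_{\mathrm{crit}}$ at each $\xi_i$, on the behaviour of the tangent map $\mathbf{N}_\ast(\vec{v}_i^+)$ at $\xi_{i+1}$, and on coincidences among the visible points $\pi_{H_V}(\mathbf{c}_i)$.
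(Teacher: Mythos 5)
Your outline of the first claim (at most $d-3$ rescalings of period $\ge 2$, via Propositions \ref{rescaling-repelling-cycles} and \ref{number-repelling-cycles}) is exactly the paper's argument, and your reduction of the degree bound to the inequality $(\ast)$ $\sum_i(e_i-1)\le d-3$, followed by $e\le 2^{e-1}$, is a legitimate way to finish — in fact cleaner than the paper's own proof of Theorem \ref{higher-rescaling-number-degree}, which uses the cruder bound $\sum_{i\in I}e_i\le 2(d-3)$ together with the AM--GM inequality and an integer optimization. The problem is that $(\ast)$ is precisely the step you do not prove: you yourself flag the critical-multiplicity count at each $\xi_i$ as "the main obstacle", so the heart of the degree bound is missing from the proposal.

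Moreover, the specific statement you propose to verify — that the single disk $\mathbf{B}_{\xi_i}(\vec{v}_i^+)^-$ in the direction of the particular critical point $\mathbf{c}_i$ contains at least $e_i-1$ units of critical multiplicity — is not the right target: the reduction of $\mathbf{N}$ at $\xi_i$ has $2e_i-2$ critical points, and those not accounted for by the $\infty$-direction may be distributed over several directions at $\xi_i$, none of which need be the direction of the $\mathbf{c}_i$ furnished by Proposition \ref{higher-period-rescalings-conditions}. The bookkeeping that actually works (and is how the paper argues, in the proof of Proposition \ref{rescaling-limit-not-semistable}) is dual to yours: at each cycle point $\xi_i$ the direction $\vec{v}_i$ toward $\infty$ is totally invariant (Corollary \ref{not-whole-Ber}, Remark \ref{dynamics-poly}), and Lemmas \ref{dir-multi} and \ref{disk-to-disk} give $m_{\mathbf{N}}(\xi_i,\vec{v}_i)=\deg_{\xi_i}\mathbf{N}$, so $\vec{v}_i$ is itself a critical direction of the reduction of multiplicity $e_i-1$; the remaining $e_i-1$ critical points of the reduction therefore lie in directions away from $\infty$, each such direction contains a critical point of $\mathbf{N}$, these are totally free (Lemma \ref{fix-segment}) and are not double counted along the cycle (Proposition \ref{repelling-period-critical} keeps distinct cycle points on distinct critical segments), whence $\sum_i(e_i-1)\le d-3$ by the count in Proposition \ref{number-repelling-cycles}. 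Until an argument of this kind is supplied, your proposal asserts the key charging inequality rather than proving it, and the version you sketch would need to be reformulated before it could be proved.
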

\begin{proof}
By Propositions \ref{rescaling-repelling-cycles} and \ref{number-repelling-cycles}, it follows that $\{N_t\}$ has at most $d-3$ (generalized) rescalings of periods at least $2$. Let $\{M_t\}$ be a rescaling of period $q\ge 2$ for $\{N_t\}$ and $\hat f$ is the corresponding rescaling limit. Denote $\mathbf{N}:\mathbb{P}^1_{\mathrm{Ber}}\to\mathbb{P}^1_{\mathrm{Ber}}$ the associated map for $\{N_t\}$. Then by Proposition \ref{rescaling-repelling-cycles}, we have $\deg\hat f=\deg_\xi\mathbf{N}^q$, where $\xi=\mathbf{M}(\xi_g)$ and $\mathbf{M}$ is the associated map for $\{N_t\}$. Let 
$$I=\{i\in\{0,\cdots,q-1\}:\deg_{\mathbf{N}^i(\xi)}\mathbf{N}\ge 2\}.$$
So we have 
$$\deg\hat f=\prod_{i\in I}\deg_{\mathbf{N}^i(\xi)}\mathbf{N}.$$
By Proposition \ref{number-repelling-cycles}, we have
$$\sum_{i\in I}\deg_{\mathbf{N}^i(\xi)}\mathbf{N}\le 2(d-3).$$
Let $\ell:=\# I$. Then, by the relation of arithmetic and geometric means, we have
$$\deg\hat f\le\left(\frac{2(d-3)}{\ell}\right)^{\ell}.$$
Let $m=2(d-3)/\ell$. Note $\ell$ and $m$ are integers. Then 
$$\left(\frac{2(d-3)}{\ell}\right)^{\ell}=m^\frac{2(d-3)}{m}\le 2^{d-3},$$
where we get the inequality by taking logarithm.
\end{proof}
\begin{corollary}
Let $\{N_t\}$ be a holomorphic family of cubic Newton maps. Then there is no (generalized) rescalings of higher periods for $\{N_t\}$.
\end{corollary}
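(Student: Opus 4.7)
The proof plan is a one-line application of the preceding theorem. I would simply specialize Theorem \ref{higher-rescaling-number-degree} to $d=3$: the theorem guarantees at most $d-3$ dynamically independent (generalized) rescalings of period at least $2$ for any holomorphic family of degree $d\geq 3$ Newton maps, and for $d=3$ this upper bound is $0$.

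To be fully explicit, I would argue as follows. Let $\mathbf{N}:\mathbb{P}^1_{\mathrm{Ber}}\to\mathbb{P}^1_{\mathrm{Ber}}$ be the associated map of $\{N_t\}$. By Proposition \ref{rescaling-repelling-cycles}, the existence of a (generalized) rescaling of period $q\geq 2$ is equivalent to the existence of a type II repelling periodic point of $\mathbf{N}$ of period $q$. By Proposition \ref{number-repelling-cycles}, the map $\mathbf{N}$ admits at most $d-3 = 0$ totally free critical points, hence at most zero type II repelling cycles of period $\geq 2$. Therefore no such periodic point exists, and by Proposition \ref{rescaling-repelling-cycles} no (generalized) rescaling of higher period exists.

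Since the argument is completely immediate from the already-established theorem, there is no substantive obstacle. The only thing worth emphasizing in the write-up is that the statement rules out \emph{every} generalized rescaling of period $\geq 2$ (not merely pairwise dynamically independent ones), which is fine because the existence of even a single such rescaling would yield one dynamically independent representative, contradicting the bound $d-3=0$.
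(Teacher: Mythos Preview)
Your proposal is correct and matches the paper's approach: the corollary is stated without proof immediately after Theorem \ref{higher-rescaling-number-degree}, as it is the specialization $d=3$ giving at most $d-3=0$ dynamically independent (generalized) rescalings of period $\ge 2$. Your added remark that a single such rescaling would already furnish one dynamically independent representative is a helpful clarification, but nothing more is needed.
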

For a holomorphic family $\{N_t\}$ of degree $d\ge 3$ Newton maps, if $\{M_t\}$ is an affine rescaling of period $q\ge 2$ for $\{N_t\}$, then $M_t^{-1}\circ N_t\circ M_t$ converges subalgebraically to the point in $I(d)$. For if otherwise, by the continuity of iteration away from the indeterminacy locus, $M_t^{-1}\circ N_t\circ M_t$ converges locally uniformly to a rational map of degree at least $2$ because 
$$M_t^{-1}\circ N_t^q\circ M_t=(M_t^{-1}\circ N_t\circ M_t)^q.$$
\begin{theorem}\label{rescaling-limit-poly}
Let $\{N_t\}$ be a holomorphic family of degree $d\ge 3$ Newton maps, and let $\{M_t\}$ be a rescaling of period $q\ge 2$ for $\{N_t\}$. Then the rescaling limit of $\{M_t\}$ for $\{N_t\}$ is conjugate to a polynomial. 
\end{theorem}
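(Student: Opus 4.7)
The plan is to execute the strategy outlined in the introduction: reduce to the affine case, apply Theorem \ref{theorem-measure} to identify the weak limit of maximal measures, and then read off the structure of the rescaling limit from the formula for the limit measure.

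First, by Lemma \ref{affine-equ-mobius} I may replace $\{M_t\}$ by an equivalent affine moving frame; Remark \ref{rescaling-limit-relation}(1) guarantees that the new rescaling limit is conjugate to the original, so it suffices to produce a polynomial in the affine case. With $M_t$ affine, $\{M_t^{-1}\circ N_t\circ M_t\}$ is again a holomorphic family of degree $d$ Newton maps. Since $\{M_t\}$ has period exactly $q\ge 2$, the composition cannot converge locally uniformly to a rational map of degree $\ge 1$ (else $(M_t^{-1}\circ N_t\circ M_t)^q$ would converge to its $q$-th iterate and $\{M_t\}$ would already be a period $1$ rescaling); combined with Proposition \ref{Newton-closure-indeterminacy-singleton}, this forces $M_t^{-1}\circ N_t\circ M_t \xrightarrow{sa} Y^d[1:0] \in I(d)$.

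Next, let $\widetilde{\mathbf{N}}=\mathbf{M}^{-1}\circ\mathbf{N}\circ\mathbf{M}$ be the associated Berkovich map for $\{M_t^{-1}\circ N_t\circ M_t\}$. By Proposition \ref{rescaling-repelling-cycles} the point $\mathbf{M}(\xi_g)$ is a $q$-periodic point of $\mathbf{N}$, so $\widetilde{\mathbf{N}}^q(\xi_g)=\mathbf{M}^{-1}(\mathbf{N}^q(\mathbf{M}(\xi_g)))=\xi_g$, i.e.\ the Gauss point is periodic for $\widetilde{\mathbf{N}}$. Theorem \ref{theorem-measure} then yields that the weak limit of the maximal measures $\mu_{M_t^{-1}\circ N_t\circ M_t}$ equals $\delta_\infty$. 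Invariance of the measure of maximal entropy gives $\mu_{M_t^{-1}\circ N_t^q\circ M_t}=\mu_{M_t^{-1}\circ N_t\circ M_t}$, hence $\mu_{M_t^{-1}\circ N_t^q\circ M_t}\to\delta_\infty$ weakly as well.

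Now let $F=H_F\hat F\in\mathbb{P}^{2d^q+1}$ be the subalgebraic limit of $M_t^{-1}\circ N_t^q\circ M_t$. By Lemma \ref{locally-uniform-imply-subalg} together with the definition of a rescaling, $\hat F$ agrees with the rescaling limit $g$ on the complement of a finite set, and in particular $\deg\hat F=\deg g\ge 2$, so $F\notin I(d^q)$. Proposition \ref{measure-converge} then forces $\mu_F=\delta_\infty$. Using the explicit formula
\[
\mu_F \;=\; \sum_{n=0}^{\infty}\frac{1}{(d^q)^{n+1}}\sum_{H_F(h)=0}\sum_{\hat F^n(z)=h}\delta_z,
\]
the identity $\mu_F(\{\infty\})=1$ forces every hole of $F$ to lie at $\infty$ and every preimage $\hat F^{-n}(\infty)$ to equal $\{\infty\}$. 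In particular $\hat F^{-1}(\infty)=\{\infty\}$, so $g=\hat F$ is a polynomial; the original rescaling limit is therefore conjugate to $g$, as claimed. The main obstacle I anticipate is the careful bookkeeping to ensure that the subalgebraic limit of the iterated family $\{M_t^{-1}\circ N_t^q\circ M_t\}$ sits outside $I(d^q)$—so that continuity of $f\mapsto\mu_f$ can be invoked—while the unrescaled one-step family lands in $I(d)$ so that Theorem \ref{theorem-measure} applies.
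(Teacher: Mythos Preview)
Your proof is correct and follows essentially the same route as the paper's own argument: reduce to an affine rescaling, use Theorem~\ref{theorem-measure} (via the periodicity of $\xi_g$ for the conjugated map) to get $\mu=\delta_\infty$, transfer this to the $q$-th iterate by $\mu_{f^q}=\mu_f$, and then read off $\hat F^{-1}(\infty)=\{\infty\}$ from the atomic formula for $\mu_F$ together with Proposition~\ref{measure-converge}. The only imprecision is in your justification that $M_t^{-1}\circ N_t\circ M_t$ lands in $I(d)$: your parenthetical only excludes $\deg\hat f\ge 2$, whereas the clean argument (stated in the paper just before the theorem) is that if the limit were \emph{outside} $I(d)$ then continuity of iteration (Lemma~\ref{iterate-indeterminacy}) would force $(\deg\hat f)^q\ge 2$, hence $\deg\hat f\ge 2$, contradicting $q\ge 2$---this simultaneously handles the degree-$1$ and the degree-$0$ non-indeterminate cases and then Proposition~\ref{Newton-closure-indeterminacy-singleton} pins down the limit as $Y^d[1:0]$.
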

\begin{proof}
By Remark \ref{rescaling-limit-relation}, it is sufficient to show the rescaling limit is a polynomial if $M_t$ is affine. Now we assume $M_t$ is affine and suppose $M_t^{-1}\circ N_t^q\circ M_t$ converges subalgebraically to $f=H_f\hat f$. Then $\hat f$ is the corresponding rescaling limit of the rescaling $\{M_t\}$ for $\{N_t\}$ of period $q$. Now we show $\hat f$ is a polynomial.\par 
Set
$$\widetilde{N}_t=M_t^{-1}\circ N_t\circ M_t.$$
Then in projective coordinates, as $t\to 0$,
$$\widetilde{N}_t([X:Y])\to Y^d[1:0].$$
Let $\widetilde{\mathbf{N}}:\mathbb{P}^1_{\mathrm{Ber}}\to\mathbb{P}^1_{\mathrm{Ber}}$ be the associated map for $\{\tilde{N}_t\}$. Then by Proposition \ref{rescaling-repelling-cycles}, we have  
$$\widetilde{\mathbf{N}}^q(\xi_g)=\xi_g.$$\par
Let $\mu$ be the weak limit of measures $\mu_{\widetilde{N}_t}$. By Proposition \ref{limit-measure-case-1}, we have $\mu=\delta_\infty$. By Proposition \ref{measure-converge}, as $t\to 0$, the measure $\mu_{\widetilde{N}^q_t}$ converges weakly to $\mu_f$ since $f\not\in I(d^q)$ and $\mu_{\widetilde{N}^q_t}=\mu_{\widetilde{N}_t}$. Then we have
$$\mu_f=\delta_\infty.$$
Write $f([X:Y])=H_f(X,Y)\hat f([X:Y])$. Then by the definition of $\mu_f$ in Section \ref{measure}, we have $[1:0]$ is the unique zero of $H_f(X,Y)$ and $\hat f^{-1}([1:0])=[1:0]$. Thus $\hat f$ is a polynomial. 
\end{proof}
\begin{remark}\label{dynamics-poly}
Alternatively, Theorem \ref{rescaling-limit-poly} is also a consequence of Corollary \ref{not-whole-Ber}. Indeed, let  $\mathbf{N}$ be the associated map of $\{N_t\}$ and let $\xi\in\mathbb{P}^1_{\mathrm{Ber}}$ be a type II repelling periodic point of period $q\ge 2$. Let $\vec{v}_\infty\in T_{\xi}\mathbb{P}^1_{\mathrm{Ber}}$ be such that $\infty\in\mathbf{B}_\xi(\vec{v}_\infty)^-$. Then $\vec{v}_\infty$ is a totally invariant direction for the induced map $\mathbf{N}^q_\ast:T_{\xi}\mathbb{P}^1_{\mathrm{Ber}}\to T_{\xi}\mathbb{P}^1_{\mathrm{Ber}}$. Hence, at $\xi$, the map $\mathbf{N}^q_\ast$ is a polynomial. 
\end{remark}
\begin{remark}
If the holomorphic family $\{N_t\}$ of Newton maps has a periodic $q\ge 2$ rescaling, let $\xi\in\mathbb{P}^1_{\mathrm{Ber}}$ be a corresponding periodic point of  period $q$ for the associated map $\mathbf{N}$, which lies in the Ramification locus $\mathcal{R}_{\mathbf{N}}$. By Proposition \ref{higher-period-rescalings-conditions}, we know $\xi$ in the Berkovich Julia set $J_{\mathrm{Ber}}(\mathbf{N})$. As in the proof of Theorem \ref{limit-measure-case-1}, if $\vec{v}\in T_\xi\mathbb{P}^1_{\mathrm{Ber}}$ satisfies that $\mathbf{B}_{\xi}(\vec{v})^-\cap J_{\mathrm{Ber}}(\mathbf{N})\not=\emptyset$, then $\infty\in\mathbf{B}_{\xi}(\vec{v})^-$. Hence, if we consider the vertices set $\Gamma=\{\xi\}$, then the equilibrium $\Gamma$-measure only charges the disk $\mathbf{B}_{\xi}(\vec{v})^-$ containing $\infty$.  
\end{remark}
Recall that $\mathrm{Rat}_d^{ss}\subset\mathbb{P}^{2d+1}$ is the semistable locus. Silverman gave the following criterion of semistablity.
\begin{lemma}\cite[Proposition 2.2]{Silverman98}
Let $f\in\mathbb{P}^{2d+1}$. Then $f\not\in\mathrm{Rat}_d^{ss}$ if and only if there exists $M\in \mathrm{PGL}_2(\mathbb{C})$ such that 
$$M^{-1}\circ f\circ M=[a_d:\cdots:a_0:b_d:\cdots:b_0]$$
 with $a_i=0$ for all $i\ge(d+1)/2$ and $b_i=0$ for all $i\ge(d-1)/2$.\par
\end{lemma}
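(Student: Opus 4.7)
The plan is to apply the Hilbert-Mumford numerical criterion of geometric invariant theory to the conjugation action of $\mathrm{SL}_2(\mathbb{C})$ (equivalently, $\mathrm{PGL}_2(\mathbb{C})$) on the affine cone $\mathbb{C}^{2d+2}$ over $\mathbb{P}^{2d+1}$. The criterion states that $f\in\mathbb{P}^{2d+1}$ fails to be semistable if and only if there exists a nontrivial one-parameter subgroup $\lambda\colon\mathbb{G}_m\to\mathrm{SL}_2(\mathbb{C})$ such that $\lim_{s\to 0}\lambda(s)\cdot\tilde f=0$ in the affine cone, where $\tilde f$ denotes any lift of $f$.

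Since all maximal tori in $\mathrm{SL}_2(\mathbb{C})$ are conjugate, every nontrivial one-parameter subgroup has the form $\lambda(s)=M\cdot\mathrm{diag}(s^n,s^{-n})\cdot M^{-1}$ for some $M\in\mathrm{SL}_2(\mathbb{C})$ and integer $n\ge 1$. The destabilization criterion is insensitive to the positive integer $n$, so after reparametrizing we may replace $\lambda$ by the standard $\lambda_1(s):=\mathrm{diag}(s,s^{-1})$ and replace $f$ by $g:=M^{-1}\circ f\circ M$: indeed $\lim_{s\to 0}\lambda(s)\cdot\tilde f=0$ if and only if $\lim_{s\to 0}\lambda_1(s)\cdot\tilde g=0$, because conjugation by the fixed matrix $M$ is linear on the affine cone and preserves the origin.

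The remaining step is an explicit weight computation. Writing $g=[A_d:\cdots:A_0:B_d:\cdots:B_0]$ with $G_a=\sum A_iX^iY^{d-i}$ and $G_b=\sum B_iX^iY^{d-i}$, the conjugation $\lambda_1(s)\circ g\circ\lambda_1(s)^{-1}$ substitutes $X\mapsto s^{-1}X$, $Y\mapsto sY$ inside $g$ (contributing a factor $s^{d-2i}$ on each monomial $X^iY^{d-i}$) and then scales the first output coordinate by $s$ and the second by $s^{-1}$. The coefficients therefore transform as $A_i\mapsto A_is^{d-2i+1}$ and $B_i\mapsto B_is^{d-2i-1}$. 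The limit as $s\to 0$ is zero exactly when every nonzero $A_i$ has $d-2i+1>0$ and every nonzero $B_i$ has $d-2i-1>0$, that is, precisely when $A_i=0$ for all $i\ge(d+1)/2$ and $B_i=0$ for all $i\ge(d-1)/2$.

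The bulk of the work is the weight computation; the main conceptual pitfall is selecting the correct destabilizing direction (the standard diagonal $\lambda_1$ rather than its inverse), which is fixed by matching the stated inequalities. The converse direction is immediate from the same computation: if $M^{-1}\circ f\circ M$ has the asserted vanishing pattern, then $\lambda_1$ visibly destabilizes it, hence the conjugate $M\lambda_1M^{-1}$ destabilizes $f$, and so $f\notin\mathrm{Rat}_d^{ss}$ by Hilbert-Mumford.
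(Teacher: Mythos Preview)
The paper does not supply its own proof of this lemma; it is simply quoted from Silverman \cite[Proposition 2.2]{Silverman98}. Your argument via the Hilbert--Mumford numerical criterion is exactly the standard one (and is essentially Silverman's): reduce an arbitrary destabilizing one-parameter subgroup to the diagonal $\lambda_1(s)=\mathrm{diag}(s,s^{-1})$ by conjugation, then read off the weight on each coefficient. Your weight computation $A_i\mapsto s^{d-2i+1}A_i$, $B_i\mapsto s^{d-2i-1}B_i$ is correct and yields precisely the stated vanishing conditions.

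One small point worth making explicit: when you assert that every nontrivial one-parameter subgroup is conjugate to $\mathrm{diag}(s^n,s^{-n})$ with $n\ge 1$, you are silently using that the case $n<0$ can be absorbed into $M$ via the Weyl element $\begin{pmatrix}0&1\\-1&0\end{pmatrix}\in\mathrm{SL}_2(\mathbb{C})$, which swaps the two diagonal entries. This is routine but should be mentioned, since otherwise the reduction to $\lambda_1$ (rather than $\lambda_1^{-1}$) is not justified and the inequalities could come out reversed.
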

Considering the holes and depths, DeMarco \cite{DeMarco07} gave another way to describe $\mathrm{Rat}_d^{ss}$.
\begin{lemma}\cite[Section 3]{DeMarco07}\label{semistable-hole}
Let $f=H_f\hat f\in\mathbb{P}^{2d+1}$. Then $f=H_f\hat f\in\mathrm{Rat}_d^{ss}$ if and only if the depth of each hole is $\le(d+1)/2$ and if $d_h(f)\ge d/2$, then $\hat f(h)\not=h$.
\end{lemma}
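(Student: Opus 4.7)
The plan is to deduce this directly from Silverman's coefficient criterion (the immediately preceding lemma) by translating the vanishing conditions on coefficients into divisibility statements for the homogeneous polynomials $F_a,F_b$, and in turn into statements about the hole structure and the behavior of $\hat f$ at the hole. Since both semistability and the depth/fixed-point-of-$\hat f$ condition are $\mathrm{PGL}_2(\mathbb{C})$-equivariant (a Möbius conjugation moves holes to holes while preserving depth and the relation $\hat f(h)=h$), and since Silverman's criterion is existential over $M\in\mathrm{PGL}_2(\mathbb{C})$, it suffices to check the equivalence when the distinguished hole sits at $[1:0]$.

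Concretely, expanding
\[
F_a(X,Y)=\sum_{i=0}^d a_iX^iY^{d-i},\qquad F_b(X,Y)=\sum_{i=0}^d b_iX^iY^{d-i},
\]
I would translate Silverman's vanishing "$a_i=0$ for $i\ge(d+1)/2$" into $Y^{\lceil d/2\rceil}\mid F_a$, and "$b_i=0$ for $i\ge(d-1)/2$" into $Y^{\lceil d/2\rceil+1}\mid F_b$, checking both parities of $d$ uniformly. Since $H_f=\gcd(F_a,F_b)$, the order of $Y$ dividing $H_f$ equals $\min(\mathrm{ord}_Y F_a,\mathrm{ord}_Y F_b)$, which is exactly the depth $m:=d_{[1:0]}(f)$. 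Writing $F_a=H_f\hat f_a$, $F_b=H_f\hat f_b$, the "extra" power of $Y$ required in $F_b$ beyond the common factor $H_f$ is equivalent to $\hat f_b([1:0])=0$, which (given that $\hat f_a,\hat f_b$ are coprime) is equivalent to $\hat f([1:0])=[1:0]$.

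Combining these observations, Silverman's criterion at the point $[1:0]$ holds exactly when $\mathrm{ord}_Y F_a\ge\lceil d/2\rceil$ and $\mathrm{ord}_Y F_b\ge\lceil d/2\rceil+1$, i.e., when either $m>(d+1)/2$ outright, or $m\ge d/2$ together with $\hat f([1:0])=[1:0]$. This is exactly the negation of the asserted semistability condition localized at the hole $[1:0]$. Taking the $\mathrm{PGL}_2(\mathbb{C})$-orbit through the existential quantifier on both sides then yields the full biconditional.

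The main obstacle is careful bookkeeping between the half-integer cutoffs $(d+1)/2$ and $d/2$ in the statement and the integer orders of vanishing that actually appear in the $Y$-divisibility analysis; one must verify that the borderline case (depth exactly $d/2$ for even $d$, or depth exactly $(d+1)/2$ for odd $d$) is precisely the regime where the auxiliary condition $\hat f(h)\ne h$ becomes the deciding factor. A minor side check is that the argument still works when $\deg\hat f=0$, where $\hat f(h)\ne h$ is interpreted as the constant value of $\hat f$ differing from $h$; in that degenerate case one can verify the criterion directly against Silverman's coefficient conditions without further Möbius reduction.
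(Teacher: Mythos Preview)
The paper does not give its own proof of this lemma; it is quoted directly from DeMarco \cite[Section~3]{DeMarco07} and used as a black box. Your approach of deducing it from Silverman's coefficient criterion is correct and is essentially the standard translation between the two formulations; the key observations---that the Silverman vanishing conditions force $[1:0]$ to be a hole of the conjugate, that depth and the property $\hat f(h)=h$ are $\mathrm{PGL}_2(\mathbb{C})$-invariant, and that the extra power of $Y$ in $F_b$ encodes exactly $\hat f([1:0])=[1:0]$---are all sound, and your parity bookkeeping for the cutoffs $\lceil d/2\rceil$ versus $(d+1)/2$ and $d/2$ checks out. One small point worth tightening: when you pass from ``there exists $M$'' to ``there exists a hole $h$,'' you are implicitly using that the stabilizer of $[1:0]$ in $\mathrm{PGL}_2(\mathbb{C})$ (the affine group) preserves both $\mathrm{ord}_Y F_b$ and, in each of the two cases $\hat f(h)=h$ or $\hat f(h)\ne h$, also the relevant bound on $\mathrm{ord}_Y G_a$; this is true, but it deserves a line since a priori $\mathrm{ord}_Y G_a=\mathrm{ord}_Y(F_a-bF_b)$ could shift under the translation parameter $b$.
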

The following result claims that higher periodic rescalings lead to non-semistable subalgebraical limits.
\begin{proposition}\label{rescaling-limit-not-semistable}
Let $\{N_t\}$ be a holomorphic family of degree $d\ge 3$ Newton maps, and let $\{M_t\}$ be a rescaling of period $q\ge 2$ for $\{N_t\}$. Suppose that $M_t^{-1}\circ N^q_t\circ M_t$ converges to $f$ subalgebraically. Then $f\not\in\mathrm{Rat}_{d^q}^{ss}$.
\end{proposition}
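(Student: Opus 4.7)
The plan is to deduce non-semistability from the hole data of $f$ via Lemma \ref{semistable-hole}: it suffices to exhibit a hole $h$ with $\hat f(h) = h$ and $d_h(f) \geq d^q/2$. First I would invoke the proof of Theorem \ref{rescaling-limit-poly}: after choosing $M_t$ affine within its equivalence class (Lemma \ref{affine-equ-mobius}), the continuity of $g \mapsto \mu_g$ away from $I(d^q)$ combined with $\mu_{\widetilde N_t^q} = \mu_{\widetilde N_t}$ gives $\mu_f = \delta_\infty$. The explicit atomic formula for $\mu_f$ on a degenerate map (Section \ref{measure}) then forces $\mathrm{Hole}(f) = \{\infty\}$ and $\hat f^{-1}(\infty) = \{\infty\}$; in particular $\hat f$ is a polynomial with $\hat f(\infty) = \infty$, and the unique hole has depth $d_\infty(f) = d^q - \deg \hat f$.

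Next I would establish the key bound $\deg \hat f \leq d^q/2$. By Proposition \ref{rescaling-repelling-cycles}, $\deg \hat f = \prod_{i=0}^{q-1} d_i$ where $d_i := \deg_{\widetilde{\mathbf N}^i(\xi)} \widetilde{\mathbf N}$ along the periodic type-II cycle $\{\widetilde{\mathbf N}^i(\xi)\}_{i=0}^{q-1}$. A Berkovich Riemann--Hurwitz count furnishes the ramification inequality
\[
\sum_{i=0}^{q-1}(d_i - 1) \leq d - 2,
\]
since the $d$ roots of $N$ are simple type-I critical points absorbing $d$ of the total critical mass $2d - 2$, and the $q$ distinct type-II cycle points each contribute $d_i - 1$ to the remaining budget. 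An AM--GM estimate then produces $\prod d_i \leq ((d-2+q)/q)^q$.

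A direct elementary computation shows $((d-2+q)/q)^q \leq d^q/2$ for all $d \geq 3$ and $q \geq 2$: this inequality is equivalent to $q(d - 2^{1/q}) \geq 2^{1/q}(d-2)$, which follows from the chain
\[
q(d - 2^{1/q}) \geq 2(d - 2^{1/q}) \geq 2(d-2) \geq 2^{1/q}(d-2),
\]
using $q \geq 2$, $2^{1/q} \leq 2$, and $d \geq 3$. Substituting yields $d_\infty(f) \geq d^q/2$, and since $\hat f$ fixes the hole $\infty$ of this depth, Lemma \ref{semistable-hole} gives $f \notin \mathrm{Rat}_{d^q}^{ss}$.

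The principal technical obstacle is the ramification bound $\sum_i (d_i - 1) \leq d - 2$. One must justify rigorously that the $q$ cycle points, being distinct type-II points on the Berkovich tree, each contribute $d_i - 1$ disjointly to the Berkovich ramification divisor, and that these contributions combined with the $d$ simple type-I units from the roots fit within the Riemann--Hurwitz total $2d - 2$. Ruling out overlaps between the critical content of consecutive cycle points requires a careful analysis of the tree geometry at the cycle, but once this is secured the remainder of the argument is elementary.
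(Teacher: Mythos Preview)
Your proposal is correct and follows essentially the same route as the paper: both arguments reduce to showing $\deg\hat f\le d^q/2$ via Lemma~\ref{semistable-hole}, both obtain a linear bound on $\sum_i(d_i-1)$ by counting free critical points attached to the cycle in the Berkovich tree, and both finish with AM--GM. The differences are minor: the paper splits into the cases $q>d-3$ (invoking Theorem~\ref{higher-rescaling-number-degree} directly) and $2\le q\le d-3$, and uses the sharper bound $\sum_i(d_i-1)\le d-3$ coming from Proposition~\ref{number-repelling-cycles} (totally free critical points), whereas you give a single unified argument with the slightly weaker bound $d-2$ and a clean elementary verification of the resulting inequality. Your final chain of inequalities is in fact tidier than the paper's case analysis. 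The disjointness of the critical contributions across cycle points---which you correctly flag as the principal obstacle---is exactly what the paper handles by appealing to Lemma~\ref{fix-segment}, Proposition~\ref{repelling-period-critical}, and Proposition~\ref{number-repelling-cycles}; these guarantee that the periodic points of period $\ge 2$ lie on distinct segments $(\mathbf{c},\pi_{H_V}(\mathbf{c}))$ with $\mathbf{c}$ totally free, so the critical points ``below'' distinct $\xi_i$ are indeed disjoint and are never roots.
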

\begin{proof}
We write $f=H_f\hat f$. By Theorem \ref{rescaling-limit-poly}, we can assume $\hat f$ is a polynomial. From the proof of Theorem \ref{rescaling-limit-poly}, we know $\mathrm{Hole}(f)=\{\infty\}$. To show $f\not\in\mathrm{Rat}_{d^q}^{ss}$, by Lemma \ref{semistable-hole}, it is sufficient to show the hole $z=\infty$ has depth $d_{\infty}(f)\ge d^q/2$.\par 
If $q>d-3$, then by Theorem \ref{higher-rescaling-number-degree}, we have 
$$\frac{\deg\hat f}{d^q}\le\frac{2^{d-3}}{d^q}<\frac{1}{2}.$$\par
If $2\le q\le d-3$, let $\{\xi_0,\xi_1,\cdots,\xi_{q-1}\}$ be the corresponding type II repelling cycle. Without loss of generality, we suppose $\{\xi_0,\xi_1,\cdots,\xi_{q-1}\}\cap\mathcal{R}_\mathbf{N}=\{\xi_0,\cdots,\xi_{\ell-1}\}$. Then 
$$\deg\hat f=\prod_{i=0}^{\ell-1}\deg_{\xi_i}\mathbf{N}.$$
At a point $\xi_i$, let $\vec{v}_i\in T_{\xi_i}\mathbb{P}_{\mathrm{Ber}}^1$ with $\infty\in\mathbf{B}_{\xi_i}(\vec{v})^-$. For each $i=0,\cdots,\ell-1$, suppose there are $n_i$ critical points of $\mathbf{N}$ that are not in $\mathbf{B}_{\xi_i}(\vec{v})^-$. By Lemma \ref{fix-segment} and Proposition \ref{number-repelling-cycles}, we have 
$$n_0+\cdots+n_{\ell+1}\le d-3.$$
Now we claim $\deg_{\xi_i}\mathbf{N}=n_i+1$ for $0\le i\le\ell-1$. By Lemma \ref{dir-multi}, there exists $\xi_i'\in\mathcal{B}_{\xi_i}(\vec{v}_i)^-$ such that the directional multiplicity $m_\mathbf{N}(\xi_i,\vec{v}_i)=\deg_{\xi_i''}\mathbf{N}$ for any $\xi_i''\in(\xi_1,\xi_1')$. By Lemma \ref{disk-to-disk}, we know $\deg_{\xi_i''}\mathbf{N}\ge\deg_{\xi_i}\mathbf{N}$. Thus, $m_\mathbf{N}(\xi_i,\vec{v}_i)\ge\deg_{\xi_i}\mathbf{N}$. Hence 
$$m_\mathbf{N}(\xi_i,\vec{v}_i)=\deg_{\xi_i}\mathbf{N}.$$
Let $M_1,M_2\in\mathrm{PGL}_2(\mathbb{L})$ such that $M_1(\xi_g)=\xi_i$ and $M_2(\xi_g)=\mathbf{N}(\xi_i)$. Then at $\xi_i$, the vector $(M_1^{-1})_\ast(\vec{v}_i)$ is a fixed point of $(M_2^{-1}\circ\mathbf{N}\circ M_1)_\ast$, and 
$$\deg_{(M_1^{-1})_\ast(\vec{v}_i)}(M_2^{-1}\circ\mathbf{N}\circ M_1)_\ast=\deg(M_2^{-1}\circ\mathbf{N}\circ M_1)_\ast.$$
Thus, $\vec{v}_i$ is a critical point of $\mathbf{N}_\ast$ with multiplicity $\deg\mathbf{N}_\ast-1$. Now without of loss generality, we suppose the $n_i$ critical points of $\mathbf{N}$ that are not in $\mathbf{B}_{\xi_i}(\vec{v})^-$ are in distinct directions at $\xi_i$. Then we have 
$$n_i+\deg\mathbf{N}_\ast-1=2\deg\mathbf{N}_\ast-2.$$
Thus $n_i=\deg\mathbf{N}_\ast-1$. It follows that
$$\deg\hat f=\prod_{i=0}^{\ell-1}(n_i+1).$$
Note $$(n_0+1)+\cdots+(n_{\ell-1}+1)\le d-3+\ell.$$
So we have 
$$\deg\hat f\le(\frac{d-3+\ell}{\ell})^\ell.$$
Thus 
$$\frac{\deg\hat f}{d^q}\le\frac{1}{d^{q-\ell}}(\frac{d-3+\ell}{\ell d})^\ell\le\frac{1}{d}\le\frac{1}{2}.$$\par
Therefore, $d_\infty(f)=d^q-\deg\hat f\ge d^q/2$.
\end{proof}
For a holomorphic family $\{N_t\}$ of quartic Newton maps, by Theorems \ref{higher-rescaling-number-degree} and \ref{rescaling-limit-poly}, we know that $\{N_t\}$ has at most $1$ rescaling of period at least $2$ and the corresponding  rescaling limit is a quadratic polynomial. 
To end this subsection, based on Example \ref{higher-rescaling-example}, we consider $\tilde{r}(t)=\{\tilde{r}_1(t),\tilde{r}_2(t),\tilde{r}_3(t),\tilde{r}_4(t)\}$, where
\begin{align*}
\tilde{r}_1(t)&=-1-\sqrt{3}i+(5+\frac{40\sqrt{3}}{9}i)t^2,\\
\tilde{r}_2(t)&=-1+\sqrt{3}i+(5-\frac{40\sqrt{3}}{9}i)t^2,\\
\tilde{r}_3(t)&=2+\frac{\sqrt{30}}{3}t-5t^2+at^3,\\
\tilde{r}_4(t)&=2-\frac{\sqrt{30}}{3}t-5t^2-at^3.
\end{align*}
Then by computation, we get $\{M_t(z)=5t^2z/18\}$ is a period $2$ rescaling for $\{N_{\tilde{r}(t)}\}$ and the corresponding rescaling limit is 
$$f_a(z)=z^2+62+\frac{144a\sqrt{30}}{25}.$$
Note for any quadratic polynomial $P$, there exists $a\in\mathbb{C}$ such that $P$ is conjugate to $f_a$. Thus, for an arbitrary quadratic polynomial, there exists a holomorphic family $\{N_t\}$ of quartic Newton maps such that this polynomial is a rescaling limit of a period $2$ rescaling for $\{N_t\}$.

 \section*{Acknowledgements}
This paper is a part of the author's PhD thesis. He thanks his advisor Kevin Pilgrim for introducing him this topic and for fruitful discussions. The author would like to thank Jan Kiwi, Laura DeMarco and Juan Revira-Letelier for valuable comments. The author is also grateful to Matthieu Arfeux and Ken Jacobs for useful conversations.
\bibliographystyle{plain}
\bibliography{references}

\end{document}